    \newtheorem{thm}{Theorem}[section]
    \newtheorem{cor}[thm]{Corollary}
    \newtheorem{lem}[thm]{Lemma}
    \newtheorem{prop}[thm]{Proposition}
    \theoremstyle{definition}
    \newtheorem{conj}[thm]{Conjecture}
    \newtheorem{defn}[thm]{Definition}
    \theoremstyle{remark}
    \newtheorem{rem}[thm]{Remark}
    \newtheorem{exam}[thm]{Example}
    \numberwithin{equation}{section}
    \newcommand{\eps}{\varepsilon}
    \newcommand{\mP}{\mathbb{P}}
    \newcommand{\K}{\mathbb{K}}
    \DeclareMathOperator\sgn{sgn}
    \newcommand\s{s}%{\sigma}}
    \newcommand{\abs}[1]{\lvert #1 \rvert}
    \newcommand{\ra}{\rightarrow}
    \newcommand\suchthat{{\,:\ }}
    \DeclareMathOperator\Gal{Gal}
    \newcommand\divides{\mid}
    \newcommand\oline[1] {{\overline{#1}}}
    \newcommand\Hom{{\operatorname{Hom}}}
    \newcommand\Aut{{\operatorname{Aut}}}
    \newcommand\lcm{{\operatorname{lcm}}}
    \newcommand\orb{{\operatorname{orb}}}
    \newcommand\Orb{{\operatorname{Orb}}}
    \DeclareMathOperator\Mon{Mon}
    \DeclareMathOperator\AGL{AGL}
\numberwithin{table}{section}
\begin{document}
    
    \title%[] %% short title
    {Monodromy groups of product type}%

    \def\ann{Department of Mathematics, University of Michigan, Ann Arbor, MI 48109--1043, USA}
    \def\technion{Department of Mathematics, Technion - Israel Institute of Technology, Haifa 32000, Israel}
    \author{ Danny Neftin}
    \address{\technion}
    \email{dneftin@tx.technion.ac.il}%
    \author{ Michael E. Zieve}
    \address{\ann}
    %\address{Mathematical Sciences Center, Tsinghua University, Beijing 100084, China}
    \email{zieve@umich.edu}
    %\urladdr{www.math.lsa.umich.edu/$\sim$zieve/}
    
    %\thanks{
    
    %}

    \begin{abstract}
    %Following the work of Zariski, Guralnick--Thompson, Aschbacher and many others,  %and beginning in the work of Guralnick--Thompson in the 80's, 
    %there has been a major progress in determining the monodromy and ramification of indecomposable coverings $f:X\ra \mP^1$  %when $X\cong \mP^1$, or alternatively 
    %when the degree $\deg f$ is sufficiently large in comparison to the genus $g_X$ of $X$. 
    % originated in Zariski's thesis, began in the work of Guralnick--Thompson in the 80's, and . 
    The combination of this paper and its companion \cite{NZ}  complete the classification of monodromy groups of  
    %by determining the ramification and monodromy groups of 
    indecomposable coverings of complex curves $f:X\ra \mP^1$ of sufficiently large degree in comparison to the genus of $X$.
In this paper we determine all such coverings with monodromy group   $G\leq S_\ell\wr S_t$ of product type for $t\ge 2$.
    \end{abstract}
    
    \maketitle

    %========== Our question ================
    \section{Introduction}\label{sec:intro}
    \subsubsection*{Background} 
    The monodromy group $\Mon(f)$ and ramification of a branched covering $f:X\ra\mP^1$ of the complex projective line are fundamental invariants which lie at the heart of many problems in arithmetic geometry, complex analysis, and other areas, cf.~\cite{NZ}. 
    Here, $X$ is a complex curve and $\Mon(f)$ is the Galois group of the Galois closure of $\mathbb C(X)$ over the function field $\mathbb C(t)=\mathbb C(\mathbb P^1)$, viewed as a permutation group on $\Hom_{\mathbb C(t)}(\mathbb C(X),\oline{\mathbb C(t)}
)$.  The ramification type of $f$ is the multiset $\{E_f(P)\,|\,P\in B\}$, where $B$ is the branch locus of $f$  and $E_f(P)$ is the multiset of ramification indices of $f$ over the point $P$. 

    The problem of determining all but finitely many of the possible monodromy groups $\Mon(f)$  for {\it indecomposable} coverings $f:X\ra \mathbb P^1$ of fixed {\it genus} $g_X=g$, and their corresponding ramification types,  originates in the work of Chisini, Ritt and Zariski. Chisini \cite{Chisini} and Ritt \cite{Ritt2} determine the coverings $f:X\ra \mP^1$ of prime degree with $g_X=0$ and solvable monodromy, a result which was soon after generalized by Zariski \cite{Zar1,Zar2}. 
    %sufficiently large degree and Zariski \cite{Zar1,Zar2}. These determine the possible solvable monodromy groups for indecomposable $f:\mP^1\ra \mP^1$ of sufficiently large degree, when the two-point stabilizer is trivial. 
Many authors contributed to this problem in subsequent years. In particular, for any  fixed $g$, the combination of the papers \cite{LSa,LSh,FM} shows that if $f:X\ra\mP^1$ is an indecomposable covering of sufficiently large degree, where $X$ has genus $g$ and the monodromy group of $f$ is simple, then this monodromy group must be an alternating group. 
    Based on this and other results, including  \cite{Asch, GN, GT},  Guralnick gave a conjectural list of the possible monodromy groups of genus-$g$ indecomposable coverings $f:X\ra\mP^1$ whose degree $n$ is sufficiently large compared to $g$ \cite[Conjecture 1.0.1]{GS}.
\begin{conj}\label{gurcon}
For each integer $g\ge 0$, there exists a constant $N_g$ with the following property.  For every indecomposable covering $f:X\ra \mP^1$ of degree $n\geq N_g$ and genus $g_X=g$,  the monodromy group $\Mon(f)$ satisfies one of the following:
    \begin{enumerate}
    \item $\Mon(f) = A_n$ or $S_n$;
    \item $\Mon(f) = A_d$ or $S_d$ with $n=d(d-1)/2$ and $g=0$;
    \item $A_d^2\leq \Mon(f) \leq (S_d^2)\rtimes C_2$, where $n=d^2$, the semidirect action of $C_2$ permutes the two copies of $S_d$, and $g\le 1$;
    \item $\Mon(f)\leq (C_p)^i\rtimes C_k$, with $i\in\{1,2\}$, $k\in \{1,2,3,4,6\}$, and $p$ prime, where $n=p^i$ and $g\le 1$.
%, see \cite[Table~9.1]{NZ} for a complete list. 
    %$p^ik$ where $n=p^i$, $i\leq 2$,  $p$ is prime, and $k\leq 6$. % described in Table~\ref{table:affine}.
\end{enumerate}
\end{conj}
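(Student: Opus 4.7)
The plan is to combine the Aschbacher--O'Nan--Scott classification of primitive permutation groups with the Riemann--Hurwitz genus bound. Since $f$ is indecomposable, $G := \Mon(f)$ acts primitively on $n$ points, and a branch cycle description $(\sigma_1,\dots,\sigma_r)$ produces generators of $G$ with product $1$ satisfying
\[
\sum_{i=1}^r \ind(\sigma_i) = 2(n-1+g),
\]
where $\ind(\sigma)$ is $n$ minus the number of $\sigma$-orbits. For fixed $g$ and large $n$ this forces $\sum_i \ind(\sigma_i)/n$ to be bounded, which rules out any O'Nan--Scott family in which every non-identity element has index bounded below by a fixed positive proportion of $n$. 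The strategy is to march through the five O'Nan--Scott families (affine, almost simple, diagonal, product type, twisted wreath) and either eliminate each or pin it down to one of (1)--(4).

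In the almost simple case with socle $T$, the combined results of \cite{LSa,LSh,FM} show that if $T$ is not an alternating group then the minimal non-identity index in any faithful primitive action is at least $cn$ for an absolute $c>0$; together with Riemann--Hurwitz and $r\ge 3$, this bounds $n$ in terms of $g$. When $T=A_d$ one enumerates the faithful primitive actions of $A_d$ and $S_d$: the natural degree-$d$ action yields (1); a direct Hurwitz computation on the action on unordered pairs shows this action can realize arbitrarily large $n$ only at $g=0$, giving (2); all other actions grow in minimum index too fast to survive. In the affine case $G = V\rtimes H$ with $V=(C_p)^d$, bounds from \cite{GT,Asch,GN} force $d\le 2$ and $H$ cyclic; imposing Riemann--Hurwitz at $g\le 1$ further restricts $|H|$ to the familiar list of orders of automorphisms of complex tori, namely $\{1,2,3,4,6\}$, which is (4). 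The diagonal and twisted wreath families are eliminated by noting that in both cases every nontrivial element has index very close to $n$, which is inconsistent with the Hurwitz bound at fixed $g$ for $n$ large.

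The main obstacle is the product type case $G \le S_\ell \wr S_t$ acting on $\ell^t$ points, which is exactly the subject of the present paper (for $t\ge 2$) together with its companion \cite{NZ} (for $t=1$, where the stabilizer lies in an almost simple group and the analysis reduces to the previous paragraph). Writing each $\sigma_i$ via its base-group coordinates and its image in $S_t$, the task is to show, by a careful index calculation in $S_\ell \wr S_t$, that $\sum_i \ind(\sigma_i)$ grows at least linearly in $\ell^t$ unless $t=2$ and $G$ lies between $A_\ell^2$ and $(S_\ell^2)\rtimes C_2$ with the $C_2$ swapping factors; the remaining genus $\le 1$ configurations are then matched to (3) directly. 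The delicate point --- and the bulk of the argument --- is controlling those tuples whose product-action indices are simultaneously small along every coordinate, essentially a wreath-product Hurwitz analysis, and excluding a thin corridor of exceptional small-$\ell$ or high-$t$ configurations that formally satisfy the index bound but fail primitivity or generate the wrong group.
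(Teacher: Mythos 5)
First, a point of framing: the statement you are proving is stated in this paper as a \emph{conjecture}, and the paper itself does not prove it. The paper's contribution is Theorem \ref{thm:main-wreath} (the product-type case with $t\ge 2$), and the assembly of all cases into a proof of Conjecture \ref{gurcon} is explicitly deferred to the companion paper \cite{NZ}. Your outline of the overall strategy --- Aschbacher--O'Nan--Scott reduction plus the Riemann--Hurwitz identity $\sum_i\ind(\sigma_i)=2(n-1+g)$, with the affine case handled by \cite{GT,Neu1}, the almost simple case by \cite{LSa,LSh,FM}, the regular-socle cases by \cite{GT,Asch}, and the product-type case by the present paper and \cite{NZ} --- does coincide with the roadmap laid out in the introduction here. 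As a survey of how the conjecture is proved across the literature, it is essentially accurate.

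The genuine gap is that the one case you do not delegate to a citation --- the product-type case $G\le S_\ell\wr S_t$ --- is dismissed as ``a careful index calculation,'' and your quantitative claim about it is false. It is not true that $\sum_i\ind(\sigma_i)$ grows linearly in $n=\ell^t$ outside the exceptional configurations: the correct bound, which is the main theorem of this paper, is only $g_X>c\,\ell^{t-1}-d\,\ell^{t-2}$, i.e.\ of order $n^{(t-1)/t}$ (and \cite[Corollary 8.7]{GN} likewise gives only $\ell^{t-1}/2500$ for $t>8$). This is exactly why the paper remarks that $N_g$ can be taken quadratic in $g$ and no better. The weaker bound still suffices for the conjecture (it tends to infinity with $\ell$ for fixed $t\ge2$), but an attempt to prove linear growth would fail, and the reason the true bound is so delicate is the entire content of Sections 4--8: one must pass to the fiber-product curve $Z$, show via the Do--Zieve argument that each coordinate covering $h_i$ has ``almost Galois'' ramification, extract a new degree-$\ell$ covering from the product-one relation (Proposition \ref{prop:product}), and only then control the correction term $R_\pi$. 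Moreover the exceptional locus is not a ``thin corridor'': Table \ref{table:wreath} contains infinitely many ramification types (parametrized by $\ell$ and by all $a$ coprime to $\ell$), every one of which must be shown to actually occur with the right monodromy group (Section \ref{sec:h1-branches}), and several near-miss types must be excluded by ad hoc arguments (Lemma \ref{cor:inf2-3}). None of this is recoverable from your outline; the proposal is a correct table of contents for the proof, not a proof.
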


In the companion paper \cite{NZ} we will combine the results of the present paper with additional arguments in order to prove Conjecture~\ref{gurcon}, and moreover we will determine all possible ramification types for $f$ in each of the cases (2)--(4), see \cite[Theorem 1.1]{NZ}. Case (1) is the generic case which occurs for nearly all $f:X\ra \mP^1$. Note that in case (4) the monodromy groups and ramification types were already known to Zariski, cf.~\cite[Proposition 3.8]{GT} and \cite[Proposition 9.5]{NZ}. 
    %and the ramification of $f$ is described in \cite[Table~4.2]{NZ}; 
    % and the ramification of $f$ is described in Table~\ref{table:wreath};
     %\ref[table:two-set-stabilizer]{NZ1};
    %\end{enumerate}
    %\end{thm}
    %Note that one can instead consider coverings of an arbitrary curve $Y$ over $k$. %Moreover, we have determined explicit lists of all possibilities for the ramification of $f$ in the second and third cases, and
    %Zariski described all rational functions occurring in the fourth case.

    \subsubsection*{Main result} 
    The proof of Conjecture~\ref{gurcon} splits into cases based on the Aschbacher--O'Nan--Scott structure theorem for primitive groups $G$, see \cite{GT}. 
    The case (A) where $G$ has an abelian minimal normal subgroup is treated by Guralnick--Thompson \cite{GT} and Neubauer \cite{Neu1}. The case (B) where $G$ has more than one minimal normal subgroup  is treated in \cite{Shih,GMN}. Henceforth assume $G$ has a unique minimal normal subgroup $Q$, and that $Q$ is nonabelian, so that $Q=L^t$ for some nonabelian simple group $L$. The case (C1) where $Q$ acts regularly is treated by Guralnick--Thompson \cite{GT}. The case (C2) where $L$ acts regularly is treated by Aschbacher \cite{Asch}. The remaining case (C3), also known as groups of {\it product type}, is when $G$ acts on a set $\Delta^t$, inducing a pointwise action of  $L^t$ on $\Delta^t$,  with respect to a nonregular action of $L$ on $\Delta$. % is nonregular. 

    The case $t=1$ is completed in \cite[Theorem 1.2]{NZ}. In this paper, we fix any $t\geq 2$ and determine all product type monodromy groups of sufficiently large degree $\ell^t$. Let $S_\ell \wr S_t = S_\ell^t\rtimes S_t$ denote the standard wreath product where the semidirect product action permutes the $t$ copies of $S_\ell$. 
    \begin{thm}\label{thm:main-wreath}
    %Fix $g\geq 0$. 
    Fix an integer $t\geq 2$. There exist positive constants $c=c_t, d=d_t$ depending only on $t$, % satisfying the following. 
    such that for every indecomposable covering $f:X\ra X_0$ with monodromy group $G$ of product type  acting on $\Delta^t$, %$G\leq S_\ell\wr S_t$  
    either $g_X > c\ell^{t-1}-d\ell^{t-2}$ or the ramification type of $f$ appears\footnote{{The ramification types are listed more concisely using the branch cycles of $f$, see Section \ref{sec:prelim-t=2}.}} in Table~\emph{\ref{table:wreath}}.
    Moreover, for all of the ramification types in Table~\emph{\ref{table:wreath}}, one has $t=2$, and  $A_\ell^2\lneq G\leq S_\ell\wr S_2$, and $g_{X_0}=0$, and  $g_X\leq 1$. 
    \end{thm}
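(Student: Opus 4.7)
The plan is to apply Riemann--Hurwitz to $f:X\ra X_0$ combined with sharp lower bounds on the orbit-indices of elements of $G \leq S_\ell \wr S_t$ acting on $\Delta^t$. Fix a branch cycle description $\sigma_1, \ldots, \sigma_r \in G$ with $\prod_i \sigma_i = 1$ generating $G$, and set $\ind(\sigma) = \ell^t - \#\{\sigma\text{-orbits on }\Delta^t\}$. Riemann--Hurwitz gives $2g_X - 2 = \ell^t(2g_{X_0} - 2) + \sum_i \ind(\sigma_i)$; since the hypothesis $g_X \leq c\ell^{t-1} - d\ell^{t-2}$ forces $g_{X_0} \leq 1$ (else $g_X \geq \ell^t + 1$), one obtains the budget $\sum_i \ind(\sigma_i) \leq 2\ell^t + 2c\ell^{t-1} + O(\ell^{t-2})$.

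Next I would establish the following central estimate. For $\sigma = (\pi_1, \ldots, \pi_t)\tau \in S_\ell \wr S_t$, the $\sigma$-orbits on $\Delta^t$ decompose as a direct product over the cycles of $\tau$: a $\tau$-cycle of length $c$ indexing coordinates $j_1, \ldots, j_c$ contributes exactly as many orbit-factors as the number of $\Delta$-orbits of the product $\pi_{j_1}\pi_{j_2}\cdots \pi_{j_c}$, which is at most $\ell$. Consequently, if $\tau$ has a cycle of length $c \geq 2$ then $\ind(\sigma) \geq \ell^t - \ell^{t-c+1} \geq \ell^{t-1}(\ell - 1)$; and if $\tau = 1$ with $\sigma \neq 1$, then some $\pi_i \neq 1$ yields $\ind(\sigma) \geq \ell^{t-1}$.

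Since $G$ acts primitively on $\Delta^t$, it must project to a transitive subgroup of $S_t$, so at least two of the $\tau_i$ are nontrivial. Letting $k$ denote this count, the budget gives $k\ell^{t-1}(\ell-1) \leq 2\ell^t + O(\ell^{t-1})$, hence $k = 2$ once $\ell$ is large. Since $\tau_{i_1}\tau_{i_2} = 1$ and $\sg{\tau_{i_1}}$ is transitive on $\{1,\ldots,t\}$, both $\tau_{i_j}$ are $t$-cycles, each contributing $\ind(\sigma_{i_j}) \geq \ell^t - \ell$; the remaining branch cycles have $\tau_i = 1$ with total index $O(\ell^{t-1})$, so $r$ is bounded in terms of $t$ and $c$. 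For $t \geq 3$ I would leverage the intermediate covering $X_0' \ra X_0$ of degree $t$ associated to the quotient $G \To C_t$ (whose genus is computable via Riemann--Hurwitz applied to the two $t$-cycle projections) and the induced cover $X \ra X_0'$ of degree $\ell^t/t$: the pointwise elements of $N = G \cap S_\ell^t$ must generate a subgroup transitive on $\Delta^t$, and combining their indices with those of the twisted $\sigma_{i_j}$'s yields the desired lower bound $g_X > c_t\ell^{t-1} - d_t\ell^{t-2}$ for suitable constants.

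This leaves the case $t = 2$, where both $\tau_{i_j} = (1\,2)$ and each contributes index $\geq \ell^2 - \ell$, while the pointwise branch cycles have total index $O(\ell)$. I would enumerate the possible $\Delta$-cycle types of the products $\pi_{j_1}\pi_{j_2}$ for the two twisted cycles, together with the cycle types of the pointwise $\sigma_i$'s, producing a finite list of candidate ramification tuples and applying Riemann--Hurwitz to identify those with $g_X \leq 1$. The main obstacle is verifying, for each surviving tuple, that the branch cycles generate a group $G$ with $A_\ell^2 \lneq G \leq S_\ell \wr S_2$ and that the tuple is actually realized by an indecomposable covering; this requires ruling out ``near-miss'' tuples satisfying the index inequality but generating imprimitive or smaller groups, via classification results on primitive subgroups of wreath products containing specific elements (cf.\ \cite{GT}) together with rigidity and generation arguments.
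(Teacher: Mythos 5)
Your central orbit-counting estimate is incorrect, and this breaks the whole subsequent argument. You claim that for $\sigma = (\pi_1,\ldots,\pi_t)\tau$ with $\tau$ containing a $c$-cycle, the number of $\sigma$-orbits contributed by that cycle equals the number of $\Delta$-orbits of the accumulated product $\pi_{j_1}\cdots\pi_{j_c}$, hence is $\leq \ell$, and so $\ind(\sigma) \geq \ell^t - \ell^{t-c+1}$. Take $t=c=2$ with $\pi_1=\pi_2=1$, so $\sigma$ is the coordinate swap: its orbits on $\Delta^2$ are the $\ell$ diagonal fixed points together with $\binom{\ell}{2}$ two-element orbits, giving $\ell(\ell+1)/2$ orbits in total and $\ind(\sigma)=\ell(\ell-1)/2\approx \ell^2/2$, not $\geq \ell^2-\ell$. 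The reason is that the accumulated product only determines the structure of $\sigma^c$; the passage from $\sigma^c$-orbits to $\sigma$-orbits divides the count by at most $c$, not down to the number of $\pi$-orbits. The correct leading-order lower bound for a $c$-cycle is $\ind(\sigma)\geq \ell^t(1-1/c) - O(\ell^{t-1})$ (cf.\ Proposition \ref{prop:genus-analysis}.(1), whose main term for a transposition is $m\ell^{t-1}/2$ rather than $m\ell^{t-1}$).

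Because of this, the deduction that exactly two branch cycles have nontrivial image in $S_t$ fails: with the correct $\approx \ell^t/2$ bound the Riemann--Hurwitz budget only forces at most four such branch cycles, which is consistent with the fact that the Galois cover $\pi_0:Y\ra X_0$ can have genus $1$ and four branch points (the F4 family in Table~\ref{table:wreath}, and case (F) in Table~\ref{table:Galois}). Your assertion that both nontrivial $\tau_i$ must then be $t$-cycles is also unsupported; Lemma~\ref{lem:no-trans} shows that elements acting as transpositions on $I$ do occur and are the hardest to control. Finally, the closing paragraph for $t=2$ --- enumerating cycle types of the products, applying Riemann--Hurwitz, and handling generation --- is where essentially all of the work actually lies, and it cannot proceed from the flawed estimate. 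The paper instead factors $f$ through the tower $Z\ra X$, $h_i:Y_i\ra Y$, $\pi_0:Y\ra X_0$, uses the Do--Zieve almost-Galois machinery (Proposition~\ref{lem:DZ}, Corollary~\ref{cor:DZ}) to control the ramification of the degree-$\ell$ coverings $h_i$, and crucially extracts from the product-$1$ relation a new degree-$\ell$ cover $\hat f$ (Proposition~\ref{lem:product}) whose Riemann--Hurwitz inequality supplies the missing constraints; a direct attack on $\ind(\sigma_i)$ over $\Delta^t$ does not capture these constraints.
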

    Conversely, for every type in Table~\ref{table:wreath}, 
    there exists an indecomposable covering $f:X\ra\mP^1$ of this ramification type with monodromy group $A_\ell^2\lneq G\leq S_\ell \wr S_2$, 
    see Section \ref{sec:h1-branches}.

The proof of Theorem \ref{thm:main-wreath} is self contained. As we explain in the proof sketch below, the theorem becomes easier as $t$ grows. The most challenging cases  are $t = 2,3,4$, and $6$, with the hardest being $t=2$. 
% and more specifically when (I) the action of $G$ on $\{1,\ldots,t\}$ induces (upon taking the quotient of the Galois closure of $f$ by the kernel of this action) a covering $\pi_0:Y\ra\mP^1$ of genus $\leq 1$, and (II) some branch cycle of $f$ acts on $\{1,\ldots,t\}$ as a transposition. 
%Among these, $t=2$ is the most challenging. 
%The cases where (I) or (II) do not hold were 
The theorem was known previously when $t>8$, in which case it is known that
%for $t>8$: When $g_Y> 1$   Guralnick--Thompson \cite[Thoerem 9.3]{GT} show 
%that $g_X>\ell^t/2000$ unless both $t$ and $\ell$ are small. 
%When $t>8$ and $g_Y\leq 1$, in which case (II) does not hold, Guralnick--Neubauer \cite[Corollary 8.7]{GN} 
%DN: Commented longer versions are also possible.
   $g_X>\ell^{t-1}/2500$ \cite[Corollary 8.7]{GN}.
% The combination of this bound with Theorem \ref{thm:main-wreath} completes the determination of monodromy groups and ramification types for large-degree indecomposable coverings with monodromy group of product type with $t\geq 2$. 

The combination of Theorem \ref{thm:main-wreath} and %\cite[Theorems 7.1 and 7.2]{GN} 
{\cite[Corollary 8.7]{GN}} shows that for an indecomposable covering $f:X\ra \mP^1$ of sufficiently large degree $n$, with monodromy group of product type with $t\geq 2$, if the monodromy group and ramification type are not in Table~\ref{table:wreath} then $g_X>c'\sqrt{n}$ for some absolute constant $c'>0$, or equivalently $n<(g_X/c')^2$.
Together with the proof of \cite[Theorem 1.1]{NZ}, this shows that the constant $N_g$ in Conjecture~\ref{gurcon} may be chosen to be quadratic in $g$ if we restrict to coverings whose monodromy group is not alternating nor symmetric.
It seems plausible that the same assertion is true without excluding alternating and symmetric groups.
But this quadratic growth rate cannot be improved, in light of \cite[Example 5.5]{NZ}.

    \subsubsection*{About the proof of Theorem \ref{thm:main-wreath}} \label{sec:method} 
    %For a map $f$ with monodromy group $A_\ell^t \leq G\leq S_\ell \wr S_t$ of product type, $G$ is equipped with a transitive action on the set $\{1,\ldots,t\}$ whose kernel is the normal subgroup $K:=G\cap S_\ell^t$. 
    Let $f:X\ra\mP^1$ be a covering with monodromy group $G$ of product type, and assume for simplicity  $A_\ell^t\leq G \leq S_\ell \wr S_t$ with the standard action on $\{1,\ldots,\ell\}^t$ where $t\geq 2$. In particular, $G$ also acts on $\{1,\ldots,t\}$ with kernel $K:=G\cap S_\ell^t$. 
    Letting $H$ be a point stabilizer in the degree-$\ell^t$ action,
one has the following diagram of natural projections: 
    $$
    \xymatrix{ Z=\tilde X/(H\cap K) \ar[r]^{\pi} \ar[d]_{h} & \tilde X/H \cong X\ar[d] \\
     Y= \tilde X/K \ar[r]_{\pi_0} & \tilde X/G\cong \mP^1,
    }
    $$
    where $Y:=\tilde X/K$ and $Z:=\tilde X/(H\cap K)$. 
    Our approach to proving that $g_X$ is large is to show that $g_{Z}$ is large in comparison to the difference $D:=g_{Z} - \deg\pi\cdot g_X$, where the latter quantity can be expressed in terms of the ramification contribution of $\pi$ to the Riemann--Hurwitz formula. 
    %This ramification can be bounded using Abhyankar's lemma over each point of $\mathbb P^1$ separately. 
    This approach was used by Guralnick--Thompson to show that $g_{Z} - D\geq a_1\ell^t$ for some absolute constant $a_1>0$ in all cases where $g_{Y} > 1$, 
    % In the cases where $g_{\tilde X/K}\leq 1$, Theorem \ref{thm:main-wreath} shows  that $g_{\tilde X/(H\cap K)}\geq c_t\ell^{t-1}$ and $D\leq d_t\ell^{t-2}$. 
    and by Guralnick--Neubauer \cite[Corollary 8.7]{GN}
    %\footnote{Although the proof of \cite[Corollary 8.7]{GN} in some cases is somewhat intricate, a very simple proof can be extracted for $t>60$, which suffices for our purposes. } 
    to show that 
    $g_{Z}-D > a_2\ell^{t-1}$  for some absolute constant $a_2>0$,  in all cases where $t>8$ and $g_{Y}\leq 1$. 
    
    Estimating the difference $g_Z-D$ using the methods of \cite{GT, GN} becomes notoriously difficult when treating cases with $t\leq 8$. %(and arbitrary $\ell$). 
    Namely, the most challenging  cases, where $D$ is large, appear  when $g_Y\leq 1$ and one of the branch cycles of $f$ acts on $\{1,\ldots,t\}$ as a transposition, i.e.~only when $t=2,3,4$ and $6$, see Section \ref{sec:transp}. In fact, in some of these cases there are (infinite families of) ramification data $\mathcal R$, that is, multisets of partitions of $\ell^t$,  such that if there were a covering $f:X\ra \mP^1$  with monodromy group $A_\ell^t\leq G\leq S_\ell \wr S_t$ and ramification type $\mathcal R$ then the Riemann--Hurwitz formula would imply that $g_{Z}-D<a_3$ for an absolute constant $a_3>0$. 
    However, it turns out that none of these ramification data correspond to a covering, %Riemann's existence theorem, 
    as the corresponding product-$1$ tuple, required in Section \ref{sec:RET}, does not exist. 
    %with the corresponding cycle structure. 
    %essentially show that for $t\geq 10$, there is a  constant $c_1>0$ such that $g_{\tilde X/(H\cap K)}-R > c_1\ell^{t-1}$. This becomes difficult to prove when $t<10$ since there is  
    
    In contrast to \cite{GT, GN, NZ}, for small values of $t$ our approach starts with exploiting the product~$1$ relation associated to a covering, cf.~Section \ref{sec:RET}. %coming out of Riemann's existence theorem.  %Starting with some ramification data for $f$ which is not similar to Galois ramification,  might still yield examples in which the the ramification of $h$ (when computed by Abhyankar's lemma) %from the ramification data for $f$) 
    %is almost Galois, leading to the above mentioned low genus ramification data  that do not correspond to a covering. 
    %However, starting with an actual covering then the product $1$ relation in $S_\ell\wr S_t$ obtained from Riemann's existence theorem, 
    When $g_Y\leq 1$, this product-$1$ relation in $S_\ell \wr S_t$ amounts to a single product-$1$ relation in $S_\ell$, which by Riemann's existence theorem gives rise to a new degree $\ell$ covering $\hat f:\hat Y\ra\mP^1$ with transitive monodromy group. The existence of the new covering $\hat f$ gives severe restrictions on the ramification of $f$ via the Riemann--Hurwitz formula for $\hat f$ (or alternatively Ree's inequality). This is carried out in Section \ref{sec:reduced}. 
    % Using the combination of Ree's inequality and almost Galois ramification for $h$, we show that the ramification of $f$ itself is similar to a Galois ramification. 
    
    A key ingredient in the proof is showing that a bound of the form $g_X<\alpha\ell^{t-1}$, for a constant $\alpha>0$, forces  the ramification of $h$ to be similar to the ramification of a Galois covering. More precisely, since $A_\ell^t\leq K\leq S_\ell^t$, the covering $h$ is the fiber product of degree-$\ell$ coverings $h_i$, $i=1,\ldots,t$, each with monodromy $A_\ell$ or $S_\ell$. For sufficiently large $\ell$, the above bound implies that either the number of preimages in $h_i^{-1}(P)$ is bounded in terms of $\alpha$, %under each of the covering $h_i$, 
    or all preimages of $P$ have the same ramification index with a bounded amount of exceptions, for each point $P$ of $Y$ and $i=1,\ldots,t$, cf.~Section~\ref{sec:almost-Gal}. This part relies on an idea from \cite{DZ}.

    Using the Riemann--Hurwitz formula for $\hat f$ and the fact that each $h_i$ has ramification resembling that of a Galois covering, we then show that the ramification of $f$ itself is similar to that of a Galois covering, cf.~Section \ref{sec:reduced}. 
    We then analyze the possibility for such ramification types of $f$, in order to obtain a bound $D\leq b_4\ell^{t-2}$  and hence the desired inequality $g_{Z}-D>a_4\ell^{t-1}-b_4\ell^{t-2},$ for some positive constants $a_4=a_{4,t}$, $b_4=b_{4,t}$, depending only on $t$.
    %Note that the proof of Theorem \ref{thm:main-wreath} is self contained and does not rely on \cite{GN} and \cite{GT}.

     In Section \ref{sec:G/K}, we reduce the proof of Theorem \ref{thm:main-wreath} to proving Theorem \ref{thm:main-Gal} which addresses cases where the ramification of $h$ is similar to a Galois covering in a sense described precisely in Section \ref{sec:almost-Gal}. The case $t=2$ of Theorem \ref{thm:main-Gal} is then treated in Section \ref{sec:t=2}. For $t\geq 3$, Section~\ref{sec:G/K} further reduces Theorem \ref{thm:main-Gal} to proving Proposition \ref{prop:main-transposition} which ensures that the above Riemann--Hurwitz contribution $D$ is bounded by a multiple of $\ell^{t-2}$ depending only on $t$. Due to the bounds on $D$ obtained in Section \ref{sec:genus}, this yields the proof when no power of a branch cycle acts on $I$ as a transposition, thus reducing the proof to the cases listed in Section \ref{sec:transp}. The proof of Proposition \ref{prop:main-transposition} in these last cases is completed in Section \ref{sec:reduced}, completing the proofs of Theorems \ref{thm:main-Gal} and Theorem \ref{thm:main-wreath}. We list the exceptional ramification types for Theorem \ref{thm:main-wreath} in Section \ref{sec:prelim-t=2} and show they correspond to exceptional coverings in Section  \ref{sec:h1-branches}. 
     \subsubsection*{Acknowledgments}
     We thank Robert Guralnick, Arielle Leitner, and Tali Monderer for helpful comments.
The first author is grateful for the support of ISF grants No.\ 577/15, 353/21, and the NSF for support under a Mathematical Sciences Postdoctoral Fellowship. The second author thanks the NSF for the support of grant DMS-1162181. We also thank the United States-Israel Binational Science Foundation (BSF) for its  support under Grant No.\ 2014173, and the SQuaREs program of the American Institute of Math.

    \section{Preliminaries and notation}\label{sec:prelim}
    %\subsection{Preliminaries} 
    \subsection{Monodromy} Let $\K$ be an algebraically closed field of characteristic $0$. 
    A {\it covering} $f:X\ra X_0$ is  a morphism of (smooth projective geometrically irreducible) curves defined over $\K$. Letting $\tilde f: \tilde X\ra X_0$ be its Galois closure, we call $\Mon(f):=\Gal(\tilde f)$ the {\it monodromy group} of $f$. We note that $G:=\Mon(f)$ is a permutation group on $n:=\deg f$ letters via its right action on $\Omega:=H\backslash G$, where $H$ is the monodromy group of the natural projection $\tilde X\ra X$. 
    The group $H$ is then a point stabilizer in this action. Given a subgroup $H_2$ of $\Mon(f)$, there is a natural projection $f_2:\tilde X/H_2\ra X_0$. This natural projection is {\it equivalent} to $f$ if there exists an isomorphism $\eta:X\ra\tilde X/H_2$ such that $f=f_2\circ \eta$. The coverings $f_2$ and $f$ are then equivalent if and only if $H_2$ is conjugate to $H$. The covering $f$ is  indecomposable if and only if $H$ is a maximal subgroup of $G$ or equivalently if the $G$-action on $\Omega$ is primitive. 
    
    Note that unless otherwise mentioned all group actions on sets are right actions. 
    %, except for actions of monodromy groups on curves which are as explicitely stated to be left actions. 
    Thus, permutation multiplication is left to right, e.g.~$(1,2)(1,3) = (1,2,3)$. Denote $x^y = y^{-1}xy$ for any $x,y\in G$. Let $C_n$ and $D_n$ denote the cyclic and Dihedral groups of order $n$, resp.
    \subsection{Ramification} 
    Let $P$ be a point of $X_0$.  For a point $Q\in f^{-1}(P)$,
    let $I=I(\tilde Q/P)$ denote the inertia group of $Q$ over $P$. 
    Since $\K$ is algebraically closed, $I$ coincides with the decomposition group 
    $\{ \sigma\in G\,|\, \tilde Q^{\oline\sigma} = \tilde Q\}. $
    We call $x\in G$ a {\it branch cycle} of $f$ over a point $P$ of $X_0$ if $x$ generates $I(Q/P)$ for some $Q\in f^{-1}(P)$. We write $\Orb_\Omega(I)$ (resp.~$\Orb_\Omega(x)$) for the set of orbits of $I$ (resp.~$x$) on $\Omega=H\backslash G$. 
    
    Write $e_f(Q):=\abs I$ for the {\it ramification index} of $Q$ under $f$. 
    For a point $P$ of $X_0$ write $E_f(P)$ for the tuple $[e_{f}(Q_1),\ldots, e_{f}(Q_k)]$ where $Q_1, \ldots, Q_k$ are the preimages of $P$ under $f$,
    ordered so that $e_f(Q_1)\geq \ldots \geq e_f(Q_k)$. We write $E_f(P)=[u_1^{k_1},\ldots, u_r^{k_r}]$ if the entry $u_i$ appears $k_i$ times, $i=1,\ldots,r$. 
    We say $Q$ is a {\it ramification point} of $f$ if $e_f(Q)\neq 1$, and $P$ is a {\it branch point} of $f$ if $f^{-1}(P)$ contains a ramification point. 
    We call the multiset $\{E_f(P)\,|\,P\text{ branch point of }f\}$ the {\it ramification type} of $f$. 
    
    The following basic lemma describes ramification using monodromy and inertia groups. In particular in the context of the end of Section \ref{sec:intro}, it will be used to derive the ramification and branch cycles of $h$ from those of $f$. %when factoring a Galois covering:
    \begin{lem}\label{lem:f-to-h}
    Let $\tilde f:\tilde X\ra X_0$ be a Galois covering with monodromy group $G$,  
    and $I=I(\tilde Q/P)$ the inertia group of $\tilde Q\in \tilde f^{-1}(P)$, for $P\in X_0$. Let $H\leq G$ be a subgroup, $X:=\tilde X/H$, and let  $f:X\ra X_0$ and $\hat f:\tilde X\ra X$ be the natural projections. 
    Then there is a bijection between 
    %the orbits $\Orb_\Omega(I)$ of $I$ on $\Omega=H\backslash G$ 
    the double cosets $I\backslash G/H$ and points in $f^{-1}(P)$, given by sending $I\sigma H$ to $Q_\sigma:=\hat f(\tilde Q^\sigma)$. Moreover,  the ramification index $e_\sigma:=e_f(Q_\sigma)$ equals $\abs{I\sigma H}/\abs H$, and $\sigma I^{e_\sigma}\sigma^{-1}$ is an inertia group of $\hat f$ over $Q_\sigma$. 
    \end{lem}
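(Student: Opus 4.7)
The plan is to derive all three assertions from the standard dictionary between points and double cosets in a Galois tower, combined with multiplicativity of ramification indices. There is no substantial obstacle expected; each piece is routine bookkeeping once the right setup is in place.

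First, I would establish the bijection. Because $\tilde f$ is Galois, $G$ acts transitively on $\tilde f^{-1}(P)$, so every preimage of $P$ in $\tilde X$ has the form $\tilde Q^\sigma$ for some $\sigma\in G$, and $\tilde Q^\sigma=\tilde Q^\tau$ if and only if $\sigma\tau^{-1}\in I$ (the stabilizer of $\tilde Q$, which coincides with the inertia group since $\K$ is algebraically closed). This identifies $\tilde f^{-1}(P)$ with $I\backslash G$ as a right $H$-set, so passing to $X=\tilde X/H$ identifies $f^{-1}(P)$ with the set of $H$-orbits $I\backslash G/H$ via $I\sigma H\mapsto Q_\sigma$.

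Next, for the ramification index I would invoke multiplicativity in the tower $\tilde X\to X\to X_0$, giving $|I|=e_{\tilde f}(\tilde Q^\sigma)=e_{\hat f}(\tilde Q^\sigma)\cdot e_f(Q_\sigma)$. Since $\hat f$ is Galois with group $H$, the factor $e_{\hat f}(\tilde Q^\sigma)$ equals the size of the stabilizer of $\tilde Q^\sigma$ in $H$, namely $|I^\sigma\cap H|$. Hence $e_f(Q_\sigma)=|I|/|I^\sigma\cap H|$. To match this against $|I\sigma H|/|H|$, I would use the standard count for the multiplication map $I\times H\to I\sigma H$, $(i,h)\mapsto i\sigma h$: its fibers all have size $|I\cap\sigma H\sigma^{-1}|=|I^\sigma\cap H|$, so $|I\sigma H|=|I|\cdot|H|/|I^\sigma\cap H|$, matching the claimed formula.

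Finally, the inertia group of $\hat f$ at $\tilde Q^\sigma$ is by definition the stabilizer $I^\sigma\cap H$, which by the previous step has index $e_\sigma$ in the cyclic group $I^\sigma$ (cyclic because the characteristic is zero). As cyclic groups have a unique subgroup of each index, this inertia group coincides with the subgroup $(I^\sigma)^{e_\sigma}$ generated by $e_\sigma$-th powers, which is precisely the stated conjugate of $I^{e_\sigma}$. The only subtlety in writing the proof up is translating between double-coset cardinalities and the intersection $I^\sigma\cap H$, which is a short conjugation bookkeeping step.
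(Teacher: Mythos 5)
Your proposal is correct and follows essentially the same route as the paper: the bijection via transitivity of $G$ on $\tilde f^{-1}(P)\cong I\backslash G$, the ramification index via multiplicativity in the tower together with the double-coset count $|I\sigma H|=|I||H|/|\sigma^{-1}I\sigma\cap H|$, and the inertia group via uniqueness of the index-$e_\sigma$ subgroup of the cyclic group $\sigma^{-1}I\sigma$. The paper's proof is terser but uses the same identifications, and your $(I^\sigma)^{e_\sigma}=\sigma^{-1}I^{e_\sigma}\sigma$ matches what the paper's own proof derives (both differ from the statement's $\sigma I^{e_\sigma}\sigma^{-1}$ by an inverse, which is a typo in the statement rather than an error on your part).
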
 
    \begin{proof} %Let $\tilde f:\tilde X\ra X_0$ be the Galois closure of $f$, $I=\langle x\rangle$ the inertia group of a point $\tilde Q\in \tilde f^{-1}(P)$.  
    As the action of $G$ on $\tilde f^{-1}(P)$ is transitive and $I$ is a point stabilizer in this action, this action is equivalent to the $G$-action on $I\backslash G$. 
    The image of two points $\tilde Q^\sigma,\tilde Q^\tau \in \tilde f^{-1}(P)$ in $X=\tilde X/H$ coincides if and only if $\tilde Q^{\sigma H} = \tilde Q^{\tau H}$ or equivalently if  $I\sigma H = I\tau H$, for $\sigma,\tau\in G$.
    Hence, the map $I\sigma H\ra Q_\sigma=\hat f(\tilde Q^\sigma)$ is well defined and gives an inclusion $\Orb_\Omega(I)\ra f^{-1}(P).$ It is surjective since the action of $G$ on $\tilde f^{-1}(P)$ is transitive. 
    Since the inertia group $I_{\tilde f}(\tilde Q^\sigma/P)$ is the subgroup $\sigma^{-1} I \sigma$, we have $I_{\hat f}(\tilde Q^\sigma/Q_\sigma)= \sigma^{-1} I \sigma\cap H$. Thus, $$e_f(Q_\sigma)=[\sigma^{-1} I \sigma: \sigma^{-1} I\sigma\cap H] = \frac{ \abs I}{\abs{\sigma^{-1} I\sigma\cap H}} = \frac{\abs{I\sigma H}}{\abs H},$$ and 
    $I_{\hat f}(\tilde Q^\sigma/Q_\sigma)= \sigma^{-1} I \sigma\cap H = \sigma^{-1} I^{e_\sigma}\sigma.$
    \end{proof}
    
    In particular, $E_f(P)$ is the tuple of cardinalities of orbits in $\Orb_\Omega(I)$ in decreasing order. 
    Note that if $f$ is Galois then $e_f(Q)$ is independent of the choice of $Q\in f^{-1}(P)$, in which case we denote $e_f(P):=e_f(Q)$. 
    \subsection{Riemann--Hurwitz} The Riemann--Hurwitz formula expresses the {\it genus} $g_X$  as 
    \begin{align*}
    %\begin{array}{rl} 
    %\begin{split}
    2(g_X-1) =&  2n(g_{X_0}-1) + \sum_{Q\in X(\K)}R_f(P),\text{ 
    where }\\ %$R_{f}(P)$ is the contribution of $P$ to Riemann--Hurwitz for $f$:
    R_{f}(P) :=& \displaystyle \sum_{Q\in f^{-1}(P)}(e_f(Q)-1) = \sum_{r\in E_f(P)}(r-1) = n - \abs{E_f(P)} = n - \abs{\Orb_\Omega(x)}.
    \end{align*}
    %\end{array}$$ is called the {\it Riemann--Hurwitz contribution}. 
    For a subset $S$ of points of $X_0$, denote by $R_f(S)$ the total contribution 
    $\sum_{P\in S}R_f(P)$ to the Riemann--Hurwitz formula.

    We note that given a degree $m$ covering $\pi: Z\ra X$  and points $Q\in f^{-1}(P),R\in~\pi^{-1}(Q)$, ramification indices are multiplicative $e_{f\circ\pi}(R) = e_\pi(R)e_f(Q)$, and satisfy the equality $\sum_{Q\in f^{-1}(P)} e_{\pi}(Q) = m$. In particular, this gives the {\it chain rule}:
    \begin{equation}\label{equ:chain} 
    \begin{split}
    R_{f\circ\pi}(P) = &  \sum_{Q\in f^{-1}(P)}\sum_{R\in \pi^{-1}(Q)}\bigl( e_\pi(R)(e_f(Q)-1) + e_\pi(R)-1\bigr) \\
    = &  \sum_{Q\in f^{-1}(P)}\bigl( m(e_f(Q)-1) + R_\pi(Q)\bigr) =  mR_f(P) + R_\pi(f^{-1}(P)).  
    \end{split}
    \end{equation}
    
    We restate the following version of Abhyankar's lemma %\cite[Lemma 7.1]{DF}
    \cite[Lemma 9.2]{NZ}. % and use it 
    %to analyze the ramification and genus in fiber products. 
    We say that a covering $h:Z\ra Y$ is {\it minimal that factors through coverings} $h_i:Y_i\ra Y$, $i\in I$ if the only compositions  $h=u\circ v$ for which $u$ factors through $h_i$ are those where $v$ is an isomorphism, that is $u=h_i\circ \hat h_i$ for some coverings $\hat h_i$, $i\in I$ if and only if $\deg v=1$. The greatest common divisor of two positive integers $a,b$ is denoted by $(a,b)$. 
    \begin{lem}\label{lem:abh}
    Let $h_i:Y_i\ra Y$, $i=1,2$ be coverings, and $p:Z\ra Y$ a minimal covering which factors through $h_1$ and $h_2$. Write $p=h_i\circ p_i$, $i=1,2$. 
    Let $P$ be a point of $Y$ and $Q_i\in h_i^{-1}(P)$ a preimage, for $i=1,2$. 
    $$\xymatrix{
    & Z \ar[dl]_{p_1} \ar[dr]^{p_2} \ar[dd]_{p} & \\
    Y_1 \ar[dr]_{h_1}& & Y_2 \ar[dl]^{h_2} \\
    & Y &
    }$$
    \begin{enumerate} 
    \item Then $e_p(Q) = \lcm(e_{h_1}(Q_1),e_{h_2}(Q_2))$ for every $Q\in p_1^{-1}(Q_1)\cap p_2^{-1}(Q_2)$;
    \item If furthermore $\deg p=\deg h_1\cdot\deg h_2$, then  there are $(e_{h_1}(Q_1), e_{h_2}(Q_2))$ points $Q$ in $Z$ with image $p_i(Q)=Q_i$ for both  $i = {1, 2}$. 
    \end{enumerate}
    %,Q_2$ be points in $Y_1,Y_2$, respectively, such that $h_1(Q_1) = h_2(Q_2)$. 
    %Suppose that the fiber product of $h_1$ and $h_2$ is irreducible and let $Z$ be its normalization. Let $p:Z\ra Y,p_i:Z\ra Y_i$, $i=1,2$ be the natural projections. 
    %assume $p:Z\ra Y$ is a minimal covering which factors as: 
    %both equal the same point $P \in Y(K)$.
    %Then $\sum_{j=1}^d m_j = (e_{q_1}(Q_1), e_{q_2}(Q_2))$.
    In particular, 
    \begin{align*}
        R_{p_2}(h_2^{-1}(P)) & =  \sum_{r_1\in E_{h_1}(P),r_2\in E_{h_2}(P)}(r_1,r_2)\Bigl( \frac{\lcm(r_1,r_2)}{r_2}-1 \Bigr) \\
    &  = \sum_{r_1\in E_{h_1}(P),r_2\in E_{h_2}(P)}\bigl( r_1-(r_1,r_2) \bigr). 
    \end{align*}
    \end{lem}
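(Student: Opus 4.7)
The plan is to pass to the Galois closure of $p$ and recast everything via cyclic inertia subgroups. Let $\tilde p\co\tilde Z\to Y$ be the Galois closure of $p$, with group $G$, and let $H_i\leq G$ be subgroups with $Y_i=\tilde Z/H_i$. The minimality of $p$ translates (via Galois correspondence) to $Z=\tilde Z/(H_1\cap H_2)$: any cover factoring through both $h_1$ and $h_2$ corresponds to a subgroup contained (up to conjugacy) in $H_1\cap H_2$, so the minimal such cover is the one whose subgroup equals $H_1\cap H_2$ itself.

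Fix $\tilde Q\in\tilde p^{-1}(P)$ with cyclic inertia $I\leq G$ of order $e$ (cyclicity uses $\charak\K=0$), and let $\alpha_i\in G$ be such that $Q_i$ corresponds to the double coset $I\alpha_iH_i$ under the bijection of \Lref{lem:f-to-h}. Preimages $Q\in p^{-1}(P)$ correspond to double cosets $I\alpha(H_1\cap H_2)$, and $p_i(Q)=Q_i$ for both $i$ iff $\alpha\in I\alpha_1H_1\cap I\alpha_2H_2$. For such $\alpha$, \Lref{lem:f-to-h} gives $e_p(Q)=e/|\alpha^{-1}I\alpha\cap H_1\cap H_2|$; writing $\alpha=\iota\alpha_ih$ with $\iota\in I$, $h\in H_i$ shows that $\alpha^{-1}I\alpha\cap H_i$ is the $H_i$-conjugate of $\alpha_i^{-1}I\alpha_i\cap H_i$ by $h$, so it has the same order $e/e_{h_i}(Q_i)$. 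Since $\alpha^{-1}I\alpha$ is cyclic, each of its subgroups of given order dividing $e$ is unique, and the intersection of two such subgroups has order the gcd of their orders, yielding
\[
|\alpha^{-1}I\alpha\cap H_1\cap H_2|=\gcd\bigl(e/e_{h_1}(Q_1),\,e/e_{h_2}(Q_2)\bigr)=e/\lcm\bigl(e_{h_1}(Q_1),e_{h_2}(Q_2)\bigr),
\]
which proves~(1).

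For~(2), the hypothesis $\deg p=\deg h_1\cdot\deg h_2$ is equivalent to $G=H_1H_2$. Under this hypothesis I would count the number~$N$ of double cosets $I\alpha(H_1\cap H_2)$ in $I\alpha_1H_1\cap I\alpha_2H_2$ by comparing the fiber sum $\sum_{Q\in p^{-1}(P),\,p_1(Q)=Q_1}e_p(Q)=e_{h_1}(Q_1)\cdot\deg p_1$ (from $p=h_1\circ p_1$) with its decomposition over values $p_2(Q)=Q_2\in h_2^{-1}(P)$, and invoking~(1); equivalently, $G=H_1H_2$ means the fiber product $Y_1\times_YY_2$ is irreducible and $Z$ is its normalization, so the classical Abhyankar lemma applies directly to give $N=(e_{h_1}(Q_1),e_{h_2}(Q_2))$. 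The displayed formula for $R_{p_2}(h_2^{-1}(P))$ then follows by summing the local contribution $e_{p_2}(Q)-1=\lcm(r_1,r_2)/r_2-1$ (with $r_i=e_{h_i}(Q_i)$) over pairs $(Q_1,Q_2)$, weighted by the multiplicity $(r_1,r_2)$; the identity $(r_1,r_2)\cdot\lcm(r_1,r_2)=r_1r_2$ rewrites this contribution as $r_1-(r_1,r_2)$, giving the second form. The main obstacle is the counting argument for $N$ in~(2), which uses the full strength of $G=H_1H_2$; the rest reduces to elementary facts about cyclic subgroups.
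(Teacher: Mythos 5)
The paper does not prove Lemma~\ref{lem:abh} internally---it cites the companion \cite[Lemma~9.2]{NZ}---so there is no in-paper argument to compare against; I therefore assess your proposal on its own merits.

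Your treatment of part~(1) is correct and clean. Passing to the Galois closure $\tilde Z\to Y$, identifying $Z=\tilde Z/(H_1\cap H_2)$ from minimality, applying Lemma~\ref{lem:f-to-h} to get $e_p(Q)=e/\lvert\alpha^{-1}I\alpha\cap H_1\cap H_2\rvert$, and then using that $I$ is cyclic so that $\alpha^{-1}I\alpha\cap H_1\cap H_2$ is the unique subgroup of order $\gcd(e/e_{h_1}(Q_1),e/e_{h_2}(Q_2))=e/\lcm(e_{h_1}(Q_1),e_{h_2}(Q_2))$ is exactly right. (One small imprecision: say explicitly that a covering factoring through both $h_1,h_2$ with given maps corresponds to a subgroup literally contained in $H_1\cap H_2$, not merely ``up to conjugacy''; the latter phrasing would not immediately give a well-defined intersection.)

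For part~(2) there is a genuine gap in your first proposed argument. The fiber-sum identity $e_{h_1}(Q_1)\deg h_2=\sum_{Q_2}N(Q_1,Q_2)\lcm\bigl(e_{h_1}(Q_1),e_{h_2}(Q_2)\bigr)$ (and its column analogue) produces only $\abs{h_1^{-1}(P)}+\abs{h_2^{-1}(P)}-1$ linear constraints on the $\abs{h_1^{-1}(P)}\cdot\abs{h_2^{-1}(P)}$ unknowns $N(Q_1,Q_2)$; even with $N(Q_1,Q_2)\ge 1$ this does not pin down $N$ when both fibers have more than one element. (Already with both $E_{h_i}(P)=[2,2]$ one checks $N\equiv 2$ and $N=\bigl(\begin{smallmatrix}1&3\\3&1\end{smallmatrix}\bigr)$ satisfy all the fiber sums.) Your second route---noting that $G=H_1H_2$ makes $Y_1\#_Y Y_2$ irreducible and $Z$ its normalization, then invoking the classical local form of Abhyankar's lemma---is valid but essentially defers to the result being stated. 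The group-theoretic argument of part~(1) extends cleanly: $G=H_1H_2$ forces the natural $I$-equivariant map $G/(H_1\cap H_2)\to G/H_1\times G/H_2$, $g(H_1\cap H_2)\mapsto (gH_1,gH_2)$, to be a bijection (injectivity is clear, surjectivity since $g_1H_1\cap g_2H_2\neq\emptyset$ whenever $G=H_1H_2$, and both sides have size $\lvert G\rvert^2/(\lvert H_1\rvert\lvert H_2\rvert)$). Then $N$ is the number of $I$-orbits on $O_1\times O_2$ where $O_i$ is the $I$-orbit corresponding to $Q_i$; writing $O_i\cong I/I_i$ with $\lvert I_i\rvert=e/e_{h_i}(Q_i)$, this is $\lvert I_1\backslash I/I_2\rvert=\lvert I\rvert/\lvert I_1I_2\rvert=\gcd(e_{h_1}(Q_1),e_{h_2}(Q_2))$ since $I$ is abelian and cyclic. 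The derivation of the displayed formula for $R_{p_2}(h_2^{-1}(P))$ from parts~(1) and~(2), including the identity $(r_1,r_2)\lcm(r_1,r_2)=r_1r_2$, is correct.
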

    We note that if the {\it fiber product} $Y_1\#_Y Y_2$ of $h_1$ and $h_2$ is irreducible, then under the assumption of (2), $Z$ is isomorphic to the normalization of this fiber product. 
    \begin{rem}\label{rem:abh}
    %We note that the equality 
    %\begin{equation}\label{equ:abh-lcm} 
    %$e_p(Q)=\lcm(e_{h_1}(Q_1),e_{h_2}(Q_2))$
    %\end{equation}
    %for $Q\in p_1^{-1}(Q_1)$ 
    %holds more generally, e.g.~by \cite[Theorem 3.9.1]{Stich} or \cite[Lemma 9.2]{NZ}. Namely, let $h_i:Y_i\ra Y$, $i=1,\ldots,t$ be coverings, $Z$ the normalization of a component of their fiber product, and $p:Z\ra Y$, $p_i:Z\ra Y_i$ the natural projections. Then $e_p(Q)=\lcm_{1\leq i\leq t}(e_{h_i}(Q_i))$ for all points $Q_i\in Y_i$, $i=1,\ldots,t$, and $Q\in \bigcap_{i=1}^t p_i^{-1}(Q_i)$. %\cap p_2^{-1}(Q_2)$. 
    Letting $\tilde h_1:\tilde Y_1\ra Y$ be the Galois closure of a covering $h_1:Y_1\ra Y$, we note that Lemma \ref{lem:abh} implies that $e_{\tilde h_1}(P)=\lcm(E_{h_1}(P))$, for every point $P$ of $Y$.  
    Indeed, letting $G_1:=\Mon(h_1)$ and $K_1$ be a point stabilizer,  
    $\tilde h_1$ is a minimal covering which factors through the natural projections $h_1^\sigma:\tilde Y_1/K^\sigma \ra Y$, $\sigma\in G_1$, and
    hence  the assertion follows by applying Lemma \ref{lem:abh} iteratively. \end{rem}
    %We note that if the fiber product $h_1$ and $h_2$ is irreducible, we can take $Z$ to be its normalization. 
    \subsection{Riemann's existence theorem} \label{sec:RET}
    Fix a transitive subgroup $G\leq S_n$,  conjugacy classes $C_1,\ldots, C_r$ of cyclic subgroups of $G$, 
    and points $P_1,\ldots, P_r$ in $\mP^1$. We call a tuple $x_1,\ldots, x_r\in G$ a {\it product-$1$ tuple for $G$} if $x_1\cdots x_r=1$ and $\langle x_1,\ldots, x_r\rangle =G$. 
    %Given a product $1$ tuple $x_1,\ldots,x_r$ for $G$ with $x_i\in C_i$, 
    By Riemann's existence theorem, there exists a Galois covering $\tilde f:\tilde X\ra\mP^1$ with monodromy group $G$ such that $C_i$ is the conjugacy class of an inertia group of $\tilde f$ over $P_i$ for $i=1,\ldots,r$ if and only if there exists a product-$1$ tuple $x_i\in C_i$, $i=1,\ldots,r$ for $G$, see \cite[Theorems 2.13 and 5.14]{Vol}. %Moreover Conversely, every covering $\tilde f:\tilde X\ra \mP^1$
    
     Given such $\tilde f$, let $f:\tilde X/H\ra \mP^1$ be the natural projection where  $H:=G\cap S_{n-1}$ is a point stabilizer. 
    Then $E_f(P_i)$ is the multiset of lengths of orbits of $x_i\in C_i$,  $i=1,\ldots,r$. 
    We call the multiset of lengths of orbits of $x_i$, the {\it cycle structure} of $x_i$, and the multiset of all cycle structures of $x_i$, $i=1,\ldots,r$ the {\it ramification type} of the tuple $x_1,\ldots,x_r$. Thus, $E_f(P_i)$ coincides with the cycle structure of $x_i$, $i=1,\ldots,r$, and the ramification type of $f$ coincides with the ramification type of $x_1,\ldots,x_r$. 
    %We note that by swapping $(x_i,x_{i+1})\ra (x_{i+1},x_i^{x_{i+1}})$ we get another product $1$ tuple 
    
    A degree $n$ {\it ramification data} is a multiset $\mathcal R$ of partitions $A_1,\ldots, A_r$ of $n$ consisting of positive integers. 
    By Riemann's existence theorem,
    the existence of a degree $n$ covering $f:X\ra \mP^1$ whose ramification type is $\mathcal R$ is equivalent to the existence of a product-$1$ tuple $x_1,\ldots,x_r$ for $G$ such that $x_i$ has cycle structure $A_i$, for $i=1,\ldots, r$. 
    The following basic lemma rules out ramification data from occurring as the ramification type of an indecomposable covering: 
    \begin{lem}\label{lem:hurwitz1}\cite[Lemma 9.1.(a,c)]{NZ}
    Let $p$ be a rational prime, $f:\mP^1\ra \mP^1$  an indecomposable covering with monodromy group $G$, %whose monodromy %different from $C_\ell,D_{2\ell}$ %,A_4,S_4$
    and $P_1,P_2,P_3$ be points of $\mP^1$. 
    \begin{enumerate}
    \item[(a)] 
    If all entries of $E_f(P_1)$ and $E_f(P_2)$ are divisible by $p$, then  $f$ is Galois of degree $p$, and $G\cong C_p$. %decomposes through a degree $p$ function. (Namely, through $X^p$). 
    %\item[(b)] 
    %If all entries in $E_h(P_1)$ are divisible by $p$, and there is a total of exactly two odd entries in $E_h(P_2)$ and $E_h(P_3)$, then $G$ is dihedral;
    \item[(c)] If all entries of $E_h(P_1)$ are even and there is a total of exactly two coprime to~$3$ entries in $E_h(P_2)$ and $E_h(P_3)$, then $G$ is a quotient of $A_4$. 
    \end{enumerate}
    \end{lem}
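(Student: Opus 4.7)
The strategy is to apply the Riemann--Hurwitz formula $\sum_P R_f(P)=2n-2$ to the genus-zero covering $f:\mP^1\to\mP^1$ of degree $n$, extract lower bounds on the individual contributions $R_f(P_i)$ from the divisibility assumptions on $E_f(P_i)$, conclude that $n$ must be small, and then use indecomposability of $f$ together with Riemann's existence theorem to analyze the constrained product-$1$ tuples.

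For part (a), since every entry of $E_f(P_i)$ is divisible by $p$ for $i=1,2$, the number of orbits of the branch cycle $x_i$ is at most $n/p$, so $R_f(P_i)\geq n(p-1)/p$. Summing and comparing with $2n-2$ forces $n\geq p$. The boundary case $n=p$ yields the conclusion immediately: $P_1$ and $P_2$ must be the only branch points, each $x_i$ is a single $p$-cycle, so $x_2=x_1^{-1}$ and $G=\langle x_1\rangle\cong C_p$. The main obstacle is excluding $n>p$, where $n\geq 2p$, each $x_i$ is fixed-point-free, and additional branch cycles must exist. I would handle this by considering the orbit partition of $\langle x_1\rangle$ into $n/p$ blocks of size divisible by $p$, and using Jordan-type constraints on primitive groups containing fixed-point-free elements of the prescribed cycle structure to conclude that $G$ cannot act primitively, contradicting indecomposability.

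For part (c), let $a_i$ and $s_i$ denote the number and total length of cycles of $x_i$ of length coprime to $3$, so that $a_2+a_3=2$ and $s:=s_2+s_3\geq 2$. The even-cycle hypothesis gives $R_f(P_1)\geq n/2$, while splitting the cycles of $x_i$ into those coprime to $3$ (contributing exactly $s_i-a_i$) and those divisible by $3$ and of length $\geq 3$ (contributing at least $2(n-s_i)/3$) gives $R_f(P_i)\geq 2n/3 + s_i/3 - a_i$ for $i=2,3$. Summing and applying Riemann--Hurwitz yields $\tfrac{11n}{6}+\tfrac{s}{3}-2\leq 2n-2$, hence $s\leq n/2$ and in particular $n\geq 2s\geq 4$. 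The boundary case $n=4$ forces $x_1$ to be a product of two disjoint transpositions, both $x_2$ and $x_3$ to have cycle type $[3,1]$, and $\{P_1,P_2,P_3\}$ to be the full branch locus; a direct check then shows $G=A_4$, which is a quotient of $A_4$ as required.

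The remaining task is to rule out $n>4$. The slack in the above bounds shrinks rapidly as $n$ grows, forcing the cycle structures to be very close to ``$x_1$ all transpositions, $x_2$ and $x_3$ all $3$-cycles except for two short exceptions.'' My plan is to enumerate these finitely many near-extremal configurations and, in each case, either rule out the existence of a product-$1$ tuple of the prescribed cycle types in $S_n$ via a short Frobenius character computation, or exhibit a $G$-invariant block system determined by the alignment of the $2$-cycles of $x_1$ with the $3$-cycle structure of $x_2$ and $x_3$, contradicting the primitivity of $G$ guaranteed by indecomposability of $f$.
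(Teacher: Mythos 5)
Your strategy has a directional flaw: applying Riemann--Hurwitz to $f$ itself yields a \emph{lower} bound on $n=\deg f$ ($n\geq p$ in (a), $n\geq 4$ in (c)), while the conclusion requires an \emph{upper} bound. The steps you sketch to rule out large $n$ do not close this gap. In (a), the $\langle x_1\rangle$-orbits are not a $G$-block system, and Jordan-type theorems conclude $A_n\leq G$ rather than contradicting primitivity, so there is no mechanism here to force $n=p$. In (c), the slack $\tfrac{n}{6}-\tfrac{s}{3}$ in your Riemann--Hurwitz estimate is at least $(n-4)/6$, which \emph{grows} with $n$ rather than shrinking; additional branch points can absorb it freely, so the cycle structures are not driven into a finite list of near-extremal configurations, and neither Frobenius character sums nor ad hoc block systems will settle the claim uniformly in~$n$.

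The correct argument applies Riemann--Hurwitz to the \emph{fiber product} with an auxiliary Galois cover, using Abhyankar's lemma (Lemma~\ref{lem:abh}). For (a), let $g\colon Y_0\ra\mP^1$ be the degree-$p$ cyclic cover totally ramified over $P_1,P_2$ and unramified elsewhere. Since $p$ divides every ramification index of $f$ over $P_1,P_2$, Abhyankar's lemma shows the normalized fiber product $Z$ of $f$ and $g$ is unramified over $X=\mP^1$; an irreducible unramified degree-$p$ cover of $\mP^1$ would have $g_Z=1-p<0$, so $Z$ must be reducible. Hence $\mC(X)$ and $\mC(Y_0)=\mC(t^{1/p})$ are not linearly disjoint over $\mC(t)$, which for a prime-degree Galois extension forces $\mC(t^{1/p})\subseteq\mC(X)$; indecomposability then gives $\mC(X)=\mC(t^{1/p})$, i.e.\ $n=p$ and $G\cong C_p$. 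For (c), take $g\colon Y_0\ra\mP^1$ to be the degree-$12$ Galois $A_4$-cover branched over $P_1,P_2,P_3$ with inertia orders $2,3,3$ (this exists and has $g_{Y_0}=0$). Abhyankar's lemma shows $Z\ra X$ is unramified except over the two points $Q\in X$ with $e_f(Q)$ coprime to $3$, above each of which lie four points of index $3$; thus $R_{Z/X}=16$ and $2(g_Z-1)=-24+16=-8<0$, again forcing $Z$ reducible. Therefore $\mC(X)\cap\mC(Y_0)\supsetneq\mC(t)$, and indecomposability gives $\mC(X)\subseteq\mC(Y_0)$, so the Galois closure of $f$ lies inside $\mC(Y_0)$ and $G$ is a quotient of $\Gal(\mC(Y_0)/\mC(t))\cong A_4$.
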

    \subsection{Wreath products and reduced forms}\label{sec:wreath-prod}
    We use the following standard wreath product notation, e.g.~see \cite[Section 2.7]{DM}. Let $\Delta$ and $I$ be sets of cardinalities $\ell\geq 5$ and $t\geq 2$, respectively. Denote by $S_\Delta \wr S_I:=S_\Delta^I\rtimes S_I$ 
     the {\it wreath product} equipped with an action on the set $\Omega:=\Delta^I$ of all tuples $\delta = (\delta(i))_{i\in I}$ given as follows. The semidirect product action is given by %$(\sigma^{-1}a\sigma)_i = a^{\sigma^{-1}}_i = a_{\sigma(i)}$
     $(\sigma^{-1}a\sigma)(i) = a^{\sigma}(i) = a(i^{\sigma^{-1}})$
     for all $a\in S_\Delta^I$, $\sigma\in S_I$ and $i\in I$. The action on $\Omega$ is given by 
    %$(a\sigma)\cdot \delta(i) = \delta(i^{\sigma^{-1}})^{a(i^{\sigma^{-1}})},$ 
    $\delta^{a\sigma}(i) = \delta(i^{\sigma^{-1}})^{a(i^{\sigma^{-1}})},$
    so that $a$ acts pointwise $\delta^a(i) = \delta(i)^{a(i)}$, and $\sigma$ acts by permuting the values of $\delta$ by $\delta^\sigma(i) = \delta(i^{\sigma^{-1}})$,  
    for all $\delta\in \Delta^I, \sigma\in S_I, a\in S_\Delta^I$, and $i\in I$, see \cite[\S 2.5]{DM}\footnote{For $I=\{1,\ldots,t\}$, the the reader may easily switch functions $a\in S_\Delta^I$ with vectors $a=(a_1,\ldots,a_t)$. The $i$-th entry of the vector $a^\sigma$ is then $a_{i^{\sigma^{-1}}}$. We shall adopt this notation when $t=2$, see \S\ref{sec:setup}.}.  Note that $S_\Delta\wr S_I$ is also equipped with an action on $I$ via the projection to $S_I$. 
    
    We shall write $S_\ell\wr S_t$ for the group $S_\Delta\wr S_I$ when $\Delta =\{1,\ldots, \ell\}$ and $I = \{1,\ldots, t\}$. %act on $t$ copies of $\Delta^t$ via the natural action under which $S_t$ permutes the $t$ copies and $S_\ell^t$ acts coordinatewise. Let $S_\ell^{(i)}$ denote the $i$-th copy of $S_\ell$ in $S_\ell^t$.
    %In the case $t=2$, we denote an element 
     %$\alpha\sigma\in S_\ell\wr S_2$, for $\alpha\in \Delta^{\{1,2\}}$ and $\sigma\in S_2$ by  $(\alpha(1), \alpha(2))\sigma$. 
    
    %\begin{rem}\label{rem:primitive-gen} The proof of Lemma \ref{lem:primitive} shows more generally, that a primitive subgroup of $S_\Delta \wr S_I$ acts transitively on $I$. 
    %\end{rem}

    To simplify orbit counts, we replace elements in $S_\Delta^I$ by conjugates of the following form. 
    \begin{defn}
    Let $y=a\sigma$, $a\in S_\Delta^I, \sigma\in S_I$, and let $O$ be the set of orbits of $\sigma$ on $I$. We  say $y$  is a {\it reduced form} of $x\in G$ with {\it representatives} $\iota_\theta\in \theta$, $\theta\in O$, if $a(i) = 1$ for every $i\in I\setminus \{\iota_\theta\,|\,\theta\in O\}$, and $y=x^\tau$ for some $\tau\in S_\Delta^I$. 
    \end{defn}
    Elements of reduced forms are sometimes also called {\it (standard) odometers}, see e.g.\  \cite{Pink1}. Every element is conjugate to an element in reduced form:
    \begin{lem}\label{lem:conjugate-1}
    Let $x=a\sigma\in S_\Delta\wr S_\ell$, where $a\in S_\Delta^I$ and $\sigma\in S_I$. 
    Let $O$ be the set of orbits of $\sigma$ and $\iota_\theta\in \theta$, $\theta\in O$, be representatives. 
    Then there is an element $z\in S_\Delta^I$  such that $x^z = b\sigma$  with $b(i) = 1$ for  $i\in \theta \setminus \{\iota_\theta\}$, %,:\,\theta\in O\}$, 
    and $b(\iota_\theta) = \prod_{k=0}^{\abs\theta-1}a(\iota_\theta^{\sigma^{k}})$ for  $\theta\in O$. 
    %1) If x is in $E_2$, it is conjugate to (a,1,1)r for some a in $S_l$
    %2) If x is in $E_3$, it is conjugate to (a,1,b)s for some a,b in $S_l$.
    \end{lem}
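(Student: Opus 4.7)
The plan is to construct $z$ on each $\sigma$-orbit of $I$ separately, exploiting the fact that conjugation by an element of $S_\Delta^I$ does not disturb the $S_I$-component of $x = a\sigma$. First I would use the wreath-product multiplication rule from \S\ref{sec:wreath-prod} to compute that $x^z = b\sigma$ with $b(i) = z(i)^{-1}\, a(i)\, z(i^\sigma)$ for all $i \in I$. Since the formula for $b|_\theta$ depends only on the restriction $z|_\theta$, and the orbits $\theta \in O$ are pairwise disjoint, the construction reduces to a one-orbit problem that can be carried out independently on each orbit.

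Fix $\theta \in O$ of size $s = \abs\theta$ and enumerate it as $i_k := \iota_\theta^{\sigma^k}$ for $0 \le k < s$, so that $i_k^\sigma = i_{k+1}$ with indices mod $s$. The requirement $b(i_k) = 1$ for $k = 1, \ldots, s-1$ becomes the recursion $z(i_{k+1}) = a(i_k)^{-1} z(i_k)$, which leaves one value of $z$ on $\theta$ free. I would set $z(i_0) := 1$ and define
\[
z(i_k) := a(i_k)\, a(i_{k+1}) \cdots a(i_{s-1}) \quad \text{for } 1 \le k < s.
\]
The recursion at $k = 1, \ldots, s-2$ is then a telescoping identity, and the wrap-around instance $k = s-1$ closes up consistently because $z(i_{s-1}) = a(i_{s-1})$ gives $a(i_{s-1})^{-1} z(i_{s-1}) = 1 = z(i_0)$. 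Substituting into the remaining formula $b(i_0) = z(i_0)^{-1} a(i_0) z(i_1)$ yields
\[
b(\iota_\theta) \;=\; a(i_0)\, a(i_1) \cdots a(i_{s-1}) \;=\; \prod_{k=0}^{s-1} a(\iota_\theta^{\sigma^k}),
\]
as required; the degenerate case $s = 1$ (fixed points of $\sigma$) is handled the same way, yielding $b(\iota_\theta) = a(\iota_\theta)$.

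Assembling the values $z|_\theta$ over all orbits $\theta \in O$ produces a single $z \in S_\Delta^I$ with the desired property. The construction is elementary and the orbits do not interact, so there is no real obstacle here; the main care needed is keeping track of the paper's left-to-right multiplication convention when writing out the telescoping product, and verifying that the wrap-around equation at $k = s-1$ is compatible with the free choice $z(i_0) = 1$, both of which are routine.
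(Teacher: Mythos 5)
Your proof is correct and takes essentially the same approach as the paper: compute $b(i)=z(i)^{-1}a(i)z(i^\sigma)$, build $z$ orbit by orbit, and choose $z$ on each orbit so the product telescopes to $1$ everywhere except at the chosen representative. The only cosmetic difference is that you give a closed formula for $z$ on the forward enumeration $\iota_\theta^{\sigma^k}$ of the orbit, whereas the paper defines $z$ iteratively along the backward enumeration $\iota_\theta^{\sigma^{-k}}$; the two produce the same element.
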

    \begin{proof} 
    Define $z\in S_\Delta^I$ on each orbit $\theta\in O$ iteratively. Fix $\theta\in O$ and for simplicity enumerate the orbit $\iota_\theta,\iota_\theta^{\sigma^{-1}},\ldots,\iota_\theta^{\sigma^{-(\abs\theta-1)}}$ by $1,\ldots,\abs{\theta}$, so that $j^\sigma=j-1$ mod $\abs{\theta}$ for $1\leq j\leq\abs{\theta}$. Now set $z(1):=1$ and iteratively set $z(j):=a(j)z(j-1)$ for $j=2,\ldots,\abs{\theta}$. %Hence, $z(i) = a(i)z(i^\sigma)$ for all $i\neq \iota_\theta$. 
    Then $x^z = b\sigma$ where $b:=z^{-1}az^{\sigma^{-1}}\in S_\Delta^I$, and
    \begin{align*}
    b(j) & = z^{-1}(j)a(j)z^{\sigma^{-1}}(j)=z^{-1}(j)a(j)z(j^\sigma) = z^{-1}(j)a(j)z(j-1)=1\text{ for  }j=2,\ldots,\abs{\theta} \\%$i\in \theta\setminus\{\iota_\theta\}$, and } \\
    b(1) & = a(1)z(1^{\sigma}) = a(1)a(1^{\sigma})z(1^{\sigma^{2}}) = \ldots = \prod_{k=0}^{\abs\theta-1} a(1^{\sigma^{k}}), \text{ as desired}. \qedhere% for all }\theta\in O. \qedhere 
    \end{align*}
    \end{proof}
    %\begin{rem}\label{rem:generalization}
    %Note that the proof also allows picking $z(1)=1$. 
    %\end{rem}
    
    \subsection{Groups of product type}\label{sec:product-groups}
    In this paper we consider primitive monodromy subgroups of the following type. We start with the definition of a class of primitive groups $G$ from the Aschbacher--O'Nan--Scott theorem \cite{AS} using its version from  \cite[Theorem 11.2]{Gur}, and embed such $G$ into $S_\Delta\wr S_I$ with respect to the action from  \S\ref{sec:wreath-prod}. Note that \cite[Theorem 11.2]{Gur} is a refinement of the version from \cite[pg.\ 304]{GT}: for, types (i),(ii),(iii) of \cite{Gur} are clearly contained in types (A),(B),(C) of \cite{GT}, and moreover types (iii).(i),(iii).(ii),(iii).(iii) of \cite{Gur} are contained in types (C3), (C2), (C1) of \cite{GT}, resp. As explained in \S\ref{sec:intro}, our concern is with type (C3) of \cite{GT}. As the types from \cite{Gur} are contained in those of \cite{GT}, it suffices to consider type (iii).(iii) of \cite{Gur} which is defined as follows: \\ 
    %in the version of \cite{AS} of the Aschbacher-O'Nan-Scott theorem and embed it in the following lemma into $S_\Delta\wr S_I$.
%    By  \cite[Theorem 11.2]{Gur}, the definition in \cite{AS} is equivalent to the following. 
    Let $N_G(L)$ and $C_G(L)$ denote the normalizer and centralizer, resp., of  $L\leq G$. 
    
    \begin{defn}\label{def:product-type}
    Say that a primitive group $G$  with point stabilizer $H$ is of {\it product type} if it has a unique minimal normal subgroup $Q$, this subgroup $Q$ decomposes as $L_1\times\cdots \times L_t$ for isomorphic nonabelian simple groups $L_1,\ldots,L_t$ for $t\geq 1$, and the group $H\cap Q$  decomposes as  $(H\cap L_1)\times \cdots \times (H\cap L_t)$ with $H\cap Q\neq 1$. 
    Moreover for $G$ of product type, the groups $H\cap L_i$, $i=1,\ldots, t$ are conjugate in $H$ and $N_H(L_i)C_G(L_i)$ is a maximal subgroup of $N_G(L_i)$, for $i=1,\ldots,t$, \cite[Theorem 11.2.(iii).(1)]{Gur}. 
    % Then $G$ is primitive of product type if and only if $H\cap Q$ is nontrivial and decomposes as $(H\cap L_1)\times \cdots \times (H\cap L_t)$. 
    \end{defn}
    
    Throughout the paper, we shall identify groups of product type with subgroups of wreath products as follows. Put $I=\{1,\ldots,t\}$. 
    For $i\in I$, let $S_\Delta^{(i)}\leq S_\Delta^I$ denote the subgroup consisting of tuples $a\in S_\Delta^I$ such that $a(j) =1$ for all $j\in I\setminus \{i\}$. 
    %\begin{defn}
    %Let $G$ be a primitive permutation group with $1$-point stabilizer $H\leq G$. Let $Q$ be a minimal normal subgroup of $G$, $L$ a minimal normal subgroup of $Q$, and $L_1=L,L_2,...,L_t$ the conjugates of $L$ in $G$.
    %A group $G$ is said to be of product type $t$ if
    %$H\cap Q=H_1\oplus \cdots \oplus H_t$, where $H_i=H\cap L_i\ne 1$ for $1\leq i\leq t$.
    %\end{defn}
    
    \begin{lem} \label{lem:product-type}
    A faithful primitive group $G$ of product type is isomorphic to a subgroup $\hat G\leq S_\Delta\wr S_I$ 
    %is of {\it product type } if it has a unique minimal normal subgroup $Q\lhd G$, this subgroup 
    whose minimal normal subgroup $\hat Q$ decomposes as a direct sum $\hat Q = \bigoplus_{i\in I}(\hat Q\cap S_\Delta^{(i)})$
    of isomorphic nonabelian simple groups acting transitively on $\Delta$. 
    Moreover, the image of $G$ in $S_I$ is transitive, and the action of $\hat G\cap (S_\Delta^I\rtimes S_{I\setminus\{i\}})$ on the $i$-th copy of $\Delta$ is primitive. 
    %Moreover 
    % and its intersection with a point stablizer $H\leq G$ decomposes as $$H\cap Q = \bigoplus_{i\in I}(H\cap Q\cap S_\Delta^{(i)}).$$
    %A primitive subgroup $G\leq S_\ell\wr S_t$ is said to be of product type if  the minimal normal subgroup $Q\lhd G$ decomposes as a direct sum $Q=\oplus_{i=1}^t (Q\cap S_\ell^{(i)})$ of isomorphic nonabelian simple groups acting transitively on $\Delta$, and its intersection with the $1$-point stabilizer $H\leq G$ decomposes as $H\cap Q = \oplus_{i=1}^t (H\cap Q\cap S_\ell^{(i)})$.
    \end{lem}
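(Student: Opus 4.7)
The plan is to construct the embedding $\hat G\le S_\Delta\wr S_I$ explicitly from the data of Definition~\ref{def:product-type}, and then verify each asserted property in turn. I would put $I:=\{1,\ldots,t\}$, $R_i:=H\cap L_i$, and $\Delta:=R_1\backslash L_1$. Since the $R_i$ are mutually conjugate in $H$, fix elements $h_i\in H$ (with $h_1=1$) satisfying $h_i^{-1}L_ih_i=L_1$ and $h_i^{-1}R_ih_i=R_1$; these identify every coset space $R_i\backslash L_i$ with $\Delta$. As $G$ acts faithfully and primitively on $G/H$ with $Q\trianglelefteq G$ nontrivial, $Q$ cannot be contained in $H$, so $HQ=G$; consequently
\[
G/H\;\cong\;Q/(H\cap Q)\;=\;\prod_{i\in I}R_i\backslash L_i\;\cong\;\Delta^I
\]
as $Q$-sets, with $Q$ acting pointwise.

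To upgrade this to a $G$-equivariant identification, I would use the fact that $G$ normalizes $Q$ and permutes the set $\{L_1,\ldots,L_t\}$ of its minimal normal subgroups by conjugation, yielding a homomorphism $G\to S_I$. Combining this $S_I$-action with the pointwise $Q$-action on $\Delta^I$, transported through the $h_i$, assembles into a homomorphism $\Phi\colon G\to S_\Delta\wr S_I$ whose induced action on $\Delta^I$ coincides with the $G$-action on $G/H$; faithfulness of the latter forces $\Phi$ to be injective. Set $\hat G:=\Phi(G)$.

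For the decomposition $\hat Q=\bigoplus_{i\in I}(\hat Q\cap S_\Delta^{(i)})$: any $L_j$ with $j\ne i$ centralizes $L_i$ and hence acts trivially on $R_i\backslash L_i$, so each $\Phi(L_i)$ lies in $S_\Delta^{(i)}$. Because $L_i$ is nonabelian simple and $R_i$ is proper in $L_i$ (else $Q\le H$, contradicting $HQ=G\ne H$), this action is faithful and transitive, giving the claimed direct sum of isomorphic nonabelian simple groups each transitive on $\Delta$. Transitivity of the image of $G$ in $S_I$ is standard: if $J\subsetneq I$ were $G$-stable, then $\prod_{j\in J}L_j$ would be a proper nontrivial $G$-normal subgroup of $Q$, contradicting the minimality of $Q$. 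Finally, $\hat G\cap(S_\Delta^I\rtimes S_{I\setminus\{i\}})$ is the image under $\Phi$ of $N_G(L_i)$, and its action on the $i$-th copy of $\Delta$ is the natural action of $N_G(L_i)$ on $R_i\backslash L_i$ with point stabilizer $N_H(L_i)C_G(L_i)$; blocks on $\Delta$ then correspond bijectively to subgroups strictly between $N_H(L_i)C_G(L_i)$ and $N_G(L_i)$, and the maximality clause of Definition~\ref{def:product-type} rules these out, so the action is primitive.

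The main obstacle is the bookkeeping in the second paragraph: the conjugators $h_i$ must be chosen so that the pointwise $Q$-action and the $S_I$-permutation action intertwine to give a well-defined wreath-product homomorphism $\Phi$ whose action on $\Delta^I$ matches the original $G$-action on $G/H$. This is the heart of the product-action case of the Aschbacher--O'Nan--Scott theorem and relies on careful selection of coset representatives in $H\backslash G$, but no further ideas beyond those already encoded in Definition~\ref{def:product-type}.
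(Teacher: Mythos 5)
Your proposal is correct and follows essentially the same route as the paper: both proofs write $G = HQ$, identify $G/H$ with $(H\cap Q)\backslash Q = \prod_i (H\cap L_i)\backslash L_i$, transport all factors to a common $\Delta$ via conjugating elements of $H$, and then verify the decomposition of $\hat Q$, transitivity on $I$, and primitivity of the $N_G(L_i)$-action via the maximality hypothesis on $N_H(L_i)C_G(L_i)$. The only mild difference is that where you build the wreath-product embedding by hand (picking the $h_i$ and tracking the twisted right-multiplication action), the paper instead observes $\hat G$ lies in the normalizer $N$ of $\hat Q$ in $S_\Omega$ and cites \cite[Lemma 4.5A]{DM} to identify $N$ as $M\wr S_I$ — which sidesteps the bookkeeping you flagged as the main obstacle.
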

    \begin{rem}\label{rem:normal}
    The proof of the lemma relies on the following well known facts:
    \begin{enumerate}
    \item  Let $G$ be a faithful primitive group acting on $\Omega$ and $N$ a nontrivial normal subgroup of $G$, then $N$ acts transitively on $\Omega$ \cite[Theorem 1.6A.v]{DM}.
    \item Let $L$ be a nonabelian simple group acting transitively on a set $\Delta$ and $M$ the normalizer of $L$ in $S_\Delta$. Let $I$ be a finite nonempty set, so that $M\wr S_I$ and $L^I$ act on $\Delta^I$. 
    Let $N$ be the normalizer of $L^I$ in $S_{\Delta^I}$. If $N$ is primitive then $N=M\wr S_I$ \cite[Lemma 4.5A]{DM}. 
    %Let $M_1,\ldots, M_t$ be isomorphic copies of $L$, so that each is equipped with an action on $\Delta$. 
    \item Suppose $W$ acts transitively on $\Omega$,  and permutes the set of fibers of a surjection $\phi:\Omega\ra \Upsilon$, and hence also acts on $\Upsilon$. 
    Suppose that the kernel $C\leq W$ of the action on $\Upsilon$ acts transitively on each fiber $\phi^{-1}(x)$,  $x\in\Upsilon$. If $H$ is the stabilizer of $\omega\in \Omega$, then the stabilizer of $\phi(\omega)$ is $H\cdot C$. Indeed for $v\in W$,  one has $\phi(\omega)^v=\phi(\omega)$ if and only if  $\phi(\omega^v)=\phi(\omega)$. As $C$ is transitive on $\phi^{-1}(\phi(\omega))$, this condition is equivalent to the existence of $c\in C$ such that $\omega^{vc}=\omega$, that is, to $v$ being in $H\cdot C$.  
    \end{enumerate}
    \end{rem}
    \begin{proof}[Proof of Lemma \ref{lem:product-type}]
    Since $G$ is primitive, $H$ is maximal in $G$ and $Q$ is transitive by Remark \ref{rem:normal}, so that $G=HQ$. Hence the bijection $H\backslash G\cong (Q\cap H)\backslash Q$ gives a faithful and primitive action of $G$ on $(Q\cap H)\backslash Q$ with point stabilizer $H$. The action is explicitly described as follows: Since $(Hx)^h = Hxh = Hh^{-1}xh$, we have $((Q\cap H) x)^h= (Q\cap H)h^{-1}xh$ and $((Q\cap H)\cdot x)^q= (Q\cap H)xq$, for all $x,q\in Q$, and $h\in H$. Thus, the bijection $(H\cap Q)\backslash Q\cong (L_1\cap H)\backslash L_1\times \cdots \times (L_t\cap H)\backslash L_t$ gives a faithful and primitive $G$-action on $(L_1\cap H)\backslash L_1\times \cdots \times (L_t\cap H)\backslash L_t$, where  $H$ is a point stabilizer, and $Q= L_1\times\cdots \times L_t$ acts by right multiplication. 
    %Since $H$ and $Q$ act on $H\backslash G$ by $(Hx)^q = Hxq$ and $(Hx)^h = Hh^{-1}xh$, their action on $(Q\cap H)\backslash Q$ is given by $((Q\cap H)\cdot x)^q= (Q\cap H)x^q$ and $((Q\cap H)\cdot x)^h= (Q\cap H)x^q$, for all $x,q\in Q, h\in H$. 
    
    Let $\Delta:=(L_1\cap H)\backslash L_1$, and write $I=\{1,\ldots, t\}$. Note that since $G$ acts transitively on the copies $L_1,\ldots,L_t$ of $L$, and since $G=HQ$, so does $H$ acts transitively on the copies. Since an element of $H$ conjugating $L_1$ and $L_i$ also conjugates $L_1\cap H$ and $L_i\cap H$, it defines an isomorphism $\phi_i:L_1\ra L_i$ which induces an isomorphism $\Delta\cong (H\cap L_i)\backslash L_i$, for $i\in I$.  
    Identifying each of the $L_i$-sets $(H\cap L_i)\backslash L_i$ with $\Delta$ via these isomorphisms for all $i\in I$, 
    we obtain a faithful and primitive action of $G$ on $\Omega:=\Delta^I$, where $H$ is a point stabilizer and $Q=L_1\times \cdots \times L_t$ acts on $\Omega=\Delta^t$ coordinatewise. 
    Let $\hat G,\hat H, \hat Q,$ and $\hat L_i = \hat Q\cap S_\Delta^{(i)}$ be the images of $G,H,Q$ and $L_i$ in $S_\Omega$, respectively, for $i\in I$. 
    Since $\hat G$ is contained in the normalizer $N$ of $\hat Q$ in $S_\Omega$, the subgroup $N$ acts primitively on $\Omega$. 
    Since $N$ is primitive, and $\hat Q = \prod_{i=1}^t\hat L_i$,  Remark \ref{rem:normal}.(2) implies that $N=M\wr S_I$, where $M$ is the normalizer of $\hat Q\cap S_\Delta^{(i)}$ in $S_\Delta^{(i)}$, for $i\in I$. This proves the first assertion. 
    
    Since $G$ acts transitively on $\{L_1,\ldots,L_t\}$ by conjugation, it also acts transitively on $\{\hat L_i = \hat Q\cap S_\Delta^{(i)}\suchthat i\in I\}$ by conjugation. 
    It is straightforward to check that this action is equivalent to the $G$-action on $I=\{1,\ldots,t\}$, and hence the latter is also transitive. 
    
    Fix $i\in I$. Since the normalizer of $\hat L_i$ in $N$ is $M^I\rtimes S_{I\setminus\{i\}}$, we have $N_{\hat G}(\hat L_i) = \hat G\cap 
    (S_\Delta^I\rtimes S_{I\setminus\{i\}})$, for $i\in I$. 
    In particular, $N_{\hat G}(\hat L_i)$ acts on the $i$-th copy $\Delta^{(i)}$ of $\Delta$ in $\Omega$. Since $\hat L_i$ is a nonabelian simple group, the kernel of this action is $C_{\hat G}(\hat L_i) = \hat G\cap (M^{I\setminus\{i\}}\rtimes S_{I\setminus\{i\}})$. Since $G$ is of product type, this kernel acts transitively on the fibers of the projection $\Omega\ra \Delta^{(i)}$, and hence Remark \ref{rem:normal}.(3) implies that the point stabilizer in the action of $N_{\hat G}(\hat L_i)$ on $\Delta^{(i)}$ is  $(N_{\hat G}(\hat L_i)\cap \hat H)C_{\hat G}(\hat L_i)=N_{\hat H}(\hat L_i) C_{\hat G}(\hat L_i)$. Since $N_H(L_i)C_G(L_i)$ is maximal in $N_G(L_i)$ by Definition \ref{def:product-type}, it follows that the point stabilizer $N_{\hat H}(\hat L_i)C_{\hat G}(\hat L_i)$ is maximal in $N_{\hat G}(\hat L_i)$, and hence $N_{\hat G}(\hat L_i) = \hat G\cap (S_\Delta^I\rtimes S_{I\setminus\{i\}})$ acts primitively. 
    \end{proof}
    From now on, we shall identify groups of product type with subgroups $G\leq S_\Delta\wr S_I$ of a wreath product  via Lemma \ref{lem:product-type}. We note that the proof of Theorem \ref{thm:main-wreath} for $t\geq 3$ does not use the primitivity of $G\leq S_\Delta\wr S_I$ but rather the other properties in Lemma \ref{lem:product-type}.  
    More specifically, the only used properties are the transitivity of $K=G\cap S_\Delta^I$  on $\Delta^I$ (which follows from that of $Q$), and the transitivity of $G$ on $I$. 
    %has a minimal normal subgroup $Q$ that decomposes as $L_1\times \cdots \times L_t$ for isomorphic nonabelian simple groups $L_i$, $i=1,\ldots,t$ for $t\geq 3$;  
    %(2) the group $H\cap Q$ is nontrivial and decomposes as $(H\cap L_1)\times\cdots \times (H\cap L_t)$ with $H\cap Q\neq 1$; %the subgroups $H\cap L_i$, $i=1,\ldots,t$ are conjugate in $G$; 
    %and (3) $Q$ acts transitively and $C_G(Q)=1$. 
    The last property in Lemma  \ref{lem:product-type}, namely the primitivity of $G\cap (S_\Delta^I\rtimes S_{I\setminus\{i\}})$ on the $i$-th copy of $\Delta$ is used only in the proof for $t=2$, namely in the final steps where Lemmas \ref{lem:hurwitz1} and \ref{lem:primitive} are applied.
     
     The following basic properties then hold: %of primitive groups of product type:
    %Assume   $G\leq S_\ell\wr S_t$ is of product type with respect to its action on a set $\Omega:=\Delta^t$, with $t\geq 2$ and $\ell:=|\Delta|\geq 5$.
    %Let $H:=G\cap (S_\ell^{t-1}\wr S_t)$ be a point stabilizer in the action of $G$ on $\Omega$ and
    \begin{lem}\label{rem:product-type}
    Let $G\leq S_\Delta\wr S_I$ be a group of product type, let $K:=G\cap S_\Delta^I$ and  $G_i:=G\cap (S_\Delta^I\rtimes S_{I\setminus\{i\}})$ be the kernel of the action on $I$ and the stabilizer of $i\in I$, resp. 
    Let $K_i$ and $H_i$ be point stabilizers in the action of $K$ and $G_i$ on the $i$-th coordinate of $\Delta^I$, $i\in I$, resp., and $Q\subseteq K$ the minimal normal subgroup of $G$. Set $\ell:=\abs\Delta$ and $t:=\abs I$. Then:
    \end{lem}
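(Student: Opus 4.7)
The proof plan is to read each claim directly off the wreath-product identification of $G$ inside $S_\Delta \wr S_I$ supplied by Lemma \ref{lem:product-type}, combined with orbit-stabilizer bookkeeping and the elementary facts collected in Remark \ref{rem:normal}. The three structural inputs I will rely on are: the transitivity of $Q$ (and hence of $K$) on $\Delta^I$; the transitivity of the image $\bar G$ of $G$ in $S_I$ on $I$; and the primitivity of $G_i$ on the $i$-th copy $\Delta^{(i)}$ of $\Delta$, all of which are part of Lemma \ref{lem:product-type}.

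First I would record the basic transitivity and index statements. Since $K \supseteq Q$ is transitive on $\Delta^I$, projecting to $\Delta^{(i)}$ shows that $K$ acts transitively on each $\Delta^{(i)}$, so $[K:K_i] = \ell$. The primitivity of $G_i$ on $\Delta^{(i)}$ gives $[G_i:H_i] = \ell$ together with the maximality of $H_i$ in $G_i$. The transitivity of $\bar G$ on $I$ gives $[G:G_i] = t$, and hence $[G:K]$ divides $t!$. These claims follow immediately once one writes a generic element of $G$ in the form $a\sigma$ with $a \in S_\Delta^I$ and $\sigma \in \bar G$ and notes that $K$ is the kernel of the projection $a\sigma \mapsto \sigma$.

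Next I would relate the subgroups $K, K_i, H_i, G_i$ via the short exact sequence $1 \to K \to G \to \bar G \to 1$, in which $G_i$ is by definition the preimage of the stabilizer of $i$ in $\bar G$. The identification $K \cap H_i = K_i$ is immediate from the definitions of both groups as point stabilizers on $\Delta^{(i)}$. For the factorization $G_i = K \cdot H_i$, I would apply Remark \ref{rem:normal}.(3) with $W = G_i$ acting on $\Omega = \Delta^I$ and with $C = K$ the kernel of the $G_i$-action on the $\bar G$-orbit through $i$ in $I$; this identifies the stabilizer in $G_i$ of a point of $\Delta^{(i)}$ with $K \cdot H_i$, and since $H_i$ is already such a stabilizer one concludes $G_i = K H_i$. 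Combining with $K \cap H_i = K_i$ gives $H_i/K_i \cong G_i/K$, and $H = \bigcap_{i \in I} H_i$ is the point stabilizer of $G$ on $\Omega$.

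The main obstacle, such as it is, lies in avoiding confusion between the various actions of $G, G_i, K$ on $I$, on $\Omega = \Delta^I$, and on a single copy $\Delta^{(i)}$. Each of the claimed identifications amounts to writing elements of $G$ in the form $a\sigma$ and tracking which component stabilizes which piece of data. Given the explicit wreath-product formulas recalled in Section \ref{sec:wreath-prod} and the embedding already carried out in Lemma \ref{lem:product-type}, I expect no substantive difficulty beyond this bookkeeping.
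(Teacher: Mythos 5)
Your proposal covers only a fragment of the lemma and leaves the bulk of it untouched. The lemma has three parts: (1) the transitivity of $K$ and the index relations $[G_i:H_i]=[K:K_i]=\ell$, $[K:H\cap K]=[G:H]=\ell^t$, $[H:H\cap K]=[G:K]$; (2) the conjugacy in $G$ of the $G_i$'s and of the $K_i$'s; and (3) primitivity of $K$ on each copy of $\Delta$ when $t=2$. You establish $[K:K_i]=\ell$ and $[G_i:H_i]=\ell$ (the paper in fact gets the latter just from transitivity of $K\subseteq G_i$, while you invoke primitivity of $G_i$ — both are fine), but you never derive $[K:H\cap K]=[G:H]=\ell^t$ or $[H:H\cap K]=[G:K]$, and parts (2) and (3) are not addressed at all. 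Part (2) is proved in the paper by choosing $\sigma\in G$ with $i^\sigma=j$ and observing that $\sigma^{-1}K_i\sigma$, $\sigma^{-1}H_i\sigma$ are point stabilizers of the actions of $K$ and $G_j$ on the $j$-th coordinate; part (3) follows because when $t=2$ the stabilizer $G_i$ equals $K$, and Lemma \ref{lem:product-type} says $G_i$ acts primitively on the $i$-th copy. These are quick, but they are the actual content to be proved.

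Separately, the application of Remark~\ref{rem:normal}.(3) is misfired. You set $W=G_i$, $\Omega=\Delta^I$, $\Upsilon=\Delta^{(i)}$, but take $C=K$, "the kernel of the $G_i$-action on the $\bar G$-orbit through $i$ in $I$". In the remark $C$ must be the kernel of the action of $W$ on $\Upsilon=\Delta^{(i)}$; that kernel is $C_{G_i}(L_i)$ (i.e.\ $G\cap(M^{I\setminus\{i\}}\rtimes S_{I\setminus\{i\}})$ in the proof of Lemma~\ref{lem:product-type}), not $K$, which is the kernel of the action on $I$. The correct application would give $H_i=(H\cap G_i)\cdot C$, not $G_i=KH_i$; your stated conclusion that the stabilizer equals both $K\cdot H_i$ and $H_i$ would force $K\subseteq H_i$, which is false. (The identity $G_i=KH_i$ does hold, but it follows from $K_i=K\cap H_i$ together with $[K:K_i]=[G_i:H_i]=\ell$, and in any case it is not one of the lemma's assertions.) In short: much of your effort is spent on auxiliary facts not claimed by the lemma, while the claimed parts (2) and (3) and half of part (1) are missing.
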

    \begin{enumerate} 
    \item $K$ is transitive on $\Delta^{I}$ and hence $[G_i:H_i]=[K:K_i]=\ell$, $[K:H\cap K]=[G:H]=\ell^t$, and $[H:H\cap K]=[G:K]$. 
    \item The subgroups $G_i$, $i\in I$ (resp.\ $K_i$, $i\in I$) are conjugate in $G$.
    \item If $t=2$, then $K$ acts on each of the copies of $\Delta$ primitively. 
    \end{enumerate}
    \begin{proof}
    %(1) The group  $\oline G:= G/K$ is naturally identified with a subgroup of  $S_I$. In particular, $G$ acts on $I$ via the projection $G\ra \oline G$. 
    %Since $Q$ is a minimal normal subgroup, $G$ permutes transitively  the copies $Q\cap S_\Delta^{(i)}$, $i\in I$, under conjugation. It follows that $\oline G$ and hence $G$ act transitivly on $I$.
    (1) Since $Q\cap S_\Delta^{(i)}$, $i\in I$ is transitive on $\Delta$, the action of $Q$ and hence of $K$ on $\Delta^I$ is transitive. %and hence on the $i$-th, so does $K$. 
    Hence, $[K:K_i]=\ell$, and  $[K:H\cap K]=\ell^t$, for $i\in I$ and $t=\abs I$. The latter also implies $[H:H\cap K]=[G:K]$. 
    
    \noindent (2) Letting $\sigma\in G$ be an element %whose coset $\oline \sigma\in \oline G$ satisfies 
    such that $i^\sigma=j$, the subgroups $\sigma^{-1} K_i\sigma$ and $\sigma^{-1} H_i\sigma$ are point stabilizer in the actions of $K$ and $G_j=\sigma^{-1}G_i\sigma$ on the $j$-th coordinate of $\Delta^I$, resp., for $i,j\in I$. As $G$ is transitive on $I$, this yields the assertion. %implies that the subgroups $K_i$, $i\in I$ are conjugate in~$G$. 
    
    (3) If $t=2$, 
    then $G_i=\hat G\cap (S_\Delta^I\rtimes S_{I\setminus\{i\}}) = K$ and Lemma \ref{lem:product-type} implies that the action of $G_i=K$
     on each of the two copies of $\Delta$ is primitive. % for $i\in I$. 
    %\end{enumerate}
    \end{proof}
    %\begin{rem}
    
%\end{rem}    

    We use the following lemma to ensure that a group is of product type:
    \begin{lem}\label{lem:primitive}\label{lem:cyc-exist} 
    Let $G\leq S_\Delta\wr S_I$, $\abs\Delta\geq 5$, act transitively on $\Delta^I$,   
    and assume that the projection of $K:= S_\Delta^I\cap G$ to each coordinate contains $A_\Delta$. Then
    \begin{enumerate}
    \item $G$ is primitive if and only if $K\supseteq A_\Delta^I$ and the $G$-action on $I$ is transitive;
    \item if $\abs I=2$ 
     and $\abs\Delta\neq 6$, 
    then either $K \supseteq A_\Delta^I$ or $K \subseteq \{ (a,v^{-1}av))\,|\,a\in S_\Delta\}$ for some $v\in S_\Delta$. 
    \end{enumerate}
    \end{lem}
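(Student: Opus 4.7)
The plan is to prove the two parts largely independently: part (2) via a Goursat-style analysis specialized to $t=2$, and part (1) via a block-system argument for ``$(\Leftarrow)$'' together with a subdirect-product analysis for ``$(\Rightarrow)$''.

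For the ``$(\Leftarrow)$'' direction of part~(1), I will first observe that, since $A_\Delta$ is primitive on $\Delta$ for $\ell\ge 5$, the only blocks of $A_\Delta^I$ on $\Delta^I$ containing a fixed point $\delta_0$ are the coordinate-product sets
\[ B_J:=\set{y\in\Delta^I\suchthat y(i)=\delta_0(i)\text{ for all }i\in J}, \quad J\subseteq I. \]
Any $G$-block through $\delta_0$ is therefore some $B_J$. To rule out $\emptyset\subsetneq J\subsetneq I$, I will use transitivity of the $G$-image on $I$ to select $g_0\in G$ whose image $g^*\in S_I$ satisfies $J^{g^*}\neq J$; modifying $g_0$ on the right by a suitable element of $A_\Delta^I\le G$ (using that $A_\Delta$ is transitive on $\Delta$) yields $g\in G$ with the same image $g^*$ and $\delta_0^g(i)=\delta_0(i)$ on $J\cap J^{g^*}$. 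Then $B_J^g$ is based at $J^{g^*}\ne J$ but shares the base values on $J\cap J^{g^*}$, so $B_J^g\cap B_J$ is nonempty and strictly smaller than $B_J$, contradicting the block property.

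For the ``$(\Rightarrow)$'' direction, assume $G$ is primitive. If $G$ were intransitive on $I$ with decomposition $I=I_1\sqcup I_2$, the $G$-equivariant projection $\Delta^I\to\Delta^{I_1}$ would yield a nontrivial block system, so $G$ is transitive on $I$. Set $J:=\set{i\suchthat A_\Delta^{(i)}\le K}$; since conjugation by $g\in G$ sends $A_\Delta^{(i)}$ to $A_\Delta^{(i^{g^*})}$, $J$ is $G$-invariant, hence $J\in\set{\emptyset,I}$. To rule out $J=\emptyset$, note that both $K\trianglelefteq G$ and $K^+:=K\cap A_\Delta^I\trianglelefteq G$ must each act transitively or trivially on $\Delta^I$ by primitivity. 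Triviality of $K^+$ forces $K\hookrightarrow(S_\Delta/A_\Delta)^I=(C_2)^I$, too small for $K$ to be transitive on $\Delta^I$. So $K^+$ is a nontrivial transitive subdirect product of $A_\Delta^I$; by the standard classification, $K^+$ decomposes as a product of full diagonal copies of $A_\Delta$ along a partition $\mathcal P$ of $\set{i\suchthat\pi_i(K^+)=A_\Delta}$. A diagonal on $\Delta^P$ with $\abs P\ge 2$ has orbit of size at most $\abs{A_\Delta}<\ell^{\abs P}$ by an elementary arithmetic check (the borderline $\ell=6,\abs P=2$ is handled by noting that the exceptional diagonal in $A_6^2$ inherits an $A_5$-block system which is preserved by $G$, again contradicting primitivity). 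Hence every block of $\mathcal P$ is a singleton, giving $J=I$ and $K\supseteq A_\Delta^I$.

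For part~(2), Goursat's lemma expresses $K\le S_\Delta\times S_\Delta$ in terms of the normal subgroups $N_1:=\set{a\in S_\Delta\suchthat (a,1)\in K}\trianglelefteq\pi_1(K)$ and $N_2:=\set{b\in S_\Delta\suchthat (1,b)\in K}\trianglelefteq\pi_2(K)$ together with an isomorphism $\phi\co\pi_1(K)/N_1\xrightarrow{\sim}\pi_2(K)/N_2$. Each $\pi_i(K)\supseteq A_\Delta$ is simple, so $N_i\cap A_\Delta\in\set{1,A_\Delta}$; the case $\abs{N_i}=2$ is excluded because $A_\Delta$ would then centralize a nontrivial involution, contradicting $C_{S_\Delta}(A_\Delta)=1$. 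If both $N_i\supseteq A_\Delta$ then $K\supseteq A_\Delta^I$. Otherwise, by symmetry one may assume $N_1=1$; then $\pi_1(K)\cong\pi_2(K)/N_2$ has order at least $\abs{A_\Delta}$, forcing $N_2=1$ and $\pi_1(K)=\pi_2(K)$. The hypothesis $\ell\ne 6$ ensures every automorphism of $A_\Delta$ (and of $S_\Delta$) is conjugation by some $v\in S_\Delta$, so $\phi$ is such a conjugation and $K\subseteq\set{(a,v^{-1}av)\suchthat a\in S_\Delta}$. The main obstacle is the structural step in part~(1) ruling out $J=\emptyset$: the subdirect-product classification combined with $\ell^{\abs P}\nmid\abs{A_\Delta}$ covers all cases, with the $\ell=6$ exception needing the secondary $A_5$-block-system observation to close the argument.
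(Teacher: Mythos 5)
Your $(\Leftarrow)$ argument in part (1) — classifying the blocks of $A_\Delta^I$ on $\Delta^I$ through $\delta_0$ as the coordinate sets $B_J$ and then ruling out intermediate $J$ via transitivity on $I$ — is a genuinely different route from the paper's, which instead shows directly that the point stabilizer $H$ admits no proper intermediate overgroup $M$ by analyzing $Q\cap M$ as a subdirect product containing $A_{\ell-1}^t$. Both work, though your opening classification of blocks is asserted rather than proved (it needs the observation that any subgroup between $(A_\Delta^I)_{\delta_0}$ and $A_\Delta^I$ is a product of factors, which uses simplicity of $A_\Delta$ coordinatewise). Part (2) is essentially the paper's argument restated via Goursat; no issues there.

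In the $(\Rightarrow)$ direction, however, the key step is wrong as written. The claim that a diagonal on $\Delta^P$ with $\abs P\ge 2$ has orbit of size ``at most $\abs{A_\Delta}<\ell^{\abs P}$ by an elementary arithmetic check'' is false: already for $\ell=5$, $\abs P=2$ one has $\abs{A_5}=60>25=5^2$, and the inequality fails for all small $\abs P$. The intended conclusion — that a full diagonal subgroup of $A_\Delta^P$ with $\abs P\ge 2$ cannot be transitive on $\Delta^P$ — is correct for $\ell\ne 6$, but the reason is structural, not arithmetic: for $\ell\ne 6$ every automorphism of $A_\ell$ is conjugation by some $v\in S_\ell$, so the diagonal is $S_\Delta^P$-conjugate to the standard diagonal $\{(a,\ldots,a)\}$, which has an orbit of size $\ell$ on $\Delta^P$. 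Your argument, as phrased, does not establish this.

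Your $\ell=6$ mitigation also does not hold up. Take $\phi\in\Aut(A_6)$ an involution inducing the outer automorphism that interchanges the two conjugacy classes of $A_5$-subgroups, and let $D=\{(a,a^\phi)\,|\,a\in A_6\}$. Then $D$ acts transitively on $\Delta^2$ with point stabilizer $D_{10}=A_5\cap A_5^\phi$, the coordinate swap $s$ normalizes $D$ (since $\phi^2=1$) and acts on $D\cong A_6$ as $\phi$, and $G:=D\rtimes\langle s\rangle\le S_6\wr S_2$ satisfies all the hypotheses of the lemma. But $G$ is primitive: conjugation by $s$ swaps the two $A_5$-subgroups above $D_{10}$, so neither $A_5$-block system of $D$ is $G$-invariant, and in fact the stabilizer $D_{10}\rtimes\langle s\rangle$ of order $20$ is maximal in $G$ (the only $\phi$-stable subgroups of $A_6$ containing $D_{10}$ are $D_{10}$ and $A_6$). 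Thus $G$ is primitive and transitive on $I$ yet $K=D\not\supseteq A_6^2$, so no block-system obstruction rescues the statement at $\ell=6$. This is a real borderline case (it also appears to slip past the step ``$R$ transitive hence $U=I$'' in the paper's own proof); since part (2) already excludes $\ell=6$ and all downstream applications have $\ell\ge 7$, the cleanest fix is to assume $\ell\ne 6$ throughout and use the structural fact about $\Aut(A_\ell)$ in place of the arithmetic bound.
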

     %Note that if $\abs\Delta\neq 6$, we have $\varphi(a)=v\in S_\Delta$.
    \begin{proof} 
    Write $\Delta = \{1,\ldots, \ell\}$,  $I=\{1,\ldots, t\}$, let $\pi_i: K \ra S_\ell$, $i\in I$ be the natural projections, and let  $Q:=A_\ell^t$.
    
    First assume $K\supseteq A_\ell^t$ and $G$ is transitive on $I$. 
    We claim that there are no nontrivial intermediate subgroups $H\leq M\leq G$, where $H$ is a point stabilizer.  Since $G$ acts transitively on $I$ it acts transitively on the $t$ copies of $A_\ell$. 
    Since in addition $G=HQ$ as $Q$ is transitive, the action of $H$ and hence of $M$ on the $t$ copies is also transitive. It follows by the Lemma \ref{rem:product-type} (with $G$ replaced by $M$ and $K$ replaced by $Q\cap M$) that $\pi_i(Q\cap M)$, $i\in I$ are all isomorphic.  Since $Q\cap H\cong A_{\ell-1}^t$, we have $\pi_i(Q\cap M) = A_{\ell-1}$ or $A_\ell$ for all $i\in I$. 
    Thus $Q\cap M$ is a subdirect product of either $t$ copies of $A_\ell$ or of $t$ copies of  $A_{\ell-1}$. Since in addition $Q\cap M$ contains $Q\cap H\cong A_{\ell-1}^t$, it follows that $Q\cap M =Q$ or $Q\cap H$.  
    Since $Q$ is transitive, we also have $[Q\cap M:Q\cap H] = [M:H]$ and hence either $[M:H]=1$ or $[M:H] = [Q:Q\cap H] = [G:H]$ yielding the claim.
    
    %Letting $H\leq S_\ell \wr S_t$ be the stabilizer of the constant tuple $\delta = (\ell,\ldots,\ell) \in \Delta^I$, we claim that $H\cap G$ is maximal in $G$.
    %Fix $u\in S_\ell^t$ such that $u(i)=1$ for all $i\in I\setminus \{1\}$ and $u(1)$ is the transposition $(1,2)$. As  $S_\ell \wr S_t =  A_\ell^t\rtimes D$ for $D := \langle u,S_t\rangle \cong C_2 \wr S_t$ and $G\supseteq A_\ell^t$, we have  $G=A_\ell^t\rtimes D_0$ for $D_0:=D\cap G$.  Since $D\subseteq H$, we get $G\cap H = A_{\ell-1}^t\rtimes D_0$. Since $G$ and hence $D_0$ act transitively on $I$, the group $A_{\ell-1}^t$ is a maximal $D_0$-invariant subgroup of $A_\ell^t$. Thus $G\cap H$ is maximal in $G$ and hence $G$ is primitive.
    
    Conversely, if the $G$-action on $I$ has an  orbit $\theta$ which is properly contained in $I$, then the $G$-action on $\Delta^I$ is imprimitive since the blocks $U_{\delta_0} = \{\delta\in \Delta^I\suchthat \delta_{|\theta} = \delta_0\}$, $\delta_0\in\Delta^{\theta}$ form a nontrivial partition. 
    %$(\prod_{i\in \theta_1}\{\delta(i)\})\times\Delta^{\theta_2}$, $\delta\in \Delta^{\theta_1}$ is 
    Assuming $G$ is primitive, we let $R\leq K$ be a minimal normal subgroup of $G$ and claim that $R=Q$. As $R\lhd K$, one has $\pi_i(R)\lhd \pi_i(K)$ for all $i\in I$. Since $G$ acts transitively on $I$ and $\pi_i(K)\supseteq A_\ell$, the normality  $\pi_i(R)\lhd \pi_i(K)$ implies that $\pi_i(R) = A_\ell$ for every $i\in I$. %Since a minimal normal subgroup $Q$ of $G$ is a product of isomorphic simple groups, and  $A_\ell$ is a quotient of~$Q$,  we have $Q\cong A_\ell^u$ for some $u\leq t$, and $Q\leq K\cap A_\ell^t$. 
    Thus $R$ is a subdirect product of $A_\ell^t=Q$ and hence is isomorphic via the natural projection to  $A_\ell^U$ for some $U\subseteq I$. 
    Since $R\lhd G$ and $G$ is primitive,  $R$ acts transitively on $\Delta^I$ by Remark \ref{rem:normal}, and hence $U=I$, proving that $R=Q$, completing (1). % the proof of (1).  

    Now assume $t=2$. 
    If $\ker\pi_1\neq 1$, then $\pi_2(\ker\pi_1)$ is a nontrivial normal subgroup of $\pi_2(K)$. Since $\pi_2(K)\supseteq A_\ell$, this implies $\pi_2(\ker\pi_1)\supseteq A_\ell$. Thus $K\supseteq \ker\pi_1\supseteq 1\times A_\ell$. 
    %Since $\pi_2(K) \supseteq  A_\ell$, if $\ker\pi_1\neq 1$ then it is a nontrivial normal subgroup of $1\times S_\ell$ and hence $\ker\pi_1 \supseteq  1\times A_\ell$. 
    Since in addition $\pi_1(K) \supseteq  A_\ell$, we deduce that  $K\supseteq  A_\ell^2$. 
    Henceforth assume both projections $\pi_1,\pi_2$ are injective. Then $\pi_2\circ\pi_1^{-1}:\pi_1(K) \ra \pi_2(K)$ is an isomorphism.  Identifying $\pi_1(K)$ and $\pi_2(K)$ with subgroups of $S_\ell$, we get that $\pi_2\circ \pi_1^{-1}$ is an automorphism of $A_\ell \leq \pi_1(K) \leq S_\ell$, and hence is given by conjugation by some $v\in S_\ell$, completing the proof of (2). 
    \end{proof} 
 Note that in Part (1) and its proof, $A_\Delta$ can be replaced by any primitive nonabelian simple group $L\leq S_\Delta$. For Part (2), one additionally needs the assumption that $\Aut(L')$ embeds in $S_\Delta$ for $L\leq L'\leq \Aut(L)$, or equivalently that the action of $L$ extends to $\Aut(L)$. We  use only the case $L=A_\Delta$ and hence restrict to this case for simplicity.    

%\begin{rem}  %The ``only if" assertion in part (1) can be extended to cases where $A_\ell$ is replaced by a primitive nonabelian simple group $L$ as follows. The same argument implies that $G$ is primitive when  one assumes that (1) $G$ has  minimal normal subgroup $Q=L_1\times \cdots \times L_t$, 
%where each $L_i$ is isomorphic to the  primitive nonabelian simple group $L$;
% (2) $1\neq H\cap Q = (H\cap L_1)\times \cdots \times (H\cap L_t)$; and 
% (3) $G$ acts transitively on the copies $L_1,\ldots,L_t$. 
%\end{rem}

    \subsection{Setup}\label{sec:setup}
    The following setup is used throughout the proof of Theorem \ref{thm:main-wreath} and may be assumed by Lemma \ref{lem:product-type}. 
    
    Let  $f:X\ra X_0$ be an indecomposable covering with monodromy group $G\leq S_\Delta\wr S_I$ of product type and let $\tilde f:\tilde X\ra X_0$ be its Galois closure. As in Section~\ref{sec:product-groups}, we let $\Omega:=\Delta^I$, let $H\leq G$ be a point stabilizer in the $G$-action on $\Omega$, let $K:= G\cap S_\Delta^I$, and let $K_i$ be a point stabilizer in the action of $K$ on the $i$-th copy of $\Delta$ in $\Omega$, for $i\in I$. 
    As sketched in Section \ref{sec:method}, we let $Z:=\tilde X/(H\cap K)$,  $Y:=\tilde X/K$, and $Y_i:=\tilde X/K_i$ for $i\in I$. We then have the following diagram of natural projections: 
    $$
    \xymatrix{
     Z \ar[r]^{\pi}\ar@/_1pc/[dd]_{h} \ar[d] & X\ar[dd]^{f} \\
     Y_i \ar[d]^{h_i} & \\
     Y \ar[r]^{\pi_0} & X_0,
    }
    $$
    for $i\in I$. Throughout the paper we let $t:=\abs I$ be a fixed integer which is at least~$2$. 
    Also, put $m:=\deg \pi_0$, and $\ell:=\abs \Delta$. 
    %When writing $O(\ell^{k})$, we mean that a quantity is at most $c_t\ell^{k}$, where  $c_t$ is a constant depending only on $t$. 
    
    \begin{rem}\label{rem:setup}
    \begin{enumerate}
    \item  By Lemma \ref{rem:product-type}.(1), % Section \ref{sec:product-groups}, 
    $\deg f=\ell^t$, $\deg h = [K:H\cap K]=\ell^t$, $\deg h_i = [K:K_i]= \ell$ for $i\in I$, and $\deg\pi = m$. In particular the fiber product of $h_i$ and $h_j$ is irreducible for every pair of distinct $i,j\in\{1,\ldots,t\}$.
    %Since $h$ is a minimal covering which factors through all $h_i$, $i\in I$, and $\deg h = \prod_{i\in I}\deg h_i$, we can identify $Z$ with the normalization of the fiber product of $h_i$, $i\in I$. 
    \item Fix  $1\in I$. By Lemma~\ref{rem:product-type}.(2),  there exists $\sigma_i\in G$ such that $K_1^{\sigma_i} = K_i$, $i\in I$. 
    Such $\sigma_i$ induces an automorphism $\oline \sigma_i:Y\ra Y$ sending the orbit of $K$ on $\tilde P\in \tilde X$ to the orbit of $K = \sigma_i^{-1}K\sigma_i$ on $\sigma_i^{-1}(\tilde P)$. 
    Similarly, $\sigma_i$ induces an isomorphism $\sigma_i^*:Y_1\ra Y_i$, which sends the orbit of $K_1$ on $\tilde P\in \tilde X$ to the orbit of $K_i = \sigma_i^{-1}K_1\sigma_i$ on $\sigma_i^{-1}(\tilde P)$, so that $h_i\circ \sigma_i^* = \oline\sigma_i\circ h_1$. Hence, $E_{h_1}(P)=E_{h_i}(\oline \sigma_i(P))$ for $P\in Y$.  %, where $\oline\sigma_i$ is the restriction of $\sigma_i$ to $G/K$. 
    \item For $t=2$,  the action of $K$ on each copy of $\Delta$ in $\Omega$ is primitive by Lemma \ref{rem:product-type}.(3), and hence $h_i$ is indecomposable for $i\in I$. 
    \end{enumerate}
    \end{rem}

    Note that for the purpose of computing orbits, indices, and the Riemann--Hurwitz contribution, a branch cycle $y\in G$ over $P$ can always be replaced by $x\in S_\Delta\wr S_I$  of reduced form (even though $x$ might not be in $G$) since $G$ inherits its action from $S_\Delta\wr S_I$ and the above quantities do not change under conjugation in $S_\Delta\wr S_I$. 
    %action of $G$ is the restriction of that of $S_\Delta\wr S_I$.
    
    For $t=2$, we shall specify the ramification type of $f$ by writing the conjugacy class in $S_\ell \wr S_2$ of a branch cycle over each branch point of $f$.  For $s\in S_2$, and $\alpha_i\in S_\ell$ with cycle structure $A_i$, $i=1,2$, we write the conjugacy class of $(\alpha_1,1)s$ (resp.\ $(\alpha_1,\alpha_2)$) in $S_\ell \wr S_2$ as $(A_1,1)s$ (resp.\ $(A_1,A_2)$) and note that every element in $S_\ell\wr S_2\setminus S_\ell^2$ (resp.\ in $S_\ell^2$ is lies in such a conjugacy class by Lemma \ref{lem:conjugate-1}. 
    In this notation we describe the relation between the ramification of $f$ and that of $\pi_0$ and $h=(h_1,h_2)$:
    \begin{rem}\label{rem:t=2-ramification}
    Fix $P\in X_0(\K)$. If $E_f(P) = (A_1,1)s$, then a branch cycle of $f$ at $P$ is conjugate to  $x\in S_\ell \wr S_2$ 
    %is conjugate in $S_\ell\wr S_2$ to a branch cycle of $f$ at $P\in X_0(\K)$.
    of the form $x=(\alpha_1,1)s$ where $\alpha_1\in S_\ell$ is of cycle structure $A_1$. By Lemma~\ref{lem:f-to-h},  $x^2=(u,u)$ is conjugate in $S_\ell \wr S_2$ and hence also in $S_\ell^2$ to a branch cycle of $h$, so that $\pi^{-1}(P)=\{Q\}$ and $E_h(Q)=(A_1,A_1)$. %Moreover, every element in $S_\ell \wr S_2 \setminus S_\ell^2$ is conjugate to an element of the form $(u,1)s$ by Lemma \ref{lem:conjugate-1}. 
    
    If on the other hand $E_f(P)= (A_1,A_2)$, then a branch cycle of $f$ at $P$ is conjugate to $x\in S_\ell^2$ of the form $x=(a,b)\in S_\ell^2$ where $\alpha_1,\alpha_2\in S_\ell$ are of cycle structures $A_1,A_2$, resp. By Lemma \ref{lem:f-to-h}, $(a,b)$ and $(b,a)$ are conjugate in $S_\ell^2$ to branch cycles of $h$ over $Q_1,Q_2$, resp., where $\pi_0^{-1}(P) = \{Q_1,Q_2\}$.
    
    Conversely, the ramification of $f$ is determined by the ramification of $h_1,h_2$ and $\pi_0$: 
    Letting $Q\in \pi_0^{-1}(P)$, it then follows that the ramification type of $f$ at $P$ is $(E_{h_1}(Q),1)s$ if $P$ is a branch point of $\pi_0$, and it is  $(E_{h_1}(Q),E_{h_2}(Q))$ otherwise. %if $P$ is unramified under $\pi_0$
    %For example, if $P$ is a branch point of $\pi_0$ and $E_{h_1}(Q)=[2,1^{\ell-2}]$ for $Q\in \pi_0^{-1}(P)$, then the 
    \end{rem}

    \section{The ramification types in Theorem \ref{thm:main-wreath}}\label{sec:prelim-t=2}    
    In Table~\ref{table:wreath} below we list the ramification types in Theorem~\ref{thm:main-wreath} corresponding to indecomposable coverings $f:X\ra \mP^1$ with monodromy group $A_\ell^2\lneq G\leq S_\ell \wr S_2$ of product type. We list the {ramification types more concisely by listing the conjugacy classes of branch cycles $x_1,\ldots, x_r$, over the branch points $P_1,\ldots,P_r$ of $f$, in the symmetric normalizer $S_\ell \wr S_2 = N_{S_{\ell^2}}(G)$ of $G$ in $S_{\ell^2}$, see \cite[Lemma 4.5(A)]{DM}. The ramification type of $f$ is then  easily read from these conjugacy classes since $E_f(P_i)$ coincides with the tuple of cardinalities of $\Orb_{\{1,2\}}(x_i)$.}
    As in Setup~\ref{sec:setup}, we represent elements (resp.~conjugacy classes) in $S_\ell \wr S_2$ by $(a_1,a_2)\s^j$ (resp.~$(u_1,1)\s$ and $(u_1,u_2)$) where $a_1,a_2\in S_\ell$ (resp.~$u_1,u_2$ are partitions of $\ell$),  $\s$ is the swap in $S_2$, and $j\in\{0,1\}$.

    We note that if $f:X\ra \mP^1$ is an indecomposable covering with monodromy group $G\leq S_\ell \wr S_2$ such that the ramification of $f$ appears in Table~\ref{table:wreath}, then $G$ is of product type with a minimal normal subgroup $A_\ell^2$. For this, as in Section \ref{sec:RET}, it suffices to show: 
    \begin{lem}\label{lem:All2}
     Let $G\leq S_\ell \wr S_2$, $\ell\geq 9$, be a group of product type, and $x_1,\ldots,x_r$ a product-$1$ tuple generating $G$ whose ramification type appears in Table~\ref{table:wreath}. Then $G\supseteq A_\ell^2$.
    \end{lem}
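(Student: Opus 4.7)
Set $K := G \cap S_\ell^2$ and let $\pi_1,\pi_2 : K \to S_\ell$ be the coordinate projections. Since $G$ is of product type, Lemma~\ref{rem:product-type}(1) yields that $K$ is transitive on $\Delta^2$, and Lemma~\ref{rem:product-type}(3) that each $\pi_i(K)$ is primitive on $\Delta$. Since $K \leq G$, the goal $G \supseteq A_\ell^2$ follows from $K \supseteq A_\ell^2$. The plan is therefore to show $\pi_1(K), \pi_2(K) \supseteq A_\ell$ using the ramification data of Table~\ref{table:wreath}, then invoke Lemma~\ref{lem:cyc-exist}(2), and finally rule out the exceptional ``diagonal'' alternative it leaves open.

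\textbf{Main argument.} From the product-$1$ tuple, extract explicit elements of $K$ as follows: if $x_j = (\alpha_j, 1)\s$ in reduced form, then $x_j^2$ is conjugate in $S_\ell \wr S_2$ to $(\alpha_j, \alpha_j) \in S_\ell^2$, so $\pi_i(x_j^2)$ has the cycle structure of $\alpha_j$ for both $i$; if $x_j = (\alpha_j, \beta_j) \in K$, then $\pi_1(x_j)$ and $\pi_2(x_j)$ have the cycle structures of $\alpha_j$ and $\beta_j$, respectively. A row-by-row inspection of Table~\ref{table:wreath} shows that every listed type contributes to each $\pi_i(K)$ at least one element whose cycle structure (e.g., a transposition, a $3$-cycle, or a $p$-cycle for a prime $p \leq \ell - 3$) triggers a classical Jordan-type theorem for primitive subgroups of $S_\ell$; combined with $\ell \geq 9$, this forces $\pi_i(K) \supseteq A_\ell$. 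Since $\ell \geq 9 > 6$, Lemma~\ref{lem:cyc-exist}(2) then gives either $K \supseteq A_\ell^2$ (in which case we are done) or $K \subseteq \{(a, v^{-1}av) : a \in S_\ell\}$ for some $v \in S_\ell$. In the latter case, the stabilizer of $(\delta_1, \delta_2) \in \Delta^2$ in $K$ equals the pointwise stabilizer in $\pi_1(K)$ of $\{\delta_1, \delta_2^{v^{-1}}\} \subseteq \Delta$, so each $K$-orbit on $\Delta^2$ has size $\ell$ (when $\delta_2 = \delta_1^v$) or $\ell(\ell-1)$ (otherwise), and never $\ell^2$; this contradicts the transitivity of $K$ on $\Delta^2$.

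\textbf{Main obstacle.} The principal work is the case-by-case verification that each row of Table~\ref{table:wreath} supplies some projected cycle structure to which a direct Jordan-type conclusion applies. For rows whose individual cycle structures are uniformly long, one can instead combine several projected branch cycles through their product-$1$ relation in $\pi_i(K)$, or appeal to CFSG-based classifications of primitive groups of degree $\ell$ containing prescribed cycle types, to still force $\pi_i(K) \supseteq A_\ell$.
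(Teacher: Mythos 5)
Your overall architecture matches the paper's: pass to $K := G\cap S_\ell^2$, show both coordinate projections $\pi_i(K)$ contain $A_\ell$, and then conclude $K\supseteq A_\ell^2$ via Lemma~\ref{lem:cyc-exist}. Your closing step, however, takes a genuinely different route at the end: the paper invokes Lemma~\ref{lem:cyc-exist}(1) directly, using the primitivity of $G$ to force $K\supseteq A_\ell^2$, whereas you use part~(2) and then rule out the twisted-diagonal possibility $K\subseteq\{(a,v^{-1}av)\}$ by counting orbits on $\Delta^2$: such a subgroup has all orbits of size at most $\ell(\ell-1)<\ell^2$, contradicting the transitivity of $K$ on $\Delta^2$ from Lemma~\ref{rem:product-type}(1). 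That is a correct and self-contained alternative to the paper's argument.

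There is, however, a real gap in the middle step. You assert that a row-by-row inspection of Table~\ref{table:wreath} produces, \emph{in each coordinate}, an element of $\pi_i(K)$ with a short cycle structure (transposition, $3$-cycle, double transposition, small prime-cycle). This is not true if one only looks at the branch cycles (and their powers) lying in $K$. For instance, in type I1A.1 the branch cycles in $K$ are $(\alpha_1,\alpha_2)$ with both $\ell$-cycles and $(\beta_1,1)$ with $\beta_1$ a $3$-cycle, while the remaining two branch cycles have type $\s$, so their squares are trivial; hence $\pi_2$ of everything you can extract by your recipe is only an $\ell$-cycle or the identity, and an $\ell$-cycle in a primitive group does \emph{not} by itself force $A_\ell$. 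Your ``Main obstacle'' paragraph acknowledges the worry but the suggested fixes (combining branch cycles via the product-$1$ relation, or invoking CFSG classifications) do not cleanly close it: the product-$1$ relation in $\pi_2$ still only yields products of $\ell$-cycles and the identity. The paper's fix is much simpler and you should adopt it: since $K\lhd G$ and any $\sigma=(\gamma_1,\gamma_2)\s\in G\setminus K$ satisfies $\pi_2(\sigma^{-1}k\sigma)=\gamma_1^{-1}\pi_1(k)\gamma_1$ for $k\in K$, the two images $\pi_1(K)$ and $\pi_2(K)$ are conjugate subgroups of $S_\ell$. Therefore it suffices to find a good cycle in \emph{one} coordinate, and then $\pi_1(K)\supseteq A_\ell$ automatically transfers to $\pi_2(K)\supseteq A_\ell$. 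With that observation inserted, the rest of your argument goes through, and in fact the paper's check (powers $x_j^a$ with $a\le 6$) produces a $2$-cycle, $3$-cycle, or $2^2$-cycle in at least one coordinate for every row of the table.
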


    \begin{rem}\label{rem:jordan} 
    The proof relies on Jordan's theorem \cite[Theorem 3.3.E]{DM} and its consequence \cite[Example 3.3.1]{DM}. These assert that 
     a primitive subgroup of $S_\ell$ containing either a $p$-cycle for a prime $p<\ell-2$, or a product of two $2$-cycles if $\ell\geq 9$,  contains all of $A_\ell$. %Moreover, if a primitive subgroup of $S_\ell$ contains a product of two $2$-cycles \cite[Example 3.3.1]{DM}. 
    \end{rem}
    \begin{proof}[Proof of Lemma \ref{lem:All2}]
    For each ramification type in Table~\ref{table:wreath}, by considering the powers $x_j^a$, $a\leq 6$, for some $j\in\{1,\ldots,r\}$, we see that  $K:=G\cap S_\ell^2$ contains an element $(a_1,a_2)$ such that one of its coordinates $a_i$ is a $3$-cyclc, or a $2$-cycle, or a product of two disjoint $2$-cycles. Since $G$ is of product type, the projection of $K$ to both coordinates is primitive by Lemma \ref{rem:product-type}.(3). As the image of one of the projections is primitive and contains $a_i$, it contains $A_\ell$ by Remark \ref{rem:jordan}. Conjugating the preimage of $A_\ell$ by  an element in $G\setminus K$, we get that the images of projections on both coordinates contain $A_\ell$.
    As $G$ is primitive,  Lemma \ref{lem:cyc-exist} implies that $G\supseteq A_\ell^2$. 
     \end{proof}
     The existence of coverings with such ramification types and the determination of their monodromy groups is given in Section \ref{sec:h1-branches}. 
    \begin{rem}\label{rem:ramification-genus}
    If a covering $f:X\ra \mP^1$ has one of the ramification types I1A.1-I1A.3 or F4.1-F4.5, then  $g_X=1$, and for any other ramification type in Table~\ref{table:wreath} one has $g_X=0$. The genus is computed in each case using Lemma \ref{lem:t=2-gXY1}. Here, the ramification of $h_1$ and $h_2$ is found by Remark \ref{rem:t=2-ramification} and $g_{Y_1}$ is computed using the Riemann--Hurwitz formula for $h_1$. 
    \end{rem}

    \begin{table}
    \caption{Ramification types of coverings $f:X\ra\mP^1$ with monodromy group 
    $A_\ell^2\leq G\leq S_\ell \wr S_2$ of product type.  
    We follow the notation of Setup \ref{sec:setup}. 
    Here $\ell\geq 7$, $s$ is the swap in $S_2$, and $a$ is any integer satisfying $0<a<\ell/2$ and $(a,\ell)=1$. The notation $u^v$ means that $u$ appears $v$ times in the multiset.  }\label{table:wreath}
    $$\begin{array}{| l | l |}
    %\hline
    %\text{Label} & \text{Branch cycle} \\ % & Group \\
    \hline
    I1.1 & ([\ell],[1^\ell])s, ([a,\ell-a],[1^\ell])s, ([2, 1^{\ell-2}], [1^\ell]) \\
    %i.8:
    %I1A.7 & ([2,1^{\ell-2}],[1^\ell]),  ([\ell],[1^\ell]) s,  ([a,\ell-a],[1^\ell]) s  \\ %& S_\ell \wr S_2 \\
    
    \hline
    %i.1:
    I1A.1 &  ([\ell],[\ell]),  ([3,1^{\ell-3}],[1^\ell]),  s,  s  \\ %&  A_\ell \wr S_2  \text{ if $\ell$ is odd}\\
    					%& & \text{ and  $K\langle s\rangle$  if $\ell$ is even} \\
    %i.2:
    I1A.2a &  ([\ell],[\ell]),  ([2,1^{\ell-2}],[1^\ell]) s,  ([2,1^{\ell-2}],[1^\ell]) s \\ % &  K.\langle ([2,1^{\ell-2}],[1^\ell])s\rangle  \\
    I1A.2b & ([\ell],[\ell]),  ([2,1^{\ell-2}],[1^\ell]),  ([2,1^{\ell-2}],[1^\ell]) s,  s  \\ % &  S_\ell \wr S_2 \\
    I1A.2c & ([\ell],[\ell]),  ([2,1^{\ell-2}],[1^\ell]),  ([2,1^{\ell-2}],[1^\ell]),  s,  s  \\ % & S_\ell \wr S_2 \\
    %i.3:
    I1A.3 & ([\ell],[\ell]),  ([2^2,1^{\ell-4}],[1^\ell]),  s,  s  \\ %& A_\ell \wr S_2 if \ell is odd, and  K.\langle s\rangle  if \ell is even \\
    %i.5:
    I1A.4 & ([a,\ell-a],[a,\ell-a]),  ([3,1^{\ell-3}],[1^\ell]), s,  s  \\ %&  A_\ell \wr S_2 if \ell is even,  and  K.\langle s\rangle  if \ell is odd \\
    %i.6:
    I1A.5a & ([a,\ell-a],[a,\ell-a]),  ([2,1^{\ell-2}],[1^\ell]) s,  ([2,1^{\ell-2}],[1^\ell]) s \\ %&  K.\langle ([2,1^{\ell-2}],[1^\ell])s\rangle  \\
    I1A.5b & ([a,\ell-a],[a,\ell-a]),  ([2,1^{\ell-2}],[1^\ell]),  ([2,1^{\ell-2}],[1^\ell]) s,  s  \\ %&  S_\ell \wr S_2 \\
    I1A.5c & ([a,\ell-a],[a,\ell-a]),  ([2,1^{\ell-2}],[1^\ell]),  ([2,1^{\ell-2}],[1^\ell]),  s,  s  \\ %&  S_\ell \wr S_2 \\
    I1A.6 & ([a,\ell-a],[a,\ell-a]),  ([2^2,1^{\ell-4}],[1^\ell]), s,  s  \\ %&  S_\ell \wr S_2 \\
    %i.4:
    I1A.7a & ([\ell],[a,\ell-a]),  ([2,1^{\ell-2}],[1^\ell]),  s,  s  \\ %&  S_\ell \wr S_2 \\
    I1A.7b & ([\ell],[a,\ell-a]),  ([2,1^{\ell-2}],[1^\ell]) s,  s \\ %&  S_\ell \wr S_2 \\
    \hline
    I2.1a  & ([\ell],[1^\ell])s, s, ([1^3,2^{(\ell-3)/2}],[1,2^{(\ell-1)/2}]), ([1^{\ell-2},2],[1^\ell]) \\
    I2.1b & ([\ell],[1^\ell])s, ([1^{\ell-2},2],[1^\ell])s, ([1^3,2^{(\ell-3)/2}],[1,2^{(\ell-1)/2}]) \\
    I2.2a & ([\ell],[1^\ell])s, ([1^{\ell-2},2],[1^\ell])s, ([1^2,2^{\ell/2-1}],[1^2,2^{\ell/2-1}])  \\
    I2.2b & ([\ell],[1^\ell])s, s, ([1^2,2^{\ell/2-1}],[1^2,2^{\ell/2-1}]), ([1^{\ell-2},2],[1^\ell]) \\
    I2.3 & ([\ell],[1^\ell])s, s, ([2^{(\ell-3)/2},3],[1^3,2^{(\ell-3)/2}]) \\
    I2.4 & ([\ell],[1^\ell])s, s, ([1,2^{\ell/2-2},3],[1^2,2^{\ell/2-1}]) \\
    I2.5 & ([\ell],[1^\ell])s, s, ([1^2,2^{(\ell-5)/2},3],[1,2^{(\ell-1)/2}]) \\
    I2.6 & ([\ell],[1^\ell])s, s, ([1,2^{(\ell-5)/2},4],[1^3,2^{(\ell-3)/2}]) \\
    I2.7 & ([\ell],[1^\ell])s, s, ([1^2,2^{(\ell-6)/2)},4],[1^2,2^{(\ell-2)/2}]) \\
    I2.8 & ([\ell],[1^\ell])s, s, ([1^3,2^{(\ell-7)/2},4],[1,2^{(\ell-1)/2}]) \\
    I2.9a & ([a,\ell-a],[1^\ell])s, ([1^{\ell-2},2],[1^\ell])s, ([1^2,2^{(\ell-2)/2}],[2^{\ell/2}]) \\
    I2.9b & ([a,\ell-a],[1^\ell])s, s, ([1^2,2^{(\ell-2)/2}],[2^{\ell/2}]), ([1^{\ell-2},2],[1^\ell]) \\
    I2.10a & ([a,\ell-a],[1^\ell])s, s, ([1,2^{(\ell-1)/2)}],[1,2^{(\ell-1)/2}]), ([1^{\ell-2},2],[1^\ell]) \\
    I2.10b & ([a,\ell-a],[1^\ell])s, ([1^{\ell-2},2],[1^\ell])s, ([1,2^{(\ell-1)/2}],[1,2^{(\ell-1)/2}]) \\
    I2.11 & ([a,\ell-a],[1^\ell])s, s, ([1^2,2^{(\ell-6)/2},4],[2^{\ell/2}]) \\
    I2.12 & ([a,\ell-a],[1^\ell])s, s, ([1,2^{(\ell-5)/2},4],[1,2^{(\ell-1)/2}]) \\
    I2.13 & ([a,\ell-a],[1^\ell])s, s, ([2^{(\ell-4)/2},4],[1^2,2^{(\ell-2)/2}]) \\
    I2.14 & ([a,\ell-a],[1^\ell])s, s, ([2^{(\ell-3)/2},3],[1,2^{(\ell-1)/2}]) \\
    I2.15 & ([a,\ell-a],[1^\ell])s, s, ([1,2^{(\ell-4)/2},3],[2^{\ell/2}]) \\
    \hline
    %f.18:
    F2.1 & ([1,2,3^{(\ell-3)/3}],[3^{\ell/3}]),  ([1,2,3^{(\ell-3)/3}],[1^\ell]) s,  s \\ %  with group  S_n \wr S_2 \\
    %f.19:
     F2.2 & ([2,3^{(\ell-2)/3}],[1^2,3^{(\ell-2)/3}]),  ([2,3^{(\ell-2)/3}],[1^\ell]) s,  s  \\ %with group  S_n \wr S_2 \\
    %f.20:
     F2.3 & ([1,3^{(\ell-1)/3}],[2^2,3^{(\ell-4)/3}]),  ([1,3^{(\ell-1)/3}],[1^\ell]) s,  s  \\ %with group  A_n \wr S_2
     \hline
    F3.1 & ([1,3, 4^{(\ell-4)/4}], [4^{\ell/4}]), ([1^2,2^{(\ell-2)/2}], [1^\ell])s, s \\
    %f.2:
    F3.2 & ([2,3,4^{(\ell-5)/4}],[1,4^{(\ell-1)/4}]),  ([1,2^{(\ell-1)/2}],[1^\ell]) s,  s  \\ %with group  A_n \wr S_2 if n=1 mod 8
          %   and  K.<s> if n=5 mod 8 \\
     %f.1:
    F3.3 & ([3,4^{(\ell-3)/4}],[1,2,4^{(\ell-3)/4}]),  ([1,2^{(\ell-1)/2}],[1^\ell]) s,  s  \\ %with group  S_n \wr S_2 \\
    \hline
    \end{array}$$
    \end{table}
    \setcounter{table}{0}
    \begin{table}
    \caption{Continued. }
    %Part II of ramification types of coverings of product type $2$. } %and their corresponding monodromy groups.
    %Here $s$ is the swap in $S_\ell^2\wr S_2$ and a vector $(f(1),f(2))$ represents elements in $K$ viewed as a subgroup of $S_\ell^2$. }
    \label{table:wreath2}\label{table:wreathY=2}\label{table:wreathY=2N}
    $$\begin{array}{| l | l |}
    \hline
    %f.11:
    F1A.1a & ([1^2,2^{(\ell-2)/2}],[2^{\ell/2}]), ([1^2,2^{(\ell-2)/2}],[1^2,2^{(\ell-2)/2}]), ([2,1^{\ell-2}],[1^\ell]) s, s \\ % & S_\ell \wr S_2 \\
    F1A.1b & ([1^2,2^{(\ell-2)/2}],[2^{\ell/2}]), ([1^2,2^{(\ell-2)/2}],[1^2,2^{(\ell-2)/2}]), ([2,1^{\ell-2}],[1^\ell]),  s, s \\% & S_\ell \wr S_2 \\
    %f.10:
    F1A.2a & ([1,2^{(\ell-1)/2}],[1^3,2^{(\ell-3)/2}]), ([1,2^{(\ell-1)/2}],[1,2^{(\ell-1)/2}]), ([2,1^{\ell-2}],[1^\ell]) s, s \\ % & S_\ell \wr S_2 \\
    F1A.2b & ([1,2^{(\ell-1)/2}],[1^3,2^{(\ell-3)/2}]), ([1,2^{(\ell-1)/2}],[1,2^{(\ell-1)/2}]), ([2,1^{\ell-2}],[1^\ell]), s, s \\% & S_\ell \wr S_2 \\
    %f.5:
    F1A.3a & ([3,2^{(\ell-3)/2}],[1^3,2^{(\ell-3)/2}]), ([1,2^{(\ell-1)/2}],[1,2^{(\ell-1)/2}]), s, s  \\ %& K.\langle s\rangle  \\
    F1A.3b & ([3,2^{(\ell-3)/2}],[1,2^{(\ell-1)/2}]), ([1^3,2^{(\ell-3)/2}],[1,2^{(\ell-1)/2}]), s, s  \\ %& S_\ell \wr S_2 \\
    %f.3:
    F1A.4a & ([1,3,2^{(\ell-4)/2}],[2^{\ell/2}]), ([1^2,2^{(\ell-2)/2}],[1^2,2^{(\ell-2)/2}]), s, s \\% &  K.\langle s\rangle  \\
    F1A.4b  & ([1,3,2^{(\ell-4)/2}],[1^2,2^{(\ell-2)/2}]), ([2^{\ell/2}],[1^2,2^{(\ell-2)/2}]), s, s  \\ %& S_\ell \wr S_2 \\
    %f.4:
    F1A.5 & ([1,2^{(\ell-1)/2}],[1^2,3,2^{(\ell-5)/2}]), ([1,2^{(\ell-1)/2}],[1,2^{(\ell-1)/2}]), s, s  \\ %&  A_\ell \wr S _2 \text{if }\ell=1 mod 4,  K.\langle s\rangle  if \ell=3 mod 4 \\
    %f.9:
    F1A.6a & ([1,4,2^{(\ell-5)/2}],[1^3,2^{(\ell-3)/2}]), ([1,2^{(\ell-1)/2}],[1,2^{(\ell-1)/2}]), s, s  \\ %& K.\langle s\rangle  \\
    F1A.6b & ([1,4,2^{(\ell-5)/2}],[1,2^{(\ell-1)/2}]), ([1^3,2^{(\ell-3)/2}],[1,2^{(\ell-1)/2}]), s, s  \\ %& S_\ell \wr S_2 \\
    %f.6:
    F1A.7a & ([4,1^2,2^{(\ell-6)/2}],[2^{\ell/2}]), ([1^2,2^{(\ell-2)/2}],[1^2,2^{(\ell-2)/2}]), s, s \\% & K.\langle s\rangle  \\
    F1A.7b & ([4,1^2,2^{(\ell-6)/2}],[1^2,2^{(\ell-2)/2}]), ([2^{\ell/2}],[1^2,2^{(\ell-2)/2}]), s, s  \\ %& S_\ell \wr S_2 \\
    %f.7:
    F1A.8 & ([1,2^{(\ell-1)/2}],[1^3,4,2^{(\ell-7)/2}]), ([1,2^{(\ell-1)/2}],[1,2^{(\ell-1)/2}]), s, s  \\ %& A_\ell \wr S_2 if \ell=1 mod 4,  K.\langle s\rangle  if \ell=3 mod 4 \\
    %f.8:
    F1A.9 & ([1^2,2^{(\ell-2)/2}],[4,2^{(\ell-4)/2}]), ([1^2,2^{(\ell-2)/2}],[1^2,2^{(\ell-2)/2}]), s, s  \\ %&  A_\ell \wr S_2 if \ell=2 mod 4,  K.\langle s\rangle  if \ell=0 mod 4 \\
    \hline
    %\begin{enumerate}
    %3
    F4.1 &  \s, \s, ([1^{\ell-2},2],1) \s, ([1^{\ell-2},2],1) \s \\
    %4
    F4.2 & \s, \s, \s, ([1^{\ell-2},2],1) \s, ([1^{\ell-2},2],1) \\
    %8
    F4.3 & \s, \s, \s, \s, ([1^{\ell-2},2],1), ([1^{\ell-2},2],1) \\
    %(7)
%    F4.N3 & \s, \s, \s, \s, ([1^{\ell-2},2],[1^{\ell-2},2]) \\
    %(5)
    F4.4 & \s, \s, \s, \s, ([1^{\ell-4},2^2],1) \\
    %(6)
    F4.5 & \s, \s, \s, \s, ([1^{\ell-3},3],1) \\
    \hline
%    F4.N1 & \s, \s, \s, ([1^{\ell-4},2^2],1) \s \\
    %2
 %   F4.N2 & \s, \s, \s, ([1^{n-3},3],1) \s \\
    %\end{enumerate}
    %\hline
    \end{array}$$ 
    \end{table}
    
\begin{comment}

Count these types:
I1: 5 + 4phi(ell)
I2: 2phi(ell)+6 if ell odd, 5phi(ell)/2+4 if ell even
F2: 1
F3: 1 if ell\ne 2 mod 4
F1: 8 if ell odd, 7 if ell even
F4: 5

Total:
13/2 phi(ell)+23 if 4|ell
13/2 phi(ell)+22 if ell=2 mod 4
%DN: For even the ramification types coincide for two of the types. 
6 phi(ell)+26 if ell odd

\end{comment}

    %========================= Thao-Zieve lemma ==============
    
    \section{Almost Galois ramification}\label{sec:almost-Gal}
    
    A key ingredient in the proof of Theorem \ref{thm:main-wreath} is proving that an ``almost Galois/regular" behavior occurs. 
    For a covering $h_1:Y_1\ra Y$, this means that similarly to Galois coverings the ramification multiset $E_{h_1}(P)$ has equal entries with a bounded amount of exceptions.  We use the following proposition to guarantee such a property. This proposition is based on an  idea from Do--Zieve \cite{DZ}. % of fibers products
    \begin{prop} \label{lem:DZ}
    For every integer $\alpha>0$, there exists a constant $L_{1,\alpha}$ depending only on $\alpha$ that satisfies the following property. 
    Let $h_1:Y_1\ra Y$ and $h_2:Y_2\ra Y$ be two coverings of degree $\ell\geq L_{1,\alpha}$, 
    and let $p:Z\ra Y$ be a minimal covering that factors through $h_1$ and $h_2$. 
    Assume $g_Z<\alpha\ell$ and $\deg p = \deg h_1\cdot \deg h_2$. Then for every point $P$ of $Y$ one of the following holds:
    \begin{enumerate}
    \item[$(a)$] there exists some $k\le 6$ such that, under each of the maps $h_1$ and $h_2$, the number of preimages of $P$ which have ramification index $k$ is at least
    $$\frac{\ell}{k} - 2(\alpha+1)(k+1) - \frac{2}{3}(\alpha+1)(k^2-1);$$
    \item[$(b)$] for each $k\le 6$, under each of the maps $h_1$ and $h_2$ the number of preimages of $P$
    which have ramification index $k$ is at most $2(\alpha+1)(k+1)$.
    \end{enumerate}
    \end{prop}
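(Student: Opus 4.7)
The plan is to argue by contradiction on a single point $P\in Y$. Fix $P$ and set $A_k$ (resp.\ $B_k$) to be the number of preimages of $P$ under $h_1$ (resp.\ $h_2$) with ramification index exactly $k$, so $\sum_k k A_k = \sum_k k B_k = \ell$. By the hypothesis $\deg p = \deg h_1\cdot \deg h_2 = \ell^2$, Lemma~\ref{lem:abh} gives the exact formula
\[
R_p(P)=\sum_{r_1\in E_{h_1}(P),\,r_2\in E_{h_2}(P)}\bigl(r_1r_2-(r_1,r_2)\bigr) = \ell^2-\sum_{k_1,k_2\ge 1}A_{k_1}B_{k_2}(k_1,k_2).
\]
I would first turn the hypothesis $g_Z<\alpha\ell$ into a ``budget''. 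Applying Riemann--Hurwitz to $p_i\colon Z\to Y_i$ (degree $\ell$) forces $g_{Y_i}\leq \alpha$, and a further application to $h_i$ (again degree $\ell$) forces $g_Y\leq 1$ provided $L_{1,\alpha}$ is chosen so that $\ell>\alpha$. Riemann--Hurwitz for $p\colon Z\to Y$ (degree $\ell^2$) then yields $\sum_{P'\in Y}R_p(P')<2\alpha\ell+2\ell^2$.

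Now suppose that at some $P$ both (a) and (b) fail. By symmetry I may assume the failure of (b) is witnessed by $h_1$, so there is some $k_0\le 6$ with $A_{k_0}>\mathrm{Small}_{k_0}:=2(\alpha+1)(k_0+1)$. Setting $\mathrm{Big}_k:=2(\alpha+1)(k+1)+\tfrac23(\alpha+1)(k^2-1)$, the failure of (a) at $k=k_0$ gives either $A_{k_0}<\ell/k_0-\mathrm{Big}_{k_0}$ (so $A_{k_0}$ lies in a ``medium'' window) or $B_{k_0}<\ell/k_0-\mathrm{Big}_{k_0}$ (so $B_{k_0}$ is ``small''). In either case the key is to combine the $A_{k_0}$-lower bound with the $B_{k_0}$-upper bound, or with the per-$k$ failures of (a) for other $k\le 6$, to force $R_p(P)$ to be much larger than permitted by the budget from Step~1 after summing over other branch points.

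For the local lower bound I would use the crude inequality $k_0 r_2-(k_0,r_2)\ge k_0(r_2-1)$ (from $(k_0,r_2)\le k_0$) applied to the contribution from $k_0$-cycles of $h_1$ paired with cycles of length $r_2\ne k_0$ of $h_2$. This gives
\[
R_p(P)\ \ge\ k_0 A_{k_0}\bigl(R_{h_2}(P)-B_{k_0}(k_0-1)\bigr)\ +\ A_{k_0}B_{k_0}\cdot k_0(k_0-1),
\]
and the analogous bound swapping roles. When $B_{k_0}$ is below the threshold $\ell/k_0-\mathrm{Big}_{k_0}$ the non-$k_0$ mass of $h_2$ has size at least $k_0\,\mathrm{Big}_{k_0}$, which together with $A_{k_0}>\mathrm{Small}_{k_0}$ already produces a sub-contribution of order $A_{k_0}\cdot k_0\,\mathrm{Big}_{k_0}$; in the ``medium $A_{k_0}$'' case one applies the same reasoning after exchanging $A_{k_0}$ and $B_{k_0}$, iterating through each $k\le 6$ at which (a) fails. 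The constants $\mathrm{Small}_k,\,\mathrm{Big}_k$ are chosen precisely so that the sum of these per-$k$ excess contributions exceeds the global budget $2\alpha\ell+2\ell^2$.

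The main obstacle will be the bookkeeping in the case analysis: the bounds in (a) and (b) are tight enough that individual per-$k$ estimates are barely enough, and one must carefully track both the small ``medium-window'' exceptions for $A_{k_0}$ and the contributions from pairings with cycles of length $>6$. Handling all six values $k\in\{1,\ldots,6\}$ simultaneously, rather than $k_0$ alone, is where the balance of the argument lies, and the term $\tfrac23(\alpha+1)(k^2-1)$ in the definition of $\mathrm{Big}_k$ is calibrated to absorb the slack from the inequality $k_0 r_2-(k_0,r_2)\ge k_0(r_2-1)$ when $r_2$ ranges over small values.
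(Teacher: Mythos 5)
Your plan has a structural flaw that makes it unable to reach the stated constants, and the issue is precisely which Riemann--Hurwitz contribution you use as the ``budget.''

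You compute $R_p(P)=\ell^2-\sum_{k_1,k_2}A_{k_1}B_{k_2}(k_1,k_2)$, i.e.\ the contribution of the degree-$\ell^2$ covering $p\colon Z\to Y$, and you bound the total by
$\sum_{P'}R_p(P')<2\alpha\ell+2\ell^2$. But for \emph{any} point $P$ the quantity $R_p(P)$ is at most $\ell^2-|E_p(P)|<\ell^2$, which is already below the $2\ell^2$ term in your budget; and the lower bound you produce, $R_p(P)\ge A_{k_0}k_0R_{h_2}(P)$ (your displayed inequality simplifies to exactly this), is at most $\ell(\ell-1)$ since $A_{k_0}k_0\le\ell$ and $R_{h_2}(P)\le\ell-1$. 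So no matter how $\mathrm{Small}_k$ and $\mathrm{Big}_k$ are tuned, the claimed excess can never exceed the budget: the $2\ell^2$ slack in your inequality swallows every possible contribution. This is not a bookkeeping obstacle; the budget is simply on the wrong scale.

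The paper's proof avoids this by writing Riemann--Hurwitz for the degree-$\ell$ map $p_2\colon Z\to Y_2$ rather than for $p$. Since $g_{Y_2}\ge 0$, this gives the much tighter, \emph{linear} per-point budget
$R_{p_2}(h_2^{-1}(P))\le 2g_Z+2\ell-2<2(\alpha+1)\ell-2$,
and Abhyankar's lemma identifies $R_{p_2}(h_2^{-1}(P))=\sum_{e\in E_{h_1}(P),\,f\in E_{h_2}(P)}(e-(e,f))$, a sum that can be as large as $\ell(\ell-1)$ when the two ramification data are badly mismatched. That tension between a potentially quadratic quantity and a linear cap is what actually drives the dichotomy. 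The paper then carries out an induction on $k$ (the inductive hypothesis supplies the bound $R:=\sum_{e_i<k}e_i\le 2(\alpha+1)(k^3-k)/3=:d$, which is the source of the $\tfrac23(\alpha+1)(k^2-1)$ correction term you were trying to explain away) and splits into the cases $s\le(\ell-d)/(2k)$ and $s>(\ell-d)/(2k)$ to land exactly in (b) or (a) respectively. To repair your approach you would need to replace the $R_p$-budget by the $R_{p_2}$-budget and rebuild the per-$k$ estimates around the linear formula $\sum(e-(e,f))$; the quadratic one cannot work.
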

    
    \begin{proof}
    Write $E_{h_1}(P)=[e_1,\ldots,e_n]$ and $E_{h_2}(P)=[f_1,\ldots,f_m]$. 
    %$f_j=e_{h_2}(R_j)$ for $i=1,\ldots, n$ and $j=1,\ldots, m$.  
    Since $g_Y\geq 0$ and $g_Z<\alpha\ell$,  the Riemann--Hurwitz formula for the natural projection $p_2:Z\ra Y_2$, Lemma~\ref{lem:abh} yield
    \begin{equation} \label{rh0}
    2\ell(\alpha+1)-2 \ge 2g_Z+2\ell-2\ge R_{p_2}(h_2^{-1}(p))
    %\sum_{i=1}^{n} \sum_{j=1}^m (e_i,f_j)\Bigl( \frac{\lcm(e_i,f_j)}{f_j} - 1\Bigr) 
    = \sum_{i=1}^{\ell} \sum_{j=1}^m \bigl(e_i-(e_i,f_j)\bigr).
    \end{equation}
    
    We prove the assertion by induction on $k$, with the base case $k=0$ being vacuous.  So fix a positive integer $k\le 6$,
    and assume that for any $0<k_0<k$ the number of copies of $k_0$ amongst the $e_i$'s and $f_j$'s is at most $2(\alpha+1)(k_0+1)$.
    Let $r$ be the number of $i$'s for which $e_i$ equals $k$, and let $R=\sum_{i: e_i<k} e_i$,
     and likewise let $s$ be the number of $j$'s for which $f_j$ equals $k$, and $S=\sum_{j: f_j<k} f_j$.  Then
    \[
    R \le \sum_{k_0=1}^{k-1} 2(\alpha+1)(k_0+1)k_0 = 2(\alpha+1)\Bigl(\frac{(k-1)k(2k-1)}6 + \frac{(k-1)k}2\Bigr)
    = 2(\alpha+1)\frac{k^3-k}3.
    \]
    Putting $d:=2(\alpha+1)(k^3-k)/3$, and considering the contribution to the right side of (\ref{rh0}) coming
    from the $s$ distinct $j$'s for which $f_j$ equals $k$, we get
    \begin{equation} \label{rhs}
    2(\alpha+1)\ell > s\sum_{i=1}^{n} (e_i-(k,e_i)) \ge s\sum_{e_i> k} (e_i-k) \ge s\sum_{e_i>k} \frac{e_i}{k+1} =
    s\frac{\ell-kr-R}{k+1}\ge s\frac{\ell-kr-d}{k+1}.
    \end{equation}
    Likewise we get $S\le d$ and
    \begin{equation} \label{rhr}
    2(\alpha+1)\ell > r\frac{\ell-ks-S}{k+1} \ge r\frac{\ell-ks-d}{k+1}.
    \end{equation}
First assume  $s \le (\ell-d)/(2k)$.  Then (\ref{rhr}) gives
    \[
    2(\alpha+1)\ell > r\frac{\ell-d}{2(k+1)},
    %\]
    \text{ so that }
    %\[
    r<\frac{4(\alpha+1)\ell(k+1)}{\ell-d}
    \]
    and thus % for sufficiently large $\ell$ we get
    %\[
    $r<8(\alpha+1)(k+1)$, for every $\ell>280(\alpha+1)\geq 4(\alpha+1)(k^3-k)/3 = 2d$. % greater than a constant $M_{0,c}$ depending only on $c$. 
    %\]
    Substituting this into (\ref{rhs}) gives
    \[
    2(\alpha+1)\ell > s\frac{\ell-kr-d}{k+1}>s\frac{\ell-8(\alpha+1)k(k+1)-d}{k+1},
    \]
    so that
    \begin{equation}\label{equ:rounding}
    s<\frac{2(\alpha+1)\ell(k+1)}{\ell-8(\alpha+1)k(k+1)-d},
    \end{equation}
    for $\ell>476(\alpha+1)\geq 8(\alpha+1)k(k+1)+d$. 
    Since $s$ is an integer, there exists a constant $L'_{1,\alpha}$ such that 
    for every $\ell\geq L'_{1,\alpha}$, \eqref{equ:rounding}  forces 
    %we have 
    %(or more generally, a rational number with bounded denominator)
    %\[
    $s \le 2(\alpha+1)(k+1).$
    %\]
    Since we have shown that $r<8(\alpha+1)(k+1)$, it follows that $r\le (\ell-d)/(2k)$, for $\ell>812(\alpha+1)\geq 16(\alpha+1)k(k+1)+ d$
    % Here d\leq 140(\alpha+1) and 16*6*7=672
    Interchanging the roles of $r$ and $s$, the same argument yields $r\le 2(\alpha+1)(k+1)$.
    
    Next assume that $s>(\ell-d)/(2k)$.
    Then (\ref{rhs}) gives
    \[
    2(\alpha+1)\ell > s\frac{\ell-kr-R}{k+1} \ge \frac{\ell-d}{2k}\cdot\frac{\ell-kr-R}{k+1},
    \]
    so that
    \[
    \ell-kr-R<\frac{4(\alpha+1)\ell k(k+1)}{\ell-d},
    \]
    for $\ell>140(\alpha+1)\geq d$. Hence
    %whence
    %\[
    $\ell-kr-R<8(\alpha+1)k(k+1)$
    %\]
    for $\ell>280(\alpha+1)\geq 2d$, so that
    \[
    r > \frac{\ell-R-8(\alpha+1)k(k+1)}{k} \ge \frac{\ell-d-8(\alpha+1)k(k+1)}k.
    \]
    Substituting this into (\ref{rhr}) gives
    \[
    2(\alpha+1)\ell > r\frac{\ell-ks-S}{k+1} \ge \frac{\ell-d-8(\alpha+1)k(k+1)}{k}\cdot\frac{\ell-ks-S}{k+1},
    \]
    so that
    \begin{equation}\label{equ:bound-other}
    \ell-ks-S < \frac{2(\alpha+1)\ell k(k+1)}{\ell-d-8(\alpha+1)k(k+1)},
    \end{equation}
    for $\ell>476(\alpha+1)\geq 8(\alpha+1)k(k+1)+d$. As the left hand side of \eqref{equ:bound-other} is an integer,  %for %sufficiently large 
     %one has
    \[
    \ell-ks-S \le 2(\alpha+1)k(k+1)\text{ for $\ell\geq L'_{1,\alpha}$,}
    \]
    so that
    %\begin{equation}\label{equ:bound-other}
    \[
    s \ge \frac{\ell-S-2(\alpha+1)k(k+1)}k \ge \frac{\ell-d-2(\alpha+1)k(k+1)}k.
    \]
    %\end{equation}
    Since we showed that $r>(\ell-d-8(\alpha+1)k(k+1))/k$, in particular $r>(\ell-d)/(2k)$, for $\ell>812(\alpha+1)\geq 16(\alpha+1)k(k+1)+ d$. Interchanging
    the roles of $r$ and $s$, we obtain $r\ge (\ell-d-2(\alpha+1)k(k+1))/k$ for every $\ell\geq L_{1,\alpha}:=\max\{L'_{1,\alpha}, 812(\alpha+1)\}$. 
    \end{proof}
    %Note that as $\ell$ is sufficiently large, only one of cases (a) and (b) holds, and (a) holds for at most one value of $k$. 
    \begin{defn}\label{def:almost-galois-type}
    Fix an integer $\alpha>0$. Let $h_1:Y_1\ra Y$ be  a covering of degree $\ell$.
    We say that a point $P$ of $Y$ is of {\it almost Galois type $m_{h_1}(P):=k$} if $\ell \geq L_{1,\alpha}$ and $1\leq k\leq 6$ is the minimal integer for which $E_{h_1}(P)$ contains at least
    $\ell/k - 2(\alpha+1)(k+1) - 2(\alpha+1)(k^2-1)/3$ entries that are equal to $k$, as in case (a) of Proposition \ref{lem:DZ}. 
    %and  $1\leq k\leq 6$ is the minimal integer for which it holds. 
    %for some integer $1\leq k\leq 6$ and $m$ is the minimal such integer.
    We say that $P$ is of {\it almost Galois type} $m_{h_1}(P):=\infty$ if   $E_{h_1}(P)$ has at most $2(\alpha+1)(k+1)$ entries equal to $k$  (as in  Proposition \ref{lem:DZ}.(b)), and $P$ is not of almost Galois type $k$ for $1\leq k\leq 6$, and $\ell \geq L_{1,\alpha}$. 
    
    For a point $P$ of almost Galois type $m_{h_1}(P)=k<\infty$, we define the {\it error} $\eps=\eps_{\alpha,k}$ of $h_1$ at $P$  to be $\eps:= 2(\alpha+1)k(k+1 + (k^2-1)/3)$, so that $\eps$ bounds the sum of entries in $E_{h_1}(P)$ which are different from $k$. 
    \end{defn}
    \begin{rem}\label{lem:same-m_P} 
    (1) Under its assumptions, Proposition \ref{lem:DZ} also shows that $m_{h_1}(P) = m_{h_2}(P)$ for every point $P$ of $Y$. 
    
    (2) We note that if $P$ is a point of almost Galois type, the value $m_{h_1}(P)$ is independent of the constant $\alpha$. Indeed,  if $m_P:=m_{h_1}(P)$ is finite, the requirement
    $\ell\geq L_{1,\alpha}\geq 812(\alpha+1)$ implies that $\ell> 2\eps_{\alpha,m_P}$ and hence that the sum of entries equal to $m_P$ in $E_{h_1}(P)$ is at least $\ell/2$. If $m_P=\infty$, the same requirement implies that the sum of entries which are at most $6$ in $E_{h_1}(P)$ is less than $\ell/2$. Thus $P$ cannot be a point of distinct almost Galois types for two values of $\alpha$. 
    \end{rem}
    %\begin{proof}
    %Let $Y_i=Y_1^\sigma$ and $\pi_i=\pi_1^\sigma$. Since for a preimage $Q_1\in \pi_1^{-1}(P)$ we have $e_{\pi_1}(Q_1)=e_{\pi_1^\sigma}(Q_1^\sigma)$, the branch cycle of $P$ under $\pi_1$ is the same as the branch cycle of $P^\sigma$ under $\pi_i$.  In particular, $m_P=m_{P^\sigma}$.
    % Proposition \ref{lem:DZ} shows that the  values $m_{P^\sigma}$ are the same in the coverings $\pi_1$ and $\pi_i$. Hence $m_P=m_{P^\sigma}$ under $\pi_1$.
    %\end{proof}

    \begin{cor} \label{cor:DZ}\label{cor:DZ1}
    For every integer $\alpha>0$, there exists a constant $L_{2,\alpha}\geq L_{1,\alpha}$ satisfying the following property. Let $h_i:Y_i\ra Y$, $i=1,2$ be coverings of degree $\ell\geq L_{2,\alpha}$, and let $p:Z\ra Y$ be a minimal covering factoring through $h_1$ and $h_2$. Assume $g_Z<\alpha\ell$ and $\deg p=\deg h_1\cdot\deg h_2$, so that
     every point $P$ of $Y$ is of almost Galois type by Proposition \ref{lem:DZ}, for $i=1,2$. 
    Then all of the following hold:
\begin{enumerate}
\item $g_Y\leq 1$;
\item the multiset 
    $M_{h_1}:=\{m_{h_1}(P)>1\,|\,P\in Y(\K)\} $
    is empty if $g_Y=1$;
\item $M_{h_1}$ is one of the following if $g_Y=0$:
    \begin{itemize}
    \item $\infty,\infty$
    \item $\infty,2,2$
    \item $3,3,3$
    \item $2,3,6$
    \item $2,4,4$
    \item $2,2,2,2$;
    \end{itemize}
\item the number of branch points $P$ of $Y$ with $m_{h_1}(P)=1$ is bounded by $2(\alpha+1)$;
\item there exists an integer $N_\alpha$, depending only on $\alpha$, such  the number of points of $Y_1$ lying over $P$ is at most $N_\alpha$  for every  $P\in Y(\K)$ with $m_{h_1}(P)=\infty$;
\item $g_{Y_i}\leq \alpha+1$ for $i\in I$. % $i=1,2$. 
\end{enumerate}
    \end{cor}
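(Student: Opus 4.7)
The plan is to extract all six conclusions from the Riemann--Hurwitz identity for the degree-$\ell$ covering $h_1\co Y_1\to Y$, refined using \Lref{lem:abh} applied to the fiber product $Z$. I would address the parts in the order (6), (1), (2), (4), (3), (5).

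First, for (6), apply Riemann--Hurwitz to $p_2\co Z\to Y_2$ of degree $\deg h_1=\ell$: $2g_Z-2\ge \ell(2g_{Y_2}-2)$, combined with $g_Z<\alpha\ell$, gives $g_{Y_2}\le\alpha$; symmetrically for $Y_1$. For (1), the identity $2g_{Y_1}-2=\ell(2g_Y-2)+\sum_P R_{h_1}(P)$ with $g_{Y_1}\le\alpha+1$ and $\sum_P R_{h_1}(P)\ge 0$ forces $g_Y\le 1$ for $\ell$ large. For (2), when $g_Y=1$ the identity reduces to $\sum_P R_{h_1}(P)=2g_{Y_1}-2\le 2\alpha$, while \Dref{def:almost-galois-type} implies that every point with $m_{h_1}(P)\ge 2$ (including $\infty$) satisfies $R_{h_1}(P)\ge \ell/2-O(\alpha)$; so no such point exists for $\ell$ large.

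The technical heart is (4), proved using Abhyankar's lemma. For a type-$1$ branch point $P$ of $h_1$, Remark \ref{lem:same-m_P} gives $m_{h_2}(P)=1$ as well, so both $E_{h_1}(P)$ and $E_{h_2}(P)$ contain at least $\ell-4(\alpha+1)$ entries equal to $1$. Restricting the sum in \Lref{lem:abh} to pairs $(r_1,r_2)$ with $r_1>1$ and $r_2=1$:
\[
R_{p_2}(h_2^{-1}(P))=\sum_{r_1\in E_{h_1}(P),\,r_2\in E_{h_2}(P)}(r_1-(r_1,r_2))\ge (\ell-4(\alpha+1))R_{h_1}(P)\ge \ell-4(\alpha+1).
\]
Letting $\nu_1$ denote the number of type-$1$ branches of $h_1$, Riemann--Hurwitz for $p_2$ bounds $\sum_P R_{p_2}(h_2^{-1}(P))\le 2g_Z+2\ell-2<2(\alpha+1)\ell$, yielding $\nu_1(\ell-4(\alpha+1))<2(\alpha+1)\ell$, hence $\nu_1\le 2(\alpha+1)$ for $\ell$ large.

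For (3), assume $g_Y=0$, so $\sum_P R_{h_1}(P)=2\ell+2g_{Y_1}-2\in[2\ell-2,2\ell+2\alpha]$. By (4) and the bound $R_{h_1}(P)\le 4(\alpha+1)$ at type-$1$ branches, the total type-$1$ contribution is $O(\alpha^2)$. Points with $m_{h_1}(P)=k$ finite contribute $R_{h_1}(P)=\ell(1-1/k)+O(\alpha)$ (in both directions), and $\infty$-type points contribute between $6\ell/7-O(1)$ and $\ell-1$. Since each such contribution is $\ge\ell/2-O(\alpha)$, the multiset $M_{h_1}$ has size at most $4$ for $\ell$ large. Setting $s:=\sum_{k\in M_{h_1}}(1-1/k)$ with the convention $1-1/\infty:=1$: the upper bounds on individual $R$ combined with $\sum R\le 2\ell+2\alpha$ give $s\le 2+O(\alpha/\ell)$, while the lower bounds combined with (4) and $\sum R\ge 2\ell-2$ give $s\ge 2-O(\alpha^2/\ell)$. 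Since the possible values of $s$ have denominator dividing $60$, for $\ell$ sufficiently large $s=2$ exactly; enumerating multisets of size $\le 4$ with values in $\{2,3,4,5,6,\infty\}$ and $s=2$ yields exactly the six signatures listed. Finally, for (5), once $s=s_\mathrm{fin}+n_\infty=2$ exact, the balance $\sum_\infty R_{h_1}(P)=\ell n_\infty+O(\alpha^2)$ combined with the trivial bound $R_{h_1}(P)\le\ell-1$ forces every $\infty$-type point to satisfy $R_{h_1}(P)\ge \ell-O(\alpha^2)$, so $|E_{h_1}(P)|\le N_\alpha$ for some $N_\alpha=O(\alpha^2)$.

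The main obstacle is (4): without it, the enumeration in (3) only yields $s_\mathrm{fin}+6n_\infty/7\le 2$, which admits many spurious signatures, and the weak lower bound $R\ge 6\ell/7$ at $\infty$-type points is insufficient to conclude $|E_{h_1}(P)|$ is bounded (so (5) would fail). Once (4) is in hand, the Riemann--Hurwitz balance becomes rigid and (3) and (5) follow cleanly.
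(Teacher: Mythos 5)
Your proposal is correct, and for the parts the paper proves directly it follows the same route: part (6) is the paper's observation that $\ell(g_{Y_i}-1)\le g_Z-1<\alpha\ell$, and part (4) is the paper's argument that $R_{p_2}(h_2^{-1}(P))\ge\ell-O_\alpha(1)$ at each type-$1$ branch point followed by summing against the Riemann--Hurwitz bound $\sum_P R_{p_2}(h_2^{-1}(P))<2(\alpha+1)\ell$; your variant of isolating the $r_2=1$ summands is just a cleaner way to extract the same lower bound that the paper gets via the two-sided estimate $\abs{R_{p_2}(h_2^{-1}(P))-\ell R_{h_1}(P)}<\delta_\alpha$. For parts (1)--(3) and (5) the paper does not argue directly but delegates to \cite{DZ} via Remark~\ref{rem:almost-Galois-NZ}; your self-contained Riemann--Hurwitz accounting for $h_1$ fills in exactly what that citation elides, so I would still call this essentially the same approach rather than a genuinely new one.

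One small imprecision worth flagging in your part (3): the description swaps the roles of the upper and lower bounds on individual $R_{h_1}(P)$, and the claim that ``$s=2$ exactly'' is not literally what the estimates give. Because the $\infty$-type lower bound is only $R_{h_1}(P)\ge 6\ell/7-O_\alpha(1)$, the two usable inequalities are $s_{\mathrm{fin}}+n_\infty\ge 2-o(1)$ (from $R_{h_1}(P)\le\ell$ and $\sum R\ge 2\ell-2$, with the type-$1$ error controlled by (4)) and $s_{\mathrm{fin}}+\tfrac{6}{7}n_\infty\le 2+o(1)$ (from the lower bounds on $R$ and $\sum R\le 2\ell+2\alpha$). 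These differ, but taken together with the integrality of $n_\infty$ and the discreteness of $s_{\mathrm{fin}}$ (denominator dividing $60$) they do force $n_\infty\in\{0,1,2\}$ with $s_{\mathrm{fin}}=2,1,0$ respectively, which is exactly your six-signature list; your (5) then follows by the balance argument you give. So the proof goes through — only the prose around the enumeration needs tightening.
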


\begin{rem}\label{rem:almost-Galois-NZ} The proof relies on the proof of \cite[Corollary 7.3]{DZ}. Namely, repeating this proof gives a constant $M_\alpha$, depending only on $\alpha$, which satisfies the following property. If $h_1:Y_1\ra Y$ is a covering of degree $\ell\geq M_\alpha$ such that every $P\in Y(\K)$ is of almost Galois type $m_{h_1}(P)$ with constant $\alpha$, and if (4) and (6) hold, then (1)-(3) and (5) hold as well.
    \end{rem}
    \begin{proof}
    %Let  $h^{(2)}:Z\ra Y_2$ be the natural projection. 
    % from $Z:=Y_1\times_Y Y_2$. 
    The Riemann--Hurwitz formula for the natural projection $p_2:Z\ra Y_2$ gives:
    \begin{equation}\label{equ:DZ-h2}
    2(\alpha+1)\ell-2 > 2(g_Z-1+\ell) = \sum_{P\in Y(\K)} R_{p_2}(h_1^{-1}(P)).
    \end{equation} 
    For a fixed point $P$ of $Y$, % with  $E_{h_1}(P) = [e_1,\ldots, e_n]$, $E_{h_2}(P) = [f_1,\ldots, f_m]$, 
    we have
    %If $r_i=e_{h_1}(Q_i)$, and $s_j=e_g(R_j)$ for $i=1,\ldots, n$ and $j=1,\ldots, m$, then
    \[
    R_{p_2}(h_1^{-1}(P)) = \sum_{e\in E_{h_1}(P),\, f\in E_{h_2}(P)} (e-(e,f)),
    \]
    by Abhyankar's lemma \ref{lem:abh}. We next bound $\vert  R_{p_2}(h_1^{-1}(P)) - \ell R_{h_1}(P)\vert$ in terms of $\alpha$, for a point $P$ with $m_{h_1}(P)=1$.
    Let $m_P:=m_{h_1}(P)$ and let $\eps_P:=\eps(\alpha,m_P)$ be the error at $P$, for a point $P$ of $Y$.
    %Since every point $P$ is of almost Galois type, the sum of entries of $E_{h_1}(P)$ that are different from   is bounded by , in case $m_P=1$ one has  
    Since the number of entries of $E_{h_1}(P)$ which equal $m_P$ is at least $(\ell-\eps_P)/m_P$ for every point $P$, for $m_P=1$ the sum
    \[
    %\begin{align*}
    \sum_{\substack{e\in E_{h_1}(P),\, f\in E_{h_2}(P) \\ f=1}}(e-(e,f)) 
    = \abs{f\in E_{h_1}(P)\suchthat f=1}\sum_{e\in E_{h_1}(P)}(e-1) 
    %\sum_{\substack{e\in E_{h_1}(P),\, f\in E_{h_2}(P) \\ f=1}}(e-(e,f)) & \leq \ell\sum_{e\in E_{h_1}(P)}(e-1).
    %\end{align*}
    \]
    is at least $(\ell-\eps_P)\sum_{e\in E_{h_1}(P)}(e-1)$ and at most $\ell\sum_{e\in E_{h_1}(P)}(e-1)$.
    Since the sum and hence number of entries different from $m_P$ is at most $\eps_P$, one also has 
    \[
    \sum_{\substack{e\in E_{h_1}(P),\, f\in E_{h_2}(P) \\ e\neq 1,f\neq 1}}(e-(e,f))<\eps_P(\eps_P-1),
    \]
    for every point $P$ of $Y$.
    %where the sum runs over $e\in E_{h_1}(P)$ and $f\in E_{h_2}(P)$ different from $1$.  
    %Proposition~\ref{lem:DZ} implies that 
    Combining these inequalities for a point $P$ with $m_P=1$, we have:
    \begin{equation}\label{equ:h2h1}
    \begin{split}
    \bigl\vert  R_{p_2}(h_1^{-1}(P)) - \ell R_{h_1}(P) \bigr\vert & =
    \bigl\vert \sum_{e\in E_{h_1}(P),\, f\in E_{h_2}(P)}\bigl(e-(e,f)\bigr) - \ell\sum_{e\in E_{h_1}(P)}(e-1)\bigr\vert  \\
      & 
      \leq\bigl\vert \sum_{\substack{e\in E_{h_1}(P),\, f\in E_{h_2}(P) \\ f= 1}}\bigl(e-(e,f)\bigr) - \ell\sum_{e\in E_{h_1}(P)}(e-1)\bigr\vert \\
      & 
      \qquad + \bigl\vert \sum_{\substack{e\in E_{h_1}(P),\, f\in E_{h_2}(P) \\ e\neq 1, f\neq 1}}\bigl(e-(e,f)\bigr) \bigr\vert \\
      & < \eps_P\sum_{e\in E_{h_1}(P)}(e-1) + \eps_P(\eps_P-1) \leq 2\eps_P(\eps_P-1) < \delta_\alpha. 
    %  & \le \eps_h(P)(\eps_h(P)-1) < \delta\alpha^2
    \end{split}
    \end{equation}
    for some constant $\delta_\alpha>1$, depending only on $\alpha$. 
    %where the right hand side is a constant depending only on $c$.  
    In particular,  $R_{p_2}(h_1^{-1}(P)) \ge \ell-\delta_\alpha$  if $P$ ramifies under $h_1$.
    Hence \eqref{equ:DZ-h2} and \eqref{equ:h2h1} give 
    $
    2(\alpha+1)\ell>B(\ell-\delta_\alpha)$ where $B$ is the number of branch points of $h_1$ with $m_P=1$. For $\ell\geq \delta_\alpha(2\alpha+3)$, this implies that 
     $B\leq 2(\alpha+1)$,
     proving part (4). 
     %Contradiction as follows: 
     % 2(\alpha+1)delta(2alpha+3)>B(ell-delta)>2(alpha+1)(ell-delta) which implies
     % delta(2alpha+3)>ell-delta and hence
     % 2(alpha+2)delta>ell>2(alpha+1)delta(2alpha+3) contradiction.
     Since $\ell(g_{Y_1}-1)\leq g_Z-1$ by the Riemann--Hurwitz formula and since by assumption $g_Z< \alpha\ell$, we also have $g_{Y_1}<\alpha+1$, proving (6). Parts (1)-(3) and (5) then follow by Remark \ref{rem:almost-Galois-NZ} for $\ell$ at least $L_{2,\alpha}:=\max\{\delta_\alpha(2\alpha+3),M_\alpha\}$. 
    %Thus, \eqref{equ:DZ-h2} and \eqref{equ:h2h1} show that for sufficiently large $\ell$, there are at most $2(\alpha+1)$ branch points $P$ of $h_1$ such that $m_{h_1}(P)=1$, proving part (4). Parts (1)-(3) and (5) then follow by Remark \ref{rem:almost-Galois-NZ}. 
    \end{proof}

    \begin{defn} \label{def:almost-Gal}
    Fix  integers $\alpha>0$ and $t\geq 2$.  Let $h_i:Y_i\ra Y$, $i=1,\ldots,t$ be coverings of degree $\ell$. 
    We call the sum of errors  $\eps=\eps_\alpha:=2(\alpha+1)\eps_{\alpha,1} + \eps_{\alpha,2}+\eps_{\alpha,3}+\eps_{\alpha,6}$ the {\it total error} and the constant $N=N_\alpha$, from Corollary \ref{cor:DZ},  the {\it entry bound}. 
    \end{defn}
    Note that for coverings $h_i$, $i=1,\ldots,t$, as in the definition, and sufficiently large $\ell$, %with pairwise genus bound $\alpha\ell$, 
    Proposition \ref{lem:DZ} implies that every point $P$ is of almost Galois type under $h_i$, $i=1,\ldots, t$. Moreover, all of the consequences of Corollary \ref{cor:DZ} hold, in particular giving the possibilities for $m_{h_i}(P)$, $P\in Y(\K)$, for every $i=1,\ldots,t$. 
    The total error $\eps$ is chosen here to bound the sum of all errors over all branch points $P\in Y(\K)$ of $h_1$ with finite $m_{h_1}(P)$, when $M_{h_1}$ is as in Corollary \ref{cor:DZ}. Note that indeed the proof of the corollary shows that the number of branch points $P$ with $m_{h_1}(P)=1$ is at most $2(\alpha+1)$. 
    Also note that for coverings $h_i:Y_i\ra Y$, $i=1,\ldots,t$ with irreducible fiber products $Y_{i}\#_Y Y_j$, 
 $i\neq j$ of genus  $<\alpha\ell$, the values $m_{h_i}(P)$ are independent of the constant $\alpha$, and independent of $i$ for every point $P$ of $Y$, by Remark \ref{lem:same-m_P}.

    %The following lemma describes the possible monodromy groups of covering with almost Galois ramification which differs mildly from a Galois ramification:
    \begin{lem}\label{lem:normal-closure}
    Let $t\geq 2$, and let $h_i:Y_i\ra Y$, $i=1,\ldots,t$ be coverings with irreducible fiber products $Y_i\#_Y Y_j$, $i\neq j$.  Suppose that $h_i:Y_i\ra Y$, $i=1,\ldots,t$ admit irreducible fiber products $Y_{i}\#_Y Y_j$, $i\neq j$ of genus  $<\alpha\ell$. Assume that all entries $r\in E_{h_j}(P)$ divide $m_{h_j}(P)$ for all points $P$ of $Y$ with finite $m_{h_j}(P)$, for some $1\leq j\leq t$. Then $h_j$, $j=1,\ldots,t$ are of genus $\leq 1$ and hence their monodromy groups are solvable of derived length at most $2$. 
    %, and moreover $G_j$ and the ramification of $h_j$ appear in \cite[Table~9.1]{NZ}. 
    %, and $K_i$ is solvable of derived length at most $2$, 
    %for $i=1,\ldots,t$. 
    % is solvable of derived length at most $2$.  
    \end{lem}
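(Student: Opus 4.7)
The plan is to combine the almost-Galois structure from Corollary~\ref{cor:DZ} with Riemann--Hurwitz and Abhyankar's lemma, exploiting the divisibility hypothesis first on the distinguished index~$j$ and then propagating to each $h_i$ via the fiber-product bound.

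First, by Remark~\ref{lem:same-m_P} the value $m_P := m_{h_i}(P)$ is independent of $i$, and Corollary~\ref{cor:DZ} applied to any pair $(h_i, h_{i'})$ forces $g_Y \le 1$ together with $M := \{m_P > 1 : P \in Y(\K)\}$ being one of $\varnothing$, $\{\infty,\infty\}$, $\{\infty,2,2\}$, $\{3,3,3\}$, $\{2,3,6\}$, $\{2,4,4\}$, $\{2,2,2,2\}$. In every case $\sum_P(1-1/m_P) = 2 - 2g_Y$, with the convention $1 - 1/\infty = 1$.

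Next I would use the hypothesis to analyze $h_j$ directly. Since $\sum_{s \in E_{h_j}(P)} s = \ell$ and every entry at a finite-$m_P$ point divides $m_P$, a direct count gives $R_{h_j}(P) \le \ell(1-1/m_P)$; at $m_P = 1$ points divisibility in fact forces every entry to equal $1$, so $R_{h_j}(P) = 0$, while at $\infty$-points the trivial bound $R_{h_j}(P) \le \ell - 1$ suffices. Summing yields $\sum_P R_{h_j}(P) \le \ell(2-2g_Y)$, so Riemann--Hurwitz gives $g_{Y_j} \le 1$. Moreover, by Remark~\ref{rem:abh} together with divisibility, the Galois closure $\tilde h_j$ has cyclic inertia of order exactly $m_P$ at each finite-$m_P$ branch point, so its orbifold signature over $Y$ is one of the parabolic signatures $(3,3,3)$, $(2,3,6)$, $(2,4,4)$, $(2,2,2,2)$, $(\infty,2,2)$, $(\infty,\infty)$, or empty. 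Each corresponding Fuchsian group is an extension of a free abelian group of rank $\le 2$ by a cyclic group of order dividing~$6$, so any finite quotient such as $\Mon(h_j)$ is metabelian.

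The hard part will be transferring these conclusions to every other $h_i$. My plan is to show that divisibility is in fact forced on each $E_{h_i}(P)$ too, using Abhyankar's lemma~\ref{lem:abh} applied to the genus-bounded fiber product $Z := Y_j \#_Y Y_i$. If some $s \in E_{h_i}(P)$ fails to divide $m_P$ at a finite-$m_P$ point, then pairing $s$ with each of the $\ge \ell/m_P - O_\alpha(1)$ dominant entries $r = m_P$ of $E_{h_j}(P)$ contributes $m_P - \gcd(m_P, s) \ge 1$ per pair to $R_{p_i}(h_i^{-1}(P))$, so one bad entry already accounts for $\ge \ell/6 - O_\alpha(1)$ of $\sum_P R_{p_i}(h_i^{-1}(P))$; and this sum is in turn $\le (2\alpha + 2)\ell - 2$ by Riemann--Hurwitz for $p_i : Z \to Y_i$ and the hypothesis $g_Z < \alpha\ell$. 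A careful bookkeeping -- using that any remaining bad entries have bounded size $\le \eps_\alpha$ and that their contribution to $R_{h_i}$ must fit within the slack between $\ell - 1$ and $\ell$ at the $\infty$-points and within the sharp equality $\sum_P(1-1/m_P) = 2 - 2g_Y$ -- should upgrade this to the exact exclusion of bad entries for $\ell$ larger than an $\alpha$-dependent constant, restoring divisibility for each $h_i$. The second paragraph then applies verbatim to every $h_i$, giving $g_{Y_i} \le 1$ and metabelian $\Mon(h_i)$.
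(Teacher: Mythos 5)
Your first two paragraphs essentially track the paper's own argument for the distinguished index $j$: Corollary~\ref{cor:DZ} gives $g_Y\le1$ and the list of possibilities for $M$; the divisibility hypothesis forces $E_{h_j}(P)=[m_P^{\ell/m_P}]$ (at most) at finite-type points, which via Riemann--Hurwitz and Remark~\ref{rem:abh} pins the Galois closure $\tilde h_j$ down to the Euclidean signatures, or (at $\infty$-type points) to a covering with at most two or three branch points. One imprecision: you call $(\infty,2,2)$ and $(\infty,\infty)$ ``parabolic signatures,'' but at a point of almost-Galois type $\infty$ the cone order is some unspecified finite integer, not $\infty$; the paper handles these two cases by a direct count of branch points (at most two branch points forces cyclic monodromy; at most three branch points with two involutive inertia generators forces dihedral). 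With that fix, your paragraph~2 gives the paper's conclusion for the distinguished~$j$.

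Your third paragraph, however, has a genuine gap. To rule out a ``bad'' entry $s\in E_{h_i}(P)$ with $s\nmid m_P$, you pair $s$ against the $\approx\ell/m_P$ dominant entries $r=m_P$ of $E_{h_j}(P)$ and assert each such pair contributes $m_P-\gcd(m_P,s)\ge1$ to $R_{p_i}(h_i^{-1}(P))$. But this fails precisely when $m_P\mid s$ (so $s=km_P$ with $k\ge2$): then $\gcd(m_P,s)=m_P$ and the contribution of each pair is zero. Such an $s$ is perfectly compatible with almost-Galois type $m_P$ as long as $s\le\varepsilon_{\alpha,m_P}$, and it makes $\lcm(E_{h_i}(P))\ge km_P$, pushing the signature of $\tilde h_i$ out of the Euclidean range. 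So the fiber-product genus bound does \emph{not} force divisibility for $h_i$, and your transfer step does not close.

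More fundamentally, your paragraph~3 is attempting to prove something the paper's proof does not: the paper only establishes the claim for the particular index $j$ of the hypothesis (and that is also all that is used in the two places the lemma is invoked --- in both cases $j=1$). The ``$j=1,\ldots,t$'' in the lemma's conclusion is a reuse of the bound variable; it is not an assertion about the other $h_i$'s, and indeed in the fully general setting of the lemma (arbitrary coverings, not from a product-type group) it is unclear the statement for the other $h_i$'s is even true. You should not try to prove it; instead, state and prove the conclusion for the distinguished $j$ only, as the paper does.
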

    \begin{proof}%[Proof of Lemma \ref{lem:normal-closure}]
    Let $\tilde h_j:\tilde Y_j\ra Y$ be the Galois closure of $h_j$. 
    We claim that either $g_{\tilde Y_j} = 1$ or $g_{\tilde Y_j}=0$ and the monodromy group $G_j$  of $h_j$ is cyclic or dihedral. The lemma then follows from the claim since the monodromy group of a Galois covering $\tilde Y_j\ra  Y$ of genus $g_{\tilde Y_j}=1$ is  known to be a subgroup of a semidirect product of two abelian groups  \cite[Lemma 9.6]{NZ}. 
    %The claim implies that $G_j$ and the ramification of $h_j$ appear in \cite[Table~9.1]{NZ} by \cite[Lemma 9.4]{NZ}, 
    %and hence  $G_j$ is of derived length at most $2$.
    
    By Corollary \ref{cor:DZ} we have $g_Y\leq 1$. 
    Moreover in the case $g_Y=1$, the corollary implies $m_{h_j}(P)=1$ for all points $P$ of $Y$. Thus in this case, $h_j$ is unramified by our assumption. 
    It follows that $h_j$ is a morphism of elliptic curves and hence 
    is Galois by \cite[Theorem III.4.8]{Sil}, so that $\tilde Y_j= Y_j$ is of genus $1$, proving the claim in this case.
    %The claim then follows in the case $g_Y=1$ from 
    Henceforth assume~$g_Y=0$.
      
    By Corollary \ref{cor:DZ}, the multiset $M_{h_j}:=\{m_{h_j}(P)\,|\,P\in Y(\K), m_{h_j}(P)> 1\}$ is one of $\{\infty,\infty\}, \{\infty,2,2\}, \{2,2,2,2\}, \{3,3,3\}, \{2,4,4\}, \{2,3,6\}$. Put $u:=\deg \tilde h_j$. Since the ramification index of $\tilde h_j$ over a point $P$ is the least common multiple of all $r\in E_{h_j}(P)$ by Abhyankar's lemma, see Remark \ref{rem:abh}, our assumption implies that in the latter four cases, the ramification of $\tilde h_j$ is $[2^{u/2}]$ four times, or $[3^{u/3}]$ three times, or $[2^{u/2}], [4^{u/4}], [4^{u/4}]$, or $[2^{u/2}],[3^{u/3}], [6^{u/6}]$. 
    Thus, the Riemann--Hurwitz formula for $\tilde h_j$ implies that $g_{\tilde Y_j}=1$ in the latter four possibilities for $M_{h_j}$, proving the claim in these cases.
    % our assumption 
    %that every $r\in E_{h_1}(P)$ divides $m_{h_1}(P)$ for all $P$ 
    %implies that the ramification $\{\lcm E_{h_i}(P)\,|\,P\in Y(\K)\}$ of the Galois closure of $h_i$ is one of the multisets $\{2,2,2,2\}, \{3,3,3\}, \{2,4,4\}, \{2,3,6\}$. 
    
    If $M_{h_j} = \{\infty,\infty\}$, our assumption implies that $h_j$ has no branch points $P$ 
    with finite $m_{h_j}(P)$, and hence only two branch points in total. The only possibility for the ramification of a covering $h_j$ which satisfies the Riemann--Hurwitz formula and has at most two branch points is $[\ell],[\ell]$, where $\ell := \deg h_j$. In this case, as in Section \ref{sec:RET}, $G_j$ is generated by two elements with product $1$, and hence $G_j\cong C_\ell$. 
    %so that $\tilde Y_j\cong Y_j$ is of genus $0$. 
    
    It remains to consider the case where $m_{h_j}(P_0)=\infty$, $m_{h_j}(P_1) = m_{h_j}(P_2) = 2$ for three points $P_0,P_1,P_2$ of $Y$, 
    and $m_{h_j}(P)=1$ for every other point $P\in Y(\K)$. 
    % different from $P_j$, $j=0,1,2$. 
    By assumption $r=1$ for all $r\in E_{h_j}(P)$, $P\neq P_j$, $j=0,1,2$ and $r=1$ or $2$ for $r\in E_{h_j}(P_j)$, $j=1,2$. %By Riemann's existence theorem, 
    Since $h_j$ has such ramification, as in Section \ref{sec:RET} there exists a product-$1$ tuple $x_1,x_2,x_3$ for $G_j$ where $x_2$ and $x_3$ are elements of order $2$. Since $G_j$ is generated by the order two elements $x_2$ and $x_3$, the group $G_j$ must be cyclic or dihedral, proving the claim.
    \end{proof}

    \section{The genus of a product type covering}\label{sec:genus}
    % We use the notation of Setup \S\ref{sec:setup}. 
    Fix $t\geq 2$, and consider indecomposable coverings $f:X\ra X_0$ with Galois closure $\tilde X$, and monodromy group $G\leq S_\Delta \wr S_I$ % of product type 
    such that $\abs I=t$. As in Setup \S\ref{sec:setup}, 
    Let $H\leq G$ be a point stabilizer, $K:=G\cap S_\Delta^I$,  assume $G=H\cdot K$, and put $\ell:=\abs{\Delta}$ and $m:=[G:K]$. 
     %and $\pi:Z\ra X$ the natural projection from the quotient  $Z$ by $H\cap S_\Delta^I$. Put $\ell:=\Delta$ and $m:=\deg\pi$. 
    $$
    \xymatrix{
    \tilde X \ar[d] \ar[rd] & \\
     Z =\tilde X/(H\cap K)  \ar[r]^>>>>>>{\pi} \ar[d] & X=\tilde X/H \ar[d]^{f} \\
     \tilde X/K \ar[r] & X_0
    }
    $$
    The following proposition bounds the %main term of the 
    Riemann--Hurwitz contribution $R_\pi$ of the natural projection $\pi:Z\ra X$, where $Z:=\tilde X/(H\cap K)$.
    \begin{prop}\label{prop:genus-analysis}\label{lem:V-orbits}\label{lem:gZ-gX-t=2} \label{lem:genus-Z-X}  
    There exists a constant $E_0=E_{0,t}>0$, depending only on $t$, satisfying the following property. 
     Let $f:X\ra X_0$ and $\pi:Z\ra X$ be coverings as above, and $x$ a branch cycle of $f$ over  a point $P$ of $X_0$. Let $O$ be the set of orbits of $x$ on $I$, and $E\subseteq O$ the subset of even length orbits. Let $y=a\sigma \in S_\Delta\wr S_I$, $a\in S_\Delta^I$, $\sigma\in S_I$ be a reduced form of  $x$  with representatives $\iota_\theta,\theta\in O$. 
    \begin{enumerate} 
    \item Then $R_\pi(f^{-1}(P)) < m\ell^{t-1}/2 + E_0\ell^{t-2}$;
    \item If $\abs E\neq 1$ or $E=\{\mu\}$ with $\abs \mu>2$, then $R_\pi(f^{-1}(P)) < E_0\ell^{t-2}$;
    \item If $t=2$ and $\sigma\neq 1$, then 
    $R_{\pi}(f^{-1}(P))$ is the number of odd orbits of $a(\iota_I)$.
\end{enumerate}
    \end{prop}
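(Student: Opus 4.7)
The plan is to express $R_\pi(f^{-1}(P))$ via the chain rule as a difference of orbit counts, and then bound it through a combinatorial case analysis on the cycle structure $O$ of $\sigma$. Since $K\lhd G$, the subgroup $H\cap K$ is normal in $H$, so $\pi\co Z\ra X$ is Galois with group $H/(H\cap K)\cong G/K\hookrightarrow S_I$. Applying Lemma~\ref{lem:f-to-h} to both $f$ and $f\circ\pi$ (which share the Galois closure $\tilde f$ with monodromy $G$) together with the chain rule \eqref{equ:chain} gives
\[
R_\pi(f^{-1}(P))=mN_H-N_K,
\]
where $N_H$ and $N_K$ are the numbers of $x$-orbits on $H\backslash G\cong \Omega=\Delta^I$ and on $(H\cap K)\backslash G$, respectively. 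Because $G=HK$, the map $(H\cap K)g\mapsto(Hg,Kg)$ is a $G$-equivariant bijection $(H\cap K)\backslash G\cong \Omega\times\Gamma$, where $\Gamma:=K\backslash G$. Since an $\langle x\rangle$-orbit of $O_\omega\times O_\gamma$ splits into $\gcd(|O_\omega|,|O_\gamma|)$ orbits of size $\lcm(|O_\omega|,|O_\gamma|)$, this simplifies to
\begin{equation}\label{equ:master-plan}
R_\pi(f^{-1}(P))=\sum_{O_\omega,\,O_\gamma}\bigl(|O_\gamma|-\gcd(|O_\omega|,|O_\gamma|)\bigr),
\end{equation}
summed over $x$-orbits on $\Omega$ and on $\Gamma$. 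Since $\Gamma$ is the right regular $G/K$-set and $x$ maps to $\sigma\in G/K\subset S_I$, every $x$-orbit on $\Gamma$ has the same size $c:=\ord(\sigma)=\lcm_{\theta\in O}|\theta|$, with $m/c$ such orbits.

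For (3), I specialize to $t=2$ with $\sigma=\s$, so $c=m=2$ and \eqref{equ:master-plan} counts odd-length $x$-orbits on $\Omega$. Writing the reduced form $y=(u,1)\s$ with $u=a(\iota_I)$, the identity $y^2=(u,u)$ determines $y$-orbits on $\Delta^2$ directly: a pair $(d_1,d_2)$ lies in an odd $y$-orbit iff $d_1,d_2$ belong to a common odd-length $u$-orbit $O$ of size $r$ and satisfy $d_1=d_2\, u^{(r-1)/2}$, in which case the $y$-orbit has size $r$; all other $y$-orbits have even size. Hence each odd $u$-orbit contributes exactly one odd $y$-orbit, proving (3).

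For (1) and (2), I analyze $y$-orbits on $\Omega=\prod_{\theta\in O}\Delta^\theta$. Since $\sigma$ preserves each $\theta$, the element $y$ acts separately on each factor $\Delta^\theta$, and $y^{|\theta|}$ restricts to the diagonal action of $u_\theta:=a(\iota_\theta)$. A ``generic'' $y$-orbit on $\Delta^\theta$ has size divisible by $|\theta|$; the ``short'' ones (arising from cyclically symmetric tuples with period dividing $|\theta|/p$ for some prime $p\mid|\theta|$) number at most $C_t\,\ell^{|\theta|/p}$ by a Burnside-type count for the cyclic shift action. Since $|O_\omega|$ equals the lcm of the component sizes, $O_\omega$ is ``bad'' (i.e., $c\nmid|O_\omega|$) only when, for some prime power $p^k$ exactly dividing $c$, every $\theta$ with $p^k\mid|\theta|$ contributes a short factor. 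Combined with \eqref{equ:master-plan}, this gives $R_\pi(f^{-1}(P))\le (m/c)(c-1)\cdot\#\{\text{bad }O_\omega\}\le m\cdot\#\{\text{bad }O_\omega\}$, reducing matters to bounding the bad count.

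The ``transposition'' case $|E|=1$ with $|\mu|=2$ is extremal: here $c=2$, ``bad'' means $|O_\omega|$ odd, and the factorization $\Omega=\Delta^\mu\times\prod_{|\theta|=1}\Delta^\theta$ combined with the count from (3) on the $\mu$-factor together with the $u_\theta$-orbit counts on each fixed-point factor yield $\#\{\text{bad}\}\le \ell\cdot \ell^{t-2}=\ell^{t-1}$, proving $R_\pi(f^{-1}(P))<m\ell^{t-1}/2+E_0\ell^{t-2}$ and hence (1). In each remaining configuration --- $|E|=0$ (all $|\theta|$ odd, so $c$ is odd and short counts are $C_t\ell^{|\theta|/3}$), $|E|=1$ with $|\mu|\ge 4$ (short count on $\Delta^\mu$ is $C_t\ell^{|\mu|/2}\le C_t\ell^{t-2}$), or $|E|\ge 2$ (at least two even $\theta$ forcing simultaneous shortness) --- the bad count drops to $O_t(\ell^{t-2})$, yielding $R_\pi(f^{-1}(P))<E_0\ell^{t-2}$ and hence (2). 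The main obstacle is the uniform Burnside-style count of short $y$-orbits on each $\Delta^\theta$ (with constants depending only on $t$), together with showing that the combined product over $\theta$ always produces the promised $\ell^{t-2}$ saving in every non-transposition configuration of $E$.
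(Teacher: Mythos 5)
Your master identity
\begin{equation}\label{eq:master-review}
R_\pi(f^{-1}(P))=mN_H-N_K=\sum_{O_\omega,\,O_\gamma}\bigl(|O_\gamma|-\gcd(|O_\omega|,|O_\gamma|)\bigr)=\frac{m}{c}\sum_{O_\omega}\bigl(c-\gcd(|O_\omega|,c)\bigr)
\end{equation}
is correct, and it is the same starting point as the paper's equation \eqref{equ:Rpi-primitive} (the paper works with the reduced form $y$ rather than $x$, but since $x$ and $y$ are conjugate in $S_\Delta^I\cdot G$ the orbit counts agree). Your proof of part (3), via the direct identification of odd-size $y$-orbits on $\Delta^2$ with odd orbits of $a(\iota_I)$, is also correct and matches the paper's Step~VI.

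The argument for parts (1) and (2), however, has a concrete gap. You write ``The `transposition' case $|E|=1$ with $|\mu|=2$ is extremal: here $c=2$'' and use the factorization $\Omega=\Delta^\mu\times\prod_{|\theta|=1}\Delta^\theta$. But $c=\operatorname{ord}(\sigma)=\lcm_{\theta\in O}|\theta|$ is the lcm over \emph{all} orbits of $\sigma$, not just the even ones. The hypothesis $E=\{\mu\}$, $|\mu|=2$ does not force the remaining orbits to have size $1$ --- for $t\geq 5$ one can have $\sigma$ a $2$-cycle times a $3$-cycle, where $c=6$ --- and then $(c-1)/c=5/6>1/2$, so the chain $R_\pi\leq(m/c)(c-1)\#\{\text{bad}\}\leq(m/c)(c-1)\ell^{t-1}$ no longer delivers the coefficient $m/2$ in front of $\ell^{t-1}$. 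The point is that $c-\gcd(|O_\omega|,c)$ is \emph{not} uniformly bounded by $c/2$ on bad orbits: it equals $c/2$ only for those $O_\omega$ where the $2$-part of $|O_\omega|$ is off by exactly one factor of $2$ and the odd part still contains $c/2$; for orbits where an odd prime factor of $c$ is also missing, the contribution per orbit exceeds $c/2$. To rescue the argument you must stratify the bad orbits, proving separately that the ``only 2-part off'' orbits number at most $\ell^{t-1}$ while the ``worse'' orbits number $O_t(\ell^{t-2})$. This is precisely what the paper's decomposition into the sets $\Omega_v$ (Step~IV--V) accomplishes: the unique $v\in U$ with $v_\mu=2$ and $v_\theta=1$ for $\theta\neq\mu$ has $|\Omega_v|\leq\ell^{t-1}$, and equation~\eqref{equ:kvR} shows that for elements in $\Omega_v$ the discrepancy $1/k_{v,R}-1/\hat k_{v,R}$ is at most $1/2$ (contributing the $m\ell^{t-1}/2$ main term), while every other $v$ contributes $O_t(\ell^{t-2})$. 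The same stratification also closes the loose ends in your sketch of part (2), where ``bad count drops to $O_t(\ell^{t-2})$'' in the $|E|=0$, $|E|\geq 2$, and $|\mu|\geq 4$ cases is asserted via a Burnside-type count but not carried out; the $\Omega_v$-decomposition with the exponent $\sum_\theta|\theta|/v_\theta$ provides the clean bookkeeping your argument lacks.
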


    \begin{proof}%[Proof of Proposition \ref{prop:genus-analysis}]
     {\bf Step I:} {\it Expressing $R_\pi(f^{-1}(P))$ in terms of orbits of $y$.}
    % Let $H$ denote a point stabilizer of $G$, let $K:=G\cap S_\Delta^I$, and 
    Let $M$ be the $G$-set $K\backslash G$ so that $\abs M=[G:K]=m$. 
    Since $H\cap K$ (resp.~$H$) is a point stabilizer in the action of $G$ on $\Delta^I\times M$ (resp.~$\Delta^I$), the points %$Q_j^k$ 
    of $Z$ (resp.~of $X$) lying over $P$ are in one to one correspondence with the orbits of $x$ acting on $(H\cap K)\backslash G\cong \Delta^I\times M$ (resp., $H\backslash G\cong \Delta^I$) as $G$-sets. 
    Note that since $G=H K$, we have $\deg\pi=[H:H\cap K] = [G:K]=m$. 
      
    % Let $x\in G$ be a branch cycle over $P$ with reduced form $y$. 
     By the chain rule  \eqref{equ:chain},    $R_{f\circ \pi}(P) = mR_f(P) + R_\pi(f^{-1}(P))$. Hence, by the previous paragraph 
    \begin{equation}\label{equ:grr} 
    \begin{split}
    R_{\pi}(f^{-1}(P)) & = R_{f\circ \pi}(P) -mR_f(P) = m\ell^t - \abs{\Orb_{\Delta^I\times M}(x)} - m (\ell^t - \abs{\Orb_{\Delta^I}(x)}) \\
    & = m\cdot \abs{\Orb_{\Delta^I}(x)} - \abs{\Orb_{\Delta^I\times M}(x)}.
    \end{split}
    \end{equation}
    
    Let $\kappa\in S_\Delta^I$ be such that $x^\kappa = y$. Note that $y$ acts on $\hat M:=S_\Delta^I\backslash (S_\Delta^I\cdot G)$. 
    We claim that  
    \begin{equation}\label{equ:Rpi-primitive} 
    R_\pi(f^{-1}(P)) = m\cdot \abs{\Orb_{\Delta^I}(y)} - \abs{\Orb_{\Delta^I\times \hat M}(y)}.
    \end{equation} 
    %where $\hat M$ is the $G$-set $\hat H\backslash \hat G$. %$K^\kappa\backslash G^\kappa$ in $S_\Delta\wr S_I$.  
    Let $\hat G:=S_\Delta^I\cdot G$, and let $\hat H$ be a point stabilizer of $\hat G$ such that $\hat H\cap G=H$. 
    % and $\hat M$ the $\hat G$-set $S_\Delta^I\backslash \hat G$. 
    As $x$ and $y$ are conjugate in $\hat G$, we have
    $\abs{\Orb_{\hat H\backslash \hat G}(x)} = \abs{\Orb_{\hat H\backslash \hat G}(y)},$ 
    as well as
    $\abs{\Orb_{(\hat H\backslash \hat G) \times \hat M}(x)}=\abs{\Orb_{(\hat H\backslash \hat G) \times \hat M}(y)}$. 
    Since $\hat H\backslash \hat G\cong  \Delta^I$ and $(\hat H\backslash \hat G)\times \hat M\cong  \Delta^I\times \hat M$ as $\hat G$-sets,  and $\hat M\cong M$ as $G$-sets,
    %and since $(\hat H\backslash \hat G) \times \hat M \cong (\hat H^\kappa\backslash \hat G)\times M^\kappa$ as $G^\kappa$-sets. 
    these equalities give
    $$\abs{\Orb_{\Delta^I}(x)} = \abs{\Orb_{\Delta^I}(y)}\text{ and }\abs{\Orb_{\Delta^I\times M}(x)} = \abs{\Orb_{\Delta^I\times \hat M}(x)} =  \abs{\Orb_{\Delta^I\times \hat M}(y)}. $$ 
    Plugging these equalities into \eqref{equ:grr}, gives the claim \eqref{equ:Rpi-primitive}. 
    %We shall bound the right hand side of \eqref{equ:Rpi-primitive} to get \eqref{equ:RH-Z-X}. 
    %the element $y$ acts on $H^\kappa\backslash G^\kappa$ and on $(H^\kappa\cap K^\kappa)\backslash G^\kappa$. 
    %Put $\hat G:=S_\Delta \wr S_I$ and let $\hat H$ be a point stabilizer of $\hat G$ such that $\hat H\cap G=H$. 
    %As $x$ and $y$ are conjugate in $\hat $G$ and 
    %we have $\#\Orb_{\Delta^I}(x) = \#\Orb_{\}(x)$. First note that
    %$\#\Orb_{\Delta^I}(x) = \#\Orb_{H\backslash G}(x) = \#\Orb_{H^\kappa\backslash G^\kappa}(y).$ 
    % , the action of $y$ on $H^\kappa\backslash G^\kappa$ is equivalent to its action on $\hat H^\kappa \backslash \hat G$. The orbits under the latter action are in one to one order preserving correspondence with the orbits of $y$ on $\hat H\backslash \hat G$ which sends the orbit of $\hat Ha$, $a\in \hat G$ to the orbit of $H^\kappa\kappa^{-1}a$. 

    %Putting $M^\kappa:=K^\kappa\backslash G^\kappa$, there is a one to one correspondence between orbits of $y$ on $\Delta^I$ and orbits the latter actions are equivalent to the action of $y$ on $\Delta^I$ and on $\Delta^I\times M^\kappa$, respectively.  
    {\bf Step II:} {\it Computing the length of an orbit in the action of $y$ on  $\Delta^I\times \hat M$. }
    Consider the the action of $y$ on elements $\delta\in R = \prod_{i\in I}R_i$, for orbits $R_i$ of $a(\iota_\theta)$, for all $i\in \theta$, $\theta\in O$. 
    Denote by $\mathcal R=\mathcal R_y$ the set of all
     such products  $\prod_{i\in I}R_i$. 
    %$R=\{\delta\in \Delta^I\suchthat \delta(i)\in R_i, i\in I\}$ 

    Throughout the proof,  $R_i\subseteq \Delta$ denotes the $i$-th component of $R\in\mathcal R$.
    %$\{\delta(i)\suchthat \delta\in R\}$, so that $R_i$ is an orbit of $a(\iota_\theta)$ for $i\in \theta$, $\theta\in O$, $R\in \mathcal R$. 
    Also put  $r_i:=\abs{R_i}$, for every $R\in \mathcal R, i\in I$. 
    Moreover for $\theta\in O$, $\delta\in R$ and $R\in\mathcal R$, denote by $\delta_\theta$  its restriction to $\theta$, that is, the tuple $(\delta(i))_{i\in \theta}\in \prod_{i\in\theta}R_i$, let $R_\theta:=\{\delta_\theta\suchthat \delta\in R\}$ denote the restrictions of elements in $R$ to $\theta$,
    %\prod_{i\in\theta}R_i$, and   so that 
    and let $r_\theta:=\lcm_{i\in\theta}(r_i)$ be the length of the orbits of $a(\iota_\theta)$  on $R_\theta$. 
     
    %Fix $\delta\in \Delta^I$ and $m\in \hat M$, denote the restriction of $\delta$ to $\theta$ by $\delta_\theta\in \Delta^\theta$, and let $\hat k_\theta$ denote the length of the orbit of $(\delta_\theta,m)\in \Delta^\theta\times \hat M$ under $y$. 
    %As the length of the orbit of  % $\delta \in R$ and of 
    %$(\delta,m)\in \Delta^I\times \hat M$ is $\lcm_{\theta\in O}(\hat k_\theta)$, it suffices to find $\hat k_\theta$.  % for $R=(R_i)_{i\in I}\in \mathcal R$. 
    First note that as $\hat M=S_\Delta^I\backslash (S_\Delta^I\cdot G)$, the action of $y$ on $\hat M$ is through its image $\sigma$. As the latter acts regularly, the orbits of $y$ on $\hat M$ are of length $e:=\abs{\langle\sigma\rangle}=\lcm_{\theta\in O}(\abs\theta)$. 
    Fix $R\in\mathcal R$ and $\theta \in O$. 
    %a tuple $R_\theta=(R_i)_{i\in\theta}$, where $R_i$ is an orbit of $a(\iota_\theta)$.  
    Let $\hat k_\theta$ denote the length of the orbit of $(\delta_\theta,m) \in R_\theta\times \hat M$ under~$y$.  
    %Denote  $r_i:=\abs{R_i}$,  $i\in \theta$, so that $r_\theta:=\lcm_{i\in \theta}(r_i)$ is the length of an orbit of $a(i_\theta)$ on $R_\theta$. %$\prod_{i\in \theta}R_i$. 
    %First consider the orbit of $\hat \delta$, and let $\hat k$ denote its length. 
    Since $\abs{\theta}$ divides $e$, it also divides $\hat k_\theta$. Write
    $z:=aa^{\sigma^{-1}}\cdots a^{\sigma^{1-\abs \theta}}$, so that $y^{\hat k_\theta} = (z\sigma^{\abs\theta})^{\hat k_\theta/\abs \theta}$. 
    It follows that $\hat k_\theta$ is the smallest multiple of $\abs{\theta}$ for which
    $y^{\hat k_\theta} = (z\sigma^{\abs\theta})^{\hat k_\theta/\abs \theta}$ acts as the identity on $R_\theta$, or equivalently $z^{\hat k_\theta/\abs \theta}$ acts as the identity on $R_\theta$.
    Since 
    $z(i)=a(\iota_\theta)$ %^{\hat k/\#\theta}$ 
    for $i\in \theta$, %\end{equation} 
    and each orbit of $a(\iota_\theta)$ on $R_\theta$ has length $r_\theta$, 
    we see that the tuple $z_\theta = (z(i))_{i\in \theta} \in R_\theta$ is of order $r_\theta$.
    Hence $\hat k_\theta =  \lcm(e,\abs{\theta}\cdot r_\theta)$. 
    %the orbit of $(\delta,m)$  is then of length $\hat k=\lcm_{\theta\in O}(\#\theta\cdot r_\theta).$ 
    
    The length $\hat k$ of the orbit of  $(\delta,m)\in R\times \hat M$ is the least common multiple of the lengths $\hat k_\theta$ of the orbits of $(\delta_\theta,m), \theta\in O$. As $e=\lcm_{\theta\in O}(\abs \theta)$, we get 
    \[
    \hat k=\lcm_{\theta\in O}(\hat k_\theta) =  \lcm_{\theta\in O}(\lcm(e,\abs \theta\cdot r_\theta)) =\lcm_{\theta\in O}(\abs \theta \cdot r_\theta).
    \]

     {\bf Step III:}
    {\it Describing the length of an  orbit on $\Delta^\theta$, for  $\theta\in O$. }
    Fix $R\in\mathcal R$ and $\theta\in O$. 
    % a tuple $R_\theta=(R_i)_{i\in \theta}$, where $R_i$ is an orbit of $a(\iota_\theta)$. 
    %For $\delta_\theta \in R_\theta$,
    %let $\delta_\theta$ denote its restriction to $\Delta^\theta$, 
     Let $k=k_{\theta}$ 
     denote the length of some orbit of $y$ on $R_\theta$. 
    Since $y^{\abs \theta r_\theta}$ acts trivially on $R_\theta$ by step II, we have $k\divides \abs \theta \cdot r_\theta$. As in step II, the orbits of the element $y^{\abs\theta} = z\sigma^{\abs\theta}$ on $R_\theta$ are of length $r_\theta$, and hence $k= \abs{\theta}\cdot r_\theta/v_\theta$ for some integer $v_\theta$ which divides $\abs{\theta}$ and is coprime\footnote{This statement follows from the following elementary fact. If $x^u$ is an element of order $v$, then $x$ is an element of order $uv/w$ for some $w$ dividing $u$ which is prime to $v$.} to $r_\theta$. 
    %Thus, we may write $k=\abs \theta \cdot r_\theta/v_\theta$ for some $v_\theta\in\mathbb N$. 
    %We claim that $v_\theta$ is prime to $r_\theta$, and hence divides $\abs \theta$. 
    %Let $p$ be a prime dividing $v_\theta$, so that 
    %%$k_o \divides (\abs or_o/p)$, and hence 
    %$y^{\abs\theta\cdot r_\theta/p}=1$ on $R_\theta$. % for all $i\in o$. 
    %If $p\divides r_\theta$, %as in \eqref{equ:divisible-case} 
    %then %we have
    %$$1=y^{\abs\theta\cdot r_\theta/p}=(aa^{\sigma^{-1}}\cdots a^{\sigma^{1-\abs \theta}})^{r_\theta/p}\text{ on $R_\theta$}. $$ 
    %%, and } (aa^{\sigma^{-1}}\cdots a^{\sigma^{1-\abs\theta}})(i) = a(\iota_\theta)\text{ on $R_i$} $$
    %%for all $i\in \theta$. 
    %%$$ \delta(i) = \delta^{y^{\abs or_o/p}}(i) = \delta^{a(\iota_o)^{r_o/p}}(i), $$
    %%for every $i \in o$, 
    %As $(aa^{\sigma^{-1}}\cdots a^{\sigma^{1-\abs \theta}})(i) = a(\iota_\theta)$ for $i\in \theta$, this implies $a(\iota_\theta)^{r_\theta/p}=1$ on $R_\theta$,  contradicting the fact that $r_\theta$ is the length of an orbit of $a(\iota_\theta)$ on $R_\theta$. Hence the claim. 

    The length of the orbit of $\delta\in R_\theta$ is then $k$ if and only if $k$ is minimal positive integer
    %By definition  $y^{k_\theta}$ fixes $\delta_\theta$. 
    %we get
    such that $\delta(i) = \delta^{y^{k}}(i)=\delta^{\sigma^kb_k}(i)$ for all $i\in\theta$, where  $b_k:=\sigma^{-k}y^{k}\in S_\Delta^\theta$. Equivalently one has $\delta^{y^kb_k^{-1}}(i)=\delta^{b_k^{-1}}(i)$,   and hence one has:
    \begin{equation}\label{equ:determined} 
     \delta(i^{\sigma^{-k}})=\delta^{\sigma^k}(i)=\delta^{y^kb_k^{-1}}(i)=\delta^{b_k^{-1}}(i)=\delta(i)^{b_k^{-1}(i)}\text{ for all $i\in\theta$. } %$$% \delta_\theta $$
    \end{equation}
     
    %the restriction of $z^{r_o}$ to $\Delta_o$ as $c\tau^{r_o}$ for $c \in S_\Delta^o$, we see that
    %\begin{equation}\label{equ:delta-relation} \delta(i) = \delta^{z^{r_o}}(i) = \delta(i^{\tau^{-r_o}})^{c(i^{\tau^{-r_o}})},
    %\end{equation} 
    %for all $i \in o$. 
    Since $(r_\theta,v_\theta)=1$, and $k=\abs{\theta}r_\theta/v_\theta$, and since $\sigma^{\abs{\theta}/v_\theta}$ has orbits of length $v_\theta$ on $\theta$, the elements $ \sigma^{-k}$ and $\sigma^{\abs\theta/v_\theta}$ have the same orbits on $\theta$. 
    %\sigma^{\abs\theta/v_\theta} \rangle$ has orbits of length $v_\theta$ on $\theta$.  
  Fixing  $i_0\in \theta$ with orbit $\theta'$ under $\sigma^{\abs\theta/v_\theta}$, and $\alpha\in R_{i_0}$, it follows that the values on  $\theta'$ of  all $\delta\in R_\theta$, whose orbit is of length $k$ with $\delta(i_0)=\delta_0$, 
  %the orbit of $i$ under $\sigma^{\abs\theta/v_\theta}$ 
 are determined uniquely by \eqref{equ:determined}. 
    %In particular, given values $\alpha_k\in \Delta$ for $k=0,\ldots, v_\theta-1$, there is at most one $\delta_\theta\in \prod_{i\in\theta}R_i$ whose orbits under $y$ is of length $k_\theta = \#\theta\cdot r_\theta/v_\theta$ such that $\delta_\theta(i^{\sigma^k}) = \alpha_k$ for $k=0,\ldots,v_\theta-1$. 
    
     {\bf Step IV:} 
    {\it Estimating the number of elements in $\Delta^I$ with a fixed orbit length. }%ratios $\abs\theta r_\theta/k$. %, for $\theta\in O$.
    %We partition $\Delta^I$ as follows. %according to these ratios.
    %$R\in \mathcal R_v$. 
    %Here,  
    Let $V$ be the set of tuples $v=(v_\theta)_{\theta\in O}$, where $v_\theta \divides \abs\theta$ is a positive integer for all $\theta\in O$. 
    For $v\in V$, let $\Omega_v$ be the set of all $\delta\in \Delta^I$ such that the length of the orbit of $\delta_\theta$ is $\abs\theta \cdot r_\theta/v_\theta$, for all $\theta\in O$.  
    % where $k_\theta$ (resp.~$\hat k_\theta$) is the length of the orbit of $\delta_\theta\in \Delta^\theta$ (resp.~$(\delta_\theta,1)\in \Delta^\theta\times M$), for all $\theta\in O$. 
    By the claim in step III, $\Delta^I$ is a (disjoint) union of the sets $\Omega_{v}$, $v\in V$. %We  estimate  $\abs S_v$ for $v\in V$. 
    %Here, as in Step III, $k_\theta$ (resp. $
    
    Since an element $\delta\in \Omega_{v}$ satisfies \eqref{equ:determined} with $k=\abs\theta\cdot r_\theta/v_\theta$ for each $\theta\in O$, the values of $\delta$ on each orbit $\theta'\subseteq \theta$ of $\sigma^{\abs\theta/v_\theta}$ are determined by its value on a single representative $\iota_{\theta'}\in\theta'$ by Step III. Since there are $\abs\theta/v_\theta$ such orbits $\theta'$ and at most $\ell$ possible values $\delta(\iota_{\theta'})$ for each representative, we get the estimate
    %\begin{equation}\label{equ:SvR-bound}
    $\abs{\Omega_v}  %=\sum_{R\in \mathcal R_v}\#S_{v,R}
    \leq \ell^{\sum_{\theta\in O}\abs\theta/v_\theta}$ for $v\in V$.

     {\bf Step V:} {\it Isolating the main term of $R_\pi(f^{-1}(P))$ and deriving parts (1)-(2). }
    For $v\in V$, let $\mathcal R_v$ denote the set of tuples $R\in\mathcal R$ such that $\Omega_v\cap R$ is nonempty. 
    %and $R_i$ is an orbit of $a(\iota_\theta)$ for every $i\in  \theta$, $\theta\in O$. 
    %By Step IV, we have $(v_\theta,r_\theta)=1$ for every $R\in \mathcal R_v, v\in V$. 
    
    The element $y$ acts on the set $\Omega_v\cap R$  and  its orbits in this action are all of length $k_{v,R} := \lcm_{\theta\in O}(\abs\theta\cdot r_\theta/v_\theta)$, for  $v\in V, R\in \mathcal R_v$. % on  $S_{v,R}$ (resp.~$S_{v,R} \times~\hat M$), for $v\in V$, $R\in \mathcal R$. 
    Similarly by step II, the length of each orbit on $(\Omega_v\cap R) \times\hat M$ is $\hat k_{v,R} := \lcm_{\theta\in O}(\abs\theta\cdot r_\theta)$. In particular, $y$ has $\abs{\Omega_v\cap R}/k_{v,R}$ orbits on $\Omega_v\cap R$ and $\abs{\Omega_v\cap R}\cdot m/\hat k_{v,R}$ orbits on $(\Omega_v\cap R)\times \hat M$. 
    Thus \eqref{equ:Rpi-primitive} gives
    \begin{equation}\label{equ:estimateRpi} 
     R_\pi(f^{-1}(P))=\sum_{v\in V}\Bigl(m\frac{\abs{\Omega_v\cap R}}{k_{v,R}}-m\frac{\abs{\Omega_v\cap R}}{\hat k_{v,R}}\Bigr)=m\sum_{v\in V}T_v,
     %= (1-\frac{k_{v,R}}{\hat k_{v,R}}\right)\frac{\abs S_{v,R}}{k_{v,R}}, 
     \end{equation}
      where $T_v:=\sum_{R\in \mathcal R_v}\bigl(1/k_{v,R} - 1/\hat k_{v,R}\bigr)\abs{\Omega_v\cap R}$.
    % Estimating each term in the latter sum using \eqref{equ:SvR-bound} gives:
    % \begin{equation}\equation{equ:estimateRpi2}
    % R_\pi(f^{-1}(P)) \leq m\sum_{v\in V} \ell^{\sum_{\theta\in O}\abs\theta/v_\theta} %\cdot \sum_{R\in\mathcal R_v}.  \left(\frac{1}{k_{v,R}} - \frac{1}{\hat k_{v,R}}\right)
    % \end{equation} 
    Since $T_v< \abs{\Omega_v}$ and since $\abs{\Omega_v}\leq \ell^{\sum_{\theta\in O}\abs\theta/v_\theta}$ by step IV, 
    %by \eqref{equ:SvR-bound}, 
    we have $T_v< \ell^{t-2}$ for every $v\in V$ satisfying $\sum_{\theta\in O}\abs\theta/v_\theta\leq t-2$.  
    Moreover, if $\sum_{\theta\in O}\abs\theta/v_\theta = t$ then $v_\theta=1$ for all $\theta \in O$, implying that $k_{v,R}=\hat k_{v,R}$ for all $R\in \mathcal R_v$, and hence $T_{v}=0$. 
    Let $U$ be the set of tuples $v\in V$ such that $\sum_{\theta\in O}\abs\theta/v_\theta = t-1$.
    Since $\abs V$ is bounded by a constant $E'_{0}=E'_{0,t}$ depending only on $t$, and $T_v\leq \ell^{t-2}$ for $v\in V\setminus U$,  
     \eqref{equ:estimateRpi} gives:
    \begin{equation}\label{equ:final?} R_\pi(f^{-1}(P)) < m\sum_{v\in U}T_v +mE'_0\ell^{t-2} \leq m\sum_{v\in U}T_v +E_0\ell^{t-2},
    \end{equation}
    where $E_0:=t!E'_0$, and the latter inequality follows since $m\leq t!$, as $G/K\leq S_t$.
    % is a subgroup of $S_t$. % we have $m\alpha_1\leq \alpha$. 
    %for some constant $\alpha=\alpha_t$ depending only on $t$.  

    It remains to estimate $\abs{U}$. 
    Note that as $t=\sum_{\theta\in O}\abs\theta$, one has $v\in U$ if and only if $\sum_{\theta\in O}(\abs\theta - \abs
    \theta/v_\theta)=1$. Hence $v\in U$ if and only if $v_\theta=1$ for all but a single orbit $\mu\in O$ such that $v_\mu=2$ and $\abs\mu=2$. 
    In this case,  $(r_\mu,2)=(r_\mu,v_\mu)=1$ for all $R\in \mathcal R_v$ by step III. %, v\in U$.
    % for every element in $S_{v,R}$, since .
    As $k_{v,R} = \lcm_{\theta\in O}(r_\theta\cdot \abs\theta/v_\theta)$, and $\hat k_{v,R} = \lcm_{\theta\in O}(r_\theta\cdot \abs\theta)$   for $v\in U$, we have
    \begin{equation}\label{equ:kvR}
    k_{v,R}=\hat k_{v,R}\text{ if $ r_\theta\cdot \abs\theta$ is even for some $\theta\in O\setminus \{ \mu\}$, and $\hat k_{v,R}=2k_{v,R}$ otherwise},
    \end{equation}
    for all $R\in \mathcal R_v$. 
    %equals $1/\hat k_{v,R}$ if $\hat k_\theta=\abs\theta\cdot r_\theta$ is odd for all $\theta\neq \mu$, and $0$ otherwise. 
    In particular,  if $\abs E>1$ or $E=\{\mu\}$ with $\abs\mu\neq 2$, 
    then $T_v=0$ for all $v\in U$, so that \eqref{equ:final?} gives part (2). 
    Moreover, if $E=\{\mu\}$ with $\abs\mu=2$, we have $U=\{v\}$, where $v_\mu=2$ and $v_\theta=1$ for all $\theta\in O\setminus \{\mu\}$. Since $\abs{\Omega_v}\leq \ell^{t-1}$ by step IV, and $1/k_{v,R} - 1/\hat k_{v,R}\leq 1/2$ by \eqref{equ:kvR}, we deduce from \eqref{equ:final?} that
    $$ R_{\pi}(f^{-1}(P)) < mT_v + E_0\ell^{t-2}\leq \frac{m}{2}\sum_{R\in \mathcal R_v}\abs{\Omega\cap R} + E_0\ell^{t-2}
    \leq \frac{m\ell^{t-1}}{2} + E_0\ell^{t-2}, $$
    proving part (1). 
    
     {\bf Step VI:} {\it Proving part (3).}  Assume  $t=2$ and $\sigma\neq 1$.  
     We compute $R_\pi(f^{-1}(P))$ using \eqref{equ:estimateRpi}.
     Note that $I=\mu$ is the only orbit of $\sigma$. Write $I=\{1,2\}$ with $1=\iota_\mu$, and $y = (a(1),1)\sigma$ in the notation of Section \ref{sec:setup}. Note that $V=\{u,v\}$ where $v_\mu=2$ and $u_\mu=1$. 
     As in step V,  %$k_{u,R}=\hat k_{u,R}$, for every $R\in \mathcal R_u,$ and hence 
     $T_u=0$, so that it remains to find $T_v$. 
     
    We first claim that for every $R\in\mathcal R_v$ and $\delta\in \Omega_v\cap R$, one has $R_1=R_2$ with odd $r_1$, and $\delta(2) = \delta(1)^{b^{-1}(1)}$ where $b=\sigma y^{r_1}$. 
     By step III, $r_\mu$ and hence $r_1$ are odd. 
    By  \eqref{equ:determined},  
    $\delta(2) = \delta(1)^{b^{-1}(1)}$, where 
     $b:=\sigma^{-r_\mu}y^{r_\mu} = \sigma y^{r_\mu}\in S_\Delta^I$. 
     %As $r_\mu$ is odd by Step III, $\sigma^{-r_\mu}=\sigma$ and $b=\sigma y^{r_\mu}$. % = a^{(k+1)/2}(a^\sigma)^{(k-1)/2}$. 
    Since $y=(a(1),1)\sigma$, one has $b(1)\in \langle a(1)\rangle$, and hence  $\delta(2)\in R_1$. Thus, $R_1=R_2$, and $r_\mu=r_1$ is odd,  proving the claim. In this case we have $k_{v,R}=r_1$ and $\hat k_{v,R}=2r_1$. %Moreover, $\delta$ is determined by its value $\delta(1)$. 
     
    Conversely, we claim that for an odd length orbit $R_1$ of $a(1)$, and $\alpha\in R_1$, there exists a unique $\delta \in R_1^I\cap \Omega_v$ such that $\delta(1)=\alpha$. Indeed, letting $\delta:=(\alpha,\alpha^{b(1)})\in R_1^I$ where $b:=y^{r_1}\sigma$, a direct computation shows that the orbit of $\delta$ is of length $r_1$, and hence $\delta\in \Omega_v\cap R_1^I$. The uniqueness of $\delta$ follows from \eqref{equ:determined}, proving the claim.

    The two claims in this step imply that $\abs{\Omega_v\cap R}$ is $r_1$ if $R_1=R_2$ and $r_1$ is odd, and is $0$ otherwise. Since in the former case we got $k_{v,R}=r_1$ and $\hat k_{v,R}=2r_1$, we deduce that  $T_v$ is half the number of odd length orbits of $a(1)$. % orbits of $a(1)$. 
    Since  $R_\pi(f^{-1}(P))=mT_v=2T_v$ by \eqref{equ:estimateRpi}, this proves part (3). %is the number of odd orbits in $O$ 
\end{proof}

We will need the following further bound on the Riemann--Hurwitz contribution $R_\pi$ when the entries $a(\iota_\theta)$ are of almost Galois type, where $y=a\sigma$ is in reduced form. 
For this we shall use the following definition of almost Galois permutations: 
\begin{defn}\label{def:almost-Gal-branch-cycles} We say that $x\in S_\ell$ is {\it a permutation of almost Galois type} $m<\infty$ (resp., $m=\infty$) with {\it error} at most $\eps$ (resp., {\it entry bound} $N$)
if the number of length $m$ orbits of $x$ is at least $(\ell-\eps)/m$ (resp., the number of orbits of $x$ is at most $N$). 
\end{defn}
Thus for $m<\infty$, the error $\eps$ bounds the sum of the lengths $r$ of orbits of $x$ with $r\neq m$.
    %$\ell/k - 2(\alpha+1)(k+1) - 2(\alpha+1)(k^2-1)/3$ (resp.~is at most $2(\alpha+1)(k+1)$). 
    %
    \begin{cor}\label{cor:rpi-Galois}
    For every $\eps>0$,  there exists a constant $E_1=E_{1,\eps,t}$, depending only on $\eps$ and $t$ with the following property. 
     Let $f:X\ra X_0$ and $\pi:Z\ra X$ be coverings as above, and $y=a\sigma \in S_\Delta\wr S_I$ be a reduced form of a branch cycle of $f$ over a point $P$, where $a\in S_\Delta^I$, $\sigma\in S_I$. Let $\iota_\theta$ denote the representative of $y$ for an orbit $\theta\subseteq I$ of $\sigma$. If $a(\iota_\theta)$ 
     %let $O$ be the set of orbits of $\sigma$ on $I$, and $E\subseteq O$ the subset of even length orbits. Assume $y$ is a reduced form of a branch cycle $x$ of $f$ over $P$, with representatives $\iota_\theta,\theta\in O$. 
    %which satisfies the following property.If 
    %$a(\iota_\theta)$, $\theta\in O$ are all of almost Galois type with respect to constant $c$, and  
     %if $a(\iota_{\eta})$ 
    is of almost Galois type with error at most $\eps$ for every orbit $\theta\subseteq I$ of $\sigma$, and $m(a(\iota_\eta))$ is either infinite or even for some orbit $\eta\subseteq I$ of $\sigma$,
    then $R_\pi(f^{-1}(P)) < E_1\ell^{t-2}$.
    \end{cor}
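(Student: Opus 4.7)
The plan is to refine the analysis in the proof of \Pref{prop:genus-analysis}. Its part~(2) already yields the desired bound whenever $|E|\neq 1$ or $E=\{\mu\}$ with $|\mu|>2$, so we may assume $E=\{\mu\}$ with $|\mu|=2$, which is the sole case producing the main term $m\ell^{t-1}/2$ in part~(1). In this remaining case, Step~V of the proof of \Pref{prop:genus-analysis} yields
\[
R_\pi(f^{-1}(P)) < mT_v + E_0\ell^{t-2}
\]
for the unique $v\in U$, characterized by $v_\mu=2$ and $v_\theta=1$ for $\theta\neq\mu$. By \eqref{equ:kvR}, only tuples $R\in\mathcal R_v$ for which every $r_\theta$ is odd contribute to $T_v$, and for these $\hat k_{v,R} = 2k_{v,R}$. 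Combining with the bound $|\Omega_v\cap R|\le r_\mu\prod_{\theta\neq\mu}r_\theta^{|\theta|}$ from Step~IV, one obtains
\[
T_v \;\leq\; \frac{1}{2}\sum_{\substack{R\in\mathcal R_v\\ r_\theta\text{ odd }\forall\,\theta}} \frac{r_\mu\,\prod_{\theta\neq\mu}r_\theta^{|\theta|}}{k_{v,R}}.
\]

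The almost Galois hypothesis is then used to save a factor of $\ell$. Pick an orbit $\eta$ with $m(a(\iota_\eta))$ even or infinite; by \Dref{def:almost-Gal-branch-cycles}, in the even case the sum of $r_\eta$ over odd-length orbits of $a(\iota_\eta)$ is at most $\eps$, while in the infinite case the total number of orbits of $a(\iota_\eta)$ is at most $\eps$. Hence in either case the admissible odd-length orbits $R_\eta$ number at most $\eps$. I then split according to whether $\eta=\mu$ or $\eta\neq\mu$. When $\eta=\mu$, the bound $k_{v,R}\geq r_\mu$ cancels the factor $r_\mu$, and the sum reduces to at most $\eps$ choices of $R_\mu$ times $\prod_{\theta\neq\mu}\sum_{R_\theta}r_\theta^{|\theta|}$, which via $\sum_{R_\theta}r_\theta^{|\theta|}\leq\ell^{|\theta|}$ is bounded by $\eps\ell^{t-2}$. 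When $\eta\neq\mu$, use instead $k_{v,R}\geq r_\eta|\eta|$; the sum factors as $\tfrac{1}{|\eta|}\bigl(\sum_{R_\mu}r_\mu\bigr)\bigl(\sum_{R_\eta}r_\eta^{|\eta|-1}\bigr)\prod_{\theta\neq\mu,\eta}\sum_{R_\theta}r_\theta^{|\theta|}$, bounded by $\ell\cdot\eps\ell^{|\eta|-1}\cdot\ell^{t-2-|\eta|} = \eps\ell^{t-2}$.

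In both sub-cases $T_v\leq C(\eps,t)\ell^{t-2}$ for a constant depending only on $\eps$ and $t$, so since $m\leq t!$ the corollary follows with $E_1:=E_0+t!\cdot C(\eps,t)$. The main obstacle to watch out for is the $m=\infty$ case, where the hypothesis bounds only the total number of orbits of $a(\iota_\eta)$ and not the number of points lying in odd-length orbits; there the necessary factor of $\ell$ must be recovered indirectly by pairing the orbit count with the divisibility $k_{v,R}\geq r_\eta|\eta|$ (or with $k_{v,R}\geq r_\mu$ when $\eta=\mu$).
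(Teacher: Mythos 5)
Your overall strategy matches the paper's proof exactly: use Proposition~\ref{prop:genus-analysis}.(2) to reduce to the case where $E=\{\mu\}$ with $|\mu|=2$, note via \eqref{equ:kvR} that only tuples $R$ with all $r_\theta$ odd contribute to $T_v$ for the unique $v\in U$ (and for these $\hat k_{v,R}=2k_{v,R}$), divide by a factor coming from $\eta$, and use the hypothesis that $a(\iota_\eta)$ has even or infinite almost Galois type to bound the number of odd-length orbits of $a(\iota_\eta)$ by a constant depending only on $\eps$. The paper does exactly this.

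However, there is a genuine gap in the estimates. The correct cardinality bound from Step~IV of the proposition's proof is $|\Omega_v\cap R| \le r_{\iota_\mu}\prod_{i\in I\setminus\mu}r_i$, a product over \emph{indices} $i$. Your substitute $r_\mu\prod_{\theta\neq\mu}r_\theta^{|\theta|}$ uses $r_\theta=\lcm_{i\in\theta}(r_i)$, which dominates every individual $r_i$, so it is a valid but strictly looser bound. The problem is that the looser bound does not satisfy the inequality you then invoke: $\sum_{R_\theta}r_\theta^{|\theta|}\le \ell^{|\theta|}$ is false in general when $|\theta|>1$. For example if $a(\iota_\theta)$ has two orbits of lengths $3$ and $5$ on $\Delta$ with $\ell=8$ and $|\theta|=2$, the four ordered pairs $(R_i,R_j)$ all have odd $\lcm$ and $\sum\lcm(r_i,r_j)^2 = 9+225+225+25 = 484 > 64 = \ell^2$. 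By contrast, $\sum_{(R_i)_{i\in\theta}}\prod_{i\in\theta}r_i=\prod_{i\in\theta}\sum_{R_i}r_i=\ell^{|\theta|}$ holds with equality. So you must keep the product over indices $\prod_{i\in\theta}r_i$ rather than replace it by $r_\theta^{|\theta|}$.

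Once you make that replacement, your two-case cancellation goes through, but the paper's version is cleaner: instead of dividing by $r_\eta|\eta|$ (the lcm over $\eta$ times the orbit size), it simply uses $\hat k_{v,R}\ge r_{\iota_\eta}$ and cancels the single factor $r_{\iota_\eta}$ appearing in the numerator (with $r_{\iota_\mu}$ if $\eta=\mu$, with $r_{\iota_\eta}$ for $\iota_\eta\notin\mu$ otherwise). This yields directly
\[
T_v \le \bigl|\{R_{\iota_\eta}\in\Orb_\Delta(a(\iota_\eta))\suchthat r_{\iota_\eta}\text{ odd}\}\bigr|\prod_{i\in I\setminus\{\iota_\mu',\iota_\eta\}}\sum_{R_i}r_i \le E_1'\ell^{t-2},
\]
with no need to treat the lcm separately, since only a single coordinate's $r_i$ is ever cancelled. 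Your broad plan is correct, but the details need the tighter per-index bound and a more careful cancellation.
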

    \begin{proof}
    %{\bf Step VII:} {\it Proving part (4).}
    Let $\eta\in O$ be an orbit for which $a(\iota_\eta)$ is of almost Galois type with error at most $\eps$ such that $m(a(\iota_\eta))$ is even or infinite. 
    We use the estimates and notation of the proof of Proposition \ref{prop:genus-analysis}.
    Let $O$ be the set of orbits of $\sigma$ on $I$.
    By choosing $E_1\geq E_0$ and applying Proposition \ref{prop:genus-analysis}.(3),  we reduce to the case where $\sigma$ has a single orbit $\mu\in O$ of even length, and this orbit is of length $\abs\mu=2$.
    As in Step V, $R_\pi(f^{-1}(P))$ breaks into the sum $m\sum_{v\in V}T_v$, and as in \eqref{equ:final?}, it suffices to estimate $T_v$ for the unique $v\in U$, that is, for the tuple $v=(v_\theta)_{\theta\in O}$  where  $v_\theta =1$ for all $\theta \in O\setminus\{\mu\}$ and  $v_\mu =2$. 

    As in Proposition \ref{prop:genus-analysis}, let $\mathcal R$ be the set of all   $R=\prod_{i\in I}R_i\subseteq\Delta^I$, where $R_i$ is an orbit $a(\iota_\theta)$ for all $i\in \theta$, $\theta\in O$, let and $r_i:=\#R_i$ and 
    $r_\theta := \lcm_{i\in \theta}(\#R_i)$ for $\theta\in O$.
    Also let  $\Omega_v$ be the set of all $\delta\in R$ such that the length of the orbit of $\delta_\theta:=(\delta(i))_{i\in\theta}$ is $\abs\theta \cdot r_\theta/v_\theta$, for all $\theta\in O$. Thus the length $k_{v,R}:=\lcm_{\theta\in O}(r_\theta\cdot \abs{\theta}/v_\theta)$ of an orbit of $\delta\in \Omega_v\cap R$  is smaller than the length $\hat k_{v,R} := \lcm_{\theta\in O}(r_\theta\cdot \abs{\theta})$ of orbits in $R\times (G/K)$. 
    By step III of the proposition, if  $\Omega_v\cap R\neq \emptyset$, 
    %where $\Omega_v\cap R$ consists of the tuples  $\delta\in R$ for which the orbit of $\delta_\theta$ is of length $\abs{\theta}r_\theta/v_\theta$ for $\theta whose orbit is of length $r
    %\neq \emptyset$, 
    then  $r_\mu$ is odd. 
    Moreover by \eqref{equ:kvR}, $T_v=0$ if $r_i$ is even for some $i\in I\setminus \mu$.  
    Henceforce assume all $r_i,i\in I$ are odd, so that \eqref{equ:kvR} gives $\hat k_{v,R} = 2k_{v,R}\geq r_{\iota_\eta}$. Thus, 
    \begin{equation*}
    T_v = \sum_{R\in \mathcal R}\Bigl(\frac{1}{k_{v,R}} - \frac{1}{\hat k_{v,R}}\Bigr)\abs{\Omega_v\cap R} = \sum_{R\in \mathcal R}\frac{\abs{\Omega_v\cap R}}{\hat k_{v,R}} \leq \sum_{R\in \mathcal R}\frac{\abs{\Omega_v\cap R}}{r_{\iota_\eta}}. 
    \end{equation*}
    Write $\mu=\{\iota_\mu,\iota_\mu'\}$. 
    As in Step III, since the values of $\delta\in \Omega_v\cap R$ on $\mu$ are determined by its value on $\iota_\mu$ by \eqref{equ:determined}, 
    and hence the last inequality %\eqref{equ:bound-almost-Galois} 
    gives
    \begin{equation}\label{equ:bound-almost-Galois} 
    T_v \leq \sum_{(R_i)_{i\in I\setminus\{\iota_{\mu}'\}}}\frac{r_{\iota_\mu}\prod_{i\in I\setminus\mu}r_i}{r_{\iota_\eta}}, 
    \end{equation}
    where $R_i$ runs through odd length orbits of $a(\iota_\theta)$ for every $i\in \theta\setminus\{\iota_{\mu}'\}$. 
    %we have $\abs{\Omega_v\cap R}\leq r_{\iota_\mu}\prod_{i\in I\setminus\mu}r_i$. 
    %Thus, the inequality 
    Cancelling out $r_{\iota_\eta}$ with $r_{\iota_\mu}$ if $\mu=\eta$ and with $r_i$ for $i=\iota_\eta\not\in\mu$ otherwise, \eqref{equ:bound-almost-Galois} gives: 
    \begin{equation}\label{equ:almost-final} 
    T_v \leq 
    %\sum_{R\in\mathcal R_v}\prod_{i\in I\setminus\{\iota_\mu',\iota_\eta\}}r_i \leq 
    %\abs{\{\text{odd length orbits of }a(\iota_\eta)\}}
    \bigl\vert\{R_{\iota_\eta}\in \Orb_\Delta(a(\iota_\eta))\suchthat r_{\iota_\eta}\text{ is odd}\}\bigr\vert 
    \prod_{i\in I\setminus\{\iota_\mu',\iota_\eta\}}\sum_{R_i\in\Orb_\Delta(a(\iota_i))}r_i
    \end{equation}
    Since $a(\iota_\eta)$ is of even or infinite almost Galois type, there is an upper  bound  $E'_1=E'_{1,\eps,t}$, depending only on $\eps$ and $t$,  on the number of odd length orbits of $a(\iota_\eta)$. Thus, as the sum over orbits of an element in $S_\Delta$ is at most $\ell$, \eqref{equ:almost-final} gives:
    \begin{equation}\label{equ:beta1-bound}
    T_v\leq E'_1\prod_{i\in I\setminus\{\iota_\mu',\iota_\eta\}}\sum_{R_i\in\orb_\Delta(a(\iota_i))}r_i \leq E'_1\ell^{t-2}. 
    \end{equation}
     %(resp., odd entries) in $a(\iota_\eta)$ if $m(a(\iota_\eta))=\infty$ (resp., if $m(a(\iota_\eta))$ is even). 
    Setting $E_1:=E_{1,t}:=t!E'_1+E_0$ and noting that $m\leq t!$ as $G/K\leq S_t$, \eqref{equ:final?} and \eqref{equ:beta1-bound} give:
    $$ R_{\pi}(f^{-1}(P))< mT_v + E_0\ell^{t-2} \leq (t!E_1'+E_0)\ell^{t-2}= E_1\ell^{t-2}.  $$
    \end{proof}

    \section{Ramification of genus $\leq 1$ with a transposition}\label{sec:transp}
    Fix $t\geq 2$ and consider indecomposable coverings $f:X\ra X_0$ with monodromy group $G\leq S_\Delta \wr S_I$ with $\abs{I}=t$. Letting $K:=G\cap S_\Delta^I$ and $H$ a  point stabilizer, as in Section \ref{sec:genus} we assume $G=H\cdot K$. Assume further that the image $\oline G\cong G/K$ of $G$ in $S_I$ is transitive on $I$.  
    
    The following proposition gives an upper bound on the total Riemann--Hurwitz contribution of the natural projection $\pi:Z\ra X$, where $Z$ is the quotient by $H\cap K$. This bound is then used to prove that the ramification is of almost Galois type,  as described in Section \ref{sec:intro}. Let $Y$ be the quotient by $K$,  put $\ell:=\abs{\Delta}$ and $m:=[G:K]$, and let $E_0$ be the constant from Proposition \ref{prop:genus-analysis}. 
    \begin{prop}\label{prop:asymp}
    %There exists a constant $E_2:=E_{2,t}$ depending only on $t$, satisfying the following property. 
    Assume $g_Y\leq 1$ and let $s$ be the number of branch points of $f$ whose branch cycle acts on $I$ as a transposition. %is a transposition in $\oline G\leq S_I$. 
    Let
    $R_\pi:=\sum_P R_\pi(f^{-1}(P))$ where $P$ runs over all points of $X_0$. 
    %If $s\neq 0$, then   
    %\[
    %$R_\pi\leq %\begin{cases} 
    %\max\{sm\ell^{t-1}/2 + 4E_0\ell^{t-2},2tE_0\ell^{t-2}\}$ % & \text{ if $s\geq 1$}; %\\
    %2(t-1)E_0\ell^{t-2} & \text{ if $s=0$}. 
    %\end{cases}
    %\]
    Then $R_\pi\leq sm\ell^{t-1}/2 + 4E_0\ell^{t-2}$ if $s>0$ and $t\geq 3$;  and $R_\pi\leq 2tE_0\ell^{t-2}$ if $s=0$ and $t\geq 3$;
    %if $t\geq 3$, 
    and $R_\pi\leq s\ell$ if $t=2$. Moreover, $s\leq 2$ if $t\geq 3$, and $s\leq 4$ if $t=2$. 
    \end{prop}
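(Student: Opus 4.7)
My plan is to combine Riemann--Hurwitz for the Galois covering $\pi_0\colon Y \to X_0$ (Galois with group $\oline G = G/K$ of order $m$ acting regularly) with the per-branch-point bounds of Proposition~\ref{prop:genus-analysis}. First I would observe that every branch cycle of $f$ whose image $\sigma \in \oline G$ is nontrivial contributes $R_{\pi_0}(P) = m - m/\ord(\sigma) \geq m/2$, with equality iff $\ord(\sigma) = 2$. Riemann--Hurwitz for $\pi_0$ under $g_Y \leq 1$ forces $\pi_0$ to be unramified when $g_{X_0} \geq 1$ (so $s = 0$) and yields $\sum_P R_{\pi_0}(P) \leq 2m$ when $g_{X_0} = 0$, giving at most $4$ branch cycles with $\sigma \neq 1$ and in particular $s \leq 4$. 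To refine to $s \leq 2$ for $t \geq 3$, I would use that the sign character forces $s$ to be even, combined with a graph-theoretic argument showing that $4$ transpositions in $S_t$ whose product is $1$ can generate a transitive subgroup only when $t \leq 5$; for $t \in \{3,4,5\}$ a case analysis using product-type compatibility of $G$ would eliminate the remaining $s = 4$ configurations.

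Next I would decompose $R_\pi = \sum_P R_\pi(f^{-1}(P))$ according to the $I$-action $\sigma$ of each branch cycle. When $\sigma = 1$ the cycle lies in $K$ and acts trivially on $M = G/K$, so a direct orbit count gives $R_\pi(f^{-1}(P)) = m|\Orb_{\Delta^I}(x)| - |\Orb_{\Delta^I \times M}(x)| = 0$. When $\sigma$ is a transposition on $I$, Proposition~\ref{prop:genus-analysis}(1) bounds the contribution by $m\ell^{t-1}/2 + E_0\ell^{t-2}$ for $t \geq 3$, while Proposition~\ref{prop:genus-analysis}(3) identifies $R_\pi(f^{-1}(P))$ with the number of odd orbits of $a(\iota_I)$, which is at most $\ell$, for $t = 2$. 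For any other $\sigma \neq 1$ satisfying $|E| \neq 1$ or $E = \{\mu\}$ with $|\mu| > 2$, Proposition~\ref{prop:genus-analysis}(2) yields $R_\pi(f^{-1}(P)) < E_0\ell^{t-2}$.

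Summing these contributions would give the stated bounds: $R_\pi \leq s\ell$ for $t = 2$; $R_\pi \leq sm\ell^{t-1}/2 + 4E_0\ell^{t-2}$ for $t \geq 3$ with $s > 0$ (from the $s$ transposition contributions together with at most $4 - s$ other non-identity contributions via Part~(2)); and $R_\pi \leq 4E_0\ell^{t-2} \leq 2tE_0\ell^{t-2}$ for $s = 0$ with $t \geq 3$. The hard part will be handling the residual case in which $\sigma$ has one even orbit of length exactly $2$ together with an odd orbit of length $\geq 3$, so that $\sigma$ is not a transposition yet Part~(2) does not apply; such $\sigma$ occur only for $t \geq 5$, have order $\geq 6$, and hence contribute $\geq 5m/6$ to $R_{\pi_0}$, limiting them to at most $2$ instances. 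Their contribution to $R_\pi$ will have to be absorbed by a sharper counting of non-transposition branch points, or eliminated by invoking Corollary~\ref{cor:rpi-Galois} once almost Galois structure of the entries $a(\iota_\theta)$ has been verified in this setting.
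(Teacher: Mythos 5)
Your overall decomposition of $R_\pi$ by the image $\sigma\in S_I$ of each branch cycle, and your case-by-case application of Proposition~\ref{prop:genus-analysis}(1)--(3), does match the skeleton of the paper's argument. However, the proposal has a genuine gap precisely at what you call ``the hard part,'' and the two alternatives you offer there do not close it. If $\sigma$ has exactly one even orbit $\mu$ with $\abs\mu=2$ together with odd orbits of length $\geq 3$, then Proposition~\ref{prop:genus-analysis}(2) does not apply, and Proposition~\ref{prop:genus-analysis}(1) only gives a bound of order $m\ell^{t-1}/2$; if even one such $\sigma$ occurs, the resulting contribution destroys the stated inequality $R_\pi\leq sm\ell^{t-1}/2 + 4E_0\ell^{t-2}$, since $s$ counts only \emph{transpositions}. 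Option (a), ``sharper counting of non-transposition branch points,'' cannot reduce a single contribution of order $\ell^{t-1}$ down to $\ell^{t-2}$; and option (b), invoking Corollary~\ref{cor:rpi-Galois}, is not available here because that corollary presupposes that the entries $a(\iota_\theta)$ are of almost Galois type, which requires the irreducible-fiber-product and bounded-genus hypotheses of Section~\ref{sec:almost-Gal}, none of which are part of Proposition~\ref{prop:asymp}'s hypotheses ($g_Y\leq 1$ is all that is assumed).

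The missing ingredient is Lemma~\ref{lem:no-trans}: one introduces the degree-$t$ covering $\oline\pi_0\colon\oline Y\to X_0$ obtained by quotienting $\tilde X$ by a point stabilizer of the $G$-action on $I$, so that $\pi_0$ is its Galois closure and $E_{\oline\pi_0}(P)$ is exactly the cycle structure on $I$ of the branch cycle over $P$. Lemma~\ref{lem:no-trans} then classifies all $\oline\pi_0$ with $g_Y\leq 1$ having some $E_{\oline\pi_0}(P)$ of the form $[2,k_1,\dots,k_u]$ with odd $k_i$, and shows that in every such case $k_i=1$ for all $i$; in other words, the residual $\sigma$ you worry about simply do not occur as branch-cycle images when $g_Y\leq 1$. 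The same lemma also supplies the bound on the total number of branch points of $\pi_0$ (at most $4$ once $s\geq 1$, at most $2t$ in general) and the bounds on $s$. Your sign-character-plus-graph argument for $s\leq 2$ is both incomplete (the sign relation only forces the total count of odd-permutation images, not of transpositions specifically, to be even) and unnecessary once Lemma~\ref{lem:no-trans} is in hand. The proposal should be reorganized around proving Lemma~\ref{lem:no-trans}, or at minimum the vanishing of the residual case, before the $R_\pi$ decomposition can be made to yield the stated inequalities.
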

    The proof follows from Proposition \ref{prop:genus-analysis} and the following lemma: 
    
    \begin{lem}\label{lem:no-trans}%\footnote{is used for $D_m$, and $(G/K)''=1$.}
    Let $\oline \pi_0:\oline Y \ra \mP^1$ be a covering with Galois closure of genus at most~$1$. 
    Assume that the ramification type of $\oline\pi_0$ contains a multiset of the form $[2, k_1,\ldots, k_u]$ with odd $k_i$ for $i=1,\ldots, u$. 
    Then $k_i=1$ for $i=1,\ldots, u$, and the ramification type of $\oline\pi_0$ appears in Table~\ref{table:trans}. 
    \begin{table}
    $$\begin{array}{| c | c | c | c | c |}
    \hline
    \deg\oline\pi_0 & \text{Ramification for }\oline{\pi}_0 & \text{Ramification for }\pi_0 & \Mon(\oline{\pi}_0) & g_{Y}\\
    \hline
    2 & [2]^2 & [2]^2 & C_2 & 0 \\
    2 & [2]^4 & [2]^4 & C_2 & 1 \\
    3 & [3], [2, 1]^2 & [3^2], [2^3]^2 & D_6 & 0 \\
    3 & [2, 1]^4 & [2^3]^4 & D_6 & 1 \\
    4 & [4], [2^2], [2, 1^2]  & [4^2], [2^4]^3 & D_8 & 0 \\
    4 & [2, 1^2]^2, [2^2]^2 & [2^4]^4 & D_8 & 1\\
    4 & [4], [3, 1], [2, 1^2] & [4^6], [3^8], [2^{12}] &  S_4 & 0 \\
    6 & [6], [3, 3], [2, 1^4] & [6^4], [3^8], [2^{12}] & C_2 \times AGL(1,4) & 1 \\
    \hline
    \end{array}$$
    \caption{Ramification types containing $[2, 1^{t-2}]$, for coverings $\oline \pi_0$ %:\oline Y\ra~\mP^1$ 
     of $\mP^1$ with  Galois closure $Y$  of genus $g_Y\leq 1$.} \label{table:trans}
    \end{table}
    \end{lem}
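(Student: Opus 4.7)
The plan is to first extract a transposition from the cycle structure, then use Riemann--Hurwitz on the Galois closure to restrict the ramification, and finally enumerate.

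Let $x \in G := \Mon(\oline\pi_0)$ be a branch cycle of cycle structure $[2, k_1, \ldots, k_u]$ and set $L := \operatorname{lcm}(k_1, \ldots, k_u)$, which is odd by hypothesis. Then $\tau := x^L$ acts trivially on each odd cycle of $x$ and swaps the two points of its $2$-cycle, so $\tau$ is a transposition in $G$. The Galois closure $\tilde Y \to \mP^1$ satisfies $g_{\tilde Y} \leq 1$, so Riemann--Hurwitz together with Abhyankar's lemma (Remark~\ref{rem:abh}), giving $e_P = \operatorname{lcm}(E_{\oline\pi_0}(P))$, yields $\sum_P(1 - 1/e_P) \leq 2$. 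At the branch point in question $e_P = 2L$, and the classical classification of tuples $(e_P)$ satisfying this inequality (finite subgroups of $\mathrm{PGL}_2$ for $g = 0$ and the four types $(2,2,2,2), (3,3,3), (2,4,4), (2,3,6)$ for $g = 1$) has maximum index $6$. Hence $2L \in \{2, 6\}$, forcing $L \in \{1, 3\}$.

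The main work is ruling out $L = 3$. In this case the Galois-closure tuple must be $(2, 3, 6)$ with $g_{\tilde Y} = 1$, so $G$ is a transitive faithful quotient of the Euclidean triangle group $\Delta(2, 3, 6) \cong \Z^2 \rtimes C_6$, hence solvable of the form $T \rtimes C_6$ with $T$ abelian and $G^{\mathrm{ab}}$ a quotient of $C_6$. On the other hand, by the classical theorem on transitive groups containing a transposition (applied to the normal closure $N$ of $\tau$ in $G$), $\{1, \ldots, n\}$ partitions into $r$ blocks of equal size $m \geq 2$ with $S_m^r = N \leq G \leq S_m \wr S_r$. The combination of $S_m^r \leq G$, the abelianization constraint, and the required cycle structure $[2, 1^a, 3^b]$ with $b \geq 1$ for the image of the order-$6$ generator is highly restrictive; the smallest candidate quotients of $\Delta(2, 3, 6)$ are $C_6$, $S_3 \times C_3$, and $A_4 \times C_2$, and I will check by direct inspection that none of them admits a faithful transitive action satisfying all three conditions.

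Once $L = 1$ is established, $x$ itself is the transposition, and the same block-structure theorem together with $|G| \geq (m!)^r$ and the Riemann--Hurwitz bound $\sum (1 - 1/e_P) \leq 2$ limits $(m, r)$ to a small finite set. In each case I enumerate the product-$1$ tuples of branch cycles with the prescribed cycle types (cf.\ Section~\ref{sec:RET}) to produce the entries of Table~\ref{table:trans}; the $\pi_0$-ramification is derived from the $\oline\pi_0$-ramification via Lemma~\ref{lem:f-to-h}, and the genera $g_Y$ are computed from Riemann--Hurwitz for the Galois cover. The most delicate step is the $L = 3$ exclusion, where the interaction between the solvable structure of $\Delta(2, 3, 6)$ and the large subgroup $S_m^r$ provides the key restriction.
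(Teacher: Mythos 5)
Your extraction of the transposition $\tau = x^{L}$ with $L = \lcm(k_i)$ is a sound idea, and the Abhyankar step giving $e_P = 2L$ at the branch point in question is correct. However, there are two genuine gaps. First, your assertion that the classification of ramification index tuples for Galois covers of $\mP^1$ of genus $\le 1$ ``has maximum index $6$'' is false: the cyclic and dihedral types $(m,m)$ and $(2,2,m/2)$ have unbounded indices. The conclusion $2L\in\{2,6\}$ is still correct, but it requires an argument (as the paper carries out by running through cases (A)--(I) of Table~\ref{table:Galois}) that at a branch point whose cycle structure is $[2,k_1,\ldots,k_u]$ with the $k_i$ odd, the cyclic/dihedral branch cycles have all orbits of equal length and hence cannot contain a single $2$-cycle together with nontrivial odd cycles, so those cases already force $L=1$.

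The more serious gap is that nothing in your argument bounds $t=\deg\oline\pi_0$ in the genus-$1$ cases. The groups $\Delta(2,2,2,2)$, $\Delta(2,4,4)$, $\Delta(2,3,6)$ are Euclidean wallpaper groups $\Z^2\rtimes C_k$, whose finite quotients and transitive faithful actions have unbounded order and degree; hence ``checking the smallest candidate quotients'' of $\Delta(2,3,6)$ is not justified. Your block decomposition $S_m^r\le \oline G\le S_m\wr S_r$ with $m\le 3$ (forced by metabelianness) places no a priori bound on $r$, and the Riemann--Hurwitz inequality you invoke is only for the Galois closure, which in the genus-$1$ cases fixes the indices $(2,3,6)$ etc.\ but not $|\oline G|$. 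The paper closes this gap by bounding $t$ directly: the chain rule gives $R_{\oline\pi_0}(p_i)\le t\,(e_{\pi_0}(p_i)-1)/e_{\pi_0}(p_i)$, and feeding this into Riemann--Hurwitz for $\oline\pi_0$ itself (inequalities \eqref{equ:RH-oline-pi_0}--\eqref{equ:caseH2}) yields $t\le 6$ in cases (F)--(G)--(H1) and $t\le 9$ in case (H2), i.e.\ your $L=3$ case. The remaining finite list is then checked, and the spurious types F.N1 and H2.N1 are excluded by Lemma~\ref{lem:hurwitz1}. Your approach needs an analogous degree bound before the enumeration can be carried out.
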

    \begin{proof}[Proof of Proposition \ref{prop:asymp}] 
    Letting $\oline H$ be a point stabilizer in the action of $G$ on $I$, we let  $\pi_0:Y\ra X_0$ and $\oline\pi_0:\oline Y\ra X_0$ be the natural projections, where $\oline Y:=\tilde X/\oline H$. %by a point stabilizer of $G$ in its action on $I$.
    Since  $G$ acts  transitively on $I$, the action of $\oline G:=\Mon(\oline\pi_0)$ on $G/\oline H$ is equivalent to its action on $I$. 
    Since $K$ is the kernel of this action,  $\oline G\cong G/K$, and hence $\pi_0$ is the Galois closure of $\oline\pi_0$. 
    In particular, $\pi_0$ and $\oline\pi_0$ have the same branch points by Remark \ref{rem:abh}.
    %Since $K$ is the kernel of the action of $G$ on $I$ and this action is transitive, the Galois closure of $\oline \pi_0$ is $\pi_0$, and $\oline G=\Mon(\oline\pi_0)$ is the permutational group $G/K$ in its action on $I$. 
    %Hence a branch cycle of $f$ at a point $P$ in $X_0$  acts on $I$ as a transposition if and only if  the branch cycle of $\oline \pi_0$ at $P$ acts on $I$ as a transposition. 
    %whose branch cycle in $\oline G$ is a transposition. 
    
     Let $S$ be the subset of branch points of $\oline\pi_0$ whose ramification type contains a tuple
    %$\{i:
    $[2,k_1,\ldots,k_r]$ for some odd $k_1,\ldots,k_r$, $r\in\mathbb N$. 
    By Lemma \ref{lem:no-trans}, the number of branch points of $\pi_0$ is at most~$4$, and $\abs S\leq 2$ for $t\geq 3$ (resp.~$\abs S\leq 4$ for $t=2$). %Since $r\leq 4$ 
    %and since 
    Moreover, Lemma \ref{lem:no-trans} shows that $S$ consists of points $p$ in $X_0$ such that $E_{\oline\pi_0}(p)= [2,1^{t-2}].$  
    %Since a branch cycle $\oline u\in G/K$ of $\oline\pi_0$ at a point $P$ is the image of a branch cycle $u\in G$ of $f$ at $P$, and hence has the same action on $I$, 
    Since $E_{\oline\pi_0}(p)$ coincides with the multiset of orbits of branch cycle over $p$, and the  action of $\oline G$ on $\oline H\backslash \oline G$ is equivalent to its action on $I$, it follows that $s=\abs S$. The desired bounds on $s$ therefore follow from the above bounds from Lemma \ref{lem:no-trans}.  
    
    By Abhyankar's Lemma \ref{lem:abh}, if $p$ is not a branch point of $\pi_0$, then $\pi$ is unramified over every preimage in $f^{-1}(p)$ and hence $R_{\pi}(f^{-1}(p))=0$. 
    For a branch point $p\not\in S$ of $\pi_0$, one has  $R_{\pi}(f^{-1}(p))  < E_0\ell^{t-2}$ if $t\geq 3$ by Proposition \ref{prop:genus-analysis}.(2). 
    For $p\in S$, one has  $R_\pi(p) \leq m\ell^{t-1}/2 + E_0\ell^{t-2}$ if $t\geq 3$, and $R_\pi(f^{-1}(p))\leq \ell$  if $t=2$, by Proposition \ref{prop:genus-analysis}, parts (1) and (3), resp. 
    Since $g_Y\leq 1$, the Riemann--Hurwitz formula implies that $\oline\pi_0$ and hence $\pi_0$ have at most $2t$ branch points. Thus, we have $R_\pi\leq 2tE_0\ell^{t-2}$ if $s=0$.
    Now assume $s\geq 1$, so that $\pi_0$ has at most four branch points by Lemma \ref{lem:no-trans}. The bounds on $R_\pi(f^{-1}(p))$ in this paragraph then give $R_\pi\leq ms\ell^{t-1}/2 + 4E_0\ell^{t-2}$ for $t\geq 3$ and $R_\pi\leq s\ell$ for $t=2$. %As $s\leq 4$ if $t\geq 3$ and $s=2$ if $t=2$, we get  $R_\pi\leq m\ell^{t-1} +4E_0\ell^{t-2}$ if $t\geq 3$, and $R_\pi\leq 4\ell$ if $t=2$. 
    \end{proof}
    \begin{proof}[Proof of Lemma \ref{lem:no-trans}]
    
    Let $\pi_0:Y\ra \mP^1$ denote the Galois closure of $\oline\pi_0$, so that both maps have the same branch points $p_1,\ldots, p_r$. Assume $p_1,\ldots, p_{r-1}$ are ordered by decreasing Riemann--Hurwitz contributions 
    $R_{\oline \pi_0}(p_1)\geq \cdots \geq R_{\oline\pi_0}(p_{r-1}),$ and that $E_{\oline\pi_0}(p_r)=[2,k_1,\ldots, k_r]$ for odd $k_i$, $i=1,\ldots,r$.
    Let $\oline G$ be the monodromy group of $\oline\pi_0$ equipped with an action on a set $I$ of cardinality $t:=\deg\oline\pi_0$.
    
    We first claim that either $g_{\oline Y}=0$ or the ramification of $\oline\pi_0$ is $[2]$ four times. 
    Clearly,  $g_{\oline Y}\leq g_Y$ and $g_Y\leq 1$ by hypothesis.  
    If $t>2$  and $g_{\oline Y}=1$, then the natural projection $Y\ra \oline Y$ is unramified of degree $m/t$, and hence $[2^{m/t},k_1^{m/t},\ldots, k_r^{m/t}]$ appears in the ramification type of $\pi_0$, contradicting the fact that $\pi_0$ is Galois.
    If $t=2$ and $g_{\oline Y}=1$, then $\oline\pi_0$ is Galois, and the ramification of $\oline\pi_0$ is $[2]^4$, proving the claim. % From now on assume $g_{\oline Y}=0$.
    
    Henceforth assume $g_{\oline Y}=0$. As  $\pi_0$ is the Galois closure of $\oline\pi_0$, Abhyankar's Lemma and Remark \ref{rem:abh} %cite[Lemma 9.2]{NZ} 
    imply that
    \begin{equation}\label{equ:lcm-cond}
    \text{ $e_{\pi_0}(p_i)$  is the least common multiple of entries in
    $E_{\oline{\pi}_0}(p_i)$, for $i=1,\ldots, r$.}
    \end{equation}
    
    The ramification and monodromy groups of Galois coverings $\pi_0:Y\ra\mP^1$ with genus $g_Y\leq 1$ is well-known  and appears in Table~\ref{table:Galois}, see \cite[Proposition~2.4]{GT} and proof of \cite[Proposition 9.5]{NZ}. 
    %to be one of the following: 
    \begin{table}
    $$
    \begin{array}{|l|l|}
    \hline
    \text{(A) $[m],[m]$ with $\oline G\cong~C_m$; } & 
    \text{(F) $[2^{m/2}]^4$; } \\ % with $\oline G$; % a subgroup of $(C_u)^2\rtimes C_2$,  $u\in\{m/2,m/4\}$; 
    \text{(B)~$[m/2,m/2],[2^{m/2}],[2^{m/2}]$ with $\oline G\cong D_{m}$; } & 
    \text{(G)~$[2^{m/2}], [4^{m/4}]^2$; } \\ % with $\oline G$;
    \text{(C) $[2^6], [3^4], [3^4]$ with $\oline G\cong A_4$; } & 
    \text{(H) $[2^{m/2}], [3^{m/3}], [6^{m/6}]$; } \\ % with $\oline G$ a subgroup of $(C_\ell)^2\rtimes C_6$;
    \text{(D) $[2^{12}], [3^{8}], [4^6]$ with $\oline G\cong S_4$;} & 
    \text{(I) $[3^{m/3}]^3$.} \\  % with $\oline G$ a subgroup of $(C_\ell)^2\rtimes C_3$.
    \text{(E) $[2^{30}], [3^{20}], [5^{12}]$ with $\oline G\cong~A_5$;} & \\
    \hline
    \end{array}
    $$
    \caption{Ramification types for Galois coverings $Y\ra \mP^1$ of genus $g_Y\leq 1$.} \label{table:Galois}
    \end{table}
    Moreover, $\oline G$ is solvable in cases (F)-(I). %group of derived length at most $2$. 
    %\begin{enumerate}
    %\item[(A)] $[\ell],[\ell]$ with $\oline G\cong C_\ell$; 
    %\item[(B)] $[\ell/2,\ell/2],[2^{\ell/2}],[2^{\ell/2}]$ with $\oline G\cong D_{\ell}$; 
    %\item[(C)] $[2^6], [3^4], [3^4]$ with $\oline G\cong A_4$; 
    %\item[(D)] $[2^{12}], [3^{8}], [4^6]$ with $\oline G\cong S_4$; 
    %\item[(E)] $[2^{30}], [3^{20}], [5^{12}]$ with $\oline G\cong~A_5$; 
    %\item[(F)] $[2^{\ell/2}]^4$ with $\oline G$ a subgroup of $(C_\ell)^2\rtimes C_2$; 
    %\item[(G)] $[3^{\ell/3}]^3$ with $\oline G$ a subgroup of $(C_\ell)^2\rtimes C_3$;
    %\item[(H)] $[2^{\ell/2}], [4^{\ell/4}]^2$ with $\oline G$ a subgroup of $(C_\ell)^2\rtimes C_4$;
    %\item[(I)] $[2^{\ell/2}], [3^{\ell/3}], [6^{\ell/6}]$ with $\oline G$ a subgroup of $(C_\ell)^2\rtimes C_6$. 
    %\end{enumerate}
    Running over each of the cases (A)-(I),  \eqref{equ:lcm-cond} implies  that  either $k_i=1$ for all $i$, or we are in case (H), and $e_{\pi_0}(p_r) = 6$, and  $k_i=1$ or $3$ for all $i$. We separate case (H) accordingly into case (H1) $e_{\pi_0}(p_r)=2$, and case (H2) $e_{\pi_0}(p_r)=6$. 
    In particular, as $e_{\pi_0}(p_r)$ is $2$ or $6$, %u^{m/2}]$ 
     case (I) does not occur. % the ramification of $\pi_0$ is different from $[3^{\ell/3}]^3$.
    %Since $R_{\oline\pi_0}(p_r)=1$,
    %By \eqref{equ:lcm-cond} we get: 
    %Since $e_{\oline \pi_0}(p_i)\divides e_{\pi_0}(p_i)$, we have
    
    Since $R_{\pi_0}(p_i)\geq R_{\oline\pi_0}(p_i)\cdot m/t$ by the chain rule \eqref{equ:chain},  and $R_{\pi_0}(p_i) = m(1-1/e_{\pi_0}(p_i))$, we have
    \begin{equation*}\label{equ:ineq1}
    R_{\oline\pi_0}(p_i)\leq \frac{t}{m}R_{\pi_0}(p_i) = t\cdot \frac{e_{\pi_0}(p_i) - 1}{e_{\pi_0}(p_i)} \text{ for $i=1,\ldots, r-1$. }
    \end{equation*}
    As $g_{\oline Y}=0$ and $E_{\oline \pi_0}(p_r)=[2,1^{t-2}]$, combining the last inequality with the Riemann--Hurwitz formula for $\oline \pi_0$ gives in cases (A)-(G) and case (H1):
    \begin{equation}\label{equ:RH-oline-pi_0}
    \sum_{i=1}^{r-1}\frac{e_{\pi_0}(p_i) - 1}{e_{\pi_0}(p_i)}\geq \frac{1}{t}\sum_{i=1}^{r-1}R_{\oline\pi_0}(p_i) = \frac{2t-2 -R_{\oline\pi_0}(p_r)}{t} = 2-\frac{3}{t},
    \end{equation}
    and similarly in case (H2): 
    \begin{equation}\label{equ:caseH2}
    \sum_{i=1}^{r-1}\frac{e_{\pi_0}(p_i) - 1}{e_{\pi_0}(p_i)}\geq \frac{2t-2 -R_{\oline\pi_0}(p_r)}{t} \geq \frac{2t-2 -1 - (2/3)(t-2)}{t}.
    \end{equation}
    Applying \eqref{equ:RH-oline-pi_0} gives 
    $t=2$  in case (A);
    $t\leq 4$  in case (B); % with $m\geq 20$ if $t=5$;
    $t<5$ in case (C);
    $t<6$ in case (D);
    $t<6$ in case (E);
    %$t\leq 4$ in case (H1);
    and $t\leq 6$ in cases (F), (G), and (H1); 
     %if the ramification $\pi_0$ is $[2^*]^4$, or  $[2^*], [4^*]^2$, or $[2^*], [3^*], [6^*]$, 
    where the last is an equality  if and only if $E_{\oline\pi_0}(p_i) = [e^{t/e}]$ for $e=e_{\pi_0}(p_i)$,  $i<r$.
    Similarly \eqref{equ:caseH2} gives $t\leq 10$ in case (H2). Moreover if $t=10$ in the latter case,  then $E_{\oline \pi_0}(p_r) = [2,3^{(t-2)/3}]$ and $E_{\oline\pi_0}(p_i)=[3^{t/3}]$ for some $i<r$. As both equalities cannot hold simultaneously with integral exponents, we can assume $t\leq 9$. 
    % the inequality gives: 
    % 1/2 + 2/3 >= 4/3 - 5/(3t)
    % or 5/3>= t/6 or t<=10
    % Equality implies all entries in E_{\oline pi_0}(p_i) equal e_{pi_0}(p_i).
    % 
    
    %\begin{itemize}
    %\item[(A)] $t=2$ if $\oline G$ is cyclic;
    %\item[(B)] $t\leq 4$ if $\oline G \cong D_m$;
    %\item[(C)] $t<5$ if $\oline G\cong A_4$;
    %\item[(D)] $t<6$ if $\oline G\cong S_4$;
    %\item[(E)] $t<6$ if $\oline G\cong A_5$;
    %\item[(F-I)] $t\leq 6$ if the ramification $\pi_0$ is $[2^*]^4$, or  $[2^*], [4^*]^2$, or $[2^*], [3^*], [6^*]$, with equality if and only if $E_{\oline\pi_0}(p_i) = [e_{\pi_0}(p_i)^*]$ for all $i<r$.
    %\end{itemize}
    %As $\pi$ is the Galois closure of $\pi_0$, the action of $\oline G$ on $I$ is faithful. Since $\oline Y$ is assumed to be irreducible the action of $\oline G$ on $I$ is transitive.
    Since $\oline G$ acts transitively and faithfully on $I$, these conditions force the regular action with $t=m=2$ if $\oline G$ is cyclic (case (A)); 
    the standard action of $D_{2t}$ on $t$ elements with $m=2t=6$ or $8$ if $\oline G$ is dihedral (case (B)); 
    the standard action on a set of size $t=4$ if $\oline G\cong A_4$ or $S_4$ (cases (C)-(D)); 
    and the standard action on a set of size $t=5$ if $\oline G\cong A_5$ (case (E)). Since the groups $A_4$ and $A_5$ do not contain transpositions in their standard action,
    we also have $\oline G\neq A_4,A_5$. %Under these constraints the possibilities for $\Ram(\oline\pi_0)$ containing $[2,1^*]$ are
    
    %If $\oline G\cong C_2, D_6, D_8$ or $S_4$ with their standard actions,
    It follows that the ramification of $\pi_0$ is $[2]$ twice (Case (A)), or $[t,t],[2^{t}]$ twice with $t=3$ or $4$ (Case (B)), or $[2^{12}], [3^8], [4^6]$ (Case D), or as in cases (F)-(H).
    %$[2^{m/2}]$ four times, or $[2^{m/2}], [4^{m/4}]$ twice, or %$[2^{m/2}], [3^{m/3}], [6^{m/6}]$. 
    Given the bounds on $t$ in the above paragraph and these possibilities for the ramification of $\pi_0$, a straightforward computation determines which ramification data for $\oline\pi_0$ contains $[2,1^{t-2}]$ in cases (A)-(G) and (H1)
    (resp. $[2,3^{u_1},1^{v_1}]$ for some integers $u_1\geq 1$, $v_1\geq 0$ in case (H2)), satisfies  \eqref{equ:lcm-cond}, and the Riemann--Hurwitz formula for $\oline\pi_0$. These are the ramification types in Table~\ref{table:trans} and the two ramification types (F.N1) $[2,1^4], [2^3]$ three times, and (H2.N1) $[2^3],[3^2],[1,2,3]$. 
    % Here is the check in case H2:
    % say e(p_1)=2, e(p_2)=3, E_{\oline\pi_0}(p_r) = [2,3,*]. 
    % For t=5 the maximal RH contribution comes from [2,2,1],[3,1,1],[2,3] which gives 7<2t-2
    % For t=9 the maximal RH comes from [2^4,1], [3^3], [2,3^2,1] which is 15<2t-2=16.
    % For t=8 the maximal RH contr. comes from [2^4], [3^2,1,1], [2,3,3]
    % which gives 13<2t-2=14
    %For t=7 the maxim RH comes from [2^3,1],[3^2,1], [2,3,1,1], 10<2t-2=12.
    % For t=6 the max RH comes from [2^3],[3^2][2,3,1] (which is the only one that occurs). 
    Using multiplicativity of ramification indices, it is straightforward to check that every covering with ramification index F.N1 or H2.N1 is indecomposable. There is no indecomposable covering with ramification type F.N1 by Lemma \ref{lem:hurwitz1}.(a). % as $\oline\pi_0$ is non-Galois. 
    Since $A_4$ contains no element of order $6$ and $\oline G$ contains a branch cycle of order $6$ over $p_r$ in case H2, $\oline G$ is not a quotient $A_4$, and hence there is no indecomposable covering with ramification H2.N1 by Lemma \ref{lem:hurwitz1}.(c).
    %.  % (or equivalently \eqref{equ:RH-oline-pi_0}). 
    %A straightforward check shows that the   ramification types for $\oline \pi_0$ with $g_{Y_0}=0$ (or alternatively these satisfying \eqref{equ:RH-oline-pi_0}) subject to condition \eqref{equ:lcm-cond} are all listed in Table~\ref{table:trans}. 
    
    For the ramification types with $g_Y=0$, the corresponding group $\oline G$  is read off cases (A)-(E) above. For the ramification types with $g_Y=1$ and $t\leq 4$, the subgroup  $\oline G\leq S_t$, $t=3,4$, is of derived length at most $2$, is transitive, and is not contained in $A_t$, forcing $\oline G \cong D_{2t}$.  
    %For 
    In case the ramification of $\oline\pi_0$ is  $[6], [3, 3], [2, 1^4]$,  
    as in Section \ref{sec:RET} its monodromy group $\oline G\leq S_6$ is a transitive subgroup generated by a product-$1$ tuple $x_1,x_2,x_3\in S_6$ with cycle structures~$[6],[3,3],[2,1^4]$, respectively. A direct computation with Magma shows that for such tuples $\oline G = \langle x_1,x_2,x_3\rangle \cong C_2\times \AGL(1,4)$. % gives $\oline G=C_2\times \AGL(1,4)$. % The group is %, respectively, and satisfying $x_1x_2x_3=1$, we get  $\oline G\cong C_2\times \AGL(1,4)$.
    \end{proof}
    
    %Note that the Galois closure of $\oline{\pi}_0$ is $Y:=\tilde X/K$, since $K = \core_G(H_0) = \cap_{x\in G} H_0^x$, as $K$ is the kernel of the action of $G$ on $I$.

    \section{Theorem \ref{thm:main-wreath}: reduction to almost Galois types and the case $t\geq 3$}
    \label{sec:G/K}
    %Let $f:X\ra \mP^1$ be an indecomposable covering with monodromy group $G\leq S_\ell\wr S_t$ of product type $t$ and Galois closure $\tilde X$. 
    In this section we deduce Theorem \ref{thm:main-wreath} from Theorem \ref{thm:main-Gal}, which assumes genus bounds that ensure ramification of almost Galois type by Section \ref{sec:almost-Gal}. 
    %the case of ramification of almost Galois type. 
    Moreover, we prove Theorem \ref{thm:main-Gal} for groups of product type $G\leq S_\Delta \wr S_I$ with $\abs I\geq 3$ in the absence of branch cycles that act on $I$ as a transposition. 
    
    We assume the setup of Section \ref{sec:setup}, so that to a covering $f:X\ra X_0$ with monodromy $G\leq S_\Delta \wr S_I$ of product type and Galois closure $\tilde X$, one associates the natural projections  $\pi:Z\ra X$, $h:Z\ra Y$, and $h_i:Y_i\ra Y$, where $Y:=\tilde X/K$, $Z:=\tilde X/(H\cap K)$, $H$ is a point stabilizer of $G$, $K:=G\cap S_\Delta^I$ and $K_i$ is a point stabilizer in the action of $K$ on the $i$-th coordinate of $\Delta^I$. Put $t:=\abs I$, $\ell:=\abs \Delta$, and $m:=[G:K]$. Recall that by Remark \ref{rem:setup} the fiber product of $h_i$ and $h_j$ is irreducible for distinct $i,j$ in $\{1,\ldots,t\}$. 
    $$
    \xymatrix{
    & & Z \ar[rr]^{\pi}\ar[dd]_{h} \ar[dr]\ar[dll]  & & X\ar[dd]^{f} \\
    %& Y_{i,j} & &\\
    Y_1 \ar[drr]_{h_1}  &  \cdots & & Y_t \ar[dl]^{h_t}  & \\
    & &  Y \ar[rr]^{\pi_0} & &   X_0
    }
    $$
    
    %Let $K := G\cap S_\ell^t$ and $K_1:= G\cap (S_{\ell-1}\times S_\ell^{t-1})$.

    \begin{thm}\label{thm:main-Gal}
    Fix  $t\geq 2$ and $\alpha>0$. Then there exist constants $c_2=c_{2,t}>0$ and $d_2=d_{2,\alpha,t}$, depending only on $\alpha$ and $t$, such that for every indecomposable covering $f:X\ra \mP^1$ with monodromy group $G\leq S_\ell \wr S_t$ of product type, whose corresponding coverings $h_i:Y_i\ra Y$, $i=1,\ldots,t$ admit irreducible fiber products $Y_i\#_YY_j$, $i\neq j$ of genus  $<\alpha\ell$,
    %have almost Galois ramification with constant $\alpha$, % of error $\eps>0$,
    %one of the following holds: %either 
    %\begin{enumerate}
    %\item 
    either $g_X >  c_2\ell^{t-1}-d_2\ell^{t-2},$ or
    %\item 
    $t=2$ and the ramification of $f$ appears in Table~\ref{table:wreath}. 
    %\end{enumerate}% and \ref{table:wreath2}.
    %where $m:=[G:K]$. 
    %Here, $O(\ell^{t-2})$ denotes a quantity bounded by $d_{t,e}\ell^{t-2}$ for some constant $d_{t,e}$ depending only on $t$ and $e$.
    \end{thm}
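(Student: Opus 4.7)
The plan is to lower-bound $g_X$ via the intermediate tower $Z\to Y_1\to Y$. Combining the Riemann--Hurwitz formula for $\pi\colon Z\to X$ (degree $m$) with that for $Z\to Y_1$ (degree $\ell^{t-1}$) yields
\[
g_X \;=\; 1 \;+\; \frac{\ell^{t-1}(g_{Y_1}-1)}{m} \;+\; \frac{R_{Z\to Y_1}-R_\pi}{2m}.
\]
Since $m\leq t!$ and $R_{Z\to Y_1}\geq 0$, once $R_\pi=O(\ell^{t-2})$ is established, the inequality $g_X>c_2\ell^{t-1}-d_2\ell^{t-2}$ follows in the main case $g_{Y_1}\geq 2$ for suitable constants depending only on $t$ and $\alpha$. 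The task therefore splits into verifying $R_\pi=O(\ell^{t-2})$ under the hypotheses, and handling the sporadic cases $g_{Y_1}\in\{0,1\}$ separately.

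First I would exploit the fiber-product genus hypothesis: since $K\supseteq A_\ell^t$ acts transitively on $\Delta^{\{i,j\}}$ by Lemma~\ref{lem:product-type}, each $Y_i\#_Y Y_j$ is irreducible and, by hypothesis, of genus $<\alpha\ell$. Proposition~\ref{lem:DZ} and Corollary~\ref{cor:DZ} applied to each pair $(h_i,h_j)$ give, for $\ell$ sufficiently large in $\alpha$, that every $P\in Y(\K)$ has almost Galois type $m_P:=m_{h_i}(P)\in\{1,\ldots,6,\infty\}$, common across $i$ by Remark~\ref{lem:same-m_P}, and that $g_Y\leq 1$, $g_{Y_i}\leq\alpha+1$, with $M_{h_1}$ in the short list of Corollary~\ref{cor:DZ}. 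Letting $s$ count the branch points of $f$ whose branch cycle acts on $I$ as a transposition, Proposition~\ref{prop:asymp} yields $R_\pi\leq 2tE_0\ell^{t-2}$ when $s=0$, while Corollary~\ref{cor:rpi-Galois} gives the refined per-point bound $R_\pi(f^{-1}(P))<E_1\ell^{t-2}$ whenever a reduced-form entry $a(\iota_\theta)$ has even or infinite almost Galois type. For $s>0$, Theorem~\ref{thm:main-Gal} is reduced to Proposition~\ref{prop:main-transposition} (proved in Section~\ref{sec:reduced}), which supplies the same $O(\ell^{t-2})$ bound away from the exceptional types of Table~\ref{table:wreath}.

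The main obstacle is the sporadic case $g_{Y_1}\in\{0,1\}$, in which the main term of the formula for $g_X$ vanishes or becomes negative. Here the short $M_{h_1}$-list, the Riemann--Hurwitz formula for $h_1$ with $\Mon(h_1)\supseteq A_\ell$, and the low-genus constraint confine $h_1$ to a classifiable set of ramification types (Table~\ref{table:Galois} and Lemma~\ref{lem:normal-closure}). For $t\geq 3$ without transpositions, a branch-point accounting in $Y$, comparing the counts of $\pi_0$-branched vs.\ $\pi_0$-unramified branch points of $f$ against the $M_{h_1}$-list, will show that no valid configuration exists for the transitive $G/K$-actions on $I$ avoiding transpositions (such as $G/K\cong C_3$ on $\{1,2,3\}$ or $V_4$ on $\{1,2,3,4\}$), making Theorem~\ref{thm:main-Gal} vacuous in those subcases. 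For $t=2$, by contrast, the sporadic analysis produces genuine low-genus configurations, which are precisely the exceptional ramification types of Table~\ref{table:wreath}; their complete enumeration and construction is deferred to Section~\ref{sec:t=2}.
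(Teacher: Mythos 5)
Your decomposition via $Z\to Y_1$ (degree $\ell^{t-1}$) and the resulting formula for $g_X$ are correct, and your bound $R_\pi=O_{\alpha,t}(\ell^{t-2})$ via Proposition~\ref{prop:main-transposition} matches the paper. The gap is in the ``sporadic'' case $g_{Y_1}\leq 1$, which you cannot rule out: Corollary~\ref{cor:DZ} only gives $g_{Y_1}\leq\alpha+1$, and for $t\geq 3$ the paper never claims coverings with $g_{Y_1}\leq 1$ are impossible — it only claims $g_X$ is large. Your fallback, that a ``branch-point accounting'' shows no valid configuration exists for $t\geq 3$ without transpositions, is asserted rather than proved, and it is the wrong target: to close the case $g_{Y_1}=0$ you would instead need a lower bound $R_{Z\to Y_1}\gg\ell^{t-1}$, which you never establish. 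As it stands, the argument is incomplete precisely where it matters.

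The paper avoids the difficulty by applying Riemann--Hurwitz not to $Z\to Y_1$ but to the ``dual'' degree-$\ell$ projection $h_1'\colon Z\to Y_1'$ with $Y_1':=\tilde X/(K_2\cap\cdots\cap K_t)$. Lemma~\ref{lem:normal-closure} (nonsolvability of $G$) supplies a point $P_0$ with finite type $m_0:=m_{h_1}(P_0)$ and an entry $r\in E_{h_1}(P_0)$ with $r\nmid m_0$; Abhyankar's Lemma then produces at least $\frac{(r,m_0)}{m_0}\ell^{t-1}-O_{\alpha,t}(\ell^{t-2})$ points of $Z$ over $P_0$ with $e_{h_1'}=r/(r,m_0)\geq 2$, and since $m_0\in\{1,2,3,4,6\}$ and $r\nmid m_0$, one has $(r-(r,m_0))/m_0\geq 1/3$. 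Feeding this ramification into Riemann--Hurwitz for $h_1'$, using only $g_{Y_1'}\geq 0$, gives $g_Z-1>\ell^{t-1}/6-O_{\alpha,t}(\ell^{t-2})$ unconditionally — no splitting on $g_{Y_1}$, no existence claims. Combined with $R_\pi=O_{\alpha,t}(\ell^{t-2})$ via the chain rule, this yields the bound. If you want to salvage your route through $Y_1$, you must replace the handwave with a direct lower bound on $R_{Z\to Y_1}$ when $g_{Y_1}\leq 1$; otherwise you should switch to the $Y_1'$ projection as in the paper.
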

    Note that the proof shows that one can choose $c_2=1/(3m)$ for all $t\geq 3$. Note that since $t\leq m\leq t!$,  constants depending on $m$ are bounded by constants depending only on $t$.
    We first deduce Theorem \ref{thm:main-wreath} from Theorem~\ref{thm:main-Gal}:
    \begin{proof}[Proof of Theorem \ref{thm:main-wreath}]
    Let $c_2,d_2$ be the constants\footnote{The reader who is not interested in the dependence between constants is encouraged to skip the first paragraph and replace all terms of the form $E\ell^{t-2}$ where $E$ depends on $\alpha$ and $t$ by an expression of the form $O_{\alpha,t}(\ell^{t-2})$.}
     from Theorem \ref{thm:main-Gal} with respect to a constant $\alpha$, depending only on $t$, such that $\alpha\geq 2m+1/2$. 
    Note that we may choose such $\alpha$ to  depend only on $t$ since $m$ is bounded by $t!$. 
    %, we may assume that $\alpha$, and hence also $c_2$ and $d_2$ depend only on $t$. 
    Let $E_0$, and $L_{2,\alpha}$ be the constants from Proposition \ref{prop:genus-analysis} and Corollary \ref{cor:DZ}, respectively.
    Pick $c\leq \min\{1/2,5/m,c_2\}$ and let $d> \max\{d_2,5E_0/m\}$ be sufficiently large so that $d/c\geq \max\{6+m,4E_0,L_{2,\alpha}\}$. 
    Since the assertion is trivial if $c\ell-d<0$, we may assume that $\ell\geq d/c\geq \max\{m+6,4E_0,L_{2,\alpha}\}$. Since $c\ell^{t-1}-d\ell^{t-2}<\ell^{t-1}$, it suffices to prove the claim when $g_X<\ell^{t-1}$. 
    
    As explained in \S\ref{sec:setup} we  assume the setup therein and also recalled at the beginning of the section. 
    %Let $\pi:Z\ra Y$, $\pi_0:Y\ra X_0$ be the natural projections.
    We first prove the theorem under the assumption that the set $S$ of branch points of $\pi_0$ is of cardinality $\abs S\geq 5$. 
    Consider the covering $\tilde f:=f\circ \pi = \pi_0\circ h$.
    Since $\pi_0$ is Galois and $e_{\pi_0}(P)\geq 2$ for $P\in S$, we have $e_{\tilde f}(Q)\geq 2$ for every $Q\in \tilde f^{-1}(S)$. Thus, 
    \begin{equation}\label{equ:tilde-f}
    R_{\tilde f}(P)\geq \frac{\deg \tilde f}{2}= \frac{m}{2}\ell^t
    \end{equation} for every $P\in S$. 
    By Proposition \ref{prop:genus-analysis}.(1), we also have \begin{equation}\label{equ:usual-pi-bound}
    R_{\pi}(f^{-1}(P)) \leq m\ell^{t-1}/2 + E_0\ell^{t-2}
    \end{equation} for every $P\in S$. 
    Since $R_{\tilde f}(P) = mR_f(P) + R_\pi(f^{-1}(P))$ by the chain rule \eqref{equ:chain}, we deduce from \eqref{equ:tilde-f} and \eqref{equ:usual-pi-bound} that 
    $$R_f(P) = \frac{R_{\tilde f}(P)-R_{\pi}(f^{-1}(P))}{m}\geq \frac{\ell^t-\ell^{t-1}}{2} - \frac{E_0}{m}\ell^{t-2}.$$ As $g_{X_0}\geq 0$, together with the Riemann--Hurwitz formula for $f$ this gives 
    %\begin{align*}
    \begin{equation}\label{equ:usingS}
    \begin{split}
    2(g_X-1)  & \geq 2\ell^t(g_{X_0}-1) + \sum_{P\in S}R_f(P) \geq
    -2\ell^t  + \abs{S}\bigl(\frac{\ell^t-\ell^{t-1}}{2}-\frac{E_0}{m}\ell^{t-2}\bigr) \\
    & = (\abs{S}-4)\bigl(\frac{\ell^t-\ell^{t-1}}{2}-\frac{E_0}{m}\ell^{t-2}\bigr) - 2\ell^{t-1}-\frac{4E_0}{m}\ell^{t-2}.
    \end{split}
    \end{equation}
    As $\ell\geq \max\{6,E_0\}$, one has $(\ell^t- \ell^{t-1})/2-E_0\ell^{t-2}/m>0$. 
    As in addition $\ell\geq 6$, and $c\leq c_2\leq 1/2$, and $d>d_2> 5E_0/m$, \eqref{equ:usingS} in the case $\abs S\geq 5$ gives
    \begin{align*}
    2(g_X-1)\geq \frac{\ell^t-5\ell^{t-1}}{2}-\frac{5E_0}{m}\ell^{t-2} \geq
    \frac{\ell^{t-1}}{2}-\frac{5E_0}{m}\ell^{t-2}>c\ell^{t-1}-d\ell^{t-2}.
    \end{align*}
    %\frac{(\abs S-4)\ell^t - \abs S(\ell^{t-1}}{2}, \\
    %& = (\abs{S}-4)\bigl(\frac{\ell^t-\ell^{t-1}}{2}-\frac{E_0}{m}\ell^{t-2}\bigr) - 2\ell^{t-1}-\frac{4E_0}{m}\ell^{t-2}.
    %\end{align*}
    %- O(\ell^{t-2}), $$  
    %proving a stronger version of the theorem in the case $\abs S\geq 5$. 
    
    Henceforth, assume $\abs S\leq 4$. We next prove the theorem under the assumption $g_Y>1$. By the Riemann--Hurwitz formula for  $\pi$ and $h$, one has
    %$h:Z\ra Y$ and $\pi:Z\ra X$ one has:
    \begin{equation}\label{equ:gXYZ} 2m(g_X-1) + R_\pi = 2(g_Z-1) \geq 2\ell^t(g_Y-1), 
    \end{equation}
    where $R_\pi:=\sum_{P\in X_0(\K)}R_\pi(f^{-1}(P))$. 
    Since $\abs S\leq 4$ and $\ell>2E_0/m$, Proposition \ref{prop:genus-analysis}.(1) implies 
    \[
    R_\pi\leq \abs S \bigl(\frac{m}{2}\ell^{t-1}- E_0\ell^{t-2}\bigr)\leq 2m\ell^{t-1}-4E_0\ell^{t-2}.
    \]
    Hence for $g_Y>1$, as $\ell>5+m$, and $c\leq 5/m$, and $d>2E_0/m$, \eqref{equ:gXYZ} gives 
    \[
    g_X> \frac{\ell^t - m\ell^{t-1}}{m} - \frac{4E_0}{m}\ell^{t-2} >  \frac{5}{m}\ell^{t-1}-\frac{2E_0}{m}\ell^{t-2} > c\ell^{t-1}-d\ell^{t-2}. 
    \] 
    
    Henceforth assume that $g_Y\leq 1$. %and furthermore that $g_X<\ell^{t-1}$. 
    Note that this implies $g_{X_0}=0$ as follows. 
    Clearly $g_{X_0}\leq g_Y\leq 1$. If moreover $g_{X_0}=1$, then $\pi_0$ is unramified. In this  case, all branch cycles of $f$ are contained in $S_\Delta^I$ and hence $G\leq S_\Delta^I$, which contradicts the transitive action of $G$ on $I$ by Lemma \ref{lem:product-type}.  
    % and $\abs{S}\leq 4$. 
     % for a fixed $b>0$. %Since $t$ is fixed, $m=O(1)$ and 
     
    Since $g_{Y}\leq 1$, Proposition \ref{prop:asymp} gives $R_\pi\leq \max\{2m\ell^{t-1}+4E_0\ell^{t-2},2tE_0\ell^{t-2}\}.$
    As in addition $g_X<\ell^{t-1}$, and $\ell\geq 4E_0$, and $m\geq t$, and $\alpha\geq  2m+1/2$, this bound on $R_\pi$ and the Riemann--Hurwitz formula for $\pi$ give
    \[
    \begin{split} 2(g_Z-1)  = 2m(g_X-1) + R_\pi & < \max\{4m\ell^{t-1} + 4E_0\ell^{t-2},2m\ell^{t-1}+2tE_0\ell^{t-2}\} \\
    & \leq (4m+1)\ell^{t-1}\leq 2\alpha\ell^{t-1}.
    \end{split}
    \]
    Thus, letting $Y_{i,j}:=\tilde X/(K_i \cap K_j)$ for distinct $i,j\in I$,   the Riemann--Hurwitz formula for the natural projection $Z\ra Y_{i,j}$ gives
    %\begin{equation}\label{equ:Yij}
    $g_{Y_{i,j}}-1 \leq \frac{1}{\ell^{t-2}}(g_Z -1) <  \alpha\ell. $
    %\end{equation}
    The natural projection $Y_{i,j}\ra Y$ is a minimal covering which factors through $h_i$ and $h_j$, and its degree is $\deg h_i\cdot\deg h_j=\ell^2$, for distinct $i,j\in I$. By assumption we also have $\ell\geq L_{2,\alpha}$.
    We can therefore apply Theorem \ref{thm:main-Gal}, %then gives %a constant $d_{b,m}>0$ 
    %such that either 
    and deduce that either $g_X >  c_2\ell^{t-1} - d_2\ell^{t-2}$, %d_{b,m}\ell^{t-2},$ 
    or $t=2$ and the ramification of $f$ appears in Table~\ref{table:wreath}. 
    In the latter case, $G$ contains $A_\ell^2$ by Lemma \ref{rem:ramification-genus}, and $g_X\leq 1$ by Remark~\ref{rem:ramification-genus}. 
    \end{proof} 
    We next show that Theorem \ref{thm:main-Gal} for $t\geq 3$ follows from: 
    \begin{prop}\label{cor:main-transposition}\label{prop:main-transposition}
    %Let $f:X\ra \mP^1$ be a cover with monodromy group $G\leq S_\ell \wr S_t$ of product type. 
    Let $t\geq 3$ and $\alpha>0$. There exists a constant $E_2=E_{2,t,\alpha}> 0$, depending only on $\alpha$ and $t$, which satisfies the following property.  
    For every covering $f:X\ra \mP^1$ with monodromy group $G\leq S_\ell \wr S_t$ of product type, such that its associated coverings $h_i:Y_i\ra Y$ admit irreducible fiber  products $Y_i\#_Y Y_j$, $i,j\in\{1,\ldots,t\}$ of genus  $<\alpha\ell$,
    %have almost Galois ramification with constant $\alpha$, %More$f$ has a branch cycle $x=a\sigma$, where $a\in S_\Delta^I$ and $\sigma\in S_I$ 
    % $f$ has almost Galois ramification, and 
    the Riemann--Hurwitz contribution  of the associated covering $\pi$ is bounded by $\sum_{P\in\mP^1(\K)} R_\pi(f^{-1}(P))  < E_2\ell^{t-2}$. 
    %$P\in \mP^1$ is a point with $E_{\oline \pi_0}(P) = [2,1^*]$. Then $R_\pi(P) =  O(\ell)$. 
    \end{prop}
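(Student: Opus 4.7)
The plan is to reduce to a branch-point-by-branch-point estimate $R_\pi(f^{-1}(P))=O(\ell^{t-2})$ (all implicit constants depending only on $\alpha$ and $t$), and then to sum over the branch points of $\pi_0$; by Abhyankar's lemma, only these contribute to $R_\pi$. By Corollary \ref{cor:DZ} applied to each minimal fiber product $Y_i\#_Y Y_j\to Y$ (degree $\ell^2$, genus $<\alpha\ell$), one obtains $g_Y\leq 1$, $\pi_0$ has only boundedly many branch points (depending on $t$), and every $Q\in Y(\K)$ is of almost Galois type under each $h_i$ with $m_{h_i}(Q)\in\{1,2,3,4,6,\infty\}$. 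By Remark \ref{lem:same-m_P} this value is independent of $i$, and since $\pi_0$ is Galois it is also constant on the orbit $\pi_0^{-1}(P)$, yielding a well-defined $m_P$. By Lemma \ref{lem:no-trans} and Proposition \ref{prop:asymp}, at most $s\leq 2$ branch points of $f$ have a branch cycle whose image in $S_I$ is a transposition, and each other branch point contributes at most $E_0\ell^{t-2}$ by Proposition \ref{prop:genus-analysis}.(2); so it suffices to bound $R_\pi(f^{-1}(P))$ by $O(\ell^{t-2})$ at each transposition-type branch point $P$.

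Fix such a $P$ and a branch cycle $x$ over $P$, placed in reduced form $y=a\sigma$ with $\sigma=(i_1,i_2)\in S_I$, $\sigma$-orbits $\mu=\{i_1,i_2\}$ (take $\iota_\mu=i_1$, so $a(i_2)=1$), and singletons $\{i\}$ for $i\notin\mu$. A direct computation gives $y^2=a\cdot a^\sigma\in S_\Delta^I$ whose $i$-th coordinate equals $a(\iota_\mu)$ for $i\in\mu$ and $a(\iota_{\{i\}})^2$ for $i\notin\mu$. Since $y^2$ generates the inertia in $K$ at a lift of some $Q\in\pi_0^{-1}(P)$, Lemma \ref{lem:f-to-h} applied to the Galois cover $\tilde X\to Y$ identifies the $i$-th projection of $y^2$ as a branch cycle of $h_i$ at $Q$. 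Consequently $a(\iota_\mu)$ is a permutation of almost Galois type $m_P$ in the sense of Definition \ref{def:almost-Gal-branch-cycles}, with error bounded in terms of $\alpha$, and so is each $a(\iota_{\{i\}})^2$ for $i\notin\mu$.

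When $m_P\in\{2,4,6,\infty\}$, we deduce that $a(\iota_{\{i\}})$ itself is of almost Galois type $2m_P$ (when $m_P$ is even) or $\infty$: an orbit of $a(\iota_{\{i\}})$ of odd length $r$ yields one orbit of length $r$ in $a(\iota_{\{i\}})^2$, while an even-length orbit $r$ splits into two of length $r/2$, so requiring most orbits of $a(\iota_{\{i\}})^2$ to have length $m_P$ with $m_P$ even forces $r=2m_P$; the case $m_P=\infty$ is analogous. Every $a(\iota_\theta)$ is thus of almost Galois type and $a(\iota_\mu)$ has even or infinite type, so Corollary \ref{cor:rpi-Galois} with $\eta=\mu$ yields $R_\pi(f^{-1}(P))<E_1\ell^{t-2}$.

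The main obstacle is the case $m_P\in\{1,3\}$: here $a(\iota_{\{i\}})$ can be an essentially arbitrary mixture of length-$m_P$ and length-$2m_P$ orbits in which neither type alone dominates, so none of the $a(\iota_\theta)$'s need be of even or infinite almost Galois type and Corollary \ref{cor:rpi-Galois} does not apply. Resolving this case requires global information: the product-$1$ relation satisfied by the full tuple of branch cycles of $f$ in $G\leq S_\Delta\wr S_I$, combined with $g_Y\leq 1$, projects to a single product-$1$ relation in $S_\Delta$ and thereby produces, via Riemann's existence theorem (\S\ref{sec:RET}), an auxiliary degree-$\ell$ covering $\hat f:\hat Y\to\mP^1$. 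The Riemann--Hurwitz formula for $\hat f$ imposes constraints strong enough to rule out the odd-$m_P$ transposition configurations; this step is carried out in Section \ref{sec:reduced}, and combined with the paragraphs above yields the desired constant $E_2=E_{2,t,\alpha}$.
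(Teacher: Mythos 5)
Your decomposition into cases $m_P\in\{2,4,6,\infty\}$ versus $m_P\in\{1,3\}$ is accurate, and your argument for the first case is correct: the almost-Galois type of $a(\iota_{\{i\}})^2$ forces $a(\iota_{\{i\}})$ itself to be of even or infinite almost Galois type, so Corollary \ref{cor:rpi-Galois} applies with $\eta$ a singleton orbit (equally well with $\eta=\mu$). This matches Step IV of the paper's Proposition \ref{cor:gZ-bound}, and correctly observes that the product-$1$ machinery is unnecessary in this sub-case.

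However, for $m_P\in\{1,3\}$ — which is the real content of the proposition — you only identify the strategy and then write ``this step is carried out in Section \ref{sec:reduced}.'' That is a genuine gap: Proposition \ref{prop:main-transposition} is itself the statement being proved in Section \ref{sec:reduced}, so this reference is circular rather than an invocation of an established result. The paper's resolution of the odd case is the technical heart of that section: first one must reorder the product-$1$ tuple so that condition (LG$_Y$) holds (Lemma \ref{rem:LGY-holds}); then one constructs a reduced product-$1$ multiset by a delicate inductive substitution procedure (Proposition \ref{prop:product}, Steps I--IV), which is far from the bare assertion that the global relation ``projects to a single product-$1$ relation in $S_\Delta$''; and finally one compares the Riemann--Hurwitz formula for the auxiliary covering $\hat f$ with those for the $h_i$, splitting into cases according to the multiset $M_h$ (Proposition \ref{cor:gZ-bound}, Steps V--VI). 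Note also that your conclusion — that the RH formula ``rules out the odd-$m_P$ transposition configurations'' — is not what happens: those configurations do occur, and the argument instead shows that the product-$1$ constraint forces $a(\iota_{\{i\}})$ to be of almost Galois type $2m_P$ (with bounded error) rather than a free mixture of $m_P$-cycles and $2m_P$-cycles, after which Corollary \ref{cor:rpi-Galois} applies exactly as in the even case.
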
     
    \begin{proof}[Proof of Theorem \ref{thm:main-Gal} for $t\geq 3$]
    %Recall the setup in the beginning of the section where $h_i:Y_i\ra Y, h:Z\ra Y,$ and $\pi:Z\ra X$ are the natural   projections, $i\in I$. 
    %Our first goal is to bound $g_Z$ from below. 
    We claim that $g_Z-1> \ell^{t-1}/6 -d'_2\ell^{t-2}$ for a constant $d'_2$ depending only on $t$ and $\alpha$. 
    Since the contribution $R_\pi:=\sum_{P\in X(\K)}R_\pi(f^{-1}(P))$ is bounded by $E_2\ell^{t-2}$ by Proposition \ref{cor:main-transposition}, the theorem follows from the claim by applying the Riemann--Hurwitz formula for~$\pi$:
    $$ 
    %\begin{array}{ll} 
    2(g_X-1) =  \frac{1}{m}(2(g_Z-1)-R_\pi) > \frac{1}{3m}\ell^{t-1}-\frac{2d'_2+E_2}{m}\ell^{t-2}, 
    $$
    by choosing $c_2\leq 1/(3m)$ and $d_2\geq (2d'_2+E_2)/m$, depending only on $t$. Note this is possible since as usual $m$ is bounded by a constant $t!$ depending only on $t$. 
    %\\
    %& \geq \frac{1}{3m}\left((\ell-6\eps)^{t-1}  - O(\ell^{t-2})\right). 
    %\end{array} $$
    
    To prove the claim, we apply the Riemann--Hurwitz formula to the natural projection $h_1':Z\ra Y_1'$, 
    where  %$Z= \tilde X/(H\cap K)$ and 
    $Y_1':=\tilde X/(K_2\cap \cdots \cap K_t)$. 
    Recall that as $G$ is of product type, %as in Remark $\deg h=[K:H\cap K]=\ell^t$
    Lemma \ref{rem:product-type}.(1) implies  $[K:\bigcap_{j=2}^t K_j]=\ell^{t-1}$ and hence that $\deg h_1'=[\bigcap_{j=2}^t K_j:\bigcap_{j=1}^t K_j]=\ell$. 
    %As $h$ (resp.~the natural projection $Y_1'\ra Y$) is a minimal covering which factors through $h_i$, $i=1,\ldots,t$ (resp.~$i=2,\ldots,t$), we identify it with the normalization of their fiber product. 
    
    Note that as $G$ is of product type and hence nonsolvable, and since $h_i$, $i=1,\ldots,t$ admit irreducible fiber products $Y_i\#_Y Y_j$, $i\neq j$ of genus $< \alpha\ell$, Lemma~\ref{lem:normal-closure} implies that there exists $P_0\in Y(\K)$ of type $m_0=m_{h_1}(P_0)<\infty$, and $r\in E_{h_1}(P_0)$ that does not divide $m_0$. 
    $$
    \xymatrix{
     & Z \ar[ddl] \ar[dr]^{h_1'} & \\
     & & Y_1' \ar[dl]\ar[d]\\
     Y_1 \ar[dr]_{h_1} & Y_2 \ar@{..}[r] \ar[d]^{h_2} & Y_t \ar[dl]^{h_t} \\
     & Y &
    }
    $$
    Let $U$ be the set of preimages $Q_Z\in h^{-1}(P_0)$ whose image $Q_i$ in $Y_i$ has ramification index $e_{h_i}(Q_i)=r$ if $i=1$ and $e_{h_i}(Q_i)=m_0$ if $i=2,\ldots,t$. 
    Since $h$ is a minimal covering which factors through $h_i$, $i\in I$, and $\deg h= \deg h_1\cdots\deg h_t$ by Remark~\ref{rem:setup}.(1), Abyahnkar's Lemma \ref{lem:abh} implies that for each choice of preimages $Q_i\in h_i^{-1}(P_0)$, with $e_{h_i}(Q_i)=r$ if $i=1$  and $e_{h_i}(Q_i)=m_0$ if $i=2,\ldots, t$, there exist $(r,m_0)\cdot m_0^{t-2}$ points of $Z$ with image $Q_i$ in $Y_i$, $i=1,\ldots,t$. Let $\eps=\eps_{\alpha,m_0}$ be the error from Definition \ref{def:almost-galois-type}. 
    Since $m_0 = m_{h_i}(P_0)<\infty$ for all $i\in I$, 
    %by Definition \ref{def:almost-Gal}, 
    there are at least $(\ell-\eps)/m_0$ preimages $Q_i\in h_i^{-1}(P_0)$  such that $e_{h_i}(Q_i)=m_0$, for each $i=2,\ldots, t$. Thus in total 
    \begin{equation}\label{equ:U}
    \abs U\geq (r,m_0)\cdot m_0^{t-2}\left(\frac{\ell-\eps}{m_0}\right)^{t-1} \geq \frac{(r,m_0)}{m_0}\ell^{t-1} - E_3\ell^{t-2},
    \end{equation}
    for some constant $E_3$ depending only on $t$ and $\alpha$. 
    Moreover, Abhyankar's Lemma \ref{lem:abh} implies $e_{h_1'}(Q_Z)=r/(r,m_0)$ for all $Q_Z\in U$. 
    Hence the Riemann--Hurwitz formula for $h_1':Z\ra Y_1'$ and \eqref{equ:U} give:
    \begin{equation}\label{equ:y1'}
    %\begin{array}{ll}
    \begin{split}
    2(g_Z-1) & \geq  2\ell(g_{Y_1'}-1) + \sum_{Q_Z\in U}(e_{h_1'}(Q_Z)-1)
    \geq -2\ell + \abs U\bigl(\frac{r}{(r,m_0)}-1\bigr) \\
     &  > -2\ell +\frac{(r,m_0)}{m_0}\bigl(\frac{r}{(r,m_0)}-1\bigr)\ell^{t-1}  -E_3\bigl(\frac{r}{(r,m_0)}-1\bigr)\ell^{t-2} \\
     & >  \frac{r-(r,m_0)}{m_0}\ell^{t-1} - 2d'_2\ell^{t-2}, %=  \frac{r-(r,m_0)}{m_0}\ell^{t-1} - O(\ell^{t-1}),
     \end{split}
    %\end{array} 
    \end{equation}
    for some constant $d'_2$, depending only on $t$, such that $2d_2'>2/\ell^{t-3}+(r-(r,m_0))E_3/(r,m_0)$. Note that such a $d'_2$ exists since $r$ is bounded by the error $\eps$ which depends only on $\alpha$ and $m_0\leq 6$. 
    %where the sum runs over all points $Q'$ of $Y'$ as above. 
    %Since $t\geq 3$ the quantity on the right hand side is $(r-(r,m_0))\ell^{t-1}/m_0 - O(\ell^{t-1})$. 
    As $m_0\leq 6$, as $m_0\neq 5$, and as $r$ does not divide $m_0$, we have $(r-(r,m_0))/m_0\geq 1/3$.  
    %Thus $2(g_Z-1)\geq -2\ell + \frac{1}{3}(\ell-6\eps)^{t-1}$, 
    Thus \eqref{equ:y1'} gives $g_Z-1> \ell^{t-1}/6 - d'_2\ell^{t-1}$, proving the claim, and hence~the~theorem.
    \end{proof}
    The proof of Theorem \ref{thm:main-Gal} for $t=2$ is given in Section \ref{subsec:t=2}.
    Proposition \ref{cor:main-transposition} follows from Proposition \ref{prop:asymp} in case none of the branch cycles acts on $I$ as a transposition. In the remaining cases, where  $t\geq 3$ and some branch cycle acts as on $I$ as a transposition, Proposition \ref{prop:main-transposition} is proved in Section \ref{sec:reduced}, completing the proof of Theorem \ref{thm:main-Gal}. 
    %Theorem \ref{thm:main-Gal} is proved for $t=2$ in Section \ref{subsec:t=2}.  %We shall prove theorem separately for $t\geq 3$ and for $t=2$. The theorem then follows by choosing the minimal  (resp., maximal) value of  $c_2$  (resp.,  $d_2$) among the two proofs. 

    %using preliminary results from Section \ref{sec:prelim-t=2}. 

    \section{Theorem \ref{thm:main-Gal}: The case $t=2$}\label{sec:t=2}
    \subsection{Relating $g_X$ and $g_{Y_1}$}
    We use the setup of \S\ref{sec:setup} with $t=2$, $X_0=\mP^1$, and write $I=\{1,2\}$, so that $f:X\ra \mP^1$ is an indecomposable covering with monodromy group $G\leq S_\Delta \wr S_2$ of product type, and $\tilde X$ is its Galois closure. As in \S\ref{sec:setup}, we associate to $f$ the coverings $h_i:Y_i\ra Y$, where $Y_i:=\tilde X/K_i$, $Y:=\tilde X/K$,   $K:=G\cap S_\Delta^2$, and $K_i$ a point stabilizer in the action of $K$ on the $i$-th copy of $\Delta$, $i=1,2$. Recall that $h_1$ and $h_2$ are indecomposable and their fiber product  coincides with a curve $Z$ birational to the fiber product of $f$ with the (degree-$2$) natural projection $\pi_0:Y\ra X_0$, see Remark \ref{rem:setup}. 
    %, and $h_i:Y_i\ra Y$ is the natural projection from the quotients $Y:=\tilde X/K$ and $Z:=\tilde X/(H\cap K)$, where $H$ is a point stabilizer of $G$, and $K:=G\cap S_\Delta^2$. Put $\ell:=\abs \Delta$. 
    
    The following lemma gives an explicit formula for the genus of $X$ in terms of the ramification of $h_i$, $i=1,2$. 
    The proof of Theorem \ref{thm:main-Gal} for $t=2$, given in this section, relies on this formula and  estimates of it. %, given in this section. 
%    As in Setup \ref{sec:setup}, let  $\pi_0:Y\ra X_0$  be the natural projection, so that $\deg\pi_0=[G:K]=2$ by Lemma \ref{lem:product-type}. 
Fix $\sigma\in G\setminus K$, and let $\oline \sigma:Y\ra Y$ be the automorphism it induces, cf.~Remark \ref{rem:setup}.(2). Note that $\oline \sigma$ is an involution and  $\pi_0(P^{\oline \sigma}) = \pi_0(P)$, $P\in Y(\K)$. % As $\deg\pi_0=[G:K]=2$, the element $\oline\sigma$ is an involution. 
    \begin{lem}\label{lem:t=2-gXY1} 
    Write  the points of $Y$ as a disjoint union $R \cup S\cup S^{\oline\sigma}$, where $R$ is the set of ramification points of $\pi_0$. Then
    %, and $\sigma(S)$ consists of the image of $S$ under $\sigma$. Then 
    % subset of places of $Y$ that are unramified under $\pi_0$ such that $S$ contains exactly one prime over 
    $$
    %\begin{array}{ll}
    4g_X -4  = 2\ell(g_{Y_1}-1) + \sum_{P\in S}S_{h_1,h_2}(P) + \sum_{P\in R} S_{h_1}(P) - \sum_{P \in R} \abs{\{r\in E_{h_1}(P)\,|\, r\text{ is odd}\}}, $$
    where $S_{h_1}(P) := \sum_{r_1,r_2\in E_{h_1}(P)}(r_1-(r_1,r_2))$, and 
    $$S_{h_1,h_2}(P) := \sum_{r\in E_{h_1}(P),s\in E_{h_2}(P)}(r+s-2(r,s)).$$ 
    %4g_X -4 & = 2\ell(g_{Y_1}-1) + \sum_{P\in S}\sum_{r\in E_{h_1}(P),s\in E_{h_2}(P)} (r+s-2(r,s)) \\
    %& + \sum_{P\in R} \sum_{r_1,r_2\in E_{h_1}(P)} (r_1-(r_1,r_2)) - \sum_{P \in R} \abs{\{r\in E_{h_1}(P)\,|\, r\text{ is odd}\}}.
    %\end{array}
    \end{lem}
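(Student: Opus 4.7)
The plan is to combine the Riemann--Hurwitz formula for the degree-$2$ map $\pi:Z\to X$ with the one for the degree-$\ell$ map $p_1:Z\to Y_1$, then evaluate $R_{p_1}$ via Abhyankar's lemma using the $\oline\sigma$-symmetry of $Y$, and finally apply \Pref{prop:genus-analysis}(3) to evaluate $R_\pi$. Since $G$ acts transitively on $I=\{1,2\}$ we have $m=[G:K]=2=\deg\pi$, so Riemann--Hurwitz for $\pi$ gives $4g_X-4 = 2(g_Z-1)-R_\pi$, and Riemann--Hurwitz for $p_1$ gives $2(g_Z-1)=2\ell(g_{Y_1}-1)+R_{p_1}$. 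Combining,
\[
4g_X-4 = 2\ell(g_{Y_1}-1) + R_{p_1} - R_\pi,
\]
so it suffices to identify $R_{p_1}-R_\pi$ with the three sums on the right of the claimed formula.

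For $R_{p_1}$ I would apply \Lref{lem:abh}, which for each $P\in Y$ yields $R_{p_1}(h_1^{-1}(P)) = \sum_{r\in E_{h_1}(P),\,s\in E_{h_2}(P)}(s-(r,s))$. By \Rref{rem:setup}(2), the involution $\oline\sigma$ satisfies $E_{h_2}(P)=E_{h_1}(P^{\oline\sigma})$, so one splits $Y=R\sqcup S\sqcup S^{\oline\sigma}$ and treats the two classes of points separately. A point $P\in R$ is $\oline\sigma$-fixed, hence $E_{h_2}(P)=E_{h_1}(P)$ and the sum (after relabeling $r\leftrightarrow s$) becomes $S_{h_1}(P)$. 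For $P\in S$, pairing the contribution at $P$ with the contribution at $P^{\oline\sigma}\in S^{\oline\sigma}$ collapses the Abhyankar sum into
\[
\sum_{r\in E_{h_1}(P),\,s\in E_{h_2}(P)}\bigl[(s-(r,s))+(r-(r,s))\bigr] = S_{h_1,h_2}(P),
\]
so $R_{p_1} = \sum_{P\in R}S_{h_1}(P)+\sum_{P\in S}S_{h_1,h_2}(P)$.

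For $R_\pi$ I would show that only branch points of $\pi_0$ contribute. If $P_0\in\mP^1$ is outside the branch locus of $\pi_0$, then by \Rref{rem:t=2-ramification} its branch cycle lies in $K$ and acts trivially on $G/K$, so $\pi$ is unramified over $f^{-1}(P_0)$. If $P_0$ is a branch point of $\pi_0$ with unique preimage $P\in R$, the branch cycle is of reduced form $(\alpha_1,1)\s$ with $\alpha_1$ of cycle structure $E_{h_1}(P)$, and \Pref{prop:genus-analysis}(3) gives $R_\pi(f^{-1}(P_0))=|\{r\in E_{h_1}(P):r\text{ odd}\}|$. Summing over branch points of $\pi_0$ and substituting into the identity above yields the lemma. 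The only genuinely delicate step is the bookkeeping in computing $R_{p_1}$: one must use the $\oline\sigma$-symmetry to pair the unramified fibers of $\pi_0$ correctly so that the Abhyankar sums collapse into the symmetric quantities $S_{h_1}$ and $S_{h_1,h_2}$; the evaluation of $R_\pi$ is then a direct appeal to the already-proven proposition.
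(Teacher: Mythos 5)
Your proof is correct and takes essentially the same route as the paper: combine Riemann--Hurwitz for $\pi:Z\to X$ with Riemann--Hurwitz for a degree-$\ell$ projection from $Z$, evaluate the latter's ramification via Abhyankar's Lemma \ref{lem:abh} using the $\oline\sigma$-symmetry $E_{h_2}(P)=E_{h_1}(P^{\oline\sigma})$, and compute $R_\pi$ via Proposition \ref{prop:genus-analysis}(3) and Remark \ref{rem:t=2-ramification}. The only (cosmetic) difference is that you use $p_1:Z\to Y_1$ where the paper uses $p_2=h_1^2:Z\to Y_2$, which saves the step of substituting $g_{Y_2}=g_{Y_1}$.
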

    \begin{proof}
    Let  $h_{1}^2: Z \ra Y_2$ be the natural projection, $P$ a point of $Y$, and $p:=\pi_0(P)$. 
    As in Setup \ref{sec:setup}, we have  $E_{h_2}(P^{\oline \sigma}) = E_{h_1}(P)$ for every point $P$ of $Y$.  Note that $P^{\oline\sigma}=P$ if and only if $P\in R$. 
     Thus, if $P\in R$,  Abhyankar's Lemma \ref{lem:abh}  implies:
    $$R_{h_{1}^2}((\pi_0\circ h_2)^{-1}(p)) = R_{h_{1}^2}(h_2^{-1}(P)) = \sum_{r\in E_{h_1}(P), s\in E_{h_2}(P)}(r-(r,s)) = \sum_{r_1,r_2\in E_{h_1}(P)}(r_1-(r_1,r_2)),  $$ %\text{ for $P\in R$,}$$
    % , and %For $P\in S$ and  $p:=\pi_0(P)$,  we get $E_{h_1}(P) = E_{h_1^\sigma}(\sigma(P)) = E_{h_2}(\sigma(P))$. Thus
     and, if $P\in S$, 
    \begin{align*}
    R_{h_{1}^2}((\pi_0\circ h_2)^{-1}(p))  = & R_{h_{1}^2}(h_2^{-1}(P)) + R_{h_{1}^2}(h_2^{-1}(P^{\oline\sigma})) \\ = & \sum_{r\in E_{h_1}(P), s\in E_{h_2}(P)}(r-(r,s))+  \sum_{s\in E_{h_1}(P^\oline\sigma), r\in E_{h_2}(P^{\oline\sigma})}(s-(r,s)) & \\ %\\
    %&  &   \\ 
    %& & 
    = & \sum_{r\in E_{h_1}(P), s\in E_{h_2}(P)}(r+s-2(r,s)). &
    \end{align*} 
     Thus the Riemann--Hurwitz formula  for $h_{1}^2$ gives:
    $$2(g_Z-1) = 2\ell (g_{Y_2}-1) + \sum_{P\in R}S_{h_1}(P) + \sum_{P\in S}S_{h_1,h_2}(P) $$ %\left[ \sum_{r_1,r_2\in E_{h_1}(P)} (r_1-(r_1,r_2)) \right]+ \sum_{P\in S}\left[\sum_{r\in E_{h_1}(P), s\in E_{h_2}(P)} (r+s-2(r,s))\right].$$
    Substituting 
     $g_{Y_2}=g_{Y_1}$ as $Y_2\cong Y_1$ by Remark \ref{rem:setup}.(2), and  
     replacing the left hand side using the Riemann--Hurwitz formula for the natural projection $\pi:Z\ra X$ gives
    $$4(g_X-1) + \sum_{p\in X_0(\K)}R_\pi(f^{-1}(p)) = 2\ell (g_{Y_1}-1) + \sum_{P\in R}S_{h_1}(P) + \sum_{P\in S}S_{h_1,h_2}(P). $$ %\left[ \sum_{r_1,r_2\in E_{h_1}(P)} (r_1-(r_1,r_2)) \right]+  
    The claim follows from the latter equality since  $R_\pi(f^{-1}(p)) = \abs{\{r\in E_{h_1}(p)\,|\, r\text{ is odd}\}}$ by Proposition~\ref{lem:gZ-gX-t=2}.(3) and Remark \ref{rem:t=2-ramification}, for every point $p$ of $X_0$.
    \end{proof}

    We use the following estimates on $S_{h_1}$ and $S_{h_1,h_2}$. We denote by $O_\alpha(1)$ a constant depending only on $\alpha$, and write $X=Y+O_\alpha(1)$ to denote that $\abs{X-Y}$ is bounded by a constant depending only on $\alpha$. 
    %on the sums and  for a point $P$ of type $m_P$.
    \begin{lem}\label{lem:RS-estimates} 
    %Fix an error $e>0$. Then for every point $P$ of type $m_P$ with error at most $e$, $S(h_1,P)$ takes the values:
    For an integer $\alpha>0$, assume the fiber product $Y_1\#_Y Y_2$ is irreducible of  genus at most $\alpha\ell$. Let $P$ be a point of $Y$ of almost Galois type $m_P := m_{h_1}(P) = m_{h_2}(P)$. 
    If $m_P<\infty$, then $S_{h_1,h_2}(P) = S_{h_1}(P) + S_{h_2}(P) + O_\alpha(1)$,~and 
    \[ 
    S_{h_1}(P) = \begin{cases} %R(P) = \left\{
    %\begin{array}{l}
     \ell R_{h_1}(P) + O_\alpha(1)  \text{ if }m_P=1; \\
    \ell[\frac{\ell}{2} - \abs{E_{h_1}(P)} + \abs{\{r\in E_{h_1}(P)\,|\,\text{ $r$ is odd}\}}] + O_\alpha(1)  \text{ if }m_P=2; \\
     \ell[\frac{\ell}{3} - \abs{E_{h_1}(P)} + \frac{4}{3}\abs{\{r\in E_{h_1}(P)\,|\,(r,3)=1\}}] + O_\alpha(1)  \text{ if }m_P=3; \\
      \ell[\frac{\ell}{4} - \abs{E_{h_1}(P)} + \abs{\{r\in E_{h_1}(P)\,|\,r\equiv 2\,(4)\}}
       + \frac{3}{2}\abs{\{\text{odd }r\in E_{h_1}(P)]} \\ + O_\alpha(1)  \text{ if }m_P=4; \\
    \ell\left[\frac{\ell}{6} - \abs{E_{h_1}(P)} + \abs{\{r\in E_{h_1}(P)\,|\,r\equiv 3\,(6)\}} + \frac{4}{3}\abs{\{r\in E_{h_1}(P)\,|\,r\equiv \pm 2\,(6)\}}\right. \\
    \left. + \frac{5}{3}\abs{\{r\in E_{h_1}(P)\,|\,r\equiv \pm 1\,(6)\}}\right] + O_\alpha(1)  \text{ if }m_P=6.  \\
    %\end{array}
    \end{cases}
    \]

    If $m_P=\infty$,  then 
    $S_{h_1,h_2}(P) \geq  (u+v)\ell - 2\ell\min(u,v)$, where $u:=\abs{E_{h_1}(P)}$ and $v:=\abs{E_{h_2}(P)}$. 
    Moreover, if $u\leq v\leq 3$, and the greatest common divisor of all entries in $E_{h_1}(P)$ and $E_{h_2}(P)$ is $1$, then $S_{h_1,h_2}(P)\geq (u+v-2\min(u,v)+\xi)\ell$ for an absolute constant $\xi>0$. 
    % + \eps_{u,v} \ell -O(1)$
    %and fixed $u=\abs{E_{h_1}(P)},v=\abs{E_{h_2}(P)}$,  there exists $\eps=\eps_{u,v}>0$ such that  
    %$S_{h_1}(P)\geq~\eps \ell - O(1)$
    % whenever $E_{h_1}(P)\neq~[\ell/u,\ldots, \ell/u]$, and %We also have
    %$$S_{h_1,h_2}(P) \geq  (u+v)\ell - 2\ell\min(u,v) + \eps_{u,v} \ell -O(1)$$  whenever $E_{h_1}(P)\neq [\ell/u,\ldots, \ell/u]$ or $E_{h_2}(P) \neq [\ell/v,\ldots, \ell/v]$. % and for $\eps=0$ otherwise.
    \end{lem}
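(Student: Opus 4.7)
The strategy is to split the entries of $E_{h_i}(P)$ into a ``main'' part of entries equal to $m_P$ and a bounded ``exceptional'' part, expand the double sums $S_{h_1}(P)$ and $S_{h_1,h_2}(P)$ according to this decomposition, and discard all contributions of order $O_\alpha(1)$. Concretely, Corollary~\ref{cor:DZ} and the hypothesis $g_{Y_1\#_Y Y_2}\leq \alpha\ell$ ensure that $m_{h_1}(P)=m_{h_2}(P)=:m_P$, and that for $i=1,2$ we may write $E_{h_i}(P)=\{m_P^{n_i}\}\cup E_i$ where $|E_i|$ and $\sum_{r\in E_i}r$ are bounded by a constant depending only on $\alpha$. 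In particular $\ell=m_P n_i+\sum_{r\in E_i}r$, so that $n_1=n_2=\ell/m_P+O_\alpha(1)$ and $|E_{h_i}(P)|=n_i+|E_i|$.

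Assume first $m_P<\infty$. I would expand $S_{h_1}(P)=\sum_{r_1,r_2\in E_{h_1}(P)}(r_1-(r_1,r_2))$ by separating the four cases (main,main), (main,exc), (exc,main), (exc,exc): the first contributes $0$, the fourth is $O_\alpha(1)$, and the middle two combine to give
\[
S_{h_1}(P)=n_1\sum_{r\in E_1}\bigl(m_P+r-2(m_P,r)\bigr)+O_\alpha(1).
\]
The same expansion for $S_{h_1,h_2}(P)$ yields $n_1\sum_{s\in E_2}(m_P+s-2(m_P,s))+n_2\sum_{r\in E_1}(m_P+r-2(m_P,r))+O_\alpha(1)$, and since $n_1=n_2+O_\alpha(1)$ while the sums are themselves $O_\alpha(1)$, this equals $S_{h_1}(P)+S_{h_2}(P)+O_\alpha(1)$. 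For the explicit formulas, I would substitute $n_1=\ell/m_P+O_\alpha(1)$ and, for each $m_P\in\{1,2,3,4,6\}$, evaluate the summand $m_P+r-2(m_P,r)$ as a function of $(r,m_P)$ (e.g.\ for $m_P=2$ it is $r-2$ when $r$ is even and $r$ when $r$ is odd; for $m_P=6$ it depends on $r\bmod 6$). Collecting terms using $\sum_{r\in E_1}r=\ell-m_P n_1$ and $|E_{h_1}(P)|=n_1+|E_1|$ converts the expression into the shape
$\ell[\ell/m_P-|E_{h_1}(P)|+(\text{weighted count of exceptional residue classes})]+O_\alpha(1)$,
matching each listed case.

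For $m_P=\infty$, I would use $\sum_{r\in E_{h_1}(P)}r=\sum_{s\in E_{h_2}(P)}s=\ell$ to rewrite
\[
S_{h_1,h_2}(P)=(u+v)\ell-2\sum_{r,s}(r,s),
\]
and then exploit $(r,s)\leq r$ and $(r,s)\leq s$ to obtain $\sum_{r,s}(r,s)\leq\min(u,v)\,\ell$, proving the first inequality. The finer inequality when $u\leq v\leq 3$ and the overall $\gcd$ equals $1$ reduces to checking finitely many shapes of $(E_{h_1}(P),E_{h_2}(P))$: equality $\sum(r,s)=u\ell$ forces every $s$ to be divisible by every $r$, which combined with $\gcd=1$ and $u,v\leq 3$ leaves only finitely many partition patterns to rule out. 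Since each such pattern fails the $\gcd$ condition, a positive gap $\xi$ independent of $\ell$ exists. The main obstacle is the bookkeeping in the case $m_P\in\{3,4,6\}$ and verifying the uniform $\xi>0$ in the $\infty$ case; both are elementary but require care with the residue-class accounting.
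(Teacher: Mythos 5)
Your decomposition of $E_{h_i}(P)$ into the ``main'' multiset of $n_i$ copies of $m_P$ plus a bounded exceptional set, and the resulting expansion of $S_{h_1}(P)$ and $S_{h_1,h_2}(P)$, is exactly how the paper establishes $S_{h_1,h_2}(P)=S_{h_1}(P)+S_{h_2}(P)+O_\alpha(1)$. For the explicit values of $S_{h_1}(P)$ when $m_P\in\{1,2,3,4,6\}$ the paper simply cites \cite[Lemma 10.3]{NZ}; your plan to re-derive them by evaluating $m_P+r-2(m_P,r)$ residue class by residue class and substituting $\sum_{r\in E_1}r=\ell-m_P n_1$ and $\abs{E_{h_1}(P)}=n_1+\abs{E_1}$ is sound (it reproduces the stated formulas for $m_P=2$ and $m_P=3$), so this is a legitimate alternate route to those identities. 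For $m_P=\infty$ the first inequality is likewise what the paper obtains from Lemma~\ref{lem:bound-rs}.(1).

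The genuine gap is the uniform $\xi>0$ when $u\leq v\leq 3$. You argue that equality $\sum(r,s)=u\ell$ forces divisibility between the two families (it actually forces $s\mid r$ for all pairs, not the other way around), and that this ``leaves only finitely many partition patterns to rule out.'' But the entries $r_i,s_j$ scale with $\ell$, so ruling out the exact-equality case does not by itself produce a gap that is uniform in $\ell$: near-equality configurations (e.g.\ a large $s_j$ that divides each $r_i$ up to a small remainder) could a priori have defect $S-(v-u)\ell$ that is $o(\ell)$. The paper's Lemma~\ref{lem:bound-rs}.(2) supplies the needed quantitative argument: fix a threshold $D=1/42$; either some $s_j\geq D\ell/v$ fails to divide some $r_i$, whence $(r_i,s_j)\leq s_j/2$ already gives a gap of order $D\ell/(2v)$, or every such large $s_j$ divides every $r_i$, hence divides $\ell$, hence their $\gcd$ is a nontrivial divisor of $\ell$; since the total $\gcd$ is $1$, some small $s_j<D\ell/v$ must exist, but a check on sums of at most three divisors of $\ell$ shows the large $s_j$'s sum to at most $(1-D)\ell$ while the small ones sum to less than $D\ell$, contradicting $\sum_j s_j=\ell$. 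Some argument of this quantitative shape is required; the bare ``finitely many patterns'' claim does not close it.
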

    
    The proof relies on the following lemma: % whose proof appears in this section. 
    \begin{lem}\label{lem:t=2-mini}\label{lem:minimize-gcd} \label{lem:bound-rs}
    Let  $r_1,...,r_u, s_1,...,s_v$  be positive integers such that  $\sum_{i=1}^u r_i = \sum_{j=1}^v s_j =~\ell$, and $u\leq v$.
    %with greatest common divisor $(r_1,\ldots, r_u, s_1,\ldots,s_v)=1$. 
    Let $S:=\sum_{i=1}^u\sum_{j=1}^v (r_i+s_j-2(r_i,s_j))$. Then
    %is positive. Then  
    \begin{enumerate} 
    \item  $S\geq \ell(v-u)$;
    \item There exists an absolute constant $\xi>0$ such that if $v\leq 3$, and $\ell\geq (42v)^v$, and $(r_1,\ldots, r_u, s_1,\ldots,s_v)=1$, then $S\geq \ell(v-u+\xi)$. 
    %\item $S \geq \ell/2$.
    \item If $u=v=2$,  $\{r_1,r_2\}\cap \{s_1,s_2\} = \emptyset$, and 
    $(r_1,r_2,s_1,s_2)=1$, then $S> 5\ell/2 - 5$. 
    \end{enumerate}
    %one has $\sum_{i,j}(a_i,b_j) < \xi n$.
     %Assume $a_1+a_2=b_1+b_2=n$. Then $\sum_{i,j}(a_i,b_j)\geq n + o(1)$ if and only if $a_1=b_1$ or $b_2$.
    \end{lem}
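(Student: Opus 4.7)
The plan is to handle the three claims in order of increasing difficulty.

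For (1), observe that $(r_i, s_j) \le s_j$ gives $\sum_j (r_i, s_j) \le \ell$ for each $i$, so $\sum_{i,j}(r_i, s_j) \le u\ell$; the symmetric inequality in $v$ makes this sum at most $\min(u,v)\ell = u\ell$. Since $S = (u+v)\ell - 2\sum_{i,j}(r_i, s_j)$, we obtain $S \ge (v-u)\ell$.

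For (2), the strategy is to sharpen the row sum via the identity
\[
\gcd(n,a)+\gcd(n,b) \le n + \gcd(n,a,b),
\]
proved by writing $n = gA$, $a = gB$, $b = gC$ with $\gcd(A,B,C)=1$ and observing that $\gcd(A,B)$ and $\gcd(A,C)$ are coprime divisors of $A$ whose product divides $A$, so $(x-1)(y-1) \ge 0$ gives $\gcd(A,B) + \gcd(A,C) \le A + 1$. Iterating this over $s_1,\dots,s_v$ yields $\sum_j(r_i,s_j) \le r_i + E_i$ where the error $E_i$ involves only gcds of $r_i$ with proper subsets of $\{s_1,\dots,s_v\}$; the coprimality hypothesis $\gcd(r_1,\dots,s_v)=1$ forces $\sum_i E_i$ to be bounded by a constant depending only on $v$. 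Carrying out the bookkeeping in each of the finitely many cases $1 \le u \le v \le 3$ gives $S \ge (v-u+\xi)\ell$ for some absolute $\xi > 0$ once $\ell \ge (42v)^v$ is large enough to absorb the constant.

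For (3), assume $u = v = 2$, disjointness, and $\gcd(r_1,r_2,s_1,s_2) = 1$. Say WLOG $r_1 \le r_2$, so $r_2 \ge \ell/2$. The key new input is that disjointness forces $(r_2, s_j) < r_2$ for both $j$: otherwise $r_2 \mid s_j$ with $s_j \ne r_2$ would give $s_j \ge 2r_2 > \ell$, impossible. Combined with the fact that $(r_2, s_1)$ and $(r_2, s_2)$ are coprime divisors of $r_2$ (a consequence of the gcd hypothesis, via the identity $\gcd(r_2,s_1,s_2) \mid \gcd(r_1,r_2,s_1,s_2) = 1$ applied to both $D_i$'s), the elementary lemma \emph{``coprime divisors $a, b$ of $n$ with $a, b < n$ satisfy $a+b \le n/2 + 2$''} (proof: $ab \mid n$; if $ab = n$ then $a+b = a + n/a \le 2 + n/2$, while if $ab < n$ then $ab \le n/2$ and $a+b \le ab+1$) yields $(r_2, s_1)+(r_2, s_2) \le r_2/2 + 2$. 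Adding $(r_1,s_1)+(r_1,s_2) \le r_1 + 1$ from part (2), we obtain
\[
N := \sum_{i,j}(r_i, s_j) \le r_1 + \frac{r_2}{2} + 3 = \frac{\ell + r_1}{2} + 3 \le \frac{3\ell}{4} + 3,
\]
hence $S \ge \frac{5\ell}{2} - 6$.

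To upgrade this to the strict bound $S > \frac{5\ell}{2} - 5$, I analyze the near-equality cases. In the subcase $r_1 = r_2 = \ell/2$, the disjointness argument applies to both $i$, giving $N \le \ell/2 + 4$ and $S \ge 3\ell - 8 > \frac{5\ell}{2} - 5$ for $\ell \ge 7$. In the subcase $r_1 < r_2$, simultaneous tightness in both bounds would require $r_1 \mid s_j$ for some $j$ (with $s_j = k r_1$, $k \ge 2$) together with $r_2 = 2m$ for odd $m$ and $\{(r_2,s_1),(r_2,s_2)\} = \{2,m\}$; a short case analysis on the divisibility and parity constraints imposed by $s_1 + s_2 = \ell$ forces $r_1 = 1$ and the explicit configuration $(r_1,r_2;s_1,s_2) = (1, 2m; m, m+1)$ up to swaps, yielding $N = m+4$ and $S = 3\ell - 7 > \frac{5\ell}{2} - 5$ for $\ell \ge 5$. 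If either tight inequality fails (in particular whenever $r_1 \ge 2$), the bound on $N$ improves by at least $1$, giving $S \ge 3\ell - r_1 - 4 \ge \frac{5\ell}{2} - 3$. The main technical obstacle is exactly this last case analysis, which must simultaneously solve the divisibility constraints imposed by both tight inequalities and rule out all configurations other than the parametric family $(1,2m;m,m+1)$.
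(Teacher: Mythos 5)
Parts (1) and (3) of your proposal track the paper's own proof. For (1) it is the identical computation. For (3), both you and the paper split $N:=\sum_{i,j}(r_i,s_j)$ into two pair-sums, use the gcd hypothesis (plus $\ell=\sum r_i=\sum s_j$) to show one pair consists of coprime proper divisors of the largest element, and invoke the elementary bound ``coprime divisors $a,b<n$ of $n$ satisfy $a+b\le n/2+2$''; you group by $r_i$ (taking $r_2\ge\ell/2$) whereas the paper groups by $s_j$ (taking $s_2$ maximal), which for $u=v=2$ is a cosmetic difference. Both reach $S\ge 5\ell/2-6$ from the main inequality and then need a short extra step. You sketch a case analysis at near-equality; the paper instead asserts $s_2>(\ell+3)/2$, an inequality that actually fails by one when $\ell$ is even and $r_1=r_2=\ell/2$, $\{s_1,s_2\}=\{\ell/2-1,\ell/2+1\}$ (a case the paper's text does not separately dispatch, though $N=4$ there and the conclusion is immediate). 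So your more explicit case-split for (3) is in fact a reasonable way to finish, even if you do not carry it out.

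Part (2), however, has a genuine gap. Your gcd inequality $\gcd(n,a)+\gcd(n,b)\le n+\gcd(n,a,b)$ is correct, but iterating it over $s_1,\ldots,s_v$ does \emph{not} give $\sum_j(r_i,s_j)\le r_i+E_i$ with controllable $E_i$: each further application contributes another copy of $r_i$, yielding
\[
\sum_{j=1}^v(r_i,s_j)\ \le\ (v-1)\,r_i+\gcd(r_i,s_1,\ldots,s_v).
\]
The coefficient $(v-1)$ cannot be absorbed, and the claim ``$\sum_i E_i$ bounded by a constant'' is false --- e.g.\ already for $u=v=2$ one has $\gcd(r_i,s_1,s_2)\le\gcd(s_1,s_2)$, which can be of order $\ell$. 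Summing the displayed bound over $i$ gives $\sum_{i,j}(r_i,s_j)\le(v-1)\ell+\sum_i\gcd(r_i,s_1,\ldots,s_v)$, hence $S\ge(u-v+2)\ell-2\sum_i\gcd(\cdots)$, which for $u<v$ is \emph{weaker} than the trivial $S\ge(v-u)\ell$ of part (1). Concretely with $u=1$, $r_1=\ell$: the trivial bound $\sum_j(\ell,s_j)\le\sum_j s_j=\ell$ already gives $S\ge(v-1)\ell$, and the iterated gcd inequality offers nothing beyond this; neither produces the required $(v-1+\xi)\ell$. The missing ingredient is the paper's dichotomy at the threshold $D\ell/v$ with $D=1/42$: either some $s_j\ge D\ell/v$ fails to divide some $r_i$, in which case the single term $s_j-(r_i,s_j)\ge s_j/2\ge D\ell/(2v)$ already gives the linear gain in $T=(S-(v-u)\ell)/2$; or every such $s_j$ divides every $r_i$, hence divides $\ell$, and then the arithmetic fact that a sum of at most $v-1\le 2$ proper divisors of $\ell$ cannot lie in the open interval $((1-D)\ell,\ell)$, together with $(r_1,\ldots,s_v)=1$ forcing the remaining small $s_j$ to carry at least $D\ell$ of the total, yields a contradiction. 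This threshold/divisor-structure argument is the actual content of (2) and cannot be replaced by bookkeeping with the two-variable gcd inequality.
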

    \begin{proof}[Proof of Lemma \ref{lem:t=2-mini}]
    (1) Since $(r_i,s_j)\leq s_j$ for all $i,j$, since $\sum_{i=1}^u\sum_{j=1}^v s_j =  u\ell$ and $\sum_{i=1}^u\sum_{j=1}^v r_i =  v\ell$, we have $S\geq \ell(v+u)-2\sum_{i=1}^u\sum_{j=1}^vs_j=\ell(v-u), $ as desired. 
    
    (2) Now assume $v\leq 3$, and $(r_1,\ldots, r_u,s_1,\ldots,s_v)=1$. 
    Putting $D=1/42$, a straightforward check  % Pick constants d_w as in the Lemma, and put   D := min(d_0,...,d_v,1).
     shows that for any $w$ with $0\leq w\leq 3$, the sum of $w$ divisors of $\ell$ does not lie in the interval  
    $( (1-D)\ell, \ell )$.  
    Note that  the sum 
    $$T:=\frac{S-\ell(v-u)}{2} = \sum_{i=1}^u\sum_{j=1}^v\bigl(s_j - (r_i,s_j)\bigr)$$   is at least
    $s_j - (r_i,s_j)$  for any prescribed $i,j$.  
    Put $\xi_v:=D/v$, and $\xi:=\xi_3\leq \xi_v$. If there exist $i,j$ such that  $s_j \geq D\ell/v$  and  $s_j$ doesn't divide $r_i$,  then $(r_i,s_j)$ is a proper divisor of $s_j$  and hence is at most $s_j/2$,  so that
    $$T \geq s_j - (r_i,s_j) \geq \frac{s_j}{2} >  \frac{D}{2v}\ell>\frac{\xi}{2}\ell,
    \text{ and hence }S\geq \ell(v-u+\xi).$$   
    %which is the desired conclusion with $c_{u,v} := D/(2*v)$.
    %To finish the proof, assume show that such $i,j$ exist whenever $\ell$ is sufficiently large compared to $u$ and $v$.
    
    Now suppose that 
    $\ell > (v/D)^v$ and that every $s_j$ which is at least $D\ell/v$ divides all $r_i$, $i=1,\ldots,u$.  
    Let $J:=\{1,\ldots,v\}$, let  $U_1$ be the multiset $\{s_j\suchthat j\in J,\text{ and } s_j \geq D\ell/v\}$,  and let $U_2$  be
    the multiset $\{s_j\suchthat j\in J,\text{ and }s_j< D\ell/v\}$.  Since $s_1+\ldots+s_v=\ell$, the biggest $s_j$ is at least $\ell/v$, which is at least $D\ell/v$ 
    since $D< 1$, so that $U_1$ contains an element which is at least $\ell/v$.  
    Each  $s \in U_1$  divides every $r_i$, and hence  divides
    $\ell=\sum_{i=1}^u r_i$.  
    Thus the elements of $U_1$ are divisors of $\ell$, and
    $$\gcd(U_1) = \frac{\ell}{\lcm(\{\ell/s \suchthat s \in U_1\})} \geq \frac{\ell}{\prod_{s \in U_1} \frac{\ell}{s}} \geq \frac{\ell}{\prod_{s \in U_1} \frac{v}{D}} \geq \frac{\ell}{(v/D)^{v}}>1.$$
    %Note that  $(v/D)^v$  depends only on  v,  and suppose that  n > (v/D)^v,  so that  gcd(J) > 1.
    Since  $\gcd(U_1)$ is nontrivial and divides $(r_1,...,r_u)$, the assumption $(r_1,\ldots,r_u,s_1,\ldots,s_v)=1$ implies that $U_2$ is nonempty.  Thus  $U_1$  is a multiset of
    divisors of $\ell$ whose sum is less than $\ell$;  since $\abs{U_1} < v$, it follows that the sum of the 
    elements of $U_1$ is at most $(1-D)\ell$, whence the sum of the elements of $U_2$  is at least  $D\ell$.  Here  $U_2$ is a multiset of
    integers $1\leq z<D\ell/v$, and $\abs{U_2} \leq v$,  so the sum of the elements of $U_2$ is less than $D\ell$, contradicting  $\sum_{s\in U_1\cup U_2}s=\ell$. 
    
    %%%% This is the previous proof of S>=2.
    %(1) Let $a$ be the smallest of all the $r_i$'s and $s_j$'s.  Say the number of $r_i$'s equal to $a$ is $m$, and the number of $s_j$'s equal to $a$ is $n$.
    %Then $S$ is at least
    %\begin{equation}\label{equ:basic}
    %\begin{array}{rl} S\geq & n\sum_{r_i>a} (r_i-(r_i,a)) + m\sum_{s_j>a} (s_j-(s_j,a)) \geq  n\sum_{r_i>a} r_i/2 + m\sum_{s_j>a} s_j/2 \\
    %\displaystyle = &   n\frac{\ell-am}{2} + m\frac{\ell-an}{2} = \ell\frac{m+n}{2} - amn,
    %\end{array}
    %\end{equation}
    % which is at least $\ell/2$ if $mn=0$.  If $n=\ell/a$ then \eqref{equ:basic} gives: 
    % $$S\geq \ell\frac{m+n}{2}-amn=\ell\frac{m+\ell/a}{2}-\ell m=\ell\frac{\ell/a-m}{2}.$$ Since $S>0$
    % we must have $am<\ell-a$, giving $S\geq \ell(\ell/a-m)/2 > \ell/2$.  Likewise if $m=\ell/a$ then  $S>\ell/2$.
     
    %Finally, assume that $0<m<\ell/a$ and $0<n<\ell/a$, so that $a\leq am<\ell-a$ and $a\leq an<\ell-a$.  Then \eqref{equ:basic} gives:
    %$$ S\geq \ell\frac{m+n}{2} - amn = \frac{\ell^2}{4a} - \frac{(am-\ell/2)(an-\ell/2)}{a} \geq \frac{\left[ \ell^2/4 - (\ell/2-a)^2 \right]}{a} = \ell-a, $$
    %and since $a<\ell-a$ we have $a<\ell/2$, so that $S\geq \ell-a>\ell/2$.
    
    (3) Now let $u=v=2$. Let $S_i:=(r_1,s_i)+(r_2,s_i)$,  for $i=1,2$. 
    %We claim that the sum $T:=\sum_{i,j} (r_i,s_j)$ is less than $\ell/2 +5$. We note that the claim implies part (2) since 
    %it gives $S = 4\ell - 2T > 3\ell - 10.$   % and assume without loss of generality that $r_2$ is the biggest of $r_1,r_2,s_1,s_2$. 
    Without loss of generality assume $s_2$ is the maximum in $\{r_1,r_2,s_1,s_2\}$. 
    Since $(r_1,r_2,s_1,s_2)=1$ and $s_2=r_1+r_2-s_1$ it follows that $(r_1,r_2,s_1)=1$.  Hence $(r_1,s_1)$ and $(r_2,s_1)$ are coprime. As both divide $s_1$, their product also divides $s_1$.  It follows that either $S_1\leq s_1/2+2$, or $s_1\divides r_j$ for some $j\in\{1,2\}$ in which case $S_1=s_1+1$. As $s_2>r_j$, $j=1,2$, the same argument gives $S_2\leq s_2/2+2$. 
    Thus $S_1+S_2\leq s_1+s_2/2 + 3 = \ell +3 -s_2/2$.  As $s_2>(\ell+3)/2$, we get 
    %$S< 3(\ell+3)/4$, and hence 
    \[
    S=4\ell - 2(S_1+S_2)\geq  4\ell-2(\ell+3) +s_2 > (5\ell-9)/2.
    \]

    \end{proof}
    \begin{rem}The proof of part (2) generalizes to a statement for arbitrary $v$. Namely, it yields a constant $\xi_v>0$, depending only on $v$, such that for every $\ell$ sufficiently large compared to $v$, and positive integers $r_1,\ldots,r_u$ and $s_1,\ldots,s_v$ for $u\leq v$ such that $\sum_{i=1}^ur_i=\sum_{j=1}^vs_j=\ell$ and $(r_1,\ldots,r_u,s_1,\ldots,s_v)=1$, the above sum $S$ is at least $\ell(v-u+\xi_v)$. 
    The only required adjustment is replacing $D$ by a constant $0<D_v<1$, depending only on $v$, such that every sum of at most $v$ divisors of $\ell$ does not lie in the interval $((1-D_v)\ell,\ell)$. %One then replaces $D$ with $D_v$ throughout the proof. 
    \end{rem}

    \begin{proof}[Proof of Lemma \ref{lem:RS-estimates}]
    If $m_P=\infty$, we may assume without loss of generality that $\abs{E_{h_2}(P)}\geq \abs{E_{h_1}(P)}$. The assertion then follows immediately from Lemma \ref{lem:bound-rs}.(1-2) by writing $E_{h_1}(P)=[r_1,\ldots,r_u]$ and $E_{h_2}(P)=[s_1,\ldots,s_v]$. 
    %if $v\geq u$. or switching $h_1$ and $h_2$ if. 
    
    Now assume $m_P$ is finite. Since there are $O_\alpha(1)$ entries of $h_2$ that are different from $m_P$, all of which are bounded by  $O_\alpha(1)$, we have
    \begin{align*}
    %\begin{array}{l}
    \begin{split}
    S_{h_1,h_2}(P)  = &  \displaystyle \sum_{\substack{r\in E_{h_1}(P),
    s\in E_{h_2}(P)\\
    s=m_P}} (r+m_P-2(r,m_P)) \\
     & + \displaystyle \sum_{\substack{r\in E_{h_1}(P),s\in E_{h_2}(P), \\ r=m_P}} (s+m_P-2(s,m_P)) +  \sum_{\substack{r\in E_{h_1}(P),s\in E_{h_2}(P),\\ r,s\neq m_P}} (r+s-2(r,s)) \\
     = & \frac{\ell}{m_P}\sum_{r\in E_{h_1}(P)} (r+m_P-2(r,m_P)) +  \frac{\ell}{m_P} \sum_{s\in E_{h_2}(P)} (s+m_P-2(s,m_P)) +O_\alpha(1) \\
       = &  S_{h_1}(P) + S_{h_2}(P) +  O_\alpha(1).
     %\end{array}
     \end{split}
    \end{align*}
    
     The estimates for $S_{h_1}(P)$ are proved in \cite[Lemma 10.3]{NZ}.
    \end{proof}

    Finally, we state some estimates from \cite[Proof of Proposition 10.1]{NZ}:
    %on combinations of $S_{h_1}$ terms: 
    \begin{lem}\label{lem:Sn-two-stab}%\cite[Proposition \ref{prop:two-set}]{}
    There exist constants $c_3>0$ and $d_3$ such that for every indecomposable covering $h_1:Y_1\ra \mP^1$ with nonabelian monodromy and for which every point $P$ of $Y$ is of almost Galois type $m_{h_1}(P)$, the following property holds. Let $\ell:=\deg h_1$, and
    $M_{h_1}$ the multiset $\{m_{h_1}(P)>1\,|\, P\in\mP^1(\K)\}$. 
    %$T_{h_1}:=2\ell(g_{Y_1}-1) + \sum_{P\in \mP^1(\K)} S_{h_1}(P)$. 
    %If $ 2(g_{Y_1}-1) + \sum_{P\in \mP^1(\K)} S_{h_1}(P) < c\ell + d$
    %then 
    %$$ c\ell + d > 2(\ell-1)(g_{Y_1}-1) + \sum_{P\in B}\left[-\#\{\text{even }r\in E_{h_1}(P) + \sum_{r_1,r_2\in E_{h_1}(P)}(r_1-(r_1,r_2))\right], $$
    \begin{enumerate}
    \item[(I1)] %If $h_1$ has $m_P$ values $\infty,\infty$ over the points $P_1,P_2$ and satisfies:
    %$$ c\ell + d > 2\ell(g_{Y_1}-1) + \ell\sum_{P\in \Br_1'}R_{h_1}(P)+ \sum_{r_1,r_2\in E_{h_1}(P_i),i=1,2}(r_1-(r_1,r_2)) +O(1), $$
    %then the branch cycle of $h_1$ is (I1.1) 
    If $M_{h_1} = \{\infty,\infty\}$ and $2\ell(g_{Y_1}-1) + \sum_{P\in \mP^1(\K)} S_{h_1}(P) < c_3\ell -d_3$, then 
    the ramification type of $h_1$ is $[\ell]$, $[a,\ell-a], [2,1^{\ell-2}]$ with $(a,\ell)=1$; 
    \item[(I2)] 
    %If $h_1$ has $m_P$ values $\infty,2,2$ over  the points $P_0,P_1,P_2$ and satisfies:
    %$$ \begin{array}{rl}
    %c\ell + d > & 2\ell(g_{Y_1}-1) + \sum_{r_1,r_2\in E_{h_1}(P_0)}(r_1-(r_1,r_2)) + \ell\sum_{P\neq P_1,P_2}R_{h_1}(P) \\
    %& + \ell\sum_{i=1}^2[\frac{\ell}{2} - \#E_{h_1}(P_i)+\#\{\text{odd }r\in E_{h_1}(P_i)\}] -\ell +O(1), \end{array}
    %$$
    %then the branch cycle of 
    If $M_{h_1}=\{\infty,2,2\}$ and $2\ell(g_{Y_1}-1) + \sum_{P\in \mP^1(\K)} S_{h_1}(P) - \ell < c_3\ell - d_3$, then the ramification type of ${h_1}$ is one of the types I2.1-I2.15 in \cite[Table~4.1]{NZ};
    \item[(F1)] 
    %If $h_1$ has $m_P$ values $2,2,2,2$ over the points $P_1,P_2,P_3,P_4$, and satisfies:
    %$$
    %\begin{array}{rl}
    % c\ell+ d > & 2\ell(g_{Y_1}-1) +  \ell \sum_{P\neq P_1,\ldots, P_4}R_{h_1}(P)  \\
     %& + \ell\sum_{i=1}^4[\frac{\ell}{2} - \#E_{h_1}(P_i) + \#\{\text{odd }r\in E_{h_1}(P_i)\}]-2\ell+O(1).
    %\end{array} $$
    If $M_{h_1}=\{2,2,2,2\}$ and  $2\ell(g_{Y_1}-1) + \sum_{P\in \mP^1(\K)} S_{h_1}(P) - 2\ell < c_3\ell - d_3$, then the ramification type of ${h_1}$ is one of the types F1.1-F1.9 in \cite[Table~4.1]{NZ}; % \ref{table:two-set-stab};
    \item[(F3)] 
    %If $h_1$ has $m_P$ values $3,3,3$ over the points $P_0,P_1,P_2$ and satisfies:
    %$$ \begin{array}{lll}
    %c\ell + d & > & 2\ell(g_{Y_1}-1) + \ell \sum_{P\neq P_0,P_1,P_2\}}R_{h_1}(P) + \ell[\frac{\ell}{2}-\#E_{h_1}(P_0)+\#\{\text{odd }r\in E_{h_1}(P_0)\})     \\
      % & &  +  \ell\sum_{i=1}^2[\frac{\ell}{4} - \#E_{h_1}(P_i) + \#\{r\in E_{h_1}(P_i)\,|\,r\equiv 2\,(4)\} +\frac{3}{2}\#\{\text{odd }r\in E_{h_1}(P_i)\}] +  O(1).
    % \end{array}
     %$$
    If $M_{h_1}=\{2,4,4\}$ and $2\ell(g_{Y_1}-1) + \sum_{P\in \mP^1(\K)} S_{h_1}(P) - \ell < c_3\ell - d_3$, then the ramification type of $h_1$ is one of the types F3.1-F3.3 in \cite[Table~4.1]{NZ}.% \ref{table:two-set-stabilizer}.
    \end{enumerate}
    \end{lem}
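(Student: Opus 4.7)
The plan is to handle all four cases via a common template. For each almost Galois type $m \in \{2,3,4,6,\infty\}$, I would first rewrite the estimates of Lemma~\ref{lem:RS-estimates} so that $S_{h_1}(P)$ equals $\ell$ times a minimum contribution $C_m$ depending only on $m$, plus $\ell$ times a nonnegative \emph{penalty} $W_m(P)$ counting the exceptional entries of $E_{h_1}(P)$---those not equal to $m$, or for infinity-type the excess over the minimum allowed number of entries---plus $O_\alpha(1)$. Then, combining with the Riemann--Hurwitz formula $2(g_{Y_1}-1) = -2\ell + \sum_P R_{h_1}(P)$ for $h_1$ and simplifying, the quantity $2\ell(g_{Y_1}-1) + \sum_P S_{h_1}(P)$ reduces to $\ell \cdot C_{M_{h_1}} + \ell \sum_P W_{m_P}(P) + O_\alpha(\ell)$, where the constant $C_{M_{h_1}}$ depending only on $M_{h_1}$ equals $0, 1, 2, 1$ in cases (I1), (I2), (F1), (F3), respectively. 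This matches the shift subtracted in each hypothesis.

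After this reduction the hypothesis becomes $\sum_P W_{m_P}(P) < c_3 + O_\alpha(1/\ell)$. For suitably small $c_3 > 0$ and sufficiently large $\ell$ (absorbed into $d_3$), each $W_{m_P}(P)$ must be a bounded nonnegative integer. This bounds the number of exceptional entries at every branch point by an absolute constant, and for finite-type points also bounds their sizes. Combined with Corollary~\ref{cor:DZ} (which bounds $\abs{E_{h_1}(P)}$ by $N_\alpha$ at infinity-type points), only finitely many candidate partitions of $\ell$ remain, each determined up to at most one parameter---typically an integer $a$ with $0 < a < \ell/2$ and $(a,\ell) = 1$ appearing in partitions of shape $[a, \ell-a]$.

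The final step is to pass from candidate combinatorial ramification types to those actually realized by an indecomposable covering with nonabelian monodromy. By Riemann's existence theorem (Section~\ref{sec:RET}) this requires the existence of a product-$1$ tuple with the prescribed cycle structures generating a transitive subgroup of $S_\ell$; then Lemma~\ref{lem:hurwitz1} and the Jordan-type primitivity criteria of Remark~\ref{rem:jordan} further eliminate data forcing the monodromy to be cyclic, dihedral, or otherwise imprimitive. The surviving types are exactly those listed in \cite[Table~4.1]{NZ} as I1.1, I2.1--I2.15, F1.1--F1.9, and F3.1--F3.3.

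I expect the main obstacle to be case (F1), where all four branch points have type $m_P = 2$ and $W_2(P)$ counts odd entries of $E_{h_1}(P)$. When $\ell$ is odd, each $E_{h_1}(P)$ necessarily contains an odd entry, so careful parity bookkeeping across the four branch points is needed to avoid over- or under-counting and to verify completeness of the candidate list. Cases (I2) and (F3) present milder versions of the same phenomenon, while case (I1) is the cleanest, reducing essentially to Lemma~\ref{lem:t=2-mini}. The detailed arithmetic in all four cases is carried out in the proof of \cite[Proposition~10.1]{NZ}, which we invoke directly.
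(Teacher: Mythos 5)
Your proposal is correct and follows essentially the same route as the paper: expand $S_{h_1}(P)$ term by term using Lemma~\ref{lem:RS-estimates}, combine with the Riemann--Hurwitz formula for $h_1$ so that the shifted quantity $2\ell(g_{Y_1}-1)+\sum_P S_{h_1}(P) - C_{M_{h_1}}\ell$ becomes (up to $O_\alpha(1)$) the left-hand side of the corresponding inequality in \cite{NZ}, and then invoke the case analysis of \cite[Proposition~10.1]{NZ}. The paper's actual proof is terser---it simply matches each shifted inequality with the labeled displayed inequalities (10.3), (10.6), (10.10), (10.14) of \cite{NZ} and defers all the combinatorial bookkeeping (including the parity issues you flag in case (F1)) to that reference---but the underlying structure and the source of the constants $c_3,d_3$ are exactly as you describe.
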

    \begin{proof}
    Estimating the terms $S_{h_1}(P)$ in each case using Lemma \ref{lem:RS-estimates}, the inequality 
    \[2\ell(g_{Y_1}-1) + \sum_{P\in \mP^1(\K)} S_{h_1}(P)<c_3\ell-d_3
    \] in case (I1) coincides with  \cite[(10.3)]{NZ} as the left hand side of the latter is bounded by $c_3\ell-d_3$ for some constants $c_3,d_3>0$, depending only on $\alpha$. 
    Similarly, the inequality $2\ell(g_{Y_1}-1) + \sum_{P\in \mP^1(\K)} S_{h_1}(P)-\ell<c_3\ell-d_3$ in case (I2) coincides with \cite[(10.6)]{NZ} combined with $g_{X_2}<c_3\ell-d_3$, the inequality $2\ell(g_{Y_1}-1) + \sum_{P\in \mP^1(\K)} S_{h_1}(P)-2\ell<c_3\ell-d_3$ in case (F1) coincides with 
    %the first equation in 
    \cite[(10.10)]{NZ} with $g_X<c_3\ell-d_3$, and $2\ell(g_{Y_1}-1) + \sum_{P\in \mP^1(\K)} S_{h_1}(P)-\ell<c_3\ell-d_3$ in case (F3) coincides with %the first equation of \cite[Proposition 10.1, Case 
    \cite[(10.14)]{NZ} for $g_X<c_3\ell-d_3$. 
    The proof of \cite[Proposition 10.1]{NZ}  determines all ramification types of indecomposable coverings $h_1$ satisfying these inequalities. 
    \end{proof}
    
    We shall also use the following lemma to show that certain ramification types do not correspond to an indecomposable covering: 
    \begin{lem}\label{cor:inf2-3} %Corollary 10:
    There is no product-$1$ tuple that generates a primitive group and whose ramification corresponds to types I1A.N1, I1A.N2, F4.N1,  F4.N2, F4.N3 in Table~\ref{table:I1AN}. %Assuming Conjecture~\ref{lem:commutator}, 
    %there is also no such product $1$ tuple for types F4.N1,  F4.N2 in Table~\ref{table:I1AN}. 
    \end{lem}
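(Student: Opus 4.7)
The plan is to run a separate case analysis for each of the five ``non-example'' ramification data in Table~\ref{table:I1AN}, aiming in each case either to contradict the product-$1$ condition directly, or to force the group generated to lie in one of the imprimitive configurations ruled out by Lemma~\ref{lem:cyc-exist}.(2) (that is, either a proper subdirect product or a twisted diagonal $\{(a, v^{-1}av)\suchthat a\in S_\ell\}$).

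First I will bring each candidate tuple into a convenient normal form: by Lemma~\ref{lem:conjugate-1} every element of $S_\ell \wr S_2\setminus S_\ell^2$ may be written as $(\alpha,1)\s$ with $\alpha\in S_\ell$ in a prescribed cycle type, and every element of $S_\ell^2$ may be taken of the form $(\alpha_1,\alpha_2)$. Since the product of all branch cycles is $1$, projecting to $S_2$ shows that the number of swap-type factors $(\alpha_i,1)\s$ is even; and projecting to each of the two copies of $S_\ell$ yields two relations that the $\alpha_i$'s must satisfy simultaneously. In parallel, by Lemma~\ref{rem:product-type}.(3) the projection of $K=G\cap S_\ell^2$ to each coordinate is primitive, and if $G$ is primitive then by Lemma~\ref{lem:cyc-exist}.(2) either $K\supseteq A_\ell^2$ or $K$ sits inside a twisted diagonal.

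The case analysis then proceeds as follows. For the I1A.N$i$ types, where one branch cycle has cycle structure of the form $([\ell],[\ell])$ or $([a,\ell-a],[a,\ell-a])$ while the remaining cycles are short, the coordinate-wise product-$1$ relation forces the product of the ``short'' cycle structures in each coordinate to reconstruct a single long cycle (or pair of cycles) of matching type. Checking the small number of possibilities using Remark~\ref{rem:jordan} and basic cycle arithmetic shows that either no such factorization exists, or the resulting elements in the two coordinates are simultaneously conjugate by a fixed $v\in S_\ell$, forcing $K$ into the twisted-diagonal shape of Lemma~\ref{lem:cyc-exist}.(2) and hence contradicting primitivity. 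For the F4.N$i$ types, where all or all-but-one branch cycles are pure swaps $\s$, the product of an even number of swaps lies in a very restricted subgroup of $K$ generated by conjugates of a single $(\alpha,\alpha^{-1})$; together with the prescribed short $S_\ell^2$-factor, this forces the projection of $K$ to one coordinate to be too small to be primitive on $\Delta$, again violating Lemma~\ref{rem:product-type}.(3).

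The main obstacle I expect is the bookkeeping in the I1A.N$i$ cases: the long-cycle condition together with the product-$1$ relation can be consistent in $S_\ell^2$ but inconsistent under the further requirement that the resulting subgroup acts primitively on $\Delta^2$, and isolating the exact parity/gcd obstruction needs the classification of how a long $\ell$-cycle (or $(a,\ell-a)$-cycle) factors as a product of a transposition-like element and powers of another long cycle. The F4.N$i$ cases should by contrast reduce quickly, since with only one non-swap branch cycle the coordinate-wise products are essentially determined and the twisted-diagonal obstruction becomes visible immediately.
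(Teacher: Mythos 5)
Your plan has the right target — show that any such tuple forces $K=G\cap S_\ell^2$ into a twisted diagonal $\{(u,v^{-1}uv)\suchthat u\in S_\ell\}$, contradicting primitivity via Lemma~\ref{lem:cyc-exist}.(2) — and this is indeed the strategy the paper pursues in Appendix~\ref{sec:no-ram}. But the proposal is missing the actual substance needed to carry it out, and one of its intermediate claims is wrong.

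For the I1A.N$i$ types, you assert that "basic cycle arithmetic" and Remark~\ref{rem:jordan} will show the two coordinates of the $K$-generators are simultaneously conjugate by a fixed $v$. This is far from routine: the required fact is Lemma~\ref{lem:cyc-non-exist}, a nontrivial statement about when an $\ell$-cycle and a $2$-cycle satisfying a specific four-term relation admit a common conjugating involution $z$. Nothing in your proposal indicates awareness that such a lemma is needed or how one would prove it; reconstructing the factorization of a long cycle as $ad=cb^{-1}$ and extracting the matching $z$ is precisely the content that makes the argument work, not something that falls out of cycle bookkeeping.

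For the F4.N$i$ types, your intuition is in the wrong direction. You claim the projection of $K$ to a coordinate will be "too small to be primitive." In fact the opposite is true and is used in the paper's argument: the projections $K_i$ are shown to contain $A_\ell$ (via Jordan's theorem, since $K_i$ is primitive and contains $v$, a $3$-cycle or double transposition), so $G\supseteq A_\ell^2$ by Lemma~\ref{lem:primitive}. The contradiction then comes from the other side: a theorem of Pan (Theorem~\ref{lem:commutator}) says that if $[a,b]$ is a product of two transpositions then some $z$ simultaneously inverts $a$ and $b$, which pins $K$ inside $\{(u,u^z)\}$, incompatible with $K\supseteq A_\ell^2$. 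This input — a specific nonobvious result about commutators and simultaneous conjugation — is essential and entirely absent from your proposal. Finally, the appendix argument covers only F4.N1 and F4.N2; F4.N3 (and indeed the whole list) is handled in the paper's main proof by a different route, a genus inequality $g_{Y_1}-g_Y\geq g_X$ from a representation-theoretic argument, which your proposal also does not anticipate.
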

    \begin{proof}
    Assume on the contrary that there exists an indecomposable cover $f$ with monodromy group $G$ whose ramification type appears in Table \ref{table:I1AN}. Letting $x_1,\ldots, x_5$ be a product-$1$ tuple for $G$ corresponding to this ramification type, we see that at least one of the $x_i$'s or its square is in $K:=G\cap S_\ell^2$ and has a transposition, double transposition, or a $3$-cycle as its first coordinate. Since $h_1$ is indecomposable and the projection of $K$ to the first coordinate contains  an element with such cycle structure, this projection contains $A_\ell$ by Remark \ref{rem:jordan}. Since $G$ is primitive, this implies $K\supseteq A_\ell^2$ by Lemma \ref{lem:cyc-exist}.(1). Thus, the inequality    $g_{Y_1}-g_Y\geq g_X$ from \cite[Chp.\ 3]{Tali-wreath} holds. However for types I1A.N1 and I1A.N2 (resp.\ F4.N1-3),  $g_{X}=0$ using Lemma \ref{lem:t=2-gXY1}, $g_Y=0$ (resp.\ $g_Y=1$) since $\pi$ has two (resp.\ four) branch points, and $g_{Y_1}=1$ (resp.\ $g_{Y_1}=2$), contradicting this inequality. 
    \end{proof}
    \begin{table}
    %\begin{equation}
    \caption{Non-occurring ramification types for primitive groups $A_\ell^2\leq G\leq S_\ell \wr S_2$ of product type.}
    \label{table:I1AN}
    $$\begin{array}{|l|l|}
    \hline
    %(j.2)
    I1A.N1 & ([\ell],[\ell]),  ([2,1^{\ell-2}],[2,1^{\ell-2}]),  s,  s \\
    %(j.3)
    I1A.N2 & ([\ell],[\ell]),  ([2^2,1^{\ell-4}],[1^\ell]) s,  s. \\
    \hline
    %1
    F4.N1 & \s, \s, \s, ([1^{\ell-4},2^2],1) \s \\
    %2
    F4.N2 & \s, \s, \s, ([1^{n-3},3],1) \s \\
    F4.N3 & \s, \s, \s, \s, ([1^{\ell-2},2],[1^{\ell-2},2]) \\
    \hline
    \end{array}
    $$ 
    \end{table}
    Note that the inequality from \cite{Tali-wreath} is derived from a representation theoretic argument. Another proof of Lemma \ref{cor:inf2-3}, of more explicit nature,  is given in Appendix \ref{sec:no-ram}.

    \subsection{Proof of Theorem \ref{thm:main-Gal} for $t=2$}\label{subsec:t=2}
    %We start with the most difficult case where $t=2$ and  $G/K\cong~C_2$. We shall prove:
    %\begin{prop}\label{prop:t=2}
    %There exist constants $c>0$ and $d$ such that for every covering $f:X\ra \mP^1$ with nonsolvable monodromy group $G$ of product type $t=2$ which satisfies $g_X<c\ell$, % where $\deg f=\ell^2$,
    %the group $G$ contains $A_\ell^2$ and the branch cycle of $f$ appears in Table~\ref{table:wreath}.
    %\end{prop}
    
    %We assume the setup in \S\ref{sec:setup}. 
    %\begin{proof}[Proof of Theorem \ref{thm:main-Gal} for $t=2$]
    We will show that there exists a sufficiently small $c_2>0$ and sufficiently large $d_2$ such that the condition $g_X<c_2\ell-d_2$ forces the ramification of $f$ to appear in Table~\ref{table:wreath}. Let $K:=G\cap S_\ell^2$ and $H$ be a point stabilizer of $G$. Let $\pi_0:Y\ra\mP^1$ and $\pi:Z\ra X$ be the natural projections from $Y:=\tilde X/K$ and $Z:=\tilde X/(H\cap K)$, where $\tilde X$ is the Galois closure of $f$. Recall that the coverings $h_1$ and $h_2$ are indecomposable by  Remark~\ref{rem:setup}.(3) as $G$ is primitive. % and also that $g_Y\leq 1$ by Corollary \ref{cor:DZ}, since $h_i$, $i\in I$ have almost Galois ramification.
    %We note all of the ramification types in Table~\ref{table:wreath}  occur by 
    %Proposition \ref{prop:wreath-types}. 
    %Note that as $f$ is indecomposable, if its  ramification type appears in Table~\ref{table:wreath} then $G$ contains $A_\ell^2$. 

    Write the branch points of $h$ as a disjoint union of three sets $R\cup S\cup S^\oline\sigma$ for $\oline\sigma \in G\setminus K$, where $R$ is the set of branch points of $\pi_0$. 
    Lemma \ref{lem:t=2-gXY1} %and \ref{lem:RS-estimates} 
    then gives %the following formula for $g_X$: 
    %\begin{equation*}
    %\begin{array}{rl}
    %\begin{align*}
    \begin{equation}\label{equ:genus-start}
    4g_X-4 =   2\ell(g_{Y_1}-1) + \sum_{P\in R} S_{h_1}(P)  + \sum_{P\in S} S_{h_1,h_2}(P) - \sum_{P\in R} \abs{\{\text{odd }r\in E_{h_1}(P)\}}. 
    \end{equation}
    %& - \sum_{P\in R} \abs{\{\text{odd }r\in E_{h_1}(P)\}}.
    %\end{align*}
    %= &   2\ell(g_{Y_1}-1) + \sum_{P\in Y(\K)} S_{h_1}(P) - \abs{\{\text{odd }r\in E_{h_1}(P_i)
    %,\,i=1,2\}} +O(1). 
    %\end{array}
    %\end{equation*}
    Note that since $E_{h_2}(P) = E_{h_1}(P^{\oline\sigma})$  by Remark \ref{rem:setup}.(2), we have  $S_{h_2}(P) = S_{h_1}(P^\oline\sigma)$  for all $P\in S$.
    Also note that $S_{h_1,h_2}(P) = S_{h_1}(P) + S_{h_2}(P) + O_\alpha(1)$ for $P\in S$ with $m_{h_i}(P)<\infty$,  
    and that the number of branch points of $h_1$ is bounded by Corollary \ref{cor:DZ}. Thus, \eqref{equ:genus-start} gives
    \begin{equation}\label{equ:main-t=2}
    %\begin{array}{rl} 
    \begin{split}
    4g_X-4 & =  2\ell(g_{Y_1}-1) + \sum_{P\in R,m_{h_1}(P)=\infty}S_{h_1}(P) + \sum_{P\in S,m_{h_1}(P)=\infty}S_{h_1,h_2}(P) \\
    & + \sum_{P\in Y(\K), m_{h_1}(P)<\infty} S_{h_1}(P) - \sum_{P\in R} \abs{\{\text{odd }r\in E_{h_1}(P)\}} + O_\alpha(1).
    \end{split}
    %\end{array}
    \end{equation}
    Since $Y_1\#_Y Y_2$ is irreducible of genus  $< \alpha\ell$, Corollary \ref{cor:DZ} implies that $g_{Y}\leq 1$. 
    %For a covering $f:X\ra\mP^1$, 
    %Hence, the Riemann--Hurwitz contribution % for $\pi_0$ shows that
    %$2g_Y - 2 = -4 + R_{\pi_0}$, or 
    Hence, $\abs{R}=\sum_{p\in X_0(\K)}R_{\pi_0}(p) = 2(g_Y+1)$ is $2$ if $g_Y=0$ and $4$ if $g_Y=1$, by the Riemann--Hurwitz formula for $\pi_0$. % which is $2$ or $4$. % by the Riemann--Hurwitz formula. % equals $2$ or $4$. 
    %Noting that $R_{\pi_0}(p)=1$ for every branch point $p$ of $\pi_0$ as  $\deg\pi_0 =2$, we deduce that  $\abs R=2$ if $g_Y=0$ and $\#R=4$ if $g_Y=1$. 
    %equals $R_{\pi_0}$, and hence is either $2$ or $4$. 
    %\end{rem}
    
    %Since $R_\pi(f^{-1}(P))$ is at most $\ell$ by Proposition \ref{lem:gZ-gX-t=2}, the Riemann--Hurwitz formula for $\pi$ gives:
    First assume $g_Y=1$ and hence that $\abs R=4$,
    %\begin{equation} \label{equ:gY=1}
    %2(g_Z-2g_X+1) = \sum_{p\in\mP^1}R_{\pi}(f^{-1}(p)) \leq 4\ell. %\ell\sum_{p\in\mP^1(\K)}R_{\pi_0}(p) = 4\ell. 
    %\end{equation} 
    and that $m_{h_1}(P) =1$ for every point $P$ of $Y$, by Corollary \ref{cor:DZ}.
    By Lemma \ref{lem:RS-estimates}, we have $S_{h_1}(P) = \ell R_{h_1}(P) + O_\alpha(1)$ for every point $P$ of $Y$. Letting $R_{h_1}:=\sum_{P\in Y(\K)}R_{h_1}(P)$, we have in addition $2(g_{Y_1}-1) = R_{h_1}$  for $P$ in $Y_1$ by the Riemann--Hurwitz formula for $h_1$. Thus  \eqref{equ:main-t=2} gives: 
    $$
    4(g_X-1) \geq 2\ell(g_{Y_1}-1) + \ell R_{h_1} - 4\ell + O_\alpha(1) = 2\ell (R_{h_1}-2) + O_\alpha(1).
    %g_X \geq  \frac{1}{4}(2g_Z + 2  - 4\ell ) \geq \frac{1}{4}(2\ell(g_{Y_1}-1) + R_{h_1}\ell - 4\ell) + O(1) = \frac{1}{2}(R_{h_1}-2)\ell +O(1).
    $$
    Thus,  we have $g_X> \ell/2 - O_\alpha(1)$  if $R_{h_1}>2$. Hence by requiring $c_2\leq 1/2$ and taking $d_2$ to be sufficiently large, we may assume $R_{h_1}\leq 2$. Note that  $R_{h_1}$ is even since  $R_{h_1}=2(g_{Y_1}-1)$, so that $R_{h_1}=0$ or $2$. 
    If $R_{h_1}=0$, then $h_1$ is a morphism of genus $1$ curves and hence $\Mon(h_1)$ is abelian \cite[Theorem 4.10(c)]{Sil}, contradicting the nonsolvability of $G$. 
    %Whence $g_X \geq \ell(g_{Y_1}-2)/2 + O(1).$
    %Hence if $g_{Y_1}>2$ then $g_X>\ell/2+d$ for sufficiently small $d$, so we must have $g_{Y_1}=2$.
    %Since $R=2$, 
    Thus, $R_{h_1}=2$ and the ramification type of  $h_1$ is either $[2,1^{\ell-2}]^2$, or $[3,1^{\ell-3}]$, or $[2,2,1^{\ell-4}]$. By Remark \ref{rem:t=2-ramification}, the ramification types of $f$ corresponding to such  $h_1$ appear as the F4 types in Tables \ref{table:wreathY=2} and \ref{table:I1AN}. Types I1A.N1, I1A.N2, %(and 
    %Under the assumption of Conjecture
    %By Theorem \ref{lem:commutator} types 
    F4.N1, F4.N2 and F4.N3  in Table~\ref{table:I1AN} do not correspond to an indecomposable covering by Lemma \ref{cor:inf2-3}. 
    %These types all occur by Proposition~\ref{prop:wreath-types}.  %By Proposition~\ref{prop:wreath-types} they all occur and only with monodromy containing~$A_\ell^2$.
    
    %\begin{proof}[Proof of Proposition \ref{prop:t=2}]
    Henceforth assume $g_Y=0$ and hence that $R=\{P_1,P_2\}$ for two points $P_1,P_2$ of~$Y$. 
    %Since $m_{h_1}(P) = 1$,  Lemma \ref{lem:RS-estimates} applied to the natural projection $h_1^2:Z\ra Y_1$ (noting that $R_{h_1^2}(P) = S_{h_1,h_2}(P)$ by Lemma \ref{lem:t=2-gXY1}), gives $R_{h_1^2}(P) = \ell R_{h_1}(P) + O(1) $ for every point $P$ of $Y$
    We analyze the possibilities for the multiset 
    $ M_{h_1} := \{m_{h_1}(P)>1\,|\, P\in Y(\K)\}.  $
    %using Corollary \ref{cor:DZ}. 
    %$(\infty,\infty), (\infty, 2, 2), (2,2,2,2), (3, 3, 3), (2,4,4), $ or $(2, 3, 6)$.
    %Recall that  $\abs{\Br_2}= 2$ or $4$ by Remark \ref{rem:claim4}, and 
    Since $\abs R=2$ and since  $m_{h_1}(P) = m_{h_1}(P^\oline \sigma)$ for every $P\in S$ by Remark \ref{lem:same-m_P},  Corollary~\ref{cor:DZ} gives the following possibilities for $M_{h_1}$:
    %the list of $m_P>1$ for $h_1$ cannot be $2,3,6$. The possible configurations of points are:
    \begin{enumerate}
    \item[(I1A)] $M_{h_1} = \{\infty,\infty\}$, $m_{h_1}(P_i)=1$, $i=1,2$;  %a point in $B_1$ with $m=\infty$;
    \item[(I1B)] $M_{h_1} = \{\infty,\infty\}$, $m_{h_1}(P_i)=\infty$, $i=1,2$; %two points in $\Br_2'$ with $m=\infty$;
    \item[(I2)]  $M_{h_1} = \{\infty, 2,2\}$, $\{m_{h_1}(P_1), m_{h_1}(P_2)\} = \{1,\infty\}$; % a point in  $\Br_2'$  with $m=\infty$ and a point in $B_1$ with $m=2$;
    \item[(F1A)] $M_{h_1} = \{2, 2, 2, 2\}$, $m_{h_1}(P_i)=1$, $i=1,2$; %two points in $B_1$  with $m=2$;
    \item[(F1B)] $M_{h_1} = \{2, 2, 2, 2\}$, $m_{h_1}(P_i)=2$, $i=1,2$;%a point in $B_1$ with $m=2$ and two points in $\Br_2'$ with $m=2$;
    \item[(F2)]  $M_{h_1} = \{3,3,3\}$, $\{m_{h_1}(P_1), m_{h_1}(P_2)\} = \{1,3\}$; %two points in $\Br_2'$, and a point in $B_1$ with $m=3$;
    \item[(F3)] $M_{h_1} = \{2,4,4\}$, $\{m_{h_1}(P_1), m_{h_1}(P_2)\} = \{1,2\}$. %a point in $\Br_2'$ with $m=2$ and a point in $B_1$ with $m=4$.
    \end{enumerate}
    We estimate the right hand side of \eqref{equ:main-t=2} in each of these cases using Lemma~\ref{lem:RS-estimates}. In each case we require conditions on $c_2$, and $d_2$. The theorem then follows by taking the minimal (resp., maximal) value of $c_2$ (resp., $d_2$) among all cases.  
    
    %========== I1 ==========
    {\bf Case I1A}:
    Let $P_3$ be a point of $Y$  such that $m_{h_1}(P_3)= \infty$, and set $P_4=P_3^{\oline\sigma}$, so that $m_{h_1}(P_4) =\infty$. 
    %and that $m_P=1$ for all $P\in Y\setminus\{P_1,P_2\}$.
    Set $u=u_{P_3}:=\abs{E_{h_1}(P_3)}$ and $v=v_{P_4}:=\abs{E_{h_2}(P_3)}$ which equals $\abs{E_{h_1}(P_4)}$ by Remark \ref{rem:setup}.(2). 
    %Lemmas \ref{lem:t=2-gXY1} and \ref{lem:RS-estimates} then give:
    By Lemma \ref{lem:RS-estimates},  equality \eqref{equ:main-t=2} gives: 
    \begin{equation}\label{equ:RH-XY1-inf-inf}
    %\begin{array}{lll}
    4(g_X-1) %&  = & 2\ell(g_{Y_1}-1) + \sum_{r\in E_{h_1}(P_3),s\in E_{h_2}(P_3)}(r+s-2(r,s)) \\
    %& & + \ell\sum_{P\neq P_3,P_4}R_{h_1}(P) -2\ell + O(1) \\%+ \ell\sum_{P\in S\setminus\{P_3,P_4\}}(R_{h_1}(P) + R_{h_2}(P)) + \ell\sum_{i=1}^2 R_{h_1}(P_i) - 2\ell + O(1) \\
    %& \geq & 
    \geq 2\ell(g_{Y_1}-1) + \ell (u+v) - 2\ell\min(u,v) + \xi_h \ell +
    %& & + 
    \ell\sum_{P\in Y(\K)\setminus\{P_3,P_4\}}R_{h_1}(P) -2\ell + O_\alpha(1). %\ell\sum_{P\in S\setminus\{P_3,P_4\}}(R_{h_1}(P) + R_{h_2}(P)) + \ell\sum_{i=1}^2 R_{h_1}(P_i) - 2\ell + O(1). \\
    %\end{array}
    \end{equation}
    where we set $\xi_h$ to be the constant $\xi$ from Lemma \ref{lem:RS-estimates} if $\max\{u,v\}\leq 3$ and the greatest common divisors of all entries in $E_{h_1}(P_3)$ and $E_{h_2}(P_3)$ is $1$, and set $\xi_h=0$ otherwise. 
    %if $E_{h_1}(P_3)\neq [(\ell/u)^{u}]$ or if $E_{h_2}(P_3)\neq [(\ell/v)^v]$. 
    On the other hand the Riemann--Hurwitz formula for $h_1$ gives
    \begin{equation}\label{equ:RH-Y1Y}
    %\sum_{P\in S\setminus\{P_3,P_4\}}(R_{h_1}(P)+R_{h_2}(P)) + \sum_{i=1}^2 R_{h_1}(P_i) 
    \sum_{P\in Y(\K)\setminus\{P_3,P_4\}} R_{h_1}(P) = 2(g_{Y_1}-1) + u + v.
    \end{equation}
    Substituting \eqref{equ:RH-Y1Y} into \eqref{equ:RH-XY1-inf-inf} gives:
    %$$ \begin{array}{ll}
    %\begin{align*}
    $$ 4(g_X-1) \geq 4\ell(g_{Y_1}-1) + 2\ell (u+v) - 2\ell\min(u,v) + \xi_h \ell  -2\ell + O_\alpha(1). $$
    %\end{align*}
    %\end{array} $$
    For  $0<c_2<\xi/4$ and  sufficiently large $d_2$, the condition $g_X<c_2\ell-d_2$ forces %\xi/4$
    \begin{equation*} 0 \geq 4(g_{Y_1}-1)  +  2(u+v) - 2\min(u,v) + \xi_h  -2,
    \end{equation*}
    or equivalently 
    \begin{equation}\label{equ:max-bnd}
    \max(u,v)\leq 3-2g_{Y_1}-\xi_h/2.%\text{ if $\xi >0$ and $\max(u,v)\leq 3-2g_{Y_1}$ if $\xi=0$.}
    \end{equation}
    %Hence $\max(u,v)\leq 3-2g_{Y_1}-\xiilon/2.$
    In particular $g_{Y_1}\leq 1$. First, assume $g_{Y_1}=1$ so that $u=v=1$ by \eqref{equ:max-bnd}. Then \eqref{equ:RH-Y1Y} gives $\sum_{P\in Y(\K)\setminus\{P_3,P_4\}}R_{h_1}(P)=2$.
    Thus the ramification type of $h_1$ is %the branch cycle of $h_1$ corresponding to $P_1,P_2,Q$ is: %appear in $(\infty,infty)$ $g=1$.
    %$\infty,\infty$ list for $g=1$:
    either $[\ell], [\ell], [3, 1^{\ell-3}]$,
     or $[\ell], [\ell], [2, 1^{\ell-2}]$ twice,
     or $[\ell], [\ell], [2, 2, 1^{\ell-4}].$
     By Remark \ref{rem:t=2-ramification}, these correspond to the ramification types of $f$ appearing as types I1A.1-3 and I1A.N1-2 in Tables~\ref{table:wreath} and~\ref{table:I1AN}. %Corollaries \ref{cor:type-positive} and \ref{cor:inf2-3}.
     By Lemma \ref{cor:inf2-3}, %and Proposition \ref{prop:wreath-types} types I1A.1-3 occur and 
     types I1A.N1-2 do not correspond to an indecomposable covering $f$. 
     % while types I1A.1-I1A.3 do occur. %in this case the possible branch cycles of $f$ are  I1A.1-3 in Table~\ref{table:wreath}. %$(i.1), (i.2)$, and $(i.3)$.
    %(i.2)  $([\ell],[\ell]),  ([2,1^{n-2}],[2,1^{n-2}]),  s,  s$ and (i.3) $([\ell],[\ell]),  ([2^2,1^{n-4}],[1^n]) s,  s$.
    
    We can therefore assume $g_{Y_1}=0$ and without loss of generality $u\leq v$.
    Then  \eqref{equ:max-bnd} gives $v\leq 3-\xi_h/2$. 
    By Lemma \ref{lem:hurwitz1}, since $h_1$ is indecomposable, the greatest common divisor of all entries in $E_{h_1}(P_3)$ and $E_{h_2}(P_3)=E_{h_1}(P_4)$ is $1$. Since in this case $\xi_h>0$, \eqref{equ:max-bnd} gives $v\leq 2$. 
    %If $v=3$,  \eqref{equ:max-bnd} holds only when $\xi=0$, in which case $E_{h_1}(P_3)$ is $[(\ell/u)^u]$ and $E_{h_1}(P_4) = E_{h_2}(P_3)= [\ell/3, \ell/3, \ell/3]$. Since $u\in\{1, 2, 3\}$, the greatest common divisor of all entries in $E_{h_1}(P_3)$ and $E_{h_1}(P_4)$ is $(\ell/u,\ell/3) = \ell/3$ or $\ell/6$. As $\ell>6$, this contradicts the indecomposability of $h_1$ by Lemma \ref{lem:hurwitz1}. 
    %Thus  assume $v\leq 2$. 
    If $u=v=1$, $E_{h_1}(P_3) = [\ell]$ and $E_{h_1}(P_4) = E_{h_2}(P_3) = [\ell]$, contradicting the indecomposability of $h_1$ by Lemma~\ref{lem:hurwitz1}.
    
    It remains to consider the case $v=2$. First let $u=1$, so that \eqref{equ:RH-Y1Y} implies that the sum $\sum_{P\in Y(\K)\setminus\{P_3,P_4\}}R_{h_1}(P)$ is  $1$.
    Hence $h_1$ has only one branch point $Q$ other than $P_3,P_4$, and $E_{h_1}(Q)=[2,1^{\ell-2}]$.
    Write  $E_{h_2}(P_3)=[a,\ell-a]$. Since $E_{h_1}(P_3) = [\ell]$ and $h_1$ is indecomposable, Lemma \ref{lem:hurwitz1} implies that %$\gcd(E_{h_1}(P_1),E_{h_2}(P_2))=1$ and hence
    $(a,\ell)=1$.
    Hence  the ramification type of $h_1$ over $P_3,P_4,Q$ is type  I1.1 $[\ell], [a,\ell-a], [2, 1^{\ell-2}]$ for some $1\leq a<\ell$ with $(a,\ell)=1$.
    By Remark \ref{rem:t=2-ramification}, the corresponding ramification types for $f$ in Table~\ref{table:wreath} are I1A.7a-I1A.7b. 
    %all of which occur by Proposition~\ref{prop:wreath-types}. %, and these occur by Proposition \ref{prop:wreath-types}. 
    
    %======================== u=v=2
    It remains to consider the case $u = v =2$ in which case  $\sum_{P\in Y(\K)\setminus\{P_3,P_4\}} R_{h_1}(P) =2$ by \eqref{equ:RH-Y1Y}. 
    Write $E_{h_1}(P_3)=[a_1,a_2]$ and $E_{h_2}(P_3)=~[b_1,b_2]$.
    %Lemmas \ref{lem:t=2-gXY1} and \ref{lem:RS-estimates} give:
    %\begin{equation*}\label{lem:t=2-u=v=2}
    %\begin{array}{ll}
    %4(g_X-1) & \displaystyle = -2\ell  + \sum_{1\leq i,j\leq 2}(a_i + b_j - 2(a_i,b_j)) + \ell\sum_{P\in B_1\cup\Br_2'\setminus \{P_1\}}R_{h_1}(P) - 2\ell +O(1). \\
    %\end{array}
    %\end{equation*}
    %, we get %Since $$\sum_{i,j=1,2}(a_i + b_j - 2(a_i,b_j)) = 4\ell - 2\sum_{1\leq i,j\leq 2}(a_i,b_j),$$
     By \eqref{equ:main-t=2} and Lemma \ref{lem:RS-estimates} 
    $$4g_X = -2\ell+\sum_{1\leq i,j\leq 2}(a_i+b_j-2(a_i,b_j)) +~O_\alpha(1).$$
     If $\{a_1,a_2\}\cap \{b_1,b_2\} = \emptyset$, then  $\sum_{1\leq i,j\leq 2}(a_i+b_j -2(a_i,b_j))\geq 5\ell/2-5$
     by Lemma~\ref{lem:t=2-mini}, and hence $g_X>\ell/8+O_\alpha(1)$. Thus, by choosing $c_2<1/8$ and  sufficiently large $d_2$, the condition $g_X<c_2\ell-d_2$  forces $a_1=b_1$ or $b_2$. Putting  $a:=a_1$, we deduce that $E_{h_1}(P_3)=E_{h_2}(P_3)=[a,\ell-a]$, and hence $E_{h_1}(P_4)=E_{h_2}(P_3)=[a,\ell-a]$.
     Since $h_1$ is indecomposable, Lemma \ref{lem:hurwitz1} implies %$\gcd(E_{h_1}(P_1),E_{h_2}(P_2))=1$ and hence
    that $(a,\ell)=1$.
    
    Since $\sum_{P\in Y(\K)\setminus\{P_3,P_4\}} R_{h_1}(P) =2,$  the ramification type of $h_1$ %$\Ram{(h_1)}$ 
    %$h_1$ has at most two branch points $Q_1,Q_2$ other than $P_3,P_4$ and
    %we see that points $P$ other than  $P_1,P_2$, of fixed points and a $3$-cycle or two transpositions. Checking which of the possibilities satisfy Lemma \ref{lem:t=2-gXY1}
    %the branch cycle of $h_1$ corresponding to $P_1,P_2,Q_1$ and possibly another point $Q_2$ are types 
    is one of the types I1A.4-6 in Table~\ref{table:I1A}.
    % is % one of the following:
    %$\left[a,\ell-a\right],[a,\ell-a],[3,1,...,1]$, or $\left[a,\ell-a\right],[a,\ell-a],[2,1,...,1]$ twice, or
    %$\left[a,\ell-a\right],[a,\ell-a],[2,1,...,1]$ twice.
    %Since $m_{h_1}(P_i)=1$, $i=1,2$, 
    By Remark \ref{rem:t=2-ramification}, the corresponding ramification types for $f$ are types I1A.4-I1A.6 in Table~\ref{table:wreath}\footnote{The ramification data $([a,\ell-a],[a,\ell-a]), ([3,1^{\ell-3}],1)s, s$ and $([a,\ell-a],[a,\ell-a]), ([2^2,1^{\ell-4}],1)s, s$
    are ruled out here as they do not correspond to a covering. Indeed,  if there was a covering $f:X\ra\mP^1$ with such ramification then one would have  $g_X< 0$ by Lemma \ref{lem:t=2-gXY1}.} 
     %e.g. ([a,\ell-a],[a,\ell-a]), ([3,1^*],1)s, s
     % and ([a,\ell-a],[a,\ell-a]), ([2^2,1^*],1)s,s
     %correspond to indecomposable coverings since they fail to . % all which occur by Proposition \ref{prop:wreath-types}. 
    % $(i.5), (i.6)$ of Corollary \ref{cor:type-positive}.
    %These occur by Proposition \ref{prop:wreath-types}. 

    %====================== Br_2' = infinity infinity =======
    {\bf Case I1B}:
    %Suppose $\Br_2'=\{P_1,P_2\}$ with $m_{P_1}=m_{P_2}=\infty$ and $m_P=1$ for all $P\in \Br_1'$.
    %Let $u=u_{P_1}:=\abs{E_{h_1}(P_1)},v=v_{P_2}:=\#E_{h_1}(P_2)$.
    Estimating \eqref{equ:main-t=2} using Lemma \ref{lem:RS-estimates} gives:
    \begin{equation}\label{equ:I2-gXY1}
    %\begin{array}{rl}
    4(g_X-1) = %&  \displaystyle 2\ell(g_{Y_1}-1) + \ell\sum_{P\in S}(R_{h_1}(P)+R_{h_2}(P))+ \sum_{r_1,r_2\in E_{h_1}(P_i),i=1,2}(r_1-(r_1,r_2)) +O(1) \\
     %& \displaystyle 
     2\ell(g_{Y_1}-1) + \ell\sum_{P\in Y(\K)\setminus R}R_{h_1}(P)+ \sum_{r_1,r_2\in E_{h_1}(P_i),i=1,2}(r_1-(r_1,r_2)) +O_\alpha(1).
    %\geq &  \ell[2(g_{Y_1}-1) + \sum_{P\in B_1}(R_{h_1}(P)+R_{h_2}(P))+ \sum_{P\in \Br_2'} R_{h_1}(P) + \eps ]+ O(1),
    %\end{array}
    \end{equation}

    By Lemma \ref{lem:Sn-two-stab}, there exist $c_3>0$ and sufficiently large $d_3$ for which the conditions $g_X<c_3\ell-d_3$ and \eqref{equ:I2-gXY1} force  the ramification type of $h_1$ to be $[\ell], [a,\ell-a], [2,1^{\ell-2}]$  with $(a,\ell)=1$.  %the branch cycle of $h_1$ corresponding to $P_1,P_2,Q$ is \\
    %(I1.1) in Table~\ref{table:wreath}, . 
    Since the ramification types $[\ell]$ and $[a,\ell-a]$ appear over $P_1$ and~$P_2$,
    Remark \ref{rem:t=2-ramification} implies that  
    the corresponding ramification type for $f$ is type I1.1 in Table~\ref{table:wreath}. 
    Choosing $c_2\leq c_3$ and $d_2\geq d_3$, the conclusion follows when $g_X<c_2\ell-d_2$. 
    % which occurs by Proposition \ref{prop:wreath-types}. 
    
    %============== E_0 = Infinity, 1 , E_1 = 2 ===================
    {\bf Case I2:}
    Assume without loss of generality $m_{h_1}(P_1) = \infty$ and $m_{h_1}(P_2) = 1$. %Assume $\Br_2' = \{P_0,Q_0\}$ where $m_{P_0}=\infty$, $m_{Q_0}=1$, and assume $P_1\in~B_1,P_2\in \Br_1'$ are
    Let $P_3\in S$ be a point with $m_{h_1}(P_3)=2$, and set $P_4=P_3^{\oline\sigma}$ so that  $m_{h_1}(P_4)=2$.
    %Denote $u=u_{P_0}:=\#E_{h_1}(P_0)$ and $O(P_1):= \#\{r\in E_{h_i}(P_1)\,|\,\text{$r$ is odd},i=1,2\}$.
    %Note that $O(P_1)$ is even.
    % Lemmas~\ref{lem:t=2-gXY1} and \ref{lem:RS-estimates} give:
    Estimating \eqref{equ:main-t=2} using Lemma \ref{lem:RS-estimates} gives:
    \begin{equation}\label{equ:inf-2-2-gXY1}
    %\begin{array}{rl}
    \begin{split}
     4g_X  = & % 2\ell(g_{Y_1}-1) + \sum_{r_1,r_2\in E_{h_1}(P_1)}(r_1-(r_1,r_2)) + \ell\sum_{P\in S\setminus \{P_3,P_4\} }(R_{h_1}(P)+R_{h_2}(P)) \\
    %& + \ell R_{h_1}(P_2) + \ell\sum_{i=1}^2[\frac{\ell}{2} - \abs{E_{h^{(i)}}(P_3)}+\abs{\{\text{odd }r\in E_{h^{(i)}}(P_3)\}}] -\ell +O(1) \\
    %= & 
    2\ell(g_{Y_1}-1) + \sum_{r_1,r_2\in E_{h_1}(P_1)}(r_1-(r_1,r_2)) + \ell\sum_{P\in Y(\K)\setminus\{P_1,P_3,P_4\}}R_{h_1}(P) \\
    & + \ell\sum_{j=3}^4\bigl(\frac{\ell}{2} - \abs{E_{h_1}(P_j)}+\abs{\{\text{odd }r\in E_{h_1}(P_j)\}}\bigr) -\ell +O_\alpha(1).
    %\end{array}
    \end{split}
    \end{equation}
    %Hence $h_1$ satisfies the same constraints as in Case I2 of Proposition \ref{prop:two-set}. Hence
    By Lemma \ref{lem:Sn-two-stab}, there exist $c_3>0$ and sufficiently large $d_3$ such that the conditions $g_X<c_3\ell - d_3$  and  \eqref{equ:inf-2-2-gXY1} force the ramification of $h_1$ to be one of the types I2.1--I2.15 in \cite[Table~4.1]{NZ}. 
    As $m_{h_1}(P_1) =\infty$, and $m_{h_1}(P_2) =1$, 
    Remark \ref{rem:t=2-ramification} implies that  the corresponding ramification types for $f$ are types  I2.1a--I2.15 in Table~\ref{table:wreath}. %all of which occur by Proposition \ref{prop:wreath-types}. 
    Choosing $c_2\leq c_3$ and $d_2\geq d_3$, the conclusion follows when $g_X<c_2\ell-d_2$.

    {\bf Case F1A:} %Assume  $\Br_2' = \{Q_1,Q_2\}$, with $m_{Q_1}=m_{Q_2}=1$,
    Let  $Q_1,Q_2\in S$ be points with $m_{h_1}(Q_i)=2$, for  $i=1,2$ 
    and set $Q_3 :=Q_1^{\oline\sigma}$, and $Q_4 := Q_2^{\oline\sigma}$, so that $m_{h_1}(Q_i)=2$, for $i=3,4$. 
    %\ldots 4$, and $\pi_0(Q_1)=\pi_0(Q_3),\pi_0(Q_2)=~\pi_0(Q_4)$. %, and $m_{h_1}(P=1$ for all other points $P$ of $Y$. 
    %Lemmas \ref{lem:t=2-gXY1} and \ref{lem:RS-estimates} give:
    Estimating \eqref{equ:main-t=2} using Lemma~\ref{lem:RS-estimates} gives:
    \begin{equation}\label{equ:2222-XY1}
    %\begin{array}{rl}
    \begin{split}
     4g_X  = %& 2\ell(g_{Y_1}-1) + \ell \sum_{i=1}^2 R_{h_1}(P_i)   + \ell \sum_{P\in S\setminus\{Q_1,\ldots, Q_4\}}(R_{h_1}(P)+ R_{h_2}(P))  \\
    % & + \ell\sum_{j=1}^2[\ell - \#E_{h_1}(Q_j)-\#E_{h_2}(Q_j) + \#\{\text{odd }r\in E_{h_i}(Q_j),i=1,2\}]-2\ell+O(1) \\ =
      & 2\ell(g_{Y_1}-1)   + \ell\sum_{i=1}^4\bigl(\frac{\ell}{2} - \abs{E_{h_1}(Q_i)} + \abs{\{\text{odd }r\in E_{h_1}(Q_i)\}}\bigr)  \\
     & \displaystyle + \ell \sum_{P\in Y(\K)\setminus \{Q_1,\ldots, Q_4\}}R_{h_1}(P)  -2\ell+O_\alpha(1).
    \end{split}
    %\end{array}
    \end{equation}
    %Thus $h_1$ satisfies the same constraints as in Case F1 of Proposition \ref{prop:two-set}.
    By Lemma \ref{lem:Sn-two-stab},  there exist $c_3>0$ and sufficiently large $d_3$ such that the condition $g_X<c_3\ell-d_3$ and \eqref{equ:2222-XY1} force the ramification type of $h_1$ to be one of the types (F1.1)-(F1.9) in \cite[Table~4.1]{NZ}.
    As $m_{h_1}(P_1) = m_{h_1}(P_2) = 1$, Remark \ref{rem:t=2-ramification} implies that the corresponding ramification types for $f$ are types F1A.1a-F1A.9 in Table~\ref{table:wreath}. 
    % all of which occur by Proposition \ref{prop:wreath-types}. 
    Choosing $c_2\leq c_3$ and $d_2\geq d_3$, the conclusion follows when $g_X<c_2\ell-d_2$.

    {\bf Case F1B:}
    %Assume  $\Br_2' = \{P_1,P_2\}$, with $m_{P_1}=m_{P_2}=2$,
    Let  $P_3\in S$ be a point with $m_{h_1}(P_3)=2$, and set $P_4=P_3^{\oline\sigma}$,  so that $m_{h_1}(P_i)=2$ for $i=1,\ldots,4$. 
    Then \eqref{equ:main-t=2} gives: %Lemmas \ref{lem:t=2-gXY1} and \ref{lem:RS-estimates} give:
    %\begin{equation*}
    %\begin{array}{rl}
    %\begin{align*}
    \begin{equation}\label{equ:F1B1}
    \begin{split}
     4g_X  = & 2\ell(g_{Y_1}-1)  %+ \ell \sum_{P\in S\setminus\{P_3\}}(R_{h_1}(P)+ R_{h_2}(P))  \\
    % & 
    + \ell\sum_{i=1}^4\bigl(\frac{\ell}{2} -\abs{E_{h_1}(P_i)}+ \abs{\{\text{odd }r\in E_{h_1}(P_i)\}} \bigr) \\
     & %+ \frac{\ell}{2}[2\ell - 2\#\{\text{evens in }E_{h_i}(P_3),i=1,2\}]
     +  \ell \sum_{P\in Y(\K)\setminus\{P_1,\ldots, P_4\}} R_{h_1}(P_i) + O_\alpha(1).
     \end{split}
    \end{equation}
    %\end{array}
    %\end{align*}
    On the other hand by the Riemann--Hurwitz formula for $h_1$, we have 
    \begin{equation}\label{equ:F1B-RH-h1} 
    2(g_{Y_1}-1) = \sum_{P\in Y(\K)\setminus\{P_1,\ldots,P_4\}}R_{h_1}(P_i) + \sum_{i=1}^4(\frac{\ell}{2}-\abs{E_{h_1}(P_i)}).  
    \end{equation}
    Combining the latter equality with \eqref{equ:F1B1} one has
    \begin{equation}\label{equ:F4-gXY1}
    %\begin{array}{ll}
    4g_X = 4\ell(g_{Y_1}-1) + \ell\sum_{i=1}^4 \abs{\{\text{odd }r\in E_{h_1}(P_i)\}} + O_\alpha(1).  %R_{h_1}(P) %\\ %R_{h_1}(Q_0)   + \sum_{P\in B_1\setminus\{P_1\}}(R_{h_1}(P)+ R_{h_2}(P))  \\
    % & 
    %+ 2\ell - \#\{\text{evens in $E_{h_1}(P_j),j=1,2,$ and }E_{h_i}(P_3), i=1,2\}.
    %+\sum_{i=1}^4 (\frac{\ell}{2} - \abs{E_{h_1}(P_i)} + \abs{\{\text{odd }r\in E_{h_1}(P_i)\}}).
    %\end{array}
    \end{equation}
    %\end{equation*}
    Hence for $c_2<1/4$ and sufficiently large $d_2$, the condition $g_X<c_2\ell-d_2$ forces: 
    %\begin{equation}\label{equ:F4-gXY1}
    %\begin{array}{ll}
    %2-2g_{Y_1}  = \sum_{P\in Y(\K)\setminus\{P_1,\ldots, P_4\}} R_{h_1}(P) %\\ %R_{h_1}(Q_0)   + \sum_{P\in B_1\setminus\{P_1\}}(R_{h_1}(P)+ R_{h_2}(P))  \\
    % & 
    %+ 2\ell - \#\{\text{evens in $E_{h_1}(P_j),j=1,2,$ and }E_{h_i}(P_3), i=1,2\}.
    %+\sum_{i=1}^4 (\frac{\ell}{2} - \abs{E_{h_1}(P_i)} + \abs{\{\text{odd }r\in E_{h_1}(P_i)\}}).
    %\end{array}
    %\end{equation}
    %Thus \eqref{equ:F4-gXY1} and \eqref{equ:F1B-RH-h1} imply that 
    \begin{equation}\label{equ:F1B-odds} 0 = 4(g_{Y_1}-1)+ \sum_{i=1}^4 \abs{\{\text{odd }r\in E_{h_1}(P_i)\}}. 
    \end{equation}
    In particular, $g_{Y_1}\leq 1$. Furthermore, if $g_{Y_1}=1$, then \eqref{equ:F1B-odds} and \eqref{equ:F1B-RH-h1} imply that the ramification type of $h_1$ is $[2^{\ell/2}]$ four times. 
    In this case Lemma \ref{lem:normal-closure} shows that $\Mon(h_1)$ is solvable, contradicting the nonsolvability of $G$.
    If $g_{Y_1}=0$,
    %Since $h_1$ is indecomposable, %as in the (F3) case, 
    %the total number of odd entires in $E_{h_1}(P_j),j=1,2$, $E_{h_i}(P_3),i=1,2$ is even and is at least $4$ by Lemma \ref{lem:hurwitz1}. 
    %Moreover, if $\ell$ is even this number is at least $6$.
    then \eqref{equ:F1B-odds} and \eqref{equ:F1B-RH-h1} imply that the ramification type of $h_1$ %  $\Ram(h_1)$ is
    is $[1, 2^{(\ell-1)/2}]$ four times. In this case as well, $\Mon(h_1)$ is solvable by Lemma \ref{lem:normal-closure}, contradicting the nonsolvability of~$G$.

    {\bf Case F2:}
    Assume without loss of generality $m_{h_1}(P_1) = 3$, and $m_{h_1}(P_2) = 1$. Let $P_3\in S$ be a point with $m_{h_1}(P_3)=3$ and set $P_4:=P_3^{\oline\sigma}$, so that $m_{h_1}(P_4) = 3$. %Assume $\Br_2' = \{P_0,Q_0\}$, with $m_{P_0}=3,m_{Q_0}=1$, and $P_1\in B_1, P_2\in \Br_1'$ with $\pi_0(P_1)=\pi_0(P_2)$ and $m_{P_1}=m_{P_2}=3$.  
    Estimating \eqref{equ:main-t=2} using Lemma \ref{lem:RS-estimates} gives:
    \begin{equation}\label{equ:F2main}
    \begin{split}
    %\begin{array}{ll}
    %\begin{align*}
     4g_X & = 2\ell(g_{Y_1}-1) + \ell\sum_{i\in \{1,3,4\}}\bigl(\frac{\ell}{3}-\abs{E_{h_1}(P_i)} +\frac{4}{3}\abs{\{r\in E_{h_1}(P_i)\,|\,(r,3)=1\}}\bigr)  \\
      &   + \ell\sum_{P\in Y(\K)\setminus\{P_1,P_3, P_4\}}R_{h_1}(P) -4\ell/3 + O_\alpha(1). %  + \ell R_{h_1}(P_2) +  \ell \sum_{P\in S\setminus\{P_3,P_4\}}(R_{h_1}(P)+ R_{h_2}(P)) +  O(1). %ell\sum_{j=3}^4\frac{2\ell}{3} - \#E_{h_i}(P_3)
      %\{r\in E_{h_i}(P_3)\,|\,3\divides r,i=1,2\} \\
    % & +\frac{4}{3}\#\{r\in E_{h_i}(P_3)\,|\,(r,3)=1,i=1,2\})-\frac{4\ell}{3}+O(1).
    % c\ell> 4g_X & = 2\ell(g_{Y_1}-1) + \sum_{r_1,r_2\in E_{h_1}(P_0)}(r_1-(r_1,r_2)) + \ell\sum_{P\in B_1\setminus \{P_1\} }(R_{h_1}(P)+R_{h_2}(P)) \\%& + \ell R_{h_1}(Q_0) + \frac{\ell}{2}[2\ell - \#E_{h_1}(P_1)-\#E_{h_2}(P_1) + O(P_1)] -\frac{4}{3}\ell +O(1).
    %\frac{\ell}{2}[2\ell-\#\{r\in E_{h_i}(P_1)\,|\,\text{$r$ is odd}\}-2\#\{r\in E_{h_i}\,|\,\text{$r$ is even}\}] - \ell +O(1).
    %\end{array}
    \end{split}
    \end{equation}
    By the Riemann--Hurwitz formula for $h_1$, we  have
    \begin{equation}\label{equ:F2-RH-h1}
    2(g_{Y_1}-1) =  \sum_{i\in\{1,3,4\}}\bigl(\frac{\ell}{3} - \abs{E_{h_1}(P_i)}\bigr) +  \sum_{P\in Y(\K)\setminus\{P_1,P_3, P_4\}}R_{h_1}(P).
    \end{equation}
    Combining the latter with \eqref{equ:F2main}, one has
    \[
    4g_X= 4\ell(g_{Y_1}-1) + \frac{4}{3}\ell\sum_{i\in\{1,3,4\}}\abs{\{r\in E_{h_1}(P_i)\,|\,(r,3)=1\}} - \frac{4}{3}\ell + O_\alpha(1).
    \]
    %\end{align*}
    Hence for $c_2<1/12$ and sufficiently large $d_2$, the condition $g_X<c_2\ell - d_2$ forces:
    %\begin{equation}\label{equ:F2-gXY1}
    %\begin{array}{rl}
    %\begin{split}
    % \frac{10}{3}-2g_{Y_1} = &   \sum_{i\in\{1,3,4\}}\bigl(\frac{\ell}{3} - \abs{E_{h_1}(P_i)}+\frac{4}{3}\abs{\{r\in E_{h_1}(P_i)\,|\,(r,3)=1\}}\bigr) \\%R_{h_1}(P_2)   + \sum_{P\in S\setminus\{P_3,P_4\}}(R_{h_1}(P)+ R_{h_2}(P))  \\
    % & +  \sum_{P\in Y(\K)\setminus\{P_1,P_3, P_4\}}R_{h_1}(P) +O_\alpha(1).
    %\end{array}
    %\end{split}
    %\end{equation}
    %By the Riemann--Hurwitz formula for $h_1$ we have
    %\begin{equation}\label{equ:F2-RH-h1}
    %2(g_{Y_1}-1) =  \sum_{i\in\{1,3,4\}}\bigl(\frac{\ell}{3} - \abs{E_{h_1}(P_i)}\bigr) +  \sum_{P\in Y(\K)\setminus\{P_1,P_3, P_4\}}R_{h_1}(P).
    %\end{equation}
    %Thus  \eqref{equ:F2-gXY1} and \eqref{equ:F2-RH-h1} give
    \[
    0 = 4(g_{Y_1}-1)+ \frac{4}{3}\sum_{i\in\{1,3,4\}}\abs{\{r\in E_{h_1}(P_i)\,|\,(r,3)=1\}} - \frac{4}{3},
    \]
    or equivalently, 
    \begin{equation}\label{equ:F2-conseq}
    4-3g_{Y_1}= \sum_{i\in\{1,3,4\}}\abs{\{r\in E_{h_1}(P_i)\,|\,(r,3)=1\}}.
    \end{equation}
    In particular $g_{Y_1}\leq 1$. There are no ramification types for $h_1$ satisfying \eqref{equ:F2-conseq} and \eqref{equ:F2-RH-h1} with $g_{Y_1}=1$. If $g_{Y_1}=0$,  
     at least two of $E_{h_1}(P_1),E_{h_1}(P_3),E_{h_1}(P_4)$  contain an entry $r$ prime to $3$, by Lemma \ref{lem:hurwitz1}. 
     % and that the sum of all such $r$ is divisible by $3$.
    The ramification types for $h_1$ satisfying the latter constraint, \eqref{equ:F2-conseq} and \eqref{equ:F2-RH-h1} are types F2.1-F2.3 in Table~\ref{table:I1A}. As $m_{h_1}(P_1) =3$ and $m_{h_1}(P_2) = 1$, Remark \ref{rem:t=2-ramification} implies that the corresponding ramification types for $f$ are types F2.1-F2.3 in Table~\ref{table:wreath}. % which occur by Proposition \ref{prop:wreath-types}. 

    {\bf Case F3:}
    Assume  without loss of generality $m_{h_1}(P_1)  = 2$ and $m_{h_1}(P_2) =1$. Let $P_3\in S$ be a  point with $m_{h_1}(P_3)=4$, and set $P_4=P_3^{\oline\sigma}$, so that $m_{h_1}(P_4)=4$.
    %$\Br_2' = \{P_0,Q_0\}$ with $m_{P_0}=2, m_{Q_0}=1$,  and assume $P_1\in B_1$, $P_2\in \Br_1'$  
    %with $\pi_0(P_3)=\pi_0(P_4)$ and $m_{h_1}(P_3)=m_{h_1}(P_4)=4$.
    Estimating \eqref{equ:main-t=2} using Lemma \ref{lem:RS-estimates} gives:
    \begin{equation}\label{equ:F3-XY1}
    %\begin{array}{rl}
    \begin{split}
     4g_X  & =  
     2\ell(g_{Y_1}-1) + \ell\biggl(\frac{\ell}{2}-\abs{E_{h_1}(P_1)}+\abs{\{\text{odd }r\in E_{h_1}(P_1)\}}\biggr)     \\
        &  +  \ell\sum_{i=3}^4\biggl(\frac{\ell}{4} - \abs{E_{h_1}(P_i)} + \abs{\{r\in E_{h_1}(P_i)\,|\,r\equiv 2\,(4)\}} +\frac{3}{2}\abs{\{\text{odd }r\in E_{h_1}(P_i)\}}\biggr)  \\
        & + \ell \sum_{P\in Y(\K)\setminus\{P_1,P_3, P_4\}}R_{h_1}(P)  -\ell +  O_\alpha(1).
    \end{split}
    %\end{array}
    \end{equation}
    %Hence $h_1$ satisfies the same constraints as in Case F3 of Proposition \ref{prop:two-set}.
    By Lemma \ref{lem:Sn-two-stab}, there exists $c_3>0$ and sufficiently large $d_3$ such that the conditions  $g_X<c_3\ell-d_3$ and \eqref{equ:F3-XY1} force  the ramification type of $h_1$ to be one of  types F3.1-F3.3 in \cite[Table~4.1]{NZ}. As $m_{h_1}(P_1) =2$ and $m_{h_1}(P_2)=1$, Remark \ref{rem:t=2-ramification} implies that the corresponding ramification types for $f$ are types F3.1-F3.3 in Table~\ref{table:wreath}.
    Choosing $c_2\leq c_3$ and $d_2\geq d_3$, the conclusion follows when $g_X<c_2\ell-d_2$.

    \section{Reduced tuples: a proof of Theorem \ref{thm:main-Gal} for $t\geq 3$}\label{sec:reduced}
    
    In this section we complete the proof of Proposition  \ref{cor:main-transposition}, and hence that of Theorem~\ref{thm:main-Gal}. %for  $t\geq 3$. 
    We assume the setup of \S\ref{sec:setup} with $X_0 := \mP^1$, so that $f:X\ra \mP^1$ is a covering with primitive monodromy group $G\leq S_\Delta \wr S_I$ of product type with $t=\abs{I}\geq 3$, Galois closure $\tilde X$, and $K:=G\cap S_\Delta^I$.  
    As outlined in Section \ref{sec:intro}, we shall create from $f$ a new covering $\hat f$ whose monodromy is a subgroup of $S_\Delta$. This is done in the following proposition which creates out of a product-$1$ tuple for $f$, a product-$1$ tuple in $S_\Delta$ consisting of reduced forms of branch cycles of $f$. 
    \begin{defn}
    Let $x_1,\ldots ,x_r\in G$ be a product-$1$ tuple for a group $G\leq S_\Delta\wr S_I$ of product type,  and let $O_j$ be the set of orbits of $x_j$ on $I$. Put $J:=\{1,\ldots,r\}$. 
    We call a multiset $T:=\{y_{\theta,j}\in S_\Delta\suchthat \theta\in O_j, j\in J\}$  a {\it reduced product-$1$ multiset} for $x_1,\ldots,x_r$ if it satisfies the following properties: 
    \begin{enumerate}
    \item there exists a reduced form $y_j = a_j\sigma_j$, $a_j\in S_\Delta^I$, $\sigma_j\in S_I$ of $x_j$ with representatives $\iota_\theta$, $\theta\in O_j$ such that $a_j(\iota_\theta)=y_{\theta,j}$, for all $\theta\in O_j$, $j\in J$; 
    \item the group $\langle T\rangle$ acts transitively on $\Delta$;
    \item for some choice of $\omega_{\theta,j}\in\{1,-1\}$, $\theta\in O_j$, $j\in J$ and in some ordering, the elements $y_{\theta,j}^{\omega_{\theta,j}},$ $\theta\in O_j$, $j\in J$ have product $1$.
    %$\prod y_{i,j}^{\omega_{i,j}}=1$  for some choice of $\omega_{i,j}\in\{1,-1\}$,  where the product runs over all $\theta\in O_j$, and $j\in J$ in some ordering;
    
    \end{enumerate}
    %Write $x_j=a_j\sigma_j$ and let $O_j$ denote the orbits of $\sigma_j$ on $I$, for $j=1,\ldots,r$. 
    %, let $y_i=a_i\sigma_i$, $i=1,\ldots, r$ be reduced forms of its branch cycles, $O_j$ the orbits of $\sigma_j$ on $I$ and $i_o$, $o\in O_j$ representatives of the reduced form. A covering $\oline f: \oline X\ra \mP^1$ is called {\it reduced} if its branch cycles are the conjugates of $y_j(i_o)$, $o\in O_j$, $j=1,\ldots, r$. 
    \end{defn}
    %Note that if $O_j$ is the set of orbits of $\sigma_j$, and $i_\theta\in \theta$, $\theta\in O_j$ is the set of representatives of the reduced form $y_j$, then a reduced product $1$ multiset satisfies $a_j(i)=y_{i,j}=1$ for all $i\in \theta\setminus \{i_\theta\}$, $\theta\in O_j$, $j=1,\ldots,r$. 
    
    %The following lemma asserts the existence of reduced product $1$ tuples in certain cases.
    As in the setup, let $\oline\pi_0:\oline Y \ra \mP^1$ be the natural projection from the quotient $\oline Y$ by a point stabilizer in the action of $G$ on $I$. 
    Since $G$ is primitive of product type, it acts transitively on $I$ by Lemma~\ref{lem:product-type}, and hence  this action is equivalent to the $G$-action on $I\backslash G$. 
    As $K$ is the kernel of this action, the natrual projection $\pi_0:Y\ra \mP^1$ from $Y:=\tilde X/K$, is the Galois closure of $\oline\pi_0$,
     %we shall assume that $g_{\oline Y} = 0$, $g_{Y}\leq 1$, and that $\oline G:=\Mon(\pi_0)$ has a product $1$ tuple 
    and $\oline G:=\Mon(\oline\pi_0)$ is isomorphic to $G/K$ equipped with its action on $I$. 
    Put  $t:=\deg\oline\pi_0=\abs I$, and $m:=\deg\pi_0 = [G:K]$. 
    \begin{prop}\label{prop:product}\label{lem:product}
    Assume $g_{\overline Y} = 0$, $g_Y\leq 1$, and let $x_1,\ldots, x_r$ be a product-$1$ tuple for $G$ whose images $\sigma_1,\ldots, \sigma_r\in \oline G$ satisfy\footnote{Assuming (LG$_Y$) holds simplifies the proof. Counterexamples in its absence might be possible.}:
    \begin{enumerate} 
    \item[(LG$_{Y}$)] $\sigma_i=1$ if and only if $i>s$ for some $s\in \{3, 4\}$, and $\langle \sigma_1,\sigma_2\rangle = \langle \sigma_3,\sigma_4\rangle = \oline G$ if~$s=4$, and $\sigma_1$ is a $t$-cycle if $s=3$. % (i.e. a $t$-cycle) if $s=3$, and $ 
    \end{enumerate} % and $\oline x_1,\ldots, \oline x_r$ are their images under the natural projection $G\ra G/K$. 
    %\in $G/K$ satisfy (LG$_{Y}$). 
    Then there exists a reduced product-$1$ multiset for $x_1,\ldots, x_r$. 
    %reduced forms $y_i$ of $x_i$, $i=1,\ldots, r$ such that the multiset 
    %$Y:=\{y_j(i)\,:\,j=1,\ldots, r, i\in I\}$ has product $1$ under some ordering,  and $Y$ generates a transitive subgroup of $S_\Delta$. 
    %\footnote{The lemma is proved for $t=4$, $g_Y=0$ in the notes of May 27, pg. 9. For $g_Y=1$ it is proved in May 29 notes, page 3.}
    \end{prop}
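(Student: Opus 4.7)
The plan is to identify the multiset $T$ with the branch cycles of the degree-$\ell$ covering $h_1\colon Y_1\to Y$, and then obtain the product-$1$ relation (3) via Riemann's Existence Theorem (RET). Writing each $x_j = c_j\sigma_j$ in $S_\Delta\wr S_I$, Lemma~\ref{lem:conjugate-1} produces reduced forms $y_j = a_j\sigma_j$ with $y_{\theta,j} := a_j(\iota_{\theta,j})$ equal to the holonomy $\prod_{k=0}^{|\theta|-1} c_j(\iota_{\theta,j}^{\sigma_j^k})$ around the orbit $\theta$; this establishes condition~(1). Transitivity of $\langle T\rangle$ on $\Delta$, i.e.~condition~(2), follows from the connectedness of $Y_1$, which in turn follows from Lemma~\ref{lem:product-type} since the projection of $K$ to each $S_\Delta$-factor is transitive.

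For the key identification, apply Lemma~\ref{lem:f-to-h} to the Galois cover $\tilde X\to Y$ (with group $K$) and the intermediate subgroup $K_1\leq K$. The preimages in $Y_1$ of a point $Q\in Y$ above a branch point $P_j$ of $\pi_0$ correspond to double cosets $\langle x_j^{|\sigma_j|}\rangle\backslash K/K_1$; a direct computation using the product-type structure shows that for each orbit $\theta\in O_j$ of $\sigma_j$ on $I$, exactly one point of $Y$ above $P_j$ carries ramification with branch cycle (up to conjugation in $S_\Delta$) equal to $y_{\theta,j}$. Thus $T$ coincides with the multiset of branch cycles of $h_1\colon Y_1\to Y$.

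When $g_Y = 0$, we have $Y\cong\mathbb{P}^1$, so RET applied directly to $h_1$ as a cover of $\mathbb{P}^1$ yields a product-$1$ relation among the $y_{\theta,j}$'s, with signs $\omega_{\theta,j}\in\{\pm 1\}$ and a cyclic ordering determined by a loop basis of $\pi_1$ of the punctured $Y$; this gives~(3) immediately.

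The main obstacle is the case $g_Y = 1$. Here $Y$ is an elliptic curve and $\pi_1(Y\setminus\{\mathrm{br.\ pts.}\})$ has presentation $\langle\alpha,\beta,\gamma_{\theta,j}\mid [\alpha,\beta]\prod\gamma_{\theta,j}\rangle$, so RET only produces the identity $\prod y_{\theta,j}^{\omega_{\theta,j}} = [\alpha',\beta']^{-1}$ in $\Mon(h_1)\leq S_\Delta$, for images $\alpha',\beta'$ of the handle generators. To upgrade this to a pure product-$1$ relation one must verify $[\alpha',\beta'] = 1$, which is precisely the role of hypothesis~(LG$_{Y}$): in this regime $\overline G$ is one of the genus-$1$ Galois types (F)--(I) of Table~\ref{table:Galois}, hence solvable of derived length $\leq 2$ by~\cite[Lemma 9.6]{NZ}, and the generation requirements in~(LG$_{Y}$) together with this classification force the images $\alpha',\beta'$ to lie in an abelian subgroup and thus commute. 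Carrying out this verification case-by-case through the (finitely many) possibilities in Table~\ref{table:Galois}, while tracking the compatible choices of representatives $\iota_{\theta,j}$ and signs $\omega_{\theta,j}$ from the $s=3$ versus $s=4$ alternatives of~(LG$_{Y}$), is the technical heart of the proof.
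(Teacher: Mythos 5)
Your strategy rests on the claim that the multiset $T$ of reduced entries $y_{\theta,j}=a_j(\iota_\theta)$ coincides with the multiset of branch cycles of $h_1\colon Y_1\to Y$, and then invokes Riemann's Existence Theorem on $Y$ to produce the product-$1$ relation. That identification is false, and the gap is fatal. If $y_j=a_j\sigma_j$ is a reduced form of $x_j$ and $e_j$ is the order of $\sigma_j$, then the branch cycle of $h_1$ over a point of $Y$ corresponding to an orbit $\theta\in O_j$ is (a conjugate of) $(y_j^{e_j})(\iota_\theta)=y_{\theta,j}^{\,e_j/|\theta|}$, not $y_{\theta,j}$. These agree only when $|\theta|=e_j$. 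Whenever $\sigma_j$ has orbits of unequal lengths (e.g.\ $\sigma_j$ a transposition on $I$ with $t\ge 3$, so that fixed points have $|\theta|=1$ and $e_j=2$), the branch cycle of $h_1$ at the corresponding point is the \emph{square} $y_{\theta,j}^2$. This is visible already in Example~\ref{exam:reduced}: the branch cycle of $h_1$ over the point corresponding to the fixed point of $s=(1,2)\in S_3$ is $b_2^2$, while the required reduced-multiset entry is $b_2$ itself. The entire content of Proposition~\ref{prop:product} is precisely to produce a product-$1$ relation among these \emph{roots} of the $h_1$-branch cycles, not among the branch cycles themselves; RET applied to $h_1$ only gives the latter, and gives it automatically, so your argument would make the proposition vacuous.

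Two secondary problems compound this. First, the transitivity of $\langle T\rangle$ does not follow from the connectedness of $Y_1$: connectedness gives that $\Mon(h_1)$ is transitive, and when $g_Y=0$ and $T$ is the branch-cycle multiset one has $\langle T\rangle=\Mon(h_1)$, but (as above) $T$ is not that multiset, and when $g_Y=1$ one would also need the two handle images. Second, the attempted fix for $g_Y=1$ misapplies the solvability facts: $\overline G$ and the Galois group of $\pi_0$ are solvable of small derived length by Table~\ref{table:Galois} and \cite[Lemma 9.6]{NZ}, but $\Mon(h_1)$ is a (typically alternating or symmetric) subgroup of $S_\Delta$ and is not controlled by those results, so there is no reason for the handle images $\alpha',\beta'$ to commute. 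The paper's proof avoids all of this by working purely inside $G$: one conjugates by a carefully chosen $z\in S_\Delta^I$ to put $x_j$ (for $j\le s-2$, resp.\ $j=1$) in reduced form, writes the single relation $\prod_j a_j^{\tau_j}(i)=1$ in $S_\Delta$ for each $i\in I$, and then iteratively inserts the relations at $i\ne 1$ into the one at $i=1$, using Lemma~\ref{rem:s=4} (a consequence of (LG$_Y$) and $g_Y\le1$) to match the orbit count $|\Orb_I(\sigma_{s-1})|+|\Orb_I(\sigma_s)|=t+1$ against the number of insertions, so that each $y_{\theta,j}$ appears exactly once. The signs $\omega_{\theta,j}$ and the transitivity of $\langle T\rangle$ both fall out of bookkeeping in that iterative process; neither is accessible by the topological route you propose.
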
 
    Since the proof of Proposition \ref{prop:product} is involved,  we first give the following example which illustrates our process of generating a product-$1$ multiset. 
    \begin{exam} \label{exam:reduced}
    Suppose $t=3$, and $x_1,x_2,x_3$ is a product-$1$ tuple for $\langle x_1,x_2,x_3\rangle$ whose images in  $G/K\leq S_3$ are $r,s,rs$, respectively, where $r = (1,2,3)$ and $s = (1,2)$. In this example, we form a reduced product-$1$ multiset for $G$.
    For this, we claim that
    \begin{enumerate}
    \item $x_1,x_2,x_3$ have reduced forms 
    $y_1=(\alpha,1,1)r$,
    $y_2=(1,b_1,b_2)s$, and 
    $y_3=(c_1,c_2,1)rs$, respectively, where $r=(1,2,3), s=(1,2)$, and $a, b_1, b_2, c_1, c_2\in S_\ell§$;
    \item
     $W:=\{a, b_1,b_2,c_1,c_2\}$ is a reduced product-$1$ multiset for $x_1,x_2,x_3$. 
    \end{enumerate}
    \begin{proof}[Proof of claim]
    Since the reduced form of $x_1$ is  $y_1:=(a,1,1)r$ for some $a\in S_\Delta$, there exists $z\in S_\Delta^I$ such that $x_1^z=y_1$, by Lemma \ref{lem:conjugate-1}. 
    For simplicity replace the tuple $x_1,x_2,x_3$ by the tuple $x_1^z,x_2^z,x_3^z$, and $G$ by $G^z$. Then $x_1=y_1$ is in reduced form. 
    Write $x_2=(\beta_1,\beta_2,\beta_3)s$ and $x_3 = (\gamma_1,\gamma_2,\gamma_3)rs$. Since $G$ is transitive on $\Delta^I$, the subgroup $K_0=\langle a,\beta_i,\gamma_i, i=1,2,3\rangle\leq S_\Delta$ is transitive. 
    Note that the orbits of $s$ are $\mu=\{1,2\}$ and $\eta=\{3\}$ and hence $x_2$ has a reduced form $y_2=(1,b_1,b_2)s$ where $b_1 = \beta_2\beta_1$ and $b_2 = \beta_3$, by Lemma \ref{lem:conjugate-1}. Similarly, $x_3$ has a reduced form  $y_3=(c_1,c_2,1)rs$ where $c_2 = \gamma_2\gamma_3$ and $c_1 = \gamma_1$. 
    
    The product-$1$ relation gives:
    \begin{align*}
        1 = x_1x_2x_3 & =   (a,1,1)r\cdot (\beta_1,\beta_2,\beta_3)s\cdot (\gamma_1,\gamma_2,\gamma_3)rs \\ & = (a,1,1)\cdot (\beta_1,\beta_2,\beta_3)^{r^{-1}}\cdot (\gamma_1,\gamma_2,\gamma_3)^{rs} \\
        & = (a,1,1)\cdot (\beta_2,\beta_3,\beta_1)\cdot (\gamma_1,\gamma_3,\gamma_2).
    \end{align*} 
    Since $(\beta_1,\beta_2,\beta_3)^{\sigma^{-1}} = (\beta_{1^\sigma},\beta_{2^\sigma},\beta_{3^\sigma})$ for every $\sigma\in S_t$, the product-$1$ relation amounts to
    the equalities:
    \begin{align*}
        a\beta_2\gamma_1 & = 1   \\ \beta_3\gamma_3 & = 1  \text{ or equivalently }\gamma_3\beta_3 = 1 \\ \beta_1\gamma_2 & = 1. 
    \end{align*}
    By iteratively inserting the equalities into the first one, we get:
    $$ 1= a\beta_2\gamma_1 = a\beta_2\beta_1\gamma_2\gamma_1 = a\cdot (\beta_2\beta_1)\cdot (\gamma_2\gamma_3)\cdot \beta_3\cdot \gamma_1 = a b_1 c_2 b_2 c_1.
    $$
    It remains to show that $W$ generates $K_0$ since then $\langle W\rangle$ is transitive. This holds since by the above equalities $\gamma_3=\beta_3^{-1}\in\langle \beta_3\rangle$ and $\beta_1 = \gamma_2^{-1}\in\langle \gamma_2\rangle$ and hence
    $$ \langle W\rangle = \langle a,\beta_2\beta_1, \beta_3,\gamma_1,\gamma_2\gamma_3\rangle = \langle a,\beta_2\beta_1, \beta_3,\gamma_1,\gamma_2\rangle = \langle a,\beta_2,\beta_3,\gamma_1,\gamma_2\rangle = K_0.
    $$
    This completes the proof of the claim. 
    %, and hence the example. 
    \end{proof}
    Proposition \ref{prop:product} generalizes this argument under the mere condition (LG$_Y$). 
    \end{exam}%when $\oline\pi_0$ is as in Lemma \ref{lem:no-trans}, so that 
    %when $\sigma_i$ is a transposition for some $1\leq i\leq r$.
    We first note that if one of the $x_i$'s acts on $I$  as a transposition (and hence the ramification of $\oline\pi_0$ is in Table \ref{table:trans}), the product-$1$ tuple can be modified so that condition  (LG$_Y$)  holds.
    \begin{lem}\label{rem:LGY-holds}
    Assume that $\deg\oline\pi_0>2$ and that the ramification type of $\oline\pi_0$ appears in Table~\ref{table:trans}. Then there exists a product-$1$ tuple $x_1,\ldots,x_r$  for $G$ whose images in $\oline G$ satisfy (LG$_Y$), and whose ramification type coincides with the ramification type of $f$. 
    \end{lem}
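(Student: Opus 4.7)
The plan is to modify a given product-$1$ tuple $(x_1,\dots,x_r)$ for $G$ of the ramification type of $f$ (which exists since $f$ itself provides one via Section~\ref{sec:RET}) by Hurwitz braid moves $(x_i,x_{i+1})\mapsto(x_ix_{i+1}x_i^{-1},x_i)$ together with overall conjugation in $G$. Such operations preserve the product, the group generated, and the multiset of $G$-conjugacy classes of the entries (hence the ramification type of $f$), and the braid group $B_r$ surjects onto $S_r$ so that any permutation of the $r$ positions can be realized. These moves descend to the image tuple $(\sigma_1,\dots,\sigma_r)$ in $\oline G$, which is itself a product-$1$ tuple whose non-trivial entries have cycle types given by one of the rows of Table~\ref{table:trans}.

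First, I would use braid moves to bring all entries with $\sigma_i=1$ to the last positions, so that $\sigma_1,\dots,\sigma_s$ are non-trivial and $\sigma_{s+1}=\dots=\sigma_r=1$, where $s$ is the number of branch points of $\oline\pi_0$. The hypothesis $\deg\oline\pi_0>2$ combined with Table~\ref{table:trans} restricts $s\in\{3,4\}$. For $s=3$, each of the four relevant rows of the table contains a unique entry of cycle type $[t]$, namely a $t$-cycle in $\oline G$ (a $3$-cycle in $S_3$, a $4$-cycle in $D_8$ or $S_4$, or a $6$-cycle in $C_2\times\AGL(1,4)$); a further braid move among the first three positions places it at position~$1$, yielding (LG$_Y$).

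For $s=4$ there are two possibilities: $\oline G=S_3$ with ramification $[2,1]^4$, or $\oline G=D_8$ with ramification $[2,1^2]^2,[2^2]^2$. In the $S_3$ case, since any single transposition generates only $C_2$, the four transpositions cannot all coincide; a direct enumeration of multisets of four transpositions in $S_3$ whose orderings give product~$1$ shows that a reordering with $\sigma_1\ne\sigma_2$ and $\sigma_3\ne\sigma_4$ is always available, and any two distinct transpositions in $S_3$ generate $S_3$. In the $D_8$ case, noting $\langle(13),(24)\rangle=V_4$ is proper, we cannot pair the two transpositions together; inspecting the orderings with product~$1$ that generate $D_8$ shows that both double transpositions must be edge reflections (not the central rotation $(13)(24)$), and that an interleaved pairing placing a transposition and an edge double transposition at each of positions $\{1,2\}$ and $\{3,4\}$ is realizable and satisfies $\langle\sigma_1,\sigma_2\rangle=\langle\sigma_3,\sigma_4\rangle=D_8$.

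The main obstacle is this $s=4$ case analysis: verifying that every valid multiset of four non-trivial classes with product~$1$ generating $\oline G$ admits a compatible reordering. This is a straightforward but finite group-theoretic enumeration in the small groups $S_3$ and $D_8$. Once the desired ordering of $(\sigma_i)$ is realized in $\oline G$, applying the same braid-move sequence to the lift $(x_i)$ produces the required product-$1$ tuple for $G$, since braid moves on $(x_i)$ project to braid moves on $(\sigma_i)$ and the multiset of $G$-conjugacy classes is preserved throughout.
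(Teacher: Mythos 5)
Your proposal matches the paper's proof essentially step by step: both use Hurwitz braid moves (the paper's swap $(x_i,x_{i+1})\mapsto(x_{i+1},x_i^{x_{i+1}})$) to push the trivial $\sigma_i$ to the end, then for $s=3$ place the unique $t$-cycle in position~1, and for $s=4$ observe that the four $\sigma_i$ are reflections in $\oline G\cong D_{2t}$ and can be rearranged so that $\sigma_1,\sigma_2$ (hence also $\sigma_3,\sigma_4$) are of distinct types and therefore generate $\oline G$. The only stylistic difference is that you phrase the $s=4$ step as a finite enumeration where the paper gives a direct one-swap argument; both are correct.
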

    \begin{proof}
    Let $\mathcal R$ be the ramification type of $f$.
    As in Section \ref{sec:RET}, there exists a product-$1$ tuple  $x_1,\ldots,x_r$ for $G$ with ramification type $\mathcal R$. % which exists by Riemann's existance theorem. 
    Write $x_j = a_j\sigma_j$, $a_j\in S_\Delta^I$, $\sigma_j\in S_I$, $j=1,\ldots, r$. 
    Note that by swapping $(x_i,x_{i+1})\ra (x_{i+1},x_i^{x_{i+1}})$ for some $i\in\{1,\ldots,r\}$, one obtains another product-$1$ tuple for $G$ with ramification type $\mathcal R$. 
    By making such swaps, we may assume $x_1,\ldots, x_r$ is a product-$1$ tuple for $G$ with ramification type $\mathcal R$, whose images $\sigma_1,\ldots,\sigma_r\in \oline G$ are ordered by decreasing element order. In particular $\sigma_i=1$ if and only if $i>s$ for some $s$. 
    % tuple corresponding to $f$ and branch points $P_1,\ldots, P_{i-2}, P_i, P_{i-1}, P_{i+1}, \ldots, P_r$ (REF).  By swapping the elements this way, we may assume that the images $\sigma_i\in \oline G$ of $x_i\in G$, $i=1,\ldots,r$ are ordered by decreasing element order. In particular $\sigma_i=1$ if and only if $i>s$ for some $s$. 
    
    The ramification type corresponding to $\sigma_1,\ldots,\sigma_r$ appears in Table~\ref{table:trans}. Hence $s\in \{3,4\}$. % nontrivial multisets. 
    Moreover,  every ramification type with $s=3$ contains the multiset $[t]$, and among the multisets in such a ramification type, $[t]$ has the largest least common multiple.
    Hence, if the tuple $x_1,\ldots,x_r$ corresponds to such a ramification type, then the element $\sigma_1$ of highest order among the $\sigma_i$'s is a $t$-cycle, so that (LG$_Y$)~holds. 
    %In the latter case, we can again change the order so that the first element  is a $t$-cycle, as desired. 
    
    Henceforth assume $s=4$. 
    There are two ramification types in Table~\ref{table:trans} with $s=4$ and $\deg\oline\pi_0>2$, namely, $[2,1]$ four times with $t=3$, and $[2,1^2]$ twice, $[2,2]$ twice with $t=4$. In both cases, $\sigma_1,\ldots,\sigma_4$ are  involutions, and $\oline G\cong D_{2t}$. 
    Note that in both cases a direct inspection shows that every product-$1$ tuple for $D_{2t}$, $t\in\{3,4\}$, consisting of four involutions does not contain a rotation, and hence consists of reflections. 
    %Indeed, if $t=3$ then none of the rotations is an involution. If $t=4$ and the tuple contains a rotation, then the relation $\sigma_1\cdots\sigma_4=1$ forces the tuple to contain at least two rotations. Note that there is a unique rotation of order $2$ and it is the kernel of the abelianization of  of $D_8$. Hence the images of $\sigma_1,\ldots,\sigma_4$ in the abelianization of $D_8$ contain at most two nontrivial elements whose product is $1$, contradicting $D_8 =\langle \sigma_1,\ldots,\sigma_4\rangle$. 
    
    %As there is no (transitive) product $1$ tuple which contains a rotation and corresponds to these ramification types, in both cases the product $1$ tuple $\sigma_j\in D_{2t}$, $j=1,\ldots,4$ consists of reflections. 
    Assume first that $t=3$. As $\oline G=\langle \sigma_1,\ldots,\sigma_4\rangle$,  by possibly swapping $(x_2,x_3)\ra  (x_3,x_2^{x_3})$, we may assume that  $\sigma_1\neq \sigma_2$ and hence that $\sigma_3\neq \sigma_4$ by the product-$1$ relation. 
    Since every two distinct reflections generate $D_6$, we have $\langle \sigma_1,\sigma_2\rangle = \langle \sigma_3,\sigma_4\rangle = D_6$, so that (LG$_Y$) holds. 
    
    Now assume $t=4$.  There are two types of reflections in $D_8$, transpositions and products of two transpositions. Moreover,  two reflections generate $D_8$ if and only if they are of distinct types. The ramification type in Table~\ref{table:trans} with $s=4$ and $t=4$ consists of two elements of each type. By possibly swapping  $(x_2,x_3)\ra  (x_3,x_2^{x_3})$, we may assume $\sigma_1$ and $\sigma_2$ are of distinct type, and hence so are $\sigma_3$ and $\sigma_4$, giving $\langle \sigma_1,\sigma_2\rangle = \langle\sigma_3,\sigma_4\rangle = D_8$, so that (LG$_Y$) holds. 
    %If $\sigma_1$ and $\sigma_2$ are of different type it follows that so do $\sigma_3$ and $\sigma_4$ and hence $\langle \sigma_1,\sigma_2\rangle = \langle \sigma_3,\sigma_4\rangle = D_8$. If $\sigma_1,\sigma_2$ are of different types, then do $\sigma_3,\sigma_4$
    \end{proof}
    The proof of Proposition \ref{prop:product} relies on the following lemma:
    \begin{lem}\label{rem:s=4}
    Assume that  (LG$_Y$) holds with $s\in \{3,4\}$, and that  $g_{\oline Y}=0$ and $g_Y\leq 1$.  Then  
    $\abs{\Orb_I(\sigma_{s-1})} + \abs{\Orb_I(\sigma_s)} = t+1$. Moreover,  $\abs{\Orb_I(\sigma_1)} + \abs{\Orb_I(\sigma_2)} = t+1$  if $s=4$. 
    \end{lem}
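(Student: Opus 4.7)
My plan is to handle the two cases $s=3$ and $s=4$ separately. Throughout, write $o_j := |\Orb_I(\sigma_j)|$ and $R_j := R_{\oline\pi_0}(P_j) = t - o_j$ for the ramification contribution over the branch point $P_j$ corresponding to $\sigma_j$. Riemann--Hurwitz for $\oline\pi_0$, combined with $g_{\oline Y}=0$, yields
\begin{equation}\label{equ:RH-pi0-plan}
\sum_{j=1}^{s} R_j = 2t-2, \qquad \text{equivalently,} \qquad \sum_{j=1}^{s} o_j = (s-2)t+2.
\end{equation}

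For $s=3$, the hypothesis that $\sigma_1$ is a $t$-cycle gives $R_1 = t-1$, hence \eqref{equ:RH-pi0-plan} immediately yields $R_2 + R_3 = t-1$ and so $o_2 + o_3 = t+1$.

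For $s=4$ the argument is indirect. Set $\tau := \sigma_1 \sigma_2$; the product-$1$ relation gives $\tau^{-1} = \sigma_3\sigma_4$, and by (LG$_Y$) the triples $(\sigma_1, \sigma_2, \tau^{-1})$ and $(\sigma_3, \sigma_4, \tau)$ are product-$1$ tuples generating $\oline G$. By Riemann's existence theorem (Section \ref{sec:RET}), each yields a Galois cover of $\mP^1$ with group $\oline G$; quotienting by the stabilizer $\oline H$ of a point of $I$ (the same subgroup that realizes $\oline\pi_0$) produces degree-$t$ covers $\oline\pi_{12}$ and $\oline\pi_{34}$ of $\mP^1$ whose branch cycles act on $I$ exactly as $\sigma_1,\sigma_2,\tau^{-1}$ and $\sigma_3,\sigma_4,\tau$ do. Since $|\Orb_I(\tau^{-1})|=|\Orb_I(\tau)| =: o_\tau$, Riemann--Hurwitz for $\oline\pi_{12}$ and $\oline\pi_{34}$ gives
\begin{equation}\label{equ:RH-12-34-plan}
o_1 + o_2 + o_\tau \le t+2, \qquad o_3 + o_4 + o_\tau \le t+2,
\end{equation}
using $g_{\oline Y_{12}}, g_{\oline Y_{34}}\ge 0$.

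Summing the two inequalities in \eqref{equ:RH-12-34-plan} and subtracting the $s=4$ case of \eqref{equ:RH-pi0-plan} (which reads $\sum_{j=1}^4 o_j = 2t+2$) yields $2o_\tau \le 2$, i.e., $o_\tau \le 1$; since trivially $o_\tau \ge 1$, we conclude $o_\tau=1$. Plugging $o_\tau=1$ back into \eqref{equ:RH-12-34-plan} gives $o_1+o_2\le t+1$ and $o_3+o_4\le t+1$, and equality must hold in both inequalities because the two left-hand sides sum to $2t+2$. This is the desired identity. The step I expect requires the most care is verifying that the two auxiliary triples genuinely produce degree-$t$ covers with the same $I$-action as $\oline\pi_0$ — but this is a direct consequence of (LG$_Y$), which supplies precisely the surjectivity onto $\oline G$ needed to realize $\oline H\backslash \oline G \cong I$ as the fiber of the new cover. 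The hypothesis $g_Y\le 1$ is not used in this argument; only $g_{\oline Y}=0$ enters.
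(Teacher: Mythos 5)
Your proof is correct, and for the $s=4$ case it follows a genuinely different route from the paper. The paper first shows that $g_Y\le 1$ together with the Galois Riemann--Hurwitz formula for $\pi_0$ (four branch points) forces each $\sigma_j$ to be an involution, then uses (LG$_Y$) to conclude $\oline G$ is dihedral acting as $D_{2t}$ on $I$, rules out rotations appearing in the tuple, and finally counts orbits of reflections by splitting into the cases $t$ odd and $t$ even. Your argument instead introduces $\tau=\sigma_1\sigma_2=(\sigma_3\sigma_4)^{-1}$, uses (LG$_Y$) plus Riemann's existence theorem to build two auxiliary degree-$t$ covers branched over three points with branch cycles $(\sigma_1,\sigma_2,\tau^{-1})$ and $(\sigma_3,\sigma_4,\tau)$, and reads off the conclusion from three Riemann--Hurwitz inequalities. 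This buys several things: it avoids the dihedral classification and the odd/even case split entirely; it does not use the hypothesis $g_Y\le 1$ at all (you correctly flag this), so it proves a slightly stronger statement; and it yields the extra structural fact that $\tau=\sigma_1\sigma_2$ is a $t$-cycle as a byproduct. The paper's route, by contrast, pins down that $\oline G$ is dihedral and that all $\sigma_i$ are reflections --- information that happens not to be needed again, so your argument is arguably the more efficient one here. One small caveat worth stating explicitly in a writeup: the transitivity of $\langle\sigma_1,\sigma_2,\tau^{-1}\rangle$ and $\langle\sigma_3,\sigma_4,\tau\rangle$ on $I$ (hence irreducibility of $\oline Y_{12}$ and $\oline Y_{34}$) follows from (LG$_Y$) giving $\langle\sigma_1,\sigma_2\rangle=\langle\sigma_3,\sigma_4\rangle=\oline G$, and you implicitly rely on this when applying Riemann--Hurwitz to a connected curve of non-negative genus.
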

    \begin{proof} 
    For $s=3$, this follows directly since $\sigma_1$ has one orbit by (LG$_Y$) and since the Riemann--Hurwitz formula for $\oline\pi_0$ gives $\sum_{i=1}^3\abs{\Orb_I(\sigma_i)}=t+2$. % as $\sigma_1$ is a $t$-cycle. 
    % we have $\abs{\Orb_I(\sigma_{s-1})} + \abs{\Orb_I(\sigma_s)} = t+1$
    Assume $s=4$. Since $g_Y\leq 1$ and $\pi_0:Y\ra \mP^1$ is Galois with four branch points, 
    the Riemann--Hurwitz formula for $\pi_0$ implies that the elements $\sigma_1,\ldots,\sigma_4$ are involutions. 
    %Lemma \ref{lem:genus1} implies that 
    Moreover, as (LG$_Y$) holds, the group $\oline G := \Mon(\oline \pi_0)$ is generated by two involutions and hence $\oline G \cong D_{m}$ is a Dihedral group. 
    Since $\oline\pi_0$ is not Galois as $g_{\oline Y}<g_Y$, the action of $\oline G$ is faithful, transitive and nonregular, and hence its the natural action of $\oline G\cong D_m$ on a regular $t$-gon for $t=m/2$. 
    %and $\sigma_i$, $i=1,\ldots, 4$ are involutions. 
    Moreover, since there is no product-$1$ four tuple of involutions generating  $\oline G$ which contains a rotation, the elements $\sigma_i$, $i=1,\ldots, 4$ are reflections. If $t$ is odd, the number of orbits of every reflection is $(t+1)/2$ and the claim follows. 
    If $t$ is even, there are two types of  reflections, those with no fixed point and  those with two fixed points, which have $t/2$ or $t/2 + 1$  orbits, respectively. 
    Since two reflections that generate $D_{2t}$  are necessarily of distinct types, the assumption $\langle \sigma_3,\sigma_4\rangle = D_{2t}$ implies that $\{\abs{\Orb_I(\sigma_3)},\abs{\Orb_I(\sigma_4)}\} = \{t/2,t/2 + 1\}$. Since $g_{\oline Y} = 0$, the Riemann--Hurwitz formula for $\oline\pi_0$ then forces  $\{\abs{\Orb_I(\sigma_1)},\abs{\Orb_I(\sigma_2)}\} = \{t/2,t/2 + 1\}$.
    %
    % generate $\oline G$ implies that $\oline G$ is Dihedral, the assumption on the genus of the Galois closure implies that $\sigma_i$, $i=1,\ldots,4$, are reflections, and since $\sigma_1,\sigma_2$ generate $\oline G$ one has $\ind_I\sigma_1 +\ind_I\sigma_2 = t-1$. It follows that in this case $x_1^z$ and $x_2^z$ are in reduced form. 
    \end{proof} 
    \begin{proof}[Proof of Proposition \ref{prop:product}]
    %Reorder the elements $x_1,\ldots,x_r$ (possibly changing them by their conjugates) 
    %so that their first $s$ images in $G/K$ are the nontrivial images of $x_1,\ldots,x_r$, for some $s\leq r$. Note that $s\in \{3,4\}$ by (LG$_Y$). 
    For $1\leq j\leq r$, write  $x_j=a_j\sigma_j$ where $a_j\in S_\Delta^I$, $\sigma_j\in S_I$, so that $\oline G \cong G/K$ is generated by $\sigma_1,\ldots,\sigma_r$. 
    %For each orbit  $T$  of $\sigma_i$, write  $f_i(a_{j+1}) = \alpha_{j}^{-1}\alpha_{j+1}$ with $\alpha_0,\ldots, \alpha_u\in S_\ell$
    %$T = \{a_{0,T},\ldots, a_{u,T}\}$ so that $a_{j+1,T} = a_{j,T}^{\sigma_i}$, and $y_i(a_{j+1,T})=1$ for $j>0$. 
    %We can then write $f_i(a_{0,T}) = y_i(a_{0,T})\alpha_0$, and $x_i(a_{j+1}) = \alpha_{j}^{-1}\alpha_{j+1}$ with $\alpha_0,\ldots, \alpha_u\in S_\ell$, for $j=0,\ldots, u-1$. 
    
    {\bf Step I:} {\it We first define an element $z\in S_\Delta^I$ such that $x_1^z,x_2^z$ (resp., $x_1^z$) are in reduced form if $s=4$ (resp., $s=3$). }
    %We define $z$ by setting its values iteratively on $I$. 
    We define $z$ on $I$ iteratively. First,  assume $z$ is defined on a subset $I_1\subseteq I$,  and define it on a larger subset of $I$. Put $J_1:=\{1,\ldots,s-2\}$, $J_2:=\{s-1,s\}$, and $J_3:=\{s+1,\ldots,r\}$.
    Initially, set $I_1$ to be a subset of $I$ consisting of a single element  denoted by $1\in I$, set $z(1)=1$, and set $P=\emptyset$. 
    Since $\langle \sigma_j\suchthat j\in J_1\rangle$  acts transitively on $I$,
    there exists some $i\in I\setminus I_1$ and $j\in J_1$ such that $i^{\sigma_j}\in I_1$  if $s=4$ (resp., if $s=3$). 
    Then set 
    $$z(i) := a_j(i)z(i^{\sigma_j}) = (a_jz^{\sigma_j^{-1}})(i).$$ 
    %we get $z(i)$ in terms of $z(1)$ and $a_j$. Moreover, $b_j:=x_j\sigma_j^{-1}$ then satisfies 
    %$$b_j(i) =  (z^{-1}a_jz^{\sigma_j^{-1}})(i) = 1.$$
    Adding $i$ to $I_1$ and the pair $(j,i)\in P$,  we can repeat this process until $I_1=I$. 
    
    Writing $x_j^z=b_j\sigma_j$ for $b_j\in S_\Delta^I$, we have $b_j=z^{-1}a_jz^{\sigma_j^{-1}}$ and hence
    $$b_j(i) =  (z^{-1}a_jz^{\sigma_j^{-1}})(i) = z^{-1}(i)a_j(i)z(i^{\sigma_j})= 1,\text{ for all $(j,i)\in P$.}$$
    %The resulting set $P$ consists of $t-1$ pairs $(j,i)$ such that $b_j(i)= 1$,
    Moreover, it follows that for every $i\in I\setminus \{1\}$, there exists $j\in J_1$ such that $(j,i)\in P$.  
    For $s=3$, this means that $P$ contains all pairs $(1,i)$, $i\in I\setminus\{1\}$, so that $b_1(i)=1$, for $i\in I\setminus\{1\}$, and hence  $x_1^z$ is in reduced form. 
    For $s=4$, one has $\abs P=t-1$ by construction, 
    %and %\begin{equation*}\label{equ:P}
    %(t-\abs{\Orb_I(\sigma_1)}) + (t-\abs{\Orb_I(\sigma_2)}) = t-1$ %=\abs P
    %\end{equation*} 
    %by Lemma \ref{rem:s=4}, 
    so that the complement $P'$ of $P$ in $\{1,2\}\times I$ has $t+1$ elements. 
    Since in addition the total number of orbits of $\sigma_1$ and $\sigma_2$ is $t+1$ by Lemma \ref{rem:s=4}, and since  the set $P_j:=\{i\,|\,(j,i)\in P\}$ does not contain any full orbit of $\sigma_j$ for $j\in J_1$ by construction,  
    it follows that $P'$ intersect every orbit $\theta$ of $\sigma_j$, $j=1,2$ at exactly one $\iota_\theta\in \theta$. % such that $(j,\iota_\theta)\in P'$. 
    Thus $b_j(i) = 1$ for  $i\in \theta\setminus\{i_\theta\}$ and $x_j^z$, $j=1,2$ are in reduced form. %for $j=1,2$. 
    
    Since a reduced form of $x_j^z$ is also a reduced form of $x_j$, we may replace the elements $x_j$ by  $x_j^z$, for all $j=1,\ldots,r$. Thus we may assume that $x_j$, $j\in J_1$  are in reduced form. %s=3$ (resp.,$s=4$). %would also give a reduced form of the former tuple. In particular, $x_j=a_j\sigma_j$, with $a_j(i)=1$ for all $(j,i)\in P$. 
    We note that since $x_1,\ldots,x_r$ act transitively on $\Delta^I$, the subgroup $K_0 \leq S_\Delta$ generated by the elements $a_j(i)$, $j=1,\ldots,r$, $i\in I$, acts transitively on $\Delta$. Since $a_j(i)=1$ for $(j,i)\in P$ we shall henceforth consider the following multiset of generators for $K_0$:
    $$W := \{ a_j(i)\,|\, j=1,\ldots, r,i\in I,\text{ and } (j,i)\not\in P\}.$$ 
    %acts transitively on $\Delta$. 
    
    {\bf Step II:} {\it Describing the product-$1$ relation for $t$-tuples.} 
    Consider the product-$1$ relation:  
    $$1 = \prod_{j=1}^r x_j = \prod_{j=1}^r a_j\sigma_j = \prod_{j=1}^r\sigma_j\cdot \prod_{j=1}^r a_j^{\tau_j},$$ 
    where $\tau_j = \prod_{k=j}^r\sigma_k$, $j=1,\ldots, r$.
    Since $\tau_1 = \prod_{j=1}^r\sigma_j=1$, we have $\prod_{j=1}^ra_j^{\tau_j}=~1$. 
    Evaluating the latter expression at $i\in I$, we get: 
    \begin{equation}
    \label{equ:explicit1}
    \prod_{j=1}^r a_j^{\tau_j}(i)=\prod_{j=1}^r a_j(i^{\tau_j^{-1}})=1, \,i\in I.
    \end{equation}

    {\bf Step III: }{\it Describing an iterative procedure to form the product-$1$ relation from the $t$ equalities in \eqref{equ:explicit1}.} 
    %by inserting each of these equalities into \eqref{equ:explicit1} with $i=1$. }
     Let $O_j$ be the set of orbits of $\sigma_j$ on $I$. By Lemma \ref{lem:conjugate-1}, there exist reduced forms $y_j=c_j\sigma_j$, $c_j\in S_\Delta^I$, $\sigma_j\in S_I$, of $x_j$ such that \[
     c_j(\iota_\theta) = a(\iota_\theta)\cdots a(\iota_\theta^{\sigma_j^{\abs\theta-1}}),\] 
     for $\theta\in O_j$, $j\in J_2$. 
     %let $y_j=c_j\sigma_j$, $c_j\in S_\Delta^I$ be a reduced form of $x_j$ with representatives $\iota_\theta$, for $\theta\in O_j$, $j=s-1,s$.  % We shall construct a product $1$ relation for out of the product $1$ relation
     We order the elements of $\theta\in O_j$ accordingly: $\iota_\theta,\ldots,\iota_\theta^{\sigma_j^{\abs{\theta} -1}}$. 
     %put an ordering on each orbit $\theta\in O_j$ by setting $i_1<i_2$ for $i_1,i_2\in \theta$ if $u_1<u_2$, where $u_k$ is the minimal nonnegative integer such that  $i_k$ is the image of $\iota_\theta$ under $\sigma_j^{u_k}$ for $k=1,2$. 

    At each step of the process we shall modify \eqref{equ:explicit1} with $i=1$, so that it  consists of a product over all elements $a_j^{\tau_j}(i)$, each appearing once with exponent $1$ or $-1$, where $i$ runs through a proper subset $I_2\subseteq I$, and $j\in J_1\cup J_2\cup J_3$, with the exception of pairs $(j,i)$ in a set $Q$ associated with this process. 
    
    (Step A) Initially, set $I_2$ to be the subset of $I$ consisting of the element $1$, and set $Q=\emptyset$.  
    
    Since $I_2$ is a proper subset of $I$, and $\langle \sigma_{s-1},\sigma_s\rangle=\langle \sigma_{s-1}^{\tau_{s-1}}, \sigma_s\rangle = \oline G$ is transitive on $I$,  there exist  $\mu\in I_2$ and $k\in J_2$ such that the orbit $\hat \theta$ of $\mu$ under $\sigma_{k}^{\tau_{k}}$ is not contained in $I_2$.
    For each  $\nu\in \hat\theta\setminus I_2$, equality \eqref{equ:explicit1} for $i=\nu$ gives 
    $$a_{k}(\nu^{\tau_{k}^{-1}}) = F_{k},\text{ where }F_{k} := \prod_{j=k-1}^1 a_{j}(\nu^{\tau_{j}^{-1}})^{-1}\cdot \prod_{j=s}^{k+1}a_{j}(\nu^{\tau_{j}^{-1}})^{-1}$$ is a product of the terms $a_{j}^{\tau_{j}}(\nu)^{-1}$ over all $j\not= k$, each appearing once.
    Note that since $ (i^{\tau_{k}^{-1}})^{\sigma_{k}}=(i^{(\sigma_{k}^{\tau_{k}})})^{\tau_{k}^{-1}}$ for $i\in I$ and $\hat \theta$ is an orbit of $\sigma_k^{\tau_k}$,  the set $\hat\theta^{\tau_{k}^{-1}}:=\{i^{\tau_{k}^{-1}}\suchthat i\in \hat \theta\}$ is an orbit of $\sigma_{k}$. 
    Next, 
    
    (Step B) Iteratively insert the expressions $F_{k}^{-1}\cdot a_{k}(\nu^{\tau_{k}^{-1}})$
    (resp., $a_{k}(\nu^{\tau_{k}^{-1}})F_{k}^{-1}$), which equal $1$, into equality \eqref{equ:explicit1} with $i=1$ at the left (resp., right) of 
    $a_{k}(\mu^{\tau_{k}^{-1}})$, so that the new equality at $i=1$ contains a product  $B_{k,\hat\theta}:=\prod_{i\in (\hat\theta\setminus I_2)\cup\{\mu\}}a_{k}(i^{\tau_{k}^{-1}})$, ordered so that the indices $i^{\tau_{k}^{-1}}$ are increasing with respect to the above ordering of the orbit $\hat\theta^{\tau_{k}^{-1}}$ of $\sigma_k$.  
    
    (Step C) Add $\hat\theta\setminus I_2$ to $I_2$; add the pair $(k,\nu)$ to $Q$ for every $\nu\in \hat\theta\setminus I_2$; 
    and replace in $W$ the elements $a_{k}(i^{\tau_{k}^{-1}})$ , $i\in \{\mu\}\cup (\hat \theta\setminus I_2)$,  by the element $B_{k,\hat \theta}$. 
    
    We repeat steps (B)-(C) until $I_2= I$, in which case the process terminates. 
    
    Note that after preforming step (B)  with an orbit $\hat \theta$ of $\sigma_{k}^{\tau_{k}}$, the factors in equality \eqref{equ:explicit1} contain  $a_{k}^{\tau_{k}}(i)$, $i\in\hat \theta$, and therefore the order of these elements and the products $B_{k,\hat\theta}$ remain unchanged in further steps of the process.
    For each pair $(j,i)$, $j\in J_2$, $i\in I$, for which $a_j^{\tau_j}(i)$ appears at the resulting equality \eqref{equ:explicit1} but not as part of a product $B_{j,\hat \theta}$, denote  $B_{j,\{i\}}:=a_{j}^{\tau_j}(i)$, for $j\in J_2$. 
    We shall see that in such cases $\{i\}$ is a length $1$ orbit of $\sigma_j^{\tau_j}$.

    {\bf Step IV}: {\it We show that the multiset $T$ consisting of the elements $a_j(\iota_\theta)$, $\theta\in O_j$, $j\in J_1\cup J_3$, and $c_j(\iota_\theta)$, $\theta\in O_j$, $j\in J_2$ is a reduced product-$1$ multiset for $x_1,\ldots,x_r$, where the product-$1$ relation is given by the equality \eqref{equ:explicit1}, at  $i=1$, resulting at the end of the process.}
    
    We claim that each element $c_j(\iota_\theta)$, $\theta\in O_j$, $j\in J_2$  appears  in the  resulting equality \eqref{equ:explicit1}, with $i=1$, at the end of Step III. 
    %As each $a_j(i)$, $i\in I$, $j\in J_1\cup J_2\cup J_3$  appears exactly once at the resulting equality \eqref{equ:explicit1} with $i=1$, this equality gives a product $1$ relation of the elements $b_j(\iota_\theta)$, $\theta\in O_j$, $j\in\{s-1,s\}$ and the elements $a_j(\iota_\theta)$, $\theta\in O_j$, $j\not\in\{s-1,s\}$, so that this becomes a reduced product $1$ tuple. 
    Let $\hat Q$ denote the complement of $Q$ in $\{(j,i)\suchthat j\in J_2, i\in I\}$. 
    Since each $i\in I\setminus\{1\}$ is added to $I_2$ once, we have $\abs Q = t-1$ and hence $\abs{\hat Q}=t+1$.  
    Since (LG$_Y$) holds, Lemma \ref{rem:s=4} implies that
    \begin{equation}\label{equ:J2} 
    \abs{\Orb_I(\sigma_{s-1}^{\tau_{s-1}})} + \abs{\Orb_I(\sigma_s^{\tau_s}))} = t+1=\abs{\hat Q}.
    \end{equation}
    Since by construction, for every orbit $\hat\theta$ of $\sigma_j^{\tau_j}$, $j\in J_2$, there exists at least one $\mu\in \hat\theta$ such that $(j,\mu)\in \hat Q$, \eqref{equ:J2}  implies that every such orbit $\hat\theta$ contains exactly one $\mu\in\hat\theta$ such that $(j,\mu)\in \hat Q$. Thus,  $B_{j,\hat\theta}$ is a product of elements $a_{j}^{\tau_j}(i)$  running over all $i\in\hat\theta$, for every orbit $\hat\theta$ of $\sigma_j^{\tau_j}$, $j\in J_2$. 
    In particular, the elements $B_{j,\{i\}} = a_{j}^{\tau_j}(i)$ defined in the end of Step III are products over length $1$ orbits $\{i\}$ of $\sigma_j^{\tau_j}$, $j\in J_2$, and at each step B we have $\hat\theta\cap I_2 =\{\mu\}$. 
    Since $\theta:=\hat\theta^{\tau_j^{-1}}$ is an orbit of $\sigma_j$ as above, we have $B_{j,\hat\theta} = c_j(\iota_\theta)$ by  definition of $B_{j,\hat\theta}$, for every orbit $\hat\theta$ of $\sigma_j^{\tau_j}$, $j\in J_2$. 
    Since there is a one to one correspondence between orbits of $\sigma_j$ and $\sigma_j^{\tau_j}$, this shows that each element $c_j(\iota_\theta)$, $\theta\in O_j$, $j\in J_2$ appears once in the resulting product \eqref{equ:explicit1} for $i=1$, proving the claim.
    Since each of the equations \eqref{equ:explicit1} for $i\in I$ is inserted exactly once into the equation with $i=1$, it follows that  the resulting equality \eqref{equ:explicit1} is a product-$1$ relation consisting of the elements in $T$, each appearing once either with exponent  $1$ or with exponent $-1$. 
    
    %In view of (4), it follows that the resulting product in \eqref{equ:explicit1} at $i=1$ consists of exactly one element $\beta_{T}$ or its inverse for every orbit $T$ of $\sigma_{j},j=1,\ldots, 4$. Moreover, since $\beta_T$ is the is the product over all elements in an orbit $T$ of $\sigma_j$ for some $j\in\{s-1,s\}$, we can choose the ordering of the product $\beta_T$ over $T$ to be the same as in a given reduced form of $x_j$. Thus the resulting \eqref{equ:explicit1} is the desired  product $1$ relation between the entries of reduced forms of the elements $x_1,\ldots, x_r$. 
    
    It remains to prove the transitivity of $T$. 
    We claim that the resulting set $W$ after all steps C generates the same subgroup $K_0$ and hence is transitive. 
    By definition of $B_{k,\hat\theta}$ and since we have shown that $\hat\theta\cap I_2 =\{\mu\}$ at each step B, the element $a_{k}^{\tau_{k}}(\mu)$ is in the group generated by $B_{k,\hat\theta}$ and the elements $a_{k}^{\tau_{j}}(i)$, $i\in \hat\theta\setminus \{\mu\}$, and hence by replacing it with $B_{k,\hat\theta}$, one still has $\langle W\rangle =K_0$. By equality~\eqref{equ:explicit1},  $a_{k}^{\tau_{k}}(i)$ is in the group generated by the elements $a_{j}^{\tau_{j}}(i)$, $j\neq k$ (which were not removed from $W$),  and hence after removing the elements $a_{k}^{\tau_{k}}(i)$, $i\in \hat\theta \setminus \{\mu\}$ from $W$ in step C,  one still has $\langle W\rangle =K_0$.  In total after each step~C, one  still has $\langle W\rangle=K_0$, proving the claim and hence completing the proof. 
    %Similarly, since $a_{j_1}(i_2^{\tau_{j_1}^{-1}})$ is generated Moreover, as the group generated by  and since the group generated by the latter elements and $\beta_T$ also contains $a_{j_0}(b^{\tau_{j_0}^{-1}})$. Thus at each step the group generated by the resulting $W$ contains  $a_{j_0}(c^{\tau_{j_0}^{-1}})$ for all $c\in T$, proving the claim. 
    \end{proof}

    To the covering $f:X\ra\mP^1$, one associates the natural projections $h_i:\tilde X/K_i\ra \tilde X/K$ and $\pi_0:\tilde X/K\ra \mP^1$, where $K_i$ is a point stabilizer in the action of $K$ on the $i$-th copy $\Delta$, for $i\in I$. Recall that by Remark \ref{rem:setup}.(1), the fiber product of $h_i$ and $h_j$ is irreducible for every two distinct $i,j\in\{1,\ldots,t\}$.
    %Let $Y_i:=\tilde X/K_i$, $Z:=\tilde X/K$, and $Y:=\tilde X/K$, where $K_i$ is a point stabilizer in the action of $K$ on the $i$-th copy $\Delta$, $i\in I$.
    %Let $h_i:Y_i\ra Y$ (resp., $h:Z\ra Y$) be the natural projection from $Y_i:=\tilde X/K_i$ (resp., $Z:=\tilde X/K$), where $K_i$ is a point stabilizer in the action of $K$ on the $i$-th copy $\Delta$, $i\in I$.
    The following proposition shows that if every pair of coverings $h_i,h_j$, $i\neq j$ admits an irreducible fiber products of genus  $<\alpha\ell$, and there is branch cycle of $f$, a power of which acts on $I$ as a transposition, then the branch cycles of $f$ are conjugate to elements $y\sigma\in S_\Delta\wr S_I$, $y\in S_\Delta^I$, $\sigma\in S_I$ such that $y(i)$, $i\in I$ are of almost Galois type with bounded error in the sense of Definition \ref{def:almost-Gal-branch-cycles}.

    %the new covering $\hat f$ has the property that every point $P$ of $\mP^1$ is of almost Galois type. % Assume Setup \ref{sec:setup}.
    \begin{prop}\label{cor:gZ-bound} 
    For every constant $\alpha>0$, there exist constants $\hat \eps:=\hat\eps_{\alpha,t}$ and $N_\alpha$ satisfying the following property. 
    Let $x_j\in S_\Delta\wr S_I$, $j\in J$ be a product-$1$ tuple for $f$, and assume the coverings $h_i:Y_i\ra Y$, $i\in I$ corresponding to $f$ have irreducible fiber products $Y_i\#_Y Y_j$, $i\neq j$ of genus  $< \alpha\ell$.
    Let %$\sigma_j$ be the image of $x_j$ in $S_I$; let 
    $O_j$ be the set of orbits of $x_j$ on $I$,
    let $e_j$ be the order of the image of $x_j$ in $S_I$, 
    and put $z_j:=x_j^{e_j}\in S_\Delta^I$. 
    
    Assume that there exists a reduced product-$1$ multiset $y_{\theta,j}$, $\theta\in O_j$, $j\in J$ for the tuple $x_1,\ldots,x_r$. 
    %Assume that 
    %Assume that 
    %$h_i:Y_i\ra Y$, $i\in I$ admit irreducible fiber products $Y_i\#_Y Y_j$, $i\neq j$ of genus  $< \alpha\ell$.
    %have almost Galois ramification with constant $\alpha$. 
    %total error $\eps=\eps(c)$ and entry bound $N=N(c)$, 
    Finally assume that $O_k$ consists of odd length orbits and a single length $2$ orbit for some $k\in J$. 
    Then  $y_{\theta,j}$ and $z_j(\iota_\theta)$  % has almost Galois ramification with 
    are of almost Galois type with error at most $\hat \eps$ for finite types, and with entry bound $N_\alpha$ for infinite types, such that  $m(y_{\theta,j})=m(z_j(\iota_\theta))$ if $x_j$ does not act on $I$ as a transposition or if $\abs{\theta}>1$, and $m(y_{\theta,j})=2m(z_j(\iota_\theta))$ otherwise, for $\theta\in O_j$, $j\in J$. % $\theta\in O_j$. 
    \end{prop}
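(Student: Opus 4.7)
The plan is to first recognize each $z_j(\iota_\theta)$ as, up to conjugation in $S_\Delta$, a branch cycle of the covering $h_{\iota_\theta}:Y_{\iota_\theta}\to Y$, invoke the almost Galois type machinery of Section~\ref{sec:almost-Gal} to pin down its type, and then transfer this information to $y_{\theta,j}$ via the explicit power relation supplied by the reduced form.

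For the identification, $z_j=x_j^{e_j}\in K=G\cap S_\Delta^I$ acts trivially on $I$ and is therefore a tuple $(z_j(i))_{i\in I}$ in $S_\Delta^I$. Applying Lemma~\ref{lem:f-to-h} to the inclusion $K_{\iota_\theta}\leq K$ and using that $K$ acts on the $\iota_\theta$-copy of $\Delta$ via the projection $k\mapsto k(\iota_\theta)$, one checks that the cycle structure of a branch cycle of $h_{\iota_\theta}$ over a point of $Y$ lying above the branch point of $f$ associated to $x_j$ coincides with that of $z_j(\iota_\theta)$. Since by hypothesis every fiber product $Y_i\#_Y Y_j$ with $i\neq j$ is irreducible of genus $<\alpha\ell$, Corollary~\ref{cor:DZ} combined with Remark~\ref{lem:same-m_P} applies uniformly: every point of $Y$ is of almost Galois type under every $h_i$, the type is independent of $i$, and both the error (finite types) and entry bound (infinite types) depend only on $\alpha$. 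This gives the almost Galois type assertion for $z_j(\iota_\theta)$.

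For the transfer to $y_{\theta,j}$, an application of Lemma~\ref{lem:conjugate-1} together with the multiplication rule in $S_\Delta\wr S_I$ gives the exact identity
\[
z_j(\iota_\theta)=y_{\theta,j}^{e_j/|\theta|}
\]
in $S_\Delta$. When $e_j/|\theta|=1$ the conclusion $m(y_{\theta,j})=m(z_j(\iota_\theta))$ is immediate; this covers, in particular, the case where $\sigma_j$ is a transposition and $|\theta|=2$. When $\sigma_j$ is a transposition and $|\theta|=1$, we have $z_j(\iota_\theta)=y_{\theta,j}^2$, and a direct cycle-structure analysis of the squaring map (bulk $2m$-cycles become bulk $m$-cycles) yields $m(y_{\theta,j})=2m(z_j(\iota_\theta))$. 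In the remaining cases I would combine the constrained list $m(z_j(\iota_\theta))\in\{1,2,3,4,6,\infty\}$ from Corollary~\ref{cor:DZ} with the possible orbit structures of $\sigma_j$ on $I$ to verify that $\gcd(e_j/|\theta|,m(z_j(\iota_\theta)))=1$, which forces $y_{\theta,j}$ and $y_{\theta,j}^{e_j/|\theta|}$ to share the same almost Galois type.

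The main obstacle is that the identity $z_j(\iota_\theta)=y_{\theta,j}^{e_j/|\theta|}$ does not by itself determine $y_{\theta,j}$ up to cycle structure, since a given permutation admits several $k$-th roots of distinct cycle types. To single out the correct root I would exploit the reduced product-$1$ multiset directly: by Riemann's existence theorem (Section~\ref{sec:RET}), properties (2) and (3) of such a multiset produce a covering $\hat f:\hat Y\to\mathbb{P}^1$ of degree $\ell$ with monodromy group $\langle T\rangle\leq S_\Delta$ whose branch cycles have the same cycle structures as the $y_{\theta,j}$. The hypothesis that $O_k$ consists of odd length orbits plus a single length-$2$ orbit, together with Riemann--Hurwitz applied to $\hat f$, furnishes a uniform genus bound $g_{\hat Y}<\alpha'\ell$ for a constant $\alpha'$ depending only on $\alpha$ and $t$. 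Applying Proposition~\ref{lem:DZ} (and Corollary~\ref{cor:DZ1}) to a minimal covering factoring through $\hat f$ and one of the $h_i$ then forces the $y_{\theta,j}$ to be of almost Galois type with bounds depending only on $\alpha$ and $t$; the case analysis above pins down the precise matching with $z_j(\iota_\theta)$.
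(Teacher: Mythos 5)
Your overall outline matches the paper's strategy at the coarse level: you identify $z_j(\iota_\theta)$ (up to conjugation) as a branch cycle of $h_{\iota_\theta}$, invoke Corollary~\ref{cor:DZ} and Remark~\ref{lem:same-m_P} to put it in almost Galois form, observe the relation $d_j(\iota_\theta)=y_{\theta,j}^{e_j/|\theta|}$ for the conjugate reduced form, and aim to control $y_{\theta,j}$ by building the auxiliary covering $\hat f$ from the reduced product-$1$ multiset and applying Riemann--Hurwitz. This is indeed the skeleton of the paper's Steps~I, III and V.

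However, the crux of the proof --- the case $j\in S$, $|\theta|=1$, $m_j$ odd --- is not handled correctly. Two specific problems:

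First, the gcd argument does not work. Knowing $z_j(\iota_\theta)=y_{\theta,j}^{2}$ with $m_j$ odd and $\gcd(2,m_j)=1$ does \emph{not} force $m(y_{\theta,j})=m(z_j(\iota_\theta))$: a square root of a permutation whose bulk consists of $m_j$-cycles (odd $m_j$) can have bulk $m_j$-cycles \emph{or} bulk $2m_j$-cycles, since both $m_j$-cycles and $2m_j$-cycles square to $m_j$-cycles. The paper's equation~\eqref{equ:n-bounds} makes this explicit: for $m_j$ odd one only learns $n_{\theta,1}+2n_{\theta,2}\geq(\ell-\eps_j)/m_j$, which says nothing about how the total is split. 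Your phrase ``bulk $2m$-cycles become bulk $m$-cycles'' assumes the conclusion you need to prove.

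Second, the proposed way of invoking Proposition~\ref{lem:DZ} is not available: $\hat f\colon\hat Y\to\mP^1$ and $h_i\colon Y_i\to Y$ are coverings of \emph{different} bases, so there is no minimal covering factoring through both in the sense of Proposition~\ref{lem:DZ}. Moreover, the Riemann--Hurwitz formula for $\hat f$ does not by itself give an upper bound $g_{\hat Y}<\alpha'\ell$; Riemann--Hurwitz only gives $g_{\hat Y}\geq 0$. What the paper actually does (Steps~V--VI) is the opposite: it uses Riemann--Hurwitz for $\hat f$ as a \emph{lower} bound $2\ell-2\leq\sum(\ell-|\Orb(y_{\theta,j})|)$, bounds the terms from above by the already-known almost Galois data (with the unknowns $n_{\theta,2}$ floating inside the estimate), combines this with Riemann--Hurwitz for $h_i$ to cancel the main terms, and then performs a separate case analysis of the three possible multisets $M_h$ (cases I1, F1, F2) to extract lower bounds on each individual $n_{\theta,2}$. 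That combination of the two Riemann--Hurwitz formulas, plus the enumeration of possible $M_h$ and the verification that $\oline G\not\cong S_4$ in these cases, is the technical heart of the argument and is missing from your proposal.
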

    %$m_j:=m(z_{j}(\iota_\theta))$ is independent of $\theta$, for each $j\in J$.
    %\begin{rem}
    In fact, we show that  $N=N_\alpha$ is simply the entry bound corresponding to the constant $\alpha$ in Definition \ref{def:almost-Gal}. 
    Recall that $m=\deg\pi_0$, and let $\eps_j=\eps_{j,\alpha}$ (resp.~$\eps=\eps_\alpha$) denote the error (resp.~total error) of $h_i$, $i\in I$ over the branch point of $x_j$. (Note that the error is independent of $i\in I$.) We show that $z_j(\iota_\theta)$ is of almost Galois type $m_j$ with error at most $\eps_j$, for all $\theta\in O_j$, $j\in J$. We also show the same error occurs for $y_{\theta,j}$ if $x_j$ does not act on $I$ as a transposition, or if $\abs\theta> 1$. Otherwise, we show that $y_{\theta,j}$ is almost Galois with error at most $6m_jm(\eps+N)$,
    for all $\theta\in O_j$, $j\in J$.
    %\end{rem}
    % and $m:=\deg\pi_0$. 
    
    Since the proof is subtle as it considers many cases, we first demonstrate its outline in the following example:
    \begin{exam}\label{exam:almost-Gal} 
    Following Example \ref{exam:reduced}, let $x_1,x_2,x_3$ be a product-$1$ tuple for a covering $f$ with monodromy group $G = S_\ell\wr S_3$ with reduced forms $y_1=(a,1,1)r$, $y_2=(b_1,1,b_2)s$, and $y_3=(c_1,c_2,1)rs$, respectively, such that  $W:=\{a,b_1,b_2,c_1,c_2\}$ is a reduced product-$1$ multiset. %$ab_1c_2b_2c_1 = 1$ where $r=(1,2,3)$, and $s=(1,2)$.

    Assume that the associated coverings $h_i:Y_i\ra Y$, $i\in I$ admit irreducible fiber products $Y_i\#_Y Y_j$, $i\neq j$ of genus  $< \alpha\ell$, and the list of values $m_{h_1}(Q)$, $Q\in Y$ which are greater than $1$, is $\{3,3,3\}$. We then claim that $a,b_1,b_2,c_1,c_2$ are permutations of almost Galois type of  error at most $30\eps$ each, where $\eps:=\eps_{\alpha,3}$ is the error for points of type $3$. Here, a permutation is of ``almost Galois type" in the sense of Definition \ref{def:almost-Gal-branch-cycles}. We note that  the resulting Galois types in $W$ which are greater than $1$ are then $\{2,3,6\}$ and not $\{3,3,3\}$. 
    \begin{proof}[Proof of claim (from Example \ref{exam:almost-Gal})]
    We first describe the ramification of $h_1$. Write $P_1,P_2,P_3$ for the branch points of $f$ with branch cycles $x_1,x_2,x_3$, respectively. Then $\pi_{0}^{-1}(P_1)$ consists of two points, $\pi_0^{-1}(P_i)$ consists of three points for each $i\in\{2,3\}$, and $\pi_0^{-1}(P)$ consists of six points for any other point $P$. Since $m_{h_1}(Q)$ is the same for every point $Q\in \pi_0^{-1}(P)$ over a given point $P$ of $X_0=\mP^1$ by Remark \ref{lem:same-m_P}, and since the list of greater than $1$ values of $m_{h_1}(Q)$ is given to be $\{3,3,3\}$, it follows that $m_{h_1}(Q)>1$ for all $Q\in \pi_0^{-1}(P_0)$ for exactly one point $P_0$ of $\mP^1$, and that point $P_0$ is either $P_2$ or $P_3$. Since the argument is the same in both cases, we shall assume $P_0=P_2$, and denote $\pi_0^{-1}(P_2) = \{Q_1,Q_2,Q_3\}$, so that $m_{h_1}(Q_i) = 3$ for $i=1,2,3$, and $m_{h_1}(Q) = 1$ for any other point $Q$ of $Y$. 
    
    By Lemma \ref{lem:f-to-h}, the points $Q_1,Q_2,Q_3$  correspond to the double cosets $\langle x_2\rangle \backslash (S_\ell\wr S_3)/S_\ell^3$. Since $1,r,r^2$ is a set of representatives for these double cosets, the lemma implies that, up to reordering the points, the branch cycles of $h$ over $Q_1, Q_2,Q_3$ are conjugate in $S_\ell^3$ to $$y_2^2=(b_1,b_1,b_2^2),\,\, (y_2^2)^r=(b_2^2,b_1,b_1),\,\, (y_3^2)^{r^2} = (b_1,b_2^2,b_1),$$ respectively.  Thus the branch cycles of $h_1$ over $Q_1,Q_2,Q_3$ are $b_1,b_2^2,b_1$, respectively, and hence these are of almost Galois type $3$ with error at most $\eps=\eps_{\alpha,3}$. 
    Similarly, the branch cycle of $h_1$ over each of the two preimages in $\pi_0^{-1}(P_1)$ is $a$, and the branch cycles over points in $\pi_0^{-1}(P_3)$ are $c_1,c_1,c_2^2$. Thus  $a,c_1,c_2^2$ are of almost Galois type $1$ with error at most $\eps_{\alpha,1}\leq \eps$. 
    
    The remaining difficulty is the main one: showing that $b_2$ and $c_2$ are of almost Galois type. Since $W$ is a product-$1$ multiset, there is a covering $\hat f:\hat Y\ra\mP^1$ whose branch cycles are the elements of $W$. The Riemann--Hurwitz formula  for $\hat f$ then gives:
    \begin{equation}\label{equ:exam:ree} \sum_{x\in W}\left(\ell-\abs{\orb(x)}\right)\geq 2\ell-2.
    \end{equation}
    On the other hand, since $a,b_1,c_1$ are permutations of almost Galois types $1,1,3$, resp., with error at most $\eps$, we have $\ell-\abs{\orb(a)}<\eps$, and $\ell-\abs{\orb(c_1)}<\eps$, and $\ell-\abs{\orb(b_1)}<2\ell/3+\eps$. To estimate the number of orbits of $b_2$ and $c_2$. 
    Denote  by $u_3$ and $u_6$ the number of orbits of $b_2$ with length $3$ and length $6$, respectively, and by $v_1$ and $v_2$ the number of length $1$ and length $2$ orbits of $c_2$, respectively. 
    Since $c_2^2$ is of almost Galois type $1$ with error at most $\eps$, we have $v_1+2v_2\geq \ell-\eps$ and $\abs{\orb(c_2)}\geq v_1+v_2$, the combination of which gives $\ell-\abs{\orb(c_2)}\leq v_2+ \eps$. 
    Similarly, since $b_2^2$ is of almost Galois type $3$, we have
    %$\#\orb(c_2) v_1+v_2+\eps_\alpha$ 
    %and 
    %$\#\orb(b_2)\leq u_1+u_2+\eps_\alpha$. 
     %$\ell -\#\orb(c_2) < v_2 + \eps_\alpha$ and 
     $\ell-\abs{\orb(b_2)}\leq  2u_3 + 5u_6 + \eps$. Combining the above bounds with \eqref{equ:exam:ree}, we get:
     \begin{equation}\label{equ:ree-bounded}
     2\ell-2\leq \sum_{x\in W}\left(\ell-\abs{\orb(x)}\right) < 2u_3+5u_6 + v_2 + 2\ell/3 + 5\eps.
     \end{equation}
     Furthermore, one has (A) $v_2\leq \ell/2$ and since $3u_3+6u_6\leq \ell$ and $u_6\leq \ell/6$ we also have (B) $2u_3+5u_6\leq 2(\ell/3-2u_6)+5u_6\leq  5\ell/6$. 
     %where the latter is obtained by maximizing $2u_3+5u_6$ over nonnegative pairs $(u_3,u_6)$ such that $3u_3+6u_6\leq \ell$. 
     Plugging  inequalities (A) and (B) into  \eqref{equ:ree-bounded} gives:
     \begin{align*}
         v_2 & > 4\ell/3-(2u_3+5u_6)-5\eps > \ell/2-5\eps  \text{ and } \\
         2u_3 + 5u_6 & > 4\ell/3-v_2-5\eps > 5\ell/6-5\eps. 
     \end{align*}
     The first inequality implies that $c_2$ is of almost Galois type $2$ with error at most $10\eps$. 
     Since $u_3\leq \ell/3 - 2u_6$, the second inequality implies that $2\ell/3 + u_6> 5\ell/6-5\eps$ and hence $u_6> \ell/6-5\eps$. Thus $b_2$ is of almost Galois type $6$ with error at most $30\eps$. 
    \end{proof}
    \end{exam}
    The proof of Proposition \ref{cor:gZ-bound} generalizes the proof of the claim to cases where the list of $m_{h_1}(P)$ values is general and not merely $\{3,3,3\}$.

    \begin{proof}[Proof of Proposition \ref{cor:gZ-bound}]
    {\bf Step I:} {\it Describing the branch cycles of $h_i$, $i\in I$. }
    Let $P_j$ be the point of $\mP^1$ corresponding to the branch cycle $x_j$, for $j\in J$, and $\tilde h:\tilde X\ra\tilde X/K$  the natural projection. 
    % and let $K_i$ be a point stabilizer in the action of $K$ on the $i$-th copy $\Delta^{(i)}$, $i\in I$.
    The points $Q_{j,\tau}$ in $\pi_0^{-1}(P_j)$ are in one to one correspondence with double cosets $K\tau\langle x_j\rangle$ in $K \backslash G /\langle x_j\rangle$ by Lemma \ref{lem:f-to-h}, so that $(x_j^\tau)^{e_j}=z_j^\tau$ is a branch cycle of $\tilde h$ over $Q_{j,\tau}$  for $j\in J$. 
    Since $K_i$ is a point stablizer in the action of $K$ on the $i$-th copy $\Delta^{(i)}$ of $\Delta$, 
    %$G$ is of product type, Remark \ref{rem:product} implies that the monodromy group $G_i\leq S_\Delta$ of 
    the monodromy group $G_i$ of $h_i:\tilde X/K_i\ra \tilde X/K$ is isomorphic to the projection of $K\leq S_\Delta^I$ to $S_{\Delta^{(i)}}$, %acts on $\Delta^{(i)}$, 
    for $i\in I$. 
    Identifying $G_i$ with this subgroup of $S_{\Delta^{(i)}}$,  the image $z_j^\tau(i)\in S_{\Delta^{(i)}}$ of $z_j^\tau\in S_\Delta^I$ is a branch cycle of $h_i$ over $Q_{j,\tau}$, for every coset $K\tau\langle x_j\rangle \in K \backslash G /\langle x_j\rangle$, $j\in J$, and $i\in I$. Since $h_i:Y_i\ra Y$, $i\in I$ admit irreducible fiber products $Y_i\#_Y Y_j$, $i\neq j$ of genus $< \alpha\ell$, their branch cycles  $z_j(i)$, $i\in I$, $j\in J$ are all of almost Galois type with constant $\alpha$ by Corollary \ref{cor:DZ}. As remarked at the end of Definition~\ref{def:almost-Gal}, the types $m_j=m(z_j(i))$ and errors $\eps_j=\eps_{\alpha,m_j}$ (resp., entry bounds $N$) are independent of $i\in I$, for each $j\in J$. 
    
    {\bf Step II:} {\it Describing the sets $O_j$, $j\in J$. }
    As in the beginning of the section, let $\oline\pi_0:\oline Y\ra \mP^1$ be the natural projection from the quotient $\oline Y$ of $Y$ by a point stabilizer in the action on $I$, so that its monodromy group $\oline G$ is isomorphic to $G/K$ equipped with its action on $I$. Let $\sigma_j$ be the image of $x_j$ in $\oline G$, and put $m:=\abs{G/K}=\deg\pi_0$.
    In particular, the ramification multiset $E_{\oline\pi_0}(P_j)$ coincides with the cycle structure of $\sigma_j$, for $j\in J$.  
    Since $g_Y\leq 1$ by Corollary \ref{cor:DZ},
    %as $h_i$, $i\in I$ have almost Galois ramificiation, 
    and since by assumption $O_k$ consists of odd length orbits and a single orbit of length $2$ for some $k$, we may apply Lemma~\ref{lem:no-trans} to 
    obtain the possibilities for the ramification multiset $E_{\oline \pi_0}(P_j)$, and hence for the cycle structure of $\sigma_j$, for $j\in J$.  The lemma implies that the ramification type of $\oline\pi_0$ is in Table~\ref{table:trans}. Letting $S=\{j\suchthat \sigma_j\text{ is a transposition}\}$, letting $F$ be the multiset $\{\theta\in O_j\suchthat j\in S,\abs\theta=1\}$ of fixed points, and $t:=\abs{I}$, it follows that 
    %the cy
    %Thus, 
    $\abs\theta = e_j$ for every $\theta\in O_j$, $j\in J\setminus S$ in case $\oline G\not\cong S_4$,  and 
    \begin{enumerate}
    \item $\abs F=4$ with $t\leq 6$ if $g_Y=1$;
    \item $\abs F=2$ with $t\in\{3,4\}$ if $g_Y=0$. % and $\oline G\not\cong S_4$; and 
%    \item $\abs F=3$ with $t=4$ if $g_Y=0$ and $\oline G\cong S_4$. 
    \end{enumerate} 
    
    {\bf Step III:} {\it Proof for $j\in J\setminus S$ when $\oline G\not\cong S_4$}. Let $y_j=b_j\sigma_j$, $b_j\in S_\Delta$ be a reduced form of $x_j$ with representatives $\iota_\theta$, $\theta\in O_j$, such that $b_j(\iota_\theta) = y_{\theta,j}$, for $\theta\in O_j$, $j\in J$.  
    Since $x_j$ and $y_j$ are conjugate by an element of $S_\Delta^I$, the elements $z_j=x_j^{e_j}$ and $d_j:=y_j^{e_j}\in S_\Delta^I$ are also conjugate in $S_\Delta^I$, and hence $d_j(i)$ is of almost Galois type $m_j$ with error at most $\eps_j$ if $m_j<\infty$ (resp., entry bound $N$ if $m_j=\infty$), for all $i\in I$, $j\in J$. 
    Since  the orbits of $\sigma_j$ are all of length $\abs\theta=e_j$ when $\oline G\not\cong S_4$, we have 
    $$d_j(i)=\prod_{u=0}^{e_j-1}b_j^{\sigma_j^{-u}}(i) = \prod_{u=0}^{e_j-1}b_j(i^{\sigma_j^u}) =  b_j(\iota_\theta) = y_{\theta,j},$$ 
    for every $i\in \theta$, $\theta\in O_j$, $j\in J\setminus S$. 
    Thus, $y_{\theta,j}$ is of almost Galois type $m(y_{\theta,j}) = m(d_j(\iota_\theta)) = m(z_j(\iota_\theta))=m_j$ with error at most $\eps_j$ if $m_j<\infty$ (resp., entry bound $N$ if $m_j=\infty$), for every $\theta\in O_j$, $j\in J\setminus S$.

    {\bf Step IV:} {\it Proof for  $j\in S$ with $m_j$ infinite or even}. Since the ramification of $\oline\pi_0$ is as in Table~\ref{table:trans}, in this case $e_j=2$ and $\abs\theta=1$ or $2$, for every $\theta\in O_j$. If $\abs\theta=2$, then the same argument as in step III shows that $y_{\theta,j}=b_j(\iota_\theta)$ is of almost Galois type $m(y_{\theta,j}) = m(d_j(\iota_\theta)) = m(z_j(\iota_\theta))=m_j$ with error at most $\eps_j$ if $m_j<\infty$ (resp., entry bound $N$ if $m_j=\infty$), for $\theta\in O_j$, $j\in S$. For $\theta\in O_j$, $j\in S$ with $\abs\theta=1$, we have $d_j(\iota_\theta)=b_j(\iota_\theta)b_j(\iota_\theta^{\sigma_j}) = b_j(\iota_\theta)^2$, and hence
     $b_j(\iota_\theta)^2$ is of almost Galois type $m_j$ with error $\eps_j$ if $m_j<\infty$ (resp., entry bound $N$ if $m_j=\infty$). If $m_j=\infty$ this implies that the number of orbits of $b_j(\iota_\theta)$ is at most $N$, proving the assertion in this case.
     
    Since the square of a length-$(u\cdot m_j)$ cycle in $S_\ell$  is a product of cycles of length $um_j/(2,um_j)$,  this square is a product of $m_j$-cycles if and only if either [$u\in\{1,2\}$ and $m_j$ is odd], or [$u=2$ and  $m_j$ is even]. % and $(2/k,m_j)=1$. 
    Thus, letting $n_{\theta,u}$ denote the number of length-$(um_j)$ orbits of $b_j(\iota_\theta)$, 
    we have 
    \begin{equation}\label{equ:n-bounds}
        \begin{split}
            (\ell-\eps_j)/m_j & \leq n_{\theta,1}+2n_{\theta,2}\leq \ell/m_j \text{ if $m_j$ is odd, and } \\
            (\ell-\eps_j)/m_j & \leq 2n_{\theta,2}\leq \ell/m_j\text{ if $m_j$ is even.}
        \end{split}
    \end{equation} The assertion then also follows when $m_j$ is even. 
    It remains to treat the cases where $m_j$ is finite and odd. For this we use:
    
    {\bf Step V:} {\it The Riemann--Hurwitz formula for the reduced multiset.}
    Since $y_{\theta,j}$, $\theta\in O_j$, $j\in J$ is reduced product-$1$ multiset, it generates a transitive subgroup of $S_\ell$, and hence By Riemann's existence theorem there exists a covering  $\hat f:\hat Y\ra\mP^1$ with these branch cycles. To prove that  $y_{\theta,j}=b_j(\iota_\theta)$ is of almost Galois type in the remaining case where $m_j$ is odd, $j\in S$ and $\theta\in O_j\cap F$, we bound its Riemann--Hurwitz contribution to $\hat f$ using \eqref{equ:n-bounds}:  
    \begin{equation}\label{equ:reduced-contr}
    \begin{split}
    \ell-\abs{\Orb_\Delta(y_{\theta,j})}& \leq  \ell - n_{\theta,1} - n_{\theta,2} =  (\ell - n_{\theta,1}-2n_{\theta,2}) + n_{\theta,2}  \\
    & \leq  \ell(1-1/m_j) + n_{\theta,2} + \eps_j/m_j.
    \end{split}
    \end{equation}
    Throughout the rest of the proof we use the convention $1/m_j = 0$ in case $m_j = \infty$. 
    When $\oline G\not\cong S_4$,  steps III and IV imply that $y_{\theta_j}$ are of almost Galois type $m_j$ with error $\leq \eps_j$  for $\theta\in O_j$, $j\in J$ such that either $j\in J\setminus S$, or $\abs\theta>1$, or $m_j$ is even or infinite. Hence similarly to \eqref{equ:reduced-contr}, in these cases we have:
    \begin{equation}\label{equ:non-special}
    \ell-\abs{\Orb_\Delta(y_{\theta,j})} \leq  
    \begin{cases}
    \ell(1-1/m_j) + \eps_j/m_j & \text{if }m_j<\infty \\
    \ell(1-1/m_j) & \text{if }m_j=\infty.
    \end{cases}
    \end{equation}
     
    %For $m_j=\infty$, we remark that the same equalities hold upon setting $U_o = \emptyset$. 
    %It therefore remains to consider the subset $S\subseteq \{1,\ldots,r\}$ of all $j$ such that $x_j$ acts on $I$ as a transposition, and show that for every fixed point $i$ of $x_j$, $j\in S$, 
    %we have $c_{2,i} = \ell/(2m_j) - O(1)$ and hence $m(y_j(i)) = 2m_j$. 

    %Since the genus of the covering corresponding to the reduced tuple is non-negative, 
    Plugging \eqref{equ:reduced-contr} and \eqref{equ:non-special} to the Riemann--Hurwitz formula for $\hat f$ gives: 
    %Applying Ree's inequality we get: 
    \begin{equation}\label{equ:indy}
    %\begin{array}{rl} 
    \begin{split}
     2\ell-2 \leq &  \sum_{j\in J} \sum_{\theta\in O_j}(\ell-\abs{\Orb_\Delta(y_{\theta,j})}) \\
     \leq & \sum_{j\in J} \abs{O_j}\ell(1-\frac{1}{m_j}) + \sum_{\{j\in J\suchthat m_j<\infty\}}\abs{O_j}\frac{\eps_j}{m_j} + \sum_{\{j\in S\suchthat m_j\text{ is odd}\}}\sum_{\theta\in F\cap O_j}n_{\theta,2}  \\ %\sum_{o\in O_j,k\in U_o} (k-1)c_{k,o} \\
    % = & \sum_{j\in J} \abs{O_j}\ell(1-\frac{1}{m_j}) + \sum_{j\in S,o\in O_j, \abs{o}=1}c_{2,o} + O(1). 
     \end{split} 
     \end{equation}
    Set $\delta_{0}:=\sum_{j\in J,\, m_j<\infty}\abs{O_j}\eps_j/m_j$ so that  $\delta_0<t\eps$ (since $\abs{O_j}<t$ and $\sum_j\eps_j\leq \eps$). 
    %ote that $r\leq 2(c+1)$ by Corollary \ref{cor:DZ1}. %and $m\leq $. 
    
    {\bf Step VI:} {\it Separation into cases according to almost Galois types.}
    We claim that $n_{\theta,2}\geq \ell/(2m_j) - 3m(\eps+N)$, as this would show that $y_{\theta,j}$ is of almost Galois type $2m_j$, and error at most $6mm_j(\eps+N)$ for all $j\in S$, $\theta\in O_j\cap F$, completing the proof when $\oline G\not\cong S_4$. 
     
    At first consider the case $g_Y=1$ in which case $\abs F=4$ and $t\leq 6$. 
    Since $g_Y=1$ and since $h_i:Y_i\ra Y$, $i\in I$ admit irreducible fiber products $Y_i\# Y_j$, $i\neq j$ of genus  $< \alpha\ell$,
    %have almost Galois ramification and $g_Y=1$, 
    we also have $m_j=1$ for $j\in J$ by Corollary \ref{cor:DZ1}. 
    Hence 
    %\begin{equation}\label{equ:F0t>2} 
    $\sum_{\theta\in F} n_{\theta,2} > 2\ell - \delta_0$
    by \eqref{equ:indy}. 
    %\end{equation} 
    %{\bf Case F0:}  
    Since  $\abs F\leq 4$ and $n_{\theta,2}\leq \ell/2$, we deduce that  
    $n_{\theta,2}> \ell/2-\delta_0> \ell/2 -t\eps$ for $\theta\in F$. 
    As $t\leq m$, %this gives $n_{\theta,2}\geq \ell/2-2m\eps$ for $\theta\in F$, 
    %and hence 
    this shows that $y_{\theta,j}$ is of almost Galois type $2$ with error at most $2t\eps\leq 2m\eps$ for all $\theta\in F\cap O_j$, $j\in J$, as desired.
    
    Henceforth assume $g_Y=0$. 
    We first modify \eqref{equ:indy} using the Riemann--Hurwitz formula for $h_i$, $i\in I$. 
    Since $Q_{j,\tau}$ is of almost Galois type $m_j$ under $h_i$, the cardinality  $\abs{E_{h_i}(Q_{j,\tau})}$ is at most $(\ell-\eps_j)/m_j + \eps_j$ if $m_j<\infty$ and at most $N$ if $m_j=\infty$, for $i\in I$. Hence 
    \begin{equation}\label{equ:non-special2}
    R_{h_i}(Q_{j,\tau}) = \ell - \abs{E_{h_i}(Q_{j,\tau})} \geq 
    \begin{cases} \ell(1-1/m_j) -\eps_j(1-1/m_j) & \text{if }m_j<\infty \\
    \ell(1-1/m_j) - N & \text{if }m_j=\infty,
    \end{cases}
    \end{equation}
    for every $j\in J$, and coset $\tau\in K\backslash G/\langle x_j\rangle$. 
    Since  $g_{Y_i} \leq \alpha+1$ by Remark \ref{rem:almost-Galois-NZ}, and since $\abs{\pi_0^{-1}(P_j)}=m/e_j$ as $\pi_0$ is Galois,  
    the Riemann--Hurwitz formula for $h_i$ and \eqref{equ:non-special2} give: %we get 
    \begin{equation}\label{equ:h-almost-Galois} 
    \begin{split}
    2(\ell+\alpha) & \geq 2(\ell+g_{Y_i}-1)  = \sum_{j\in J} R_{h_i}(\pi_0^{-1}(P_{j})) \geq \sum_{j\in J} \frac{m}{e_j}\ell(1 - \frac{1}{m_j}) - \delta_1, 
    \text{ where } \\
     \delta_1 & := \sum_{j\in J\suchthat m_j<\infty}\frac{m}{e_j}\eps_j(1-1/m_j) + \sum_{j\in J\suchthat m_j=\infty}\frac{m}{e_j}N. %\abs{\{j\in J\suchthat m_j=\infty\}}
    \end{split}
    \end{equation}
    Note that since the ramification of $\pi_0$ is as in Table~\ref{table:trans} and $g_Y=0$, one has $\sum_{j\in J}(1/e_j)< 2$ and hence also $\delta_1< 2m(\eps + N)$. 
    The combination of \eqref{equ:indy} and \eqref{equ:h-almost-Galois} then gives:
    \begin{equation}\label{equ:co} 
    \begin{split}
      \sum_{\{j\in S\suchthat m_j\text{ is odd}\}}\sum_{\theta\in F\cap O_j}n_{\theta,2} & \geq 2\ell-2 - \sum_{j\in J}\abs{O_j}\ell(1-\frac{1}{m_j}) -\delta_0 \\ 
      & \geq -2 -2\alpha - \delta_0 - \delta_1+ \sum_{j\in J}(\frac{m}{e_j}-\abs{O_j})\ell(1-\frac{1}{m_j}) \\
      & = \sum_{j\in J}(\frac{m}{e_j}-\abs{O_j})\ell(1-\frac{1}{m_j}) - \delta_2,
    \end{split}
    \end{equation} 
     where $\delta_2:=2+2\alpha+\delta_0+\delta_1$.
    %\begin{equation}\label{equ:co} 
    %  \sum_{\{j\in S\suchthat m_j\text{ is odd}\}}\sum_{\theta\in F\cap O_j}n_{\theta,2} \geq \sum_{j=1}^r(\frac{m}{e_j}-\abs{O_j})\ell(1-\frac{1}{m_j}) - \delta_2,\text{ where }\delta_2:=2+2c+\delta_0+\delta_1.
    %\end{equation} 
    Note that as $\delta_0<t\eps$ and 
    $\delta_1<2m(\eps+N)$, one has $\delta_2<2(\alpha+1)+t\eps+2m(\eps+N)$.
    As $2(\alpha+1)<\eps$ and $t+1<m$ in all cases in Table~\ref{table:trans} with $t>2$, one further has $\delta_2<2(\alpha+1)+t\eps+2m(\eps+N)<3m(\eps+N)$. 
    
    We estimate \eqref{equ:co} for each of the possibilities for the multiset:
    $$M_{h}  := \{ m_j=m_{h_i}(Q_{j,\tau})  \suchthat m_j>1, \tau\in K \backslash G /\langle x_j\rangle,  j\in J\},$$
    which, as remarked earlier, is independent of the choice of $i\in I$.
    Note that as $m_j=m_{h_i}(Q_{j,\tau})$ is also independent of $\tau$, it appears in $M_h$ at least $\abs{K\backslash G/\langle x_j\rangle}=m/e_j$ times. %\{m_{h}(P)>1 \,|\, P\in Y\}$.
    %Since $m_h(P) = m_{h}(P')$ if $\pi_0(P)=\pi_0(P')$ by  Lemma \ref{lem:same-m_P}, 
    %Since $m_P=m_Q$ for every $P,Q\in \pi_0^{-1}(p)$, $p\in X_0$, 
    %For each of the possibilities for $\Ram(\oline\pi_0$ in Table~\ref{table:trans} the corresponding ramification  for its Galois closure appears in Table~\ref{table:genus1}. 
    By going over  ramification types for $\pi_0$  corresponding  to the possibilities for the ramification of $\oline\pi_0$ in Table~\ref{table:trans}, Corollary~\ref{cor:DZ} shows that 
    $M_h = \{m_\eta\suchthat \tau\in K\backslash G/\langle x_\eta\rangle\}$ for some $\eta\in J$, and there are three possibilities:
    %\footnote{Note that the case with $\oline G\cong S_4$ does not occur since in that case $\abs{\pi_0^{-1}(p)}>4$ for all  $p\in X_0$.}: 
    \begin{enumerate} 
    %\item[(F0)] $M_h=\emptyset$ and $g_Y=1$; 
    %\item[(I1)] 
    \item[(I1)] $M_h=\{\infty,\infty\}$, and $E_{\pi_0}(P_\eta)=[t,t]$, and $E_{\oline\pi_0}(P_\eta)=[t]$ for $t=3$ or $4$;
    %or $E_{\pi_0}(P_\eta)=[4,4]$ and $E_{\oline ;% and $t=3$ or $4$; %and $g_Y=0$; 
    %\item[(F1)] 
    \item[(F1)] $M_h = \{3,3,3\}$, and $E_{\pi_0}(P_\eta)=[2^3]$, and $E_{\oline\pi_0}(P_\eta)=[2,1]$ for $t=3$; % and $g_Y=0$; 
    %\item[(F2)] 
    \item[(F2)] $M_h = \{2,2,2,2\}$, and $E_{\pi_0}(P_\eta)=[2^4]$, and $E_{\oline\pi_0}(P_\eta)$ is $[2,1^2]$ or $[2^2]$ for $t=4$. %, and $g_Y=0$. 
    \end{enumerate}
    In all three cases $\oline G$ is Dihedral of order $2t$ and in particular the (so far excluded) case with $\oline G\cong S_4$ does not occur. Note that as $E_{\oline\pi_0}(P_\eta)$ is the cycle structure of $\sigma_j$, we have  $\eta\not\in S$ in case (I1) and $\eta\in S$ in case (F1).  % with $m_k>1$. %by running over the possibilities for $\tau$. 
    It follows that $m_j=1$ for $j\in J\setminus\{ \eta \}$, and $\abs F=2$ as $\oline G\not\cong S_4$.  Thus \eqref{equ:co} reduces to 
    \begin{equation}\label{equ:co2}
     \sum_{\{j\in S\suchthat m_j\text{ is odd}\}}\sum_{\theta\in F\cap O_j}n_{\theta,2} \geq (\frac{m}{e_\eta}-\abs{O_\eta})(1-\frac{1}{m_\eta})\ell - \delta_2.
     \end{equation}
    %\end{enumerate}
    
    In case (I1),  $\eta\not\in S$, $m_\eta=\infty$,  $e_\eta=m/2$, and $x_\eta$ has one orbit, giving $\abs{O_\eta}=1$. Hence  $\sum_{\theta\in F}n_{\theta,2}\geq \ell -  \delta_2$ by \eqref{equ:co2}. Since in addition $n_{\theta,2}\leq \ell/2$ for each $\theta\in F$ and $\abs F=2$,  we have $n_{\theta,2}\geq \ell/2-\delta_2$ for all $\theta\in F$, as desired. 
    
    In case (F1),  $\eta\in S$, $m_\eta=3$, $m=6$,  $e_\eta=2$, and $\abs{O_\eta}=2$. Hence \eqref{equ:co2} gives $\sum_{\theta\in F}n_{\theta,2}\geq 2\ell/3 - \delta_2$. As $\abs{S}=2$, write $S=\{\eta,\mu\}$. In this case, $\sigma_\eta$ and $\sigma_\mu$ both have a single  fixed point, denoted by $\theta_1$ and $\theta_2$, respectively. %for $j\in S$. Let $\theta_1$ be the fixed point of $x_\eta$, and $\theta_2$ the fixed point of $x_j$ where . 
    Since $n_{\theta_1,2}\leq \ell/(2m_\eta) = \ell/6$ and $n_{\theta_2,2}\leq \ell/(2m_{\mu})=\ell/2$, 
     the conclusion from \eqref{equ:co2} is that $n_{\theta_1,2} \geq \ell/6-\delta_2=\ell/(2m_\eta) - \delta_2$ and 
     $n_{\theta_2,2} \geq \ell/2-\delta_2= \ell/(2m_{\mu}) - \delta_2$, as desired. 
    
    In case (F2),  $m=8$, $m_\eta=2$, $e_\eta =2$, and $S=\{\mu\}$ for some $\mu\in J$. 
    Moreover, the case $\mu=\eta$ is covered in Step IV, as then $m_\mu=m_\eta$ is even. 
    Hence, we may assume $\mu\neq\eta$, in which case $\abs{O_\eta}=2$. 
    Then  \eqref{equ:co2} reduces to $\sum_{\theta\in F}n_{\theta,2}\geq \ell - \delta_2$ and by definition $n_{\theta,2}\leq \ell/2$ for $\theta\in F$. Thus in this case  as well $n_{\theta,2}\geq \ell/2-\delta_2$ for $\theta\in F$, as desired. 
    \end{proof}

    \begin{proof}[Proof of Proposition \ref{prop:main-transposition}]
    Let $x_1,\ldots, x_r\in G$ be a product-$1$ tuple corresponding to~$f$, and $P_j$ the branch point corresponding to $x_j$. Put $J=\{1,\ldots,r\}$ and  
    let $R_\pi$ be the sum of contributions $R_\pi(f^{-1}(P))$ over all points $P$ of $\mP^1$. 
    We note that since $h_i:Y_i\ra Y$, $i\in I$ admit irreducible fiber products $Y_i\#_Y Y_j$, $i\neq j$ of genus  $< \alpha\ell$, and since the number of branch points of $\pi_0$ is at most four as $g_Y\leq 1$, 
    it follows by Corollary \ref{cor:DZ} that the number of branch points $r$ is bounded by a constant $B=B_{\alpha,t}>0$, depending only on $\alpha$ and $t$. 
    
    Let $S$ be the set of all $j\in J$ such that the action of $x_j$ on $I$ consists of odd length orbits and a single orbit of length $2$. 
    %If $S=\emptyset$ then 
    Proposition \ref{prop:genus-analysis}.(2) implies that $R_\pi(f^{-1}(P_j))<E_0\ell^{t-2}$ for all $j\in J\setminus S$, where  $E_0=E_{0,t}>0$ is a constant depending only on $t$. 
    In particular, if $S=\emptyset$, we have $R_\pi<E_3\ell^{t-2}$
    for a constant $E_3:=BE_0$, which depends only on $\alpha$ and $t$. 
    
    Henceforth, assume $S\neq\emptyset$. 
    %Let $x$ be the branch cycle of $P$ under $f$, and $y$ a reduced form of $x$ with representatives $i_1,\ldots, i_{t-1}\in I$, where $i_{t-1}$ has a nontrivial orbit under $x$. 
    Recall that the natural projection $\oline\pi_0:\oline Y\ra \mP^1$  from the quotient $\oline Y$ by a point stabilizer in the action of $G$ on $I$, has Galois closure $\pi_0$, and monodromy group $G/K$ equipped with its action on $I$. 
    In particular, the ramification multiset $E_{\oline\pi_0}(P_j)$ coincides with the cycle structure of the image $\sigma_j$ in $G/K$ of $x_j$, for $j\in J$.  Since $S\neq \emptyset$ and $g_Y\leq 1$ by Corollary \ref{cor:DZ},
    %as $h_i$, $i\in I$ have almost Galois ramification, 
    the ramification type of $\oline\pi_0$ appears in Table~\ref{table:trans} by Lemma~\ref{lem:no-trans}, and hence $S$ consists of those $j\in J$ for which $x_j$ acts on $I$ as a transposition.   
    %Since $S\neq \emptyset$ and $g_Y\leq 1$, the ramification of $\oline\pi_0$ is in Table~\ref{table:trans} by Lemma~\ref{lem:no-trans}.
    Hence, there exists a product-$1$ tuple $x_1',\ldots,x_r'$ which satisfies condition (LG$_Y$) and whose ramification type is the same as that of $f$, by Lemma~\ref{rem:LGY-holds}. 
    We can therefore apply Proposition  \ref{lem:product} and deduce that there exists a reduced tuple $y_{\theta,j}$ where $\theta$ runs through the set $O_j$ of orbits of $x_j'$ on $I$, for $j\in J$. %, $i\in I$ for $x_1,\ldots, x_r$. 
    %is a reduced cover $\hat f: \hat X\ra \mP^1$ whose branch cycle is conjugate to the entries $y_j(i), j=1,\ldots, r$, $i\in I$ of  reduced forms $y_1,\ldots, y_r$ of  branch cycles of $f$. Riemann--Hurwitz for $\hat f$ gives Ree's inequality 
    %$\sum_{j=1}^r\sum_{i\in I}\ind_\Delta(y_j(i)) \geq 2\ell-2$. Hence, we may apply Lemma \ref{cor:gZ-bound}. 
    %Let $y_j$ be the associated reduced form of $x_j'$, with representatives $\iota_\theta$, $\theta \in O_j$, $j\in J$. 
    
    Let $e_j$ be the order of the image of $x_j'$ in $G/K$, and $z_j:=(x_j')^{e_j}\in S_\Delta^I$, 
    %and $i_o, o\in O_j$ the set of representatives,  
    for $j\in J$.
    %Note that since the ramification type of $\oline\pi_0$ appears in Table~\ref{table:trans},  $E$ consists of those $j\in \{1,\ldots,r\}$ for which $x_j$ acts on $I$ as a transposition.   
    By Proposition \ref{cor:gZ-bound}, $y_{\theta,j}$ is of almost Galois type $m(y_{\theta,j}) = 2m(z_j(\iota_\theta))<\infty$ with error at most $\hat\eps_{\alpha,t}$ depending only on $\alpha$ and $t$, if $j\in S$ and $\theta\in O_j$ has length $1$. 
    %Moreover, $m_j:=m(y_{\theta,j})$ is independent of $\theta\in O_j$, for each $j\in S$. 
    %%and  of type $m(y_{\theta,j}) = m(z_j(\iota_\theta))$ otherwise, for all $\theta\in O_j, j\in J$. 
    Since $t\geq 3$ and $x_j'$ acts as a transposition on $I$, each $x_j'$ has an orbit $\tilde \theta\in O_j$ of length $1$ for every $j\in S$, 
    so that $m(y_{\theta,j})$ is even.
    Hence, 
    %Lemma~\ref{cor:Rpi-almost-Galois}
    %implies that there exists a constant $E_4=E_{4,t,\alpha}$ such that 
    $R_{\pi}(f^{-1}(P_j)) < E_1\ell^{t-2}$ for all $j\in S$, where $E_1$ is the constant provided by Corollary  \ref{cor:rpi-Galois}. In total we have $R_\pi < E_2\ell^{t-2}$ for $E_2\geq B\cdot \max\{E_3,E_1\}$.  
    \end{proof} 
    This completes the proof of Theorem \ref{thm:main-Gal} and hence also that of Theorem \ref{thm:main-wreath}.

    \section{Realizing the ramification types in Table~ \ref{table:wreath}}\label{sec:h1-branches}
    In this section, we prove the existence of indecomposable coverings $f:X\ra \mP^1$  with the ramification types in Table~\ref{table:wreath}, and a monodromy group of product type. For this, by Riemann's existence theorem it suffices to show:
    %-======================== Move to group theory part ==========================
    \begin{prop}\label{prop:wreath-types} %Corollary 4:
    For every ramification type in Table~\ref{table:wreath} with $\ell\geq 9$ %, other than F4.N1 and F4.N2, 
    there exists a product-$1$ tuple corresponding to the ramification type and generating a primitive group  $G\leq S_\ell \wr S_2$ that contains $A_\ell^2$. 
    \end{prop}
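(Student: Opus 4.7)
My plan is to handle each type in Table~\ref{table:wreath} by (i) writing down explicit branch cycle representatives in $S_\ell\wr S_2$ belonging to the prescribed conjugacy classes, with product $1$, and (ii) verifying that the generated group is primitive and contains $A_\ell^2$. For (ii), rather than checking primitivity of the action on $\Delta^2$ directly, I will use \Lref{lem:cyc-exist}: it suffices to exhibit an element of $G$ with nontrivial image in $S_2$ (transitivity on $I=\{1,2\}$) and to show that the projections $\pi_i(K)\leq S_\ell$ of $K=G\cap S_\ell^2$ to each coordinate each contain $A_\ell$. Since $\ell\geq 9\neq 6$, \Lref{lem:cyc-exist}(2) then forces $K\supseteq A_\ell^2$, and \Lref{lem:cyc-exist}(1) yields primitivity once transitivity on $I$ is noted.

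For step (i) I will, in each type, fix the first $r-1$ branch cycles as explicit elements $(\alpha_i,\beta_i)s^{j_i}\in S_\ell\wr S_2$ in the prescribed classes, then set $x_r:=(x_1\cdots x_{r-1})^{-1}$ and check that $x_r$ falls in the remaining prescribed class. Because conjugacy classes in $S_\ell\wr S_2$ are determined by the coordinatewise cycle structures (for elements of $S_\ell^2$) or by the cycle structure of $\alpha\beta^\sigma$ where $y=(\alpha,\beta)s$ (for the coset of swaps, cf.~\Lref{lem:conjugate-1} with $t=2$), this reduces in each case to a single cycle structure check of a product of two or three permutations in $S_\ell$. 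The parametric freedom built into many types (e.g.~the integer $a$ with $(a,\ell)=1$, or the parity of $\ell$) leaves room to adjust overlaps between cycles and so to control the cycle structure of the resulting $x_r$; when necessary, standard fixed-point/swap moves on the ordered tuple, together with simultaneous conjugation by elements of $S_\ell^2$, provide further flexibility. Types that differ only by moving a factor of $s$ between branch cycles (e.g.~the subcases I1A.2a/b/c, I2.1a/b, F1A.1a/b, and the F4 family) are obtained from one another by combining an existing tuple with $(\mathrm{id},\mathrm{id})s$ and rebalancing, so they can be grouped.

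Once a product-$1$ tuple is in hand, the transitivity on $I$ is immediate in every type of Table~\ref{table:wreath} since each ramification type contains at least one cycle with nontrivial $S_2$-image. For the projections $\pi_i(K)$, I will exhibit in $K=G\cap S_\ell^2$ an element whose $i$-th coordinate is either a $3$-cycle, a transposition, or a double transposition; this element is produced by squaring (or taking a suitable power of) one of the $x_j$ whose $S_2$-image is the swap, following the recipe used in the proof of \Lref{lem:All2}: for instance for the types I1.1, F4, and all ``$\ldots s,s$'' types the square of a relevant branch cycle supplies a $3$-cycle, a double transposition, or a transposition in one coordinate, after which conjugation by an element of $G$ with nontrivial $S_2$-image transports the element to the other coordinate. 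Primitivity of $\pi_i(K)$, needed to invoke Jordan's theorem (\Rref{rem:jordan}) and conclude $\pi_i(K)\supseteq A_\ell$, follows in turn from the fact that the degree-$\ell$ coverings $h_i$ are indecomposable (equivalently, $K$ is primitive on each coordinate); in the existence direction this can be checked by observing that any block system for $\pi_i(K)$ would force the specified cycle structures to respect a proper block structure, which fails for every type because of the presence of a single transposition, $3$-cycle, or $p$-cycle with $p<\ell-2$ in one coordinate.

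The main obstacle is the bookkeeping of step (i) across all of the roughly forty types, particularly those involving multiple long cycles in both coordinates (types I1A.4--6 and the I2.x with mixed parities), where the constraint that the last-defined $x_r$ falls in the prescribed class is most delicate. For those types I would handle $\ell$ even and $\ell$ odd separately and, when $\ell$ is even, exploit the entries $[1^2,2^{(\ell-2)/2}]$ versus $[2^{\ell/2}]$ already built into the table to pair up fixed points and $2$-cycles so that the residual product has exactly the prescribed cycle structure. The remaining types reduce to these after minor adjustments; in particular the F2, F3, and F4 families are essentially direct since one coordinate is the trivial partition $[1^\ell]$ and the constraint collapses to a single cycle-structure identity in $S_\ell$.
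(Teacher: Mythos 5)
Your high-level plan — write down explicit branch cycles, verify product $1$, then invoke \Lref{lem:cyc-exist} — is the right skeleton and matches the paper. But there are two substantive gaps.

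First, \Lref{lem:cyc-exist}(2) does \emph{not} by itself force $K\supseteq A_\ell^2$ once the projections $\pi_i(K)$ contain $A_\ell$: the conclusion is a disjunction, with the alternative being that $K$ is contained in a twisted diagonal $\{(u,v^{-1}uv)\,|\,u\in S_\ell\}$ for some $v$. You never explain how to rule this alternative out, and this is where most of the real work lies. The paper handles it in the main template by arranging the first branch cycle $(a,d)$ to have \emph{nonconjugate} coordinates $a,d\in S_\ell$, so that $K$ cannot sit in any diagonal; in the exceptional cases I2.2a, I2.10b, I1A.2a, I1A.5a, and the F4 family, it instead makes an explicit computation showing no $w$ can simultaneously conjugate the relevant pairs of coordinates. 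Your writeup skips this step entirely, so the conclusion $K\supseteq A_\ell^2$ does not follow.

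Second, your route to $\pi_i(K)\supseteq A_\ell$ via Jordan's theorem requires $\pi_i(K)$ to be primitive, and your justification of that primitivity is circular or hand-waved. In the ``indecomposable $h_i$'' version you are implicitly assuming $G$ is already primitive (that's where indecomposability of $h_i$ comes from, via \Lref{rem:product-type}(3)), which is what you are trying to prove. In the ``existence direction'' version you say that a block system would have to respect the given cycle structures, which fails because of a short cycle — but the mere presence of a transposition or $3$-cycle does not by itself preclude a block system; one needs an explicit argument that the generators connect the putative blocks, exactly as is done case by case in the proof of \Lref{lem:ram-possible}. The paper sidesteps Jordan's theorem in the main template by choosing the parameters $a,b,c,d$ to already generate $A_\ell$ or $S_\ell$, so that the projections $\pi_i(K)\supseteq\langle a,b,c,d\rangle$ contain $A_\ell$ for free; the burden of producing such $a,b,c,d,f$ with $dcabf=1$ in the prescribed cycle classes is then discharged via \cite[Section 11]{NZ}, \cite[Chapter 3]{GS}, and \Lref{lem:ram-possible}. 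You would need some analogous input, not merely Jordan's theorem applied to an unverified primitive group.
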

    The proof relies on the following lemma:
    \begin{lem}\label{lem:ram-possible}
    For each $\ell\geq 9$ in the appropriate congruence class modulo $3$ and integer $0<a<\ell/2$ with $(a,\ell)=1$, there exist $x_1,x_2,x_3 \in S_\ell$ such that $x_1x_2x_3=1$, and  $\langle x_1,x_2,x_3\rangle$ contains $A_\ell$,
    and the cycle structures of $x_1,x_2,x_3$ are one of the following:
    %(for any prescribed $k$ with $0<k<\ell/2$ and $\gcd(k,\ell)=1$):
    \begin{equation}\label{table:I1A}%\label{table:F2}
    \begin{array}{| l | l |}
    %(i.1):
    \hline
    I1A.1 &  [\ell],  [\ell],  [3,1^{\ell-3}] \\
    %(i.2):
    I1A.2 &  [\ell],  [\ell],  [2,1^{\ell-2}],  [2,1^{\ell-2}] \\
    %(i.3):
    I1A.3 &  [\ell],  [\ell],  [2^2,1^{\ell-4}]\\
    %(i.5):
    I1A.4 &  [a,\ell-a],  [a,\ell-a],  [3,1^{\ell-3}] \\
    %(i.6):
    I1A.5 & [a,\ell-a],  [a,\ell-a],  [2,1^{\ell-2}],  [2,1^{\ell-2}] \\
    I1A.6 & [a,\ell-a],  [a,\ell-a],  [2^2,1^{\ell-4}]  \\
    %(i.4):
    %I1A.7 &  [\ell],  [a,\ell-a],  [2,1^{\ell-2}] \\
    %already exists
    \hline
    %\end{array}
    %\end{equation}
    %\end{lem}
    %
    %
    %\begin{lem}\label{lem:finite-types} %Lemma 5:
    %For any $\ell \geq 7$ in the appropriate congruence class modulo $3$ or $4$, there exist $a,b,c \in S_\ell$ with $abc=1$
    %and $\langle a,b,c\rangle =A_\ell$ or $S_\ell$ where the sequence of cycle structures of $a,b,c%$ is any prescribed permutation of any of the following tuples:
    %\begin{equation}\label{table:F2}
    %\begin{array}{|l |l |}
    %\hline
    %f.18:
    F2.1 &  [3^{\ell/3}],  [1,2,3^{(\ell-3)/3}]\text{ twice }\\
    %f.19:
    F2.2 & [1^2,3^{(\ell-2)/3}],  [2,3^{(\ell-2)/3}]\text{ twice }\\
    %f.20:
    F2.3 & [2^2,3^{(\ell-4)/3}],  [1,3^{(\ell-1)/3}]\text{ twice }\\
    %\hline
    %F3.1 & [1,3,4^{(\ell-4)/4}], [4^{\ell/4}], [1^2,2^{(\ell-2)/2}] \\
    %%f.2:
    %F3.2 & [2,3,4^{(\ell-5)/4}],  [1,4^{(\ell-1)/4}],  [1,2^{(\ell-1)/2}] \\
    %%f.1:
    %F3.3 & [3,4^{(\ell-3)/4}],  [1,2,4^{(\ell-3)/4}],  [1,2^{(\ell-1)/2}] \\
    \hline
    \end{array}
    \end{equation}
    \end{lem}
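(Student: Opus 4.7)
The plan is to proceed by explicit case-by-case construction for each row of Table~\eqref{table:I1A}. For each type I would display specific permutations $x_1,\ldots,x_r \in S_\ell$ (with $r\in\{3,4\}$ matching the number of cycle structures in the row) having the prescribed cycle structures and satisfying $x_1\cdots x_r = 1$, and then verify $\langle x_1,\ldots,x_r\rangle \supseteq A_\ell$.

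For the I1A types I would exploit a long-cycle generator. For I1A.1--I1A.3, take $x_1 = (1,2,\ldots,\ell)$ and choose the short generator(s) explicitly (a $3$-cycle for I1A.1, a double transposition for I1A.3, two transpositions for I1A.2), then set the remaining element by the product-$1$ constraint. A direct cycle computation then verifies that the remaining element is an $\ell$-cycle; for example, with $x_3 = (1,2,3)$ in I1A.1 one finds that $x_2 = (x_1 x_3)^{-1}$ acts as $1 \to \ell \to \ell-1 \to \cdots \to 4 \to 2 \to 3 \to 1$, hence is an $\ell$-cycle. For I1A.4--I1A.6 take $x_1 = (1,\ldots,a)(a+1,\ldots,\ell)$, arrange the support of the short generator to meet both orbits of $x_1$, and choose it so that the remaining element has cycle type $[a,\ell-a]$; the coprimality hypothesis $(a,\ell) = 1$ is used both to ensure the correct cycle lengths arise and to rule out imprimitive block systems of size dividing $\gcd(a,\ell-a)$.

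For the F2 types I would construct $x_1$ as a product of disjoint $3$-cycles (with a fixed point or a transposition adjoined as dictated by $\ell \bmod 3$), and choose $x_2,x_3$ of the prescribed types with supports chosen to link consecutive $3$-cycles of $x_1$, forcing transitivity of $\langle x_1,x_2,x_3\rangle$. Concretely for F2.1 (with $3 \mid \ell$), set $x_1 = (1,2,3)(4,5,6)\cdots(\ell-2,\ell-1,\ell)$ and pick $x_3$ of type $[1,2,3^{(\ell-3)/3}]$ with its distinguished transposition straddling two consecutive $3$-cycles of $x_1$; then $x_2 = (x_1 x_3)^{-1}$ can be shown to have the required cycle type by direct computation. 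The analogous constructions for F2.2 and F2.3 differ only in where the fixed points or transpositions are placed.

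Containment $A_\ell \leq \langle x_1,\ldots,x_r\rangle$ follows in each case from three steps: (i) transitivity, which is immediate when $x_1$ is an $\ell$-cycle, follows from $(a,\ell) = 1$ plus the explicit linking for I1A.4--6, and requires direct verification in the F2 cases; (ii) presence of a $3$-cycle (in I1A.1, I1A.4, and all F2 cases) or a double transposition (in I1A.3, I1A.6, and by forming the product of the two transpositions in I1A.2, I1A.5); and (iii) primitivity, proved by showing that no nontrivial block system is compatible with both $x_1$ and the short generator, via analyzing the $x_3$-image of a putative block containing $1$. With these in hand, Jordan's theorem (Remark~\ref{rem:jordan}) yields $A_\ell \leq \langle x_1,\ldots,x_r\rangle$. \textbf{The main obstacle} will be the F2 cases, where the absence of any long cycle makes both transitivity and primitivity non-routine and requires careful combinatorial control of the $3$-cycle supports; a secondary difficulty lies in the four-generator cases I1A.2 and I1A.5, where the burden of producing the required cycle type on the remaining element is distributed across two transpositions, making the cycle-structure verification more intricate.
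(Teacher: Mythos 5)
Your plan matches the paper's proof in all essential respects: for each row of the table one writes down explicit permutations realizing the product-$1$ relation, verifies primitivity by a block analysis against the long cycle (or the two long cycles of coprime length), and then invokes Jordan's theorem (Remark~\ref{rem:jordan}) using the $3$-cycle or double-transposition present in the tuple. One simplification worth knowing for I1A.5, which the paper exploits: by taking the tuple to be $x,x^{-1},y,y^{-1}$ with $x$ of cycle type $[a,\ell-a]$ and $y$ a transposition, the product-$1$ relation is automatic and the whole burden collapses to showing $\langle x,y\rangle=S_\ell$, so this case is actually no harder than I1A.4 rather than one of the ``more intricate'' ones you flagged; also note that the coprimality $(a,\ell)=1$ is not what powers the block argument in I1A.4--I1A.6 (one instead uses that the block containing $1$ must absorb the full support of the $3$-cycle or short element and then, via the $a$-th and $(\ell-a)$-th powers of the long element, the whole set), so your heuristic that it ``rules out block systems of size dividing $\gcd(a,\ell-a)$'' is off since that gcd is already $1$.
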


    \begin{proof}[Proof of Proposition \ref{prop:wreath-types}]
    In each case in Table~\ref{table:wreath}, we give a product-$1$ tuple corresponding to this ramification types. Let $G\leq S_\ell \wr S_2$ be the group generated by this tuples,  $K:=G\cap S_\ell^2$ and $\pi_i:K\ra S_\ell, i=1,2$ be the natural projections to the first and second coordinate, respectively. 
    
    Consider elements $a,b,c,d,f \in S_\ell$ satisfying $dcabf=1$ with $\langle a,b,c,d\rangle = A_\ell$ or $S_\ell$ and such that $a,d$ are nonconjugate in $S_\ell$. Then both tuples
    \begin{equation}\label{equ:main-branch}
    \begin{array}{ll}
    (a,d), (b,c), (f,1), s, (abf,(abf)^{-1})s; \text{ and } \\
    (a,d), (b,c), (f,1)s, (abf,(abf)^{-1})s, 
    \end{array}
    \end{equation}
    have product $1$. Furthermore, we note that  $G$  is primitive: Indeed, $G$ contains $(a,d), (b,c)$ and the conjugates of these elements by $(f,1)s$, which are $(d,f^{-1}af)$ and $(c,f^{-1}bf)$, respectively.  
    Then the image $\pi_1(K)$  contains $J:=\langle a,b,c,d\rangle$ and hence contains $A_\ell$. Also $G$ contains the square of $(f,1)s$ which is $(f,f)$, so that $\pi_2(K)$ contains $f,d,c,f^{-1}af,f^{-1}bf$
    and hence contains $J$, and also $A_\ell$.  %Moreover, plainly  $G \leq J \wr S_2$,  so that $ K\leq  J \times  J$.
    Thus, we can apply Lemma \ref{lem:cyc-exist}, and as $a,d$ are nonconjugate deduce that $K\supseteq A_\ell^2$ and $G$ is primitive.
    
    Recall that an element of the form $(\alpha,\alpha^{-1})s$, $\alpha \in S_\ell^2$, is conjugate in $S_\ell \wr~S_2$ to $s$. Thus each of the ramification types in Table~\ref{table:wreath} corresponds to one of the product~$1$ tuples in \eqref{equ:main-branch}, with the exception of cases 
    \begin{equation}\label{equ:inf}
     \text{I1.1, I1A.2a, I1A.5a, I2.1b, I2.2a, I2.9a, I2.10b,} 
     \end{equation}
    where none of the branch cycles are conjugate to $s$, and all F4 cases in which there are four elements in $G\setminus K$. Moreover, for each ramification type which is not in \eqref{equ:inf}, there exists $a,b,c,d,f$ for which \eqref{equ:main-branch} corresponds to this ramification type, and such that $dcabf=1$ and $\langle a,b,c,d\rangle$ is  $A_\ell$ or $S_\ell$ by \cite[Section 11]{NZ} which relies on \cite[Chapter 3]{GS} and by Lemma \ref{lem:ram-possible} 
    % \ref{lem:finite-types} 
    for the I1A and F2 types, as desired.

    We next consider the exceptions in \eqref{equ:inf}. Consider the tuple
    \begin{equation}\label{equ:branch3} (b,d), (1,u)s, (bv,b^{-1})s, \end{equation}
    where $b,d,u,v \in S_\ell$ are elements which satisfy $dubv =1$ and $\langle b, d,u,v\rangle  = A_\ell$ or~$S_\ell$. 
    The tuple \eqref{equ:branch3} then has product $1$. 
    %Letting $G$ be the group generated by the elements in \eqref{equ:branch3}, 
    We claim that $\pi_i(K)$ contains $A_\ell$, for $i=1,2$. 
    Since $K$ contains $(b,d)$ and the squares  $((1,u)s)^2=(u,u)$ and $((bv,b^{-1})s)^2=(bvb^{-1},v)$, the projection $\pi_2(K)$ contains $d,u,v$, and hence also $b=u^{-1}d^{-1}v^{-1}$, so that  $\pi_2(K) \supseteq A_\ell$. Conjugating $\pi_2^{-1}(A_\ell)$ by $(1,u)s$ we get that $\pi_1(K)$ also contains $A_\ell$, proving the claim. 
    Hence by  Lemma \ref{lem:cyc-exist}, either $G\supseteq A_\ell^2$ or $K\subseteq \{(u,u^w)\,|\,u\in S_\ell\}$ for some $w\in S_\ell$. %
    
    For each of the ramification types $\mathcal R$ in \eqref{equ:inf}, % I1.1, I1A.2a, I1A.4, I1A.5a, I2.1b, I2.2a, I2.9a, I2.10b  
     there exist $d,u,b, v\in S_\ell$  such that $dubv=1$, $\langle b,d,u,v\rangle  = A_\ell$ or $S_\ell$, and for which the tuple \eqref{equ:branch3} is conjugate to $\mathcal R$, by \cite[Section 11]{NZ} and by Lemma \ref{lem:ram-possible} for the I1A types. 
     In cases I1.1, I2.1b, I2.9a, the elements $a,d$ can be chosen to be nonconjugate, and hence $K\supseteq A_\ell^2$ and $G$ is primitive by Lemma \ref{lem:cyc-exist}. 
    %By Lemma \ref{lem:cyc-exist}, 
    
    Other than the F4, types, it remains to choose $b,d,u,v$ %for which  \eqref{equ:branch3} corresponds to types
    in cases I2.2a, I2.10b, I1A.2a, I1A.5a such that $K\supseteq A_\ell^2$ or equivalently that $K$ is not contained $D_w:=\{(u,u^w)\,|\,u\in S_\ell\}$ for any $w\in S_\ell$. 
    In case I2.2a, we may choose 
    \begin{align*}
    b & = (1,2)(3,4)\cdots(\ell-3,\ell-2), \qquad v=(\ell-1,\ell) \\
    d & = (2,3)(4,5)\cdots(\ell-2,\ell-1), \qquad u = (2, 4, \ldots, \ell, \ell-1, \ell-3,\ldots, 1).
    \end{align*}
    %generates $S_\ell$? *****
    % If $\ell-1,\ell$ are in the same block then by applying u we get that the block is everything
    % if theyre not then v switches these blocks and hence in particular these blocks do not contain any other elements (as these are fixed by $v$). Thus $\langle b,u,v,d\rangle$ is primitive. As it contains a transposition by Jordan its all of $S_\ell$. 
    %(1,u)s, (bv,b^{-1})s
    As $K$ contains $(u,u) = ((1,u)s)^2$ and $(b,d)$, %(bvb^{-1},v)$. 
     an inclusion $K\subseteq D_w$ implies that $u^w = u$ and $b^w=d$. 
     As the centralizer of the $\ell$-cycle $u$ is $\langle u\rangle$, this implies $w\in \langle u\rangle$, but a straightforward check shows that $b^{u^i}\neq d$ for all $i$, contradiction.
     %Here evedintely no power of u moves {ell-1,ell} to {1,ell}. 
     Hence $K\supseteq A_\ell^2$ and $G$ is primitive by Lemma \ref{lem:cyc-exist}. 
    %where $u$ is an $\ell$-cycle and $v$ is a transposition. 
    
    The same argument also applies in case I2.10b by choosing:
    %$$ 
    %\begin{array}{rl}
    \begin{align*}
    b & = (1,2)(3,4)\cdots(\ell-2,\ell-1),\quad  v=(2k,1),\quad d = (2,3)(4,5)\cdots(\ell-1,\ell) \\
    u & = (2k-1,2k-3,\ldots,1)(2,4,\ldots, \ell-1,\ell, \ell-2, \ldots, 2k+1),
    \end{align*}
    %\end{array}
    %$$
    for every $1\leq k<\ell-1$ prime to $\ell$. % $(k,\ell)=1$. 
    % show generates $S_\ell$. ****
    % Raising $u$ to the power of $k$ and $\ell-k$ respectively we get that $(2k-1,\ldots,1)$ and also $(2,4,\ldots,\ell-1,\ell,\ell-2,\ldots,2k+1)$. Again either 1,2k are in the same block in which case by applying $bd = (1,3,5,\ldots, \ell, \ell-1,\ldots, 2)$ we get that the block contains everything. Again if 1 and 2k are in different block then this blocks are switched by $v$ and hence are trivial. Thus $\langle b,u,v,d\rangle$ is primitive. As it contains a transposition, by Jordan it is $S_\ell$. 
    As in the previous case, an inclusion $K\subseteq D_w$ implies $u^w =u$ and $b^w=d$. As $(k,\ell)=1$, the centralizer of $u$  is $$C_{S_\ell}(u)=\langle (1,3,\ldots, 2k-1), (2k+1,2k+3,\ldots, \ell, \ell-1,\ell-3,\ldots, 2)\rangle.$$ Since $b^w=d$, we have $w(\ell)=1$ but there is no such element in $C_{S_\ell}(u)$. 
    % where $u$ can be chosen to be of type $[k,\ell-k]$ with $(k,\ell)=1$ and $v$ a transposition such that $uv$ is an $\ell$-cycle, and hence  $(uv,uv), (v,v)$ generate the diagonal subgroup of $S_\ell^2$. % and the same argument implies $G$ is primitive. . Namely, 
    
    %Specifically one chooses
    %$$ \begin{array}{l}
    %u =  (2,\ldots, k-1, k, \ldots, 1), b=(1,2)(3,4)\cdots (\ell-2,\ell-1), \\ 
    %v=(k,k+1)\text{ and }d = (2,3)(4,5)\cdots (\ell-1,\ell).
    %\end{array} $$ 

     In  Case I1A.2a (resp.~I1A.5a),  we can take $b = d^{-1}$ to be  $(1,\ldots,\ell)$ (resp.~to be $(1,\ldots, k)(k+1,\ldots, \ell))$, and $u^{b} = v = (2, 3)$ (resp.~$u^{b} = v = (1, k+1)$). 
    %$$( (1,2,\ldots,\ell), (1,\ell,\ell-1,\ldots,2) ),  ( (1,2)(1,\ell,\ell-1,\ldots,2), (1,2,\ldots,\ell) ) s,  (1,(2,3))s$$
    %Since $(u,u),(v,v), (a,a^{-1})\in K$, the projection on both coordinates contains both an $\ell$-cycle ($a$ in case I1A.2a and $au$ or $a^{-1}u$ in case I1A.5a) and a transposition, and hence $S_\ell$. 
    Conjugating $(b,b^{-1})$ by $(1,u)s$ we get $((b^{-1})^u, b)\in G$, and hence $(u^{-1}b^{-1}ub,1)\in G$. Since $u^{-1}b^{-1}ub$ is not conjugate to $1$ in both cases, Lemma~\ref{lem:cyc-exist} gives $K\supseteq A_\ell^2$ and hence $G$ is primitive.

    For types F4.1-F4.5, consider the  tuple:
    \begin{equation}\label{equ:g1-branch} s,(a^{-1},a)s, (b,ub^{-1})s,(c^{-1}v,c)s, (d_1,d_2), (e_1,e_2), \end{equation}
    where all of $u, v, d_i,e_i, i=1,2$, are trivial except (A) one $3$-cycle in case F4.5,  (B) one element which is a product of two transpositions in case F4.4, and (C) two elements that are transpositions in cases F4.1-F4.3. 
    %The tuple in case F4.4 is specified in the end of the proof. 
    Note that the tuples in \eqref{equ:g1-branch} cover all cases F4.1-F4.5 as they are conjugate in $S_\ell \wr S_2$ to: 
    \begin{equation}\label{equ:g1-conjugate} s,s,(u,1)s, (v,1)s, (d_1,d_2), (e_1,e_2). \end{equation}
    
    We first consider cases F4.1-F4.5. The product-$1$ relation for the elements in \eqref{equ:g1-branch} is equivalent to the equations $abcd_1e_1 = 1$ and $a^{-1}ub^{-1}c^{-1}vd_2e_2=~1$. Setting $a^{-1} = bcd_1e_1$, we get that the product~$1$ relation is equivalent to: 
    %\begin{equation}\label{equ:g1-prod1} 
    \[
    [c^{-1},b^{-1}] =  (d_1e_1u)^{b^{-1}c^{-1}}vd_2e_2, 
    \]
    %\end{equation}
    where $[c^{-1},b^{-1}] = cbc^{-1}b^{-1}$. Setting $b = (1,\ldots, \ell)$ and $c=(1,k)$ with $k>2$ and $(k-1,\ell)=1$ (resp.~$c = (1,2)$, $k=2$) in cases (B) and (C) (resp.~in case (A)), we get that $[c^{-1},b^{-1}]$ is a product of two transpositions (resp.~a $3$-cycle). 
    For types F4.4, F4.5,  note that $b,c$ can also be chosen even so that $G\subseteq A_\ell\wr S_2$, namely, set $c= (1,3)(2,4)$ (resp.\ $c=(1,4)(2,5)$) and $b=(1,\ldots,\ell)$ for odd $\ell$ in  (A) (resp.\  (B)), and set $c=(1,2,3)$ (resp.\ $c=(1,3)(2,4))$ and $b:=(1,2)(3,4,\ldots,\ell)$ in (A) (resp.\ (B)) for even $\ell$. 
    For each of the cases F4.1-F4.5, we then choose from $u,v,d_i,e_i,i=1,2$ the corresponding nontrivial element in cases (B) and (C) (resp.~two elements in case (A)) so that the tuple \eqref{equ:g1-branch} has product $1$. 
    
    We next claim that the group $G$ generated by the tuple \eqref{equ:g1-branch} is primitive in cases F4.1-F4.5. In these cases, the projections $\pi_i(K)$, $i=1,2$ are all of $S_\ell$ (resp.\ $A_\ell$ for F4.4,F4.5 with even $b,c$) since these contain the $\ell$-cycle $b$ (resp.\ $(1,2)(3,4,\ldots,\ell)$ for even $\ell$) and the transposition $(1,k)$ as $(k-1,\ell)=1$ (resp.\ and $c=(1,2,3)$ or $(1,3)(2,4)$ or $(1,4)(2,4)$). 
    Note that in cases F4.2, F4.3, F4.4 and F4.5, the tuple \eqref{equ:g1-branch} contains an element of the form $(x,1)$ with $x\neq 1$, arising either from $(d_1,d_2)$ or from $(e_1,e_2)$, and thus  $K$ is not contained in $D_w$ for any $w\in S_\ell$, implying that $G$ is primitive by Lemma \ref{lem:cyc-exist}. In case F4.1 we can assume $v$ is a transposition, and hence $(c^{-1}v,c)\in K$, but $c$ and $c^{-1}v$ are nonconjugate in $S_\ell$ (as they have a different sign), hence $K\not\not\subseteq D_w$ for all $w\in S_\ell$, and $G$ is primitive by Lemma \ref{lem:cyc-exist}. 
    \end{proof}
    \begin{proof}[Proof of Lemma \ref{lem:ram-possible}]
    In each case 
    %we write the elements in the product $1$ relation bounded by brackets, and 
    let $G$ be the group generated by the elements in the corresponding product-$1$ tuple.
    
    \noindent {\bf Cases I1A.1,I1A.2,I1A.3:} Write the product-$1$ relation as $x_2x_3=x_1^{-1}$ where
    %$$\begin{array}{l} 
    \begin{align*}
        x_2 & := (1,u+1) (2,u+2), \qquad x_3:= (1,2,\ldots,\ell), \\
    x_1^{-1} & := (1,u+2,3,4,\ldots ,u+1,2,u+3,u+4,\ldots,\ell),
    \end{align*}
    %\end{array} $$
    where $1\leq u\leq \ell-2$. 
    For $u>1$, this covers the product-$1$ relation in case I1A.3 and also in the case IA.2 by splitting $x_2$ into two elements $(1,u+1)$ and $(2,u+2)$. Note that if $u=1$ then $(1,u+1)(2,u+2)=(3,2,1)$ is a 3-cycle, covering case I1A.1.
    
    Now suppose that $(u,\ell)=1$ (which is of course true for $u=1$).
    Let $G = \langle x_1^{-1},x_2\rangle$. 
    %=(1,u+2,3,4,\ldots ,u+1,2,u+3,u+4,\ldots,\ell)$ and $x_2=(1,\ldots, \ell)$.
    Consider a partition of $\{1,2,\ldots,\ell\}$ preserved by $G$, and suppose that some part $B$ has
    size at least $2$.  Since the partition is preserved by $\langle x_3\rangle$, it must consist
    of congruence classes mod some proper divisor of~$\ell$.  
    Suppose that $1\in B$ and $B$ is not $\{1,2,\ldots,\ell\}$.  
    Then $2\not\in B$ and $u+1\not\in B$, as $(u,\ell)=1$.
    Hence $B$ is not fixed by $(1,u+1)(2,u+2)$, so it contains no fixed points of this
    permutation, whence $B=\{1,u+2\}$, so that $u=\ell/2-1$; but then its image $\{2,u+1\}=\{2,\ell/2\}$
    is also a block, contradicting the fact that  the block containing $\ell/2$ is $\{\ell/2,\ell\}$.
    Therefore the partition is trivial and $G$ is primitive.  Since $G$ contains an
    element with cycle structure $[2^2,1^{\ell-4}]$ or $[3,1^{\ell-3}]$,  it follows that $G$ contains $A_\ell$, % if $\ell>8$, 
    by Remark \ref{rem:jordan}.
    %But we can check by inspection that $G$ also contains $A_\ell$ if $\ell=8,6$ or $4$ as well.
    
    %\noindent {\bf Case I1A.7:} write the product $1$ relation $x_2x_3=x_1^{-1}$ where
    %$$
    %\begin{array}{l}
    %x_2:=(1,2,\ldots,\ell), x_3:= (1,a+1), \\
    %x_1^{-1}:=(1,a+2,a+3,\ldots,\ell)(2,3,\ldots,a+1),
    %\end{array}$$
    %for $a$ prime to $\ell$. Then $\langle x_2,x_3\rangle = S_\ell$ by identical argument to the above. 
    %%a primitive group is nearly identical to that above, though even easier.
    
    \noindent {\bf Case I1A.4:} Write the product-$1$ relation as $x_2x_3=x_1^{-1}$ where
    %$$
    %\begin{array}{l}
    \begin{align*}
    x_2 & :=(1,a+1,\ell+1-a), \qquad x_3:= (1,2,\ldots,a)(a+1,a+2,\ldots,\ell), \\ 
    x_1^{-1} & :=(1,a+2,a+3,\ldots,\ell+1-a,2,3,\ldots,a)(a+1,\ell+2-a,\ell+3-a,\ldots,\ell).
    \end{align*}
    %\end{array} $$
    %Let $G$ be the group generated by $(1,2,\ldots,k)(k+1,k+2,\ldots,\ell)$ and $(1,k+1,\ell+1-k)$.  
    Any block $B$ of $G$ of size bigger than $1$ which contains $1$ must also contain a fixed point of $x_2$ unless  $B=\{1,a+1,\ell+1-a\}$, so in any case $B$ contains $\{1,a+1,\ell+1-a\}$.
    But the $a$-th power of $x_3$ is an $(\ell-a)$-cycle which fixes $1$ and hence $B$. 
    As $B$ contains $a+1$, it also contains the entire $(\ell-a)$-cycle.
    Likewise the $(\ell-a)$-th power of $x_3$ is an $a$-cycle which contains $1$ and fixes $a+1$, so it fixes
    $B$ and hence contains the entire $a$-cycle.  So  $B=\{1,2,\ldots,\ell\}$, whence $G$ is primitive.
    Since $G$ contains a $3$-cycle, it contains $A_\ell$ by Remark \ref{rem:jordan}.
      
    \noindent {\bf Case I1A.5:}  Use the identity $x x^{-1}  y y^{-1}=1$, where $$x:=(1,2,\ldots,a)(a+1,a+2,\ldots,\ell),\text{ and }y:=(1,a+1),$$
    for $(a,\ell)=1$. As in case I1A.4, $\langle x,y\rangle$ is primitive and hence is $S_\ell$ by Remark \ref{rem:jordan}. 
    
    \noindent {\bf Case I1A.6:} Write the product-$1$ relation as
    $x_1x_2=x_3$, with 
    \begin{align*}
        x_1 & :=(a+1,a,a-1,\ldots,2) (\ell,\ell-1,...,a+2,1),  \\
        x_2 & :=(1,2,\ldots,a) (a+1,\ldots,\ell),\qquad x_3 := (1,a+1)(2,a+2) 
    \end{align*} if $1<a<\ell$, 
    and % } \\
    \begin{align*}
        x_1 & :=(\ell-1,...,3,\ell,2), \quad x_2:=(1,2,...,\ell-1), \quad x_3 := (1,2)(3,\ell) %\text{ if $a=1$ or $\ell$.}
    \end{align*}
     if $a=1$ or $\ell$.
    If $1<a<\ell$, switching the roles of $x_2,x_3$, the same argument as in case I1A.4 applies unless the nontrivial block $B$ containing $1$  contains no element from $a+1,\ldots,\ell$ and no fixed point of $x_3$, so that  $B=\{1,2\}$. In this case $B$ is not fixed by $x_2$, contradicting $1^{x_2}=2$. If $a=1$ or $\ell$, and $C$ is a block containing $\ell$, then $C$ is fixed by $x_2$, and hence either $C=\{\ell\}$  or $C$ also contains the orbit $1,2,\ldots,\ell-1$ of $x_2$.  Thus $G$ is primitive. As in addition $G$ contains $x_3$, it contains $A_\ell$ by Remark \ref{rem:jordan}. 
    %If $a\in\{1,\ell\}$, assume without lose of generality that $a=1$, and write the product $1$ relation as $x_1x_2=x_3$ where
    %\[
    %x_1:=(\ell-1,...,3,\ell,2), x_2:=(1,2,...,\ell-1), x_3 := (1,2)(3,\ell)
    %\]
    %in Case I1A.4, and 
    %\begin{align*}
    %x_1 & =(1,2,\ldots,a)(a+1,\ldots,\ell), x_3^{-1} = (1,a+1,2a+1),    \\ 
    %x_2 & = (2a,2a-1,\ldots,a+1)(\ell,\ell-1,\ldots,2a+1, a,a-1,\ldots,1). 
    %\end{align*}
    % in Case I1A.6. 
    %\end{proof}
    %\begin{proof}[Proof of Lemma \ref{lem:finite-types}]
    %$$(1,2,3)(4,5,6)(7,8,9)\ldots(3k-2,3k-1,3k)(3k+1)  (where $\ell=3k+1$)
    %(3k+1,1,2)(3,4,5)(6,7,8)(9,10,11)...(3k-3,3k-2,3k-1) (3k)
    %=
    %(1,3,5)(2,3k+1)(4,6,8)(7,9,11)...(3k-5,3k-3,3k-1)(3k-2,3k). $$
    
    %Write this without "..." :
    \noindent {\bf Case F3.2:} 
    For $\ell=3u+1$, %with $u\geq 1$, we us the identity 
    write the product-$1$ relation as $x_1x_2=x_3^{-1}$ where
    \begin{align*}
    x_1& :=(3u)(1,2,3u+1) \prod_{i=1}^{u-1} (3i,3i+1,3i+2), 
     \\
    x_2& :=(3u+1) \prod_{i=0}^{u-1} (3i+1,3i+2,3i+3), \\
    x_3^{-1}& :=(2,3u+1)(3u-2,3u) \prod_{i=1}^{u-1} (3i-2,3i,3i+2).
    \end{align*}
    
    Consider a block $B$ which contains $2$ and has size at least $2$.
    If  $3u+1\not\in B$ then $B$ is not fixed by
    $(x_3^{-1})^3=(2,3u+1)(3u-2,3u)$, so $B$ is either $\{2,3u-2\}$ or $\{2,3u\}$.  
    But $3u\not\in B$,
    since $3u$ is fixed by $x_1$, but $2^{x_1}= 3u+1$.
    So $B=\{2,3u-2\}$.  Then $B^{x_3^{-1}} = \{3u+1,3u\}$, while  $B^{x_1}=\{3u+1,3u-1\}$, contradiction.  So $B$ has to contain $3u+1$,
    and thus is fixed by $x_1$ and $x_3^{-1}$, whence the block
    contains the orbit of $2$ under $G$, so $B=\{1,2,\ldots, \ell\}$.  Hence $G$ is primitive and contains an element of cycle structure $[2^2,1^{\ell-4}]$,
    so % for $\ell>7$ the group contains $A_\ell$.  Since the group is clearly contained in $A_\ell$,
    %it must equal 
    $G=A_\ell$ by Remark \ref{rem:jordan}.
    %Also by inspection the group is $A_\ell$ when $\ell=7$ or $\ell=4$.
    
    %
    %f.19: [1^2,3^{(n-2)/3}],  [2,3^{(n-2)/3}] twice
    %(1)(2)(3,4,5)(6,7,8)(9,10,11)...(3k,3k+1,3k+2)  (where n=3k+2)
    %(1,3,4)(5,6,7)(8,9,10)...(3k-1,3k,3k+1)(2,3k+2)
    %=
    %(1,4)(2,3k,3k+2)(3,5,7)(6,8,10)(9,11,13)...(3k-3,3k-1,3k+1)
    %
    %Write this without "..." :
    \noindent {\bf Case F2.2:} For $\ell=3u+2$, % with $u\geq 1$, write: 
    write the product-$1$ relation as $x_1x_2=x_3^{-1}$ where
    \begin{align*}
    x_1& :=(2,3u+2)(1,3,4) \prod_{i=2}^u (3i-1,3i,3i+1), \quad
    x_2 :=(1)(2) \prod_{i=1}^u (3i,3i+1,3i+2), \\
    x_3& :=(1,4)(2,3u,3u+2) \prod_{i=1}^{u-1} (3i,3i+2,3i+4). 
    \end{align*}
    Consider a block $B$ which contains $2$ and has size at least $2$.
    Since $x_1^3=(2,3u+2)$, the block
    contains $3u+2$ and hence is fixed by $x_3^{-1}$.  
    Thus $B$ contains the orbit of $3u+2$ under $G=\langle x_1,x_3\rangle$, which is
    $\{1,2,\ldots,\ell\}$.  So $G$ is primitive and contains a $2$-cycle,
    whence $G=S_\ell$ by Remark \ref{rem:jordan}.
    
    %
    %f.18: [3^{n/3}],  [1,2,3^{(n-3)/3}] twice
    %$$(1,2,3)(4,5,6)(7,8,9)...(3k-2,3k-1,3k)  (where n=3k)
    %(1,2,3k) (3,4,5)(6,7,8)(9,10,11)...(3k-6,3k-5,3k-4)(3k-2,3k-1)
    %=
    %(3k-2,3k,2)(1,3,5)(4,6,8)...(3k-8,3k-6,3k-4)(3k-5,3k-3)(3k-1)$$
    %
    %Write this without "..." :
    \noindent {\bf Case F2.1:} For $\ell=3u$, % with $k\geq 2$, and write
    write the product-$1$ relation as $x_1x_2=x_3^{-1}$ where
    \begin{align*}
    x_1& :=(1,2,3u)(3u-2,3u-1) \prod_{i=1}^{u-2} (3i,3i+1,3i+2), \quad
    x_2 :=\prod_{i=0}^{u-1} (3i+1,3i+2,3i+3),\\
    x_3^{-1}& :=(3u-1)(3u-3,3u-5)(2,3u-2,3u) \prod_{i=1}^{u-2} (3i-2,3i,3i+2).
    \end{align*}
    The same argument as in case F2.2 applied to the block $B$ of $3u-1$ gives $G=S_\ell$. 
    %Consider a block $B$ which contains $3u-1$ and has size at least $2$.
    %Since $x_1^3=(3u-1,3u-2)$, the block
    %contains $3u-2$.  Thus $B$ fixed by $x_2$ as well,
    %so it includes the orbit of $3u-1$ under $G$, which is $\{1,2,\ldots,\ell\}$.
    %Thus $G$ is primitive and contains a $2$-cycle, so  $G=S_\ell$ by Remark \ref{rem:jordan}.
    \end{proof} 
    
    Up to conjugacy, there are four primitive subgroups $A_\ell^2\leq G\leq S_\ell\wr S_2$. These are (1) $A_\ell \wr S_2$, (2) $S_\ell\wr S_2$, (3) $(S_\ell\#_{C_2} S_\ell)\rtimes S_2$ where $S_\ell\#_{C_2} S_\ell:=\{(\sigma,\tau)\in S_\ell\times S_\ell\suchthat \sgn(\sigma)=\sgn(\tau)\}$ and $\sgn:S_\ell\ra C_2$ is the sign map, and (4)  $A_\ell^2\rtimes C_4$ where $C_4=\langle (a,1)s \rangle$ and $a\in S_\ell$ is a transposition. The following lemma determines the group for each of the ramification types in Table \ref{table:wreath}:
    \begin{lem}\label{lem:groups}
    Let $f$ be an indecomposable covering with monodromy group $A_{\ell^2}\leq G\leq S_\ell\wr S_2$ and ramification type in Table \ref{table:wreath}. \\ 
    Then $G=A_\ell\wr C_2$ in the following cases:  F2.3, F4.4, F4.5; when $\ell$ is odd in cases I1A.1, I1A.3;  when $\ell$ is even in cases I1A.4, I1A.6; when $\ell\equiv 1$ mod $4$ in cases I2.5, I2.8, F1A.5, F1A.8; when $\ell\equiv 3$ mod $4$ in cases I2.3, I2.6; when $\ell\equiv 0$ mod $4$ in cases I2.11, I2.15; when $\ell\equiv 2$ mod $4$ in cases I2.13, F1A.9; and when $\ell\equiv 1$ mod $8$ in case F3.2.

For the following types, under the complementary congruence conditions,  $G=(S_\ell\#_{C_2}S_\ell)\rtimes S_2$:  F1A.3a, F1A.4a, F1A.6a, F1A.7a, %F4.4, 
F4.4, F4.5; when $\ell$ is even in cases I1A.1, I1A.3;  when $\ell$ is odd in cases I1A.4, I1A.6; when $\ell\equiv 3$ mod $4$ in cases I2.5, I2.8, F1A.5, F1A.8; when $\ell\equiv 1$ mod $4$ in cases I2.3, I2.6; when $\ell\equiv 2$ mod $4$ in cases I2.11, I2.15; when $\ell\equiv 0$ mod $4$ in cases I2.13, F1A.9; and when $\ell\equiv 5$ mod $8$ in case F3.2.

In cases I1A.2a, I1A.5a, I2.2a, and I2.10b,  one has $G=A_\ell^2 \rtimes C_4$.

Finally,  $G=S_\ell \wr S_2$ in the remaining cases: I1.1, I1A.2b, I1A.2c, I1A.5b, I1A.5c, I1A.7a,I1A.7b, I2.1a, I2.1b, I2.2b, I2.4, I2.7, I2.9a, I2.9b, I2.10a, I2.12, I2.14, F2.1, F2.2, F3.1, F3.3, F1A.1a, F1A.1b, F1A.2a, F1A.2b, F1A.3b, F1A.4b, F1A.6b, F1A.7b, F4.1, F4.2, F4.3. 
\end{lem}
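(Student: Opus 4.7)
The plan is to classify $G$ by analyzing the image of the product-$1$ tuple for $f$ in the order-$8$ quotient $W := (S_\ell\wr S_2)/A_\ell^2$. Using the identities $\s(a,1)\s = (1,a)$ and $(a,1)(1,a) = (a,a)$ for any $a\in S_\ell\setminus A_\ell$, one checks that $W$ is isomorphic to the dihedral group $D_4 = \langle r, \sigma \suchthat r^4=\sigma^2 = 1, \sigma r\sigma = r^{-1}\rangle$ via $r\mapsto (a,1)\s\cdot A_\ell^2$ and $\sigma\mapsto \s\cdot A_\ell^2$. Subgroups of $W$ are in bijection with intermediate subgroups $A_\ell^2\leq G\leq S_\ell\wr S_2$; the four primitive such $G$ are precisely $A_\ell\wr S_2,\ A_\ell^2\rtimes C_4,\ (S_\ell\#_{C_2}S_\ell)\rtimes S_2$, and $S_\ell\wr S_2$, corresponding respectively to the subgroups $\langle\sigma\rangle,\ \langle r\rangle,\ \langle r^2,\sigma\rangle,\ D_4$ of $D_4$. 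The remaining nontrivial subgroups of $D_4$ (in particular $\langle r^2, r\sigma\rangle$ and its conjugates) correspond to intransitive or imprimitive groups such as $S_\ell^2$, and are ruled out by Lemma~\ref{lem:All2}.

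I would next compute the image in $W$ of each conjugacy class of branch cycle appearing in Table~\ref{table:wreath}. For a non-swap element $(\alpha_1,\alpha_2)\in K$, the image in $W$ is uniquely determined by the pair $(\sgn\alpha_1,\sgn\alpha_2)$, which is a function of the cycle structures via $\sgn([a_1,\ldots,a_k]) = \prod_i(-1)^{a_i-1}$. For a swap element $(\alpha_1,\alpha_2)\s$, a short calculation of conjugation by $(\beta_1,\beta_2)$ and by $(\beta_1,\beta_2)\s$ shows that $(\sgn\alpha_1,\sgn\alpha_2)$ is invariant only up to simultaneous multiplication by $\pm 1$ and possible transposition of the two components. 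Consequently, the image in $W$ of an element in the $S_\ell\wr S_2$-conjugacy class of $\s$ belongs to $\{\sigma, r^2\sigma\}$, while the image of an element in the class of $(a,1)\s$ (with $a\in S_\ell\setminus A_\ell$) belongs to $\{r, r^3\}$.

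With this dictionary in hand, the proof reduces to a case-by-case inspection of each row of Table~\ref{table:wreath}. For each row, I compute the images in $W$ of all branch cycles, then identify the subgroup $\oline G\leq D_4$ they generate and read off $G$ from the correspondence above. The various congruence conditions on $\ell$ stated in the lemma arise naturally from the parity of $\ell$-cycles (sign $(-1)^{\ell-1}$), $[a,\ell-a]$-bicycles (sign $(-1)^\ell$), and more complicated structures like $[1,3,4^{(\ell-4)/4}]$ whose parity depends on $\ell\pmod 8$. For ramification types in which every swap branch cycle lies in the conjugacy class of $\s$ (notably the F4 types), both possible images $\sigma$ and $r^2\sigma$ can occur for elements of this class, and the product-$1$ relation imposes no obstruction to either; consequently, different coverings with the same ramification type can yield $G = A_\ell\wr S_2$ or $G = (S_\ell\#_{C_2}S_\ell)\rtimes S_2$. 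This is what the lemma is encoding when it lists F4.4 and F4.5 under both of the first two groups.

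The main obstacle is organizational rather than conceptual: Table~\ref{table:wreath} contains over sixty ramification types, each requiring its own sign computation and subgroup identification, often split into several congruence subcases. A second point that deserves care concerns the F4 types, for which one must also verify that product-$1$ tuples realizing each of the two possible $G$ actually exist as primitive monodromy groups. This is immediate from the explicit constructions in the proof of Proposition~\ref{prop:wreath-types}: varying the signs of the conjugating elements (or, equivalently, solving a commutator equation $[\gamma_1\gamma_2^{-1},\gamma_3]=\eta$ within either $A_\ell$ or $S_\ell$) produces tuples realizing each admissible image in $W$.
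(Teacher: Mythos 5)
Your approach---projecting to $W=(S_\ell\wr S_2)/A_\ell^2\cong D_4$, matching the four primitive overgroups of $A_\ell^2$ to the subgroups $\langle\sigma\rangle$, $\langle r\rangle$, $\langle r^2,\sigma\rangle$, $D_4$, computing sign-images of branch cycles (with the noted conjugation ambiguity for swap elements), and using the product-$1$ relation to resolve the remaining choices---is precisely what the paper does, merely packaged there as a short decision procedure (cases (1), (2a)/(2b), (3a)/(3b)/(3c)) built on the same projection $\pi:S_\ell\wr S_2\to D_4$ and the same parity bookkeeping. Your observation that F4.4 and F4.5 genuinely admit both $A_\ell\wr S_2$ and $(S_\ell\#_{C_2}S_\ell)\rtimes S_2$, witnessed by the parameter choices in the proof of Proposition~\ref{prop:wreath-types}, likewise matches the paper's case (3c).
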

\begin{proof}
To determine $G$  one applies the following procedure to each ramification type $R$ in Table \ref{table:wreath}. As always set $K=G\cap S_\ell^2$. Let $x_1,\ldots,x_r$ be a product-$1$ tuple corresponding to $R$. \\ 
(1) If some $x_i$ is conjugate to $(a,b)$ where exactly one of $a$ and $b$ is an odd permutation, then  $G=S_\ell\wr S_2$. Indeed, since  the projections of $K$ to each copy of $S_\ell$ are isomorphic  as in Remark \ref{rem:setup}.(2), and since $x_i\not\in A_\ell^2$, both of the projections are $S_\ell$. However, as $K$ is not contained in $S_\ell\#_{C_2}S_\ell$, this forces $K=S_\ell^2$ and hence $G=S_\ell\wr S_2$.

Assume henceforth that no $x_i$ satisfies (1), and let $\mathcal S$ be the subset of $x_i$'s conjugate to some $(a,1)s$ for  $a\in S_\ell\setminus A_\ell$. Let $\pi:S_\ell\wr S_2\ra D_4$ be the composition of the projection to $S_\ell\wr S_2/A_\ell^2\cong S_2\wr S_2$ with an isomorphism to the Dihedral group $D_4$ of order $8$. 

\noindent (2)   If $\mathcal S\neq\emptyset$, then $G=A_\ell^2\rtimes C_4$ or $G=S_\ell\wr S_2$. 
Indeed, an element of $\mathcal S$ projects in $G/A_\ell^2$ to an element of order $4$, but the groups $A_\ell\wr S_2/A_\ell^2\cong S_2$ and $(S_\ell\#_{C_2}S_\ell)\rtimes S_2/A_\ell^2\cong C_2\times C_2$ contain no such element. \\
%we have $G\notin \{A_\ell\wr S_2,(S_\ell\#_{C_2} S_\ell)\rtimes S_2 \}$. Since $A_\ell^2\wr C_4$ is a subgroup of index $2$ in $S_\ell\wr S_2$, it is normal. Hence, every element of order  $4$ in $G/A_\ell^2$ is contained in $A_\ell^2\wr C_4$. \\
%that is, whose projection to $S_2$ is nontrivial. \\
\noindent (2a) If every $x_i\notin K$ is in $\mathcal S$, then  $G=A_\ell^2\rtimes C_4$. Indeed for such $x_i$,  $\pi(x_i)$  is an element of order $4$ which lies in the unique cyclic subgroup $C_4$ of order $4$ in $D_4$. As $\pi^{-1}(C_4)=A_\ell^2\rtimes C_4$, all such $x_i$'s are contained in $A_\ell^2\rtimes C_4$. Finally, the last group also contains all $x_i$'s which are in $K$, since by assumption these are in $S_\ell\#_{C_2} S_\ell=(A_\ell^2\rtimes C_4)\cap S_\ell^2$.  As all elements of $R$ are contained in $A_\ell^2\rtimes C_4$, so is $G$. As noted in (2), this forces $G=A_\ell^2\rtimes C_4$.  

\noindent (2b) If $\mathcal S\neq\emptyset$ and some $x_i$ is conjugate to $(a,1)s$ where $a$ is an even permutation, then $G=S_\ell \wr S_2$. Indeed, an element from $\mathcal S$ generates the unique order $4$ cyclic subgroup of $S_\ell\wr S_2/A_\ell^2\cong S_2\wr S_2$ and its square is in $K$. As $x_i$ is conjugate  to $(a,1)s$, its image   in $S_2\wr S_2/A_\ell^2$ is of order $2$ and is not in $K$, so that together with an element of $\mathcal S$ it generates all of $S_2\wr S_2$, forcing $G=S_\ell\wr S_2$. 

\noindent (3) Assume henceforth $\mathcal S=\emptyset$. 
Then all $x_i$'s are in $(S_\ell\#_{C_2} S_\ell)\rtimes S_2\lhd S_\ell\wr S_2$ and hence $G=A_\ell\wr S_2$ or $(S_\ell\#_{C_2} S_\ell)\rtimes S_2$. \\
(3a) If some $x_i$ is conjugate to $(a,b)$ for odd permutations $a,b$, then $G=(S_\ell\#_{C_2} S_\ell)\rtimes S_2$ since  $x_i\notin A_\ell\wr S_2$. \\
(3b) If every $x_i$ is conjugate either to an element in $A_\ell^2$ or to $s$, and there are exactly two $x_i$'s not in $K$, then $G=A_\ell \wr S_2$. Indeed, by reordering the product-$1$ tuple using the swapping steps $(x_i,x_{i+1})\mapsto (x_{i+1},x_i^{x_{i+1}})$ and by simultaneously conjugating the tuple, we may assume $x_1=(u,1)s$ for some  $u\in A_\ell$; $x_2=(v,w)s$,  $vw  \in  A_\ell$; and $x_3,\ldots, x_r\in A_\ell^2$. 
%and all remaining branch cycles are in  $A_\ell^2$.  So 
The product-$1$ condition
 then says that each of $v$ and $w$ is a product of elements of $A_\ell$, and hence is in $A_\ell$. As all $x_i$'s are in $A_\ell\wr S_2$, we have $G=A_\ell\wr S_2$. \\
\noindent (3c) The remaining types F4.4 and F4.5 appear with both monodromy groups $A_\ell\wr S_2$ and $(S_\ell\#_{C_2} S_\ell)\rtimes C_2$. Indeed,  product-$1$ tuples for these ramification types with  both groups are given in the proof of  Proposition \ref{prop:wreath-types} by specifying the parameters in \eqref{equ:g1-branch}. 
%For both types, there are exactly two $x_i$'s which are not in $A_\ell \wr S_2$, so that the given tuples yield $G=(S_\ell\#_{C_2} S_\ell)\rtimes C_2$. 
%Reordering the tuple using the above swapping moves, one can assume these are $x_1$ and $x_2$. 
%Letting $\tau$ be a transposition, we replace $x_1,x_2$ by $x_1(\tau,\tau)$ and $(\tau,\tau)x_2$, resp., so that the product $1$ relation is preserved. Note that  Multlying the first by $(\pi,\pi)$ from the right and the second by $(\pi,\pi)$ on the left for a transpoisition $\pi$ preserves the product one relation and changes both cycles to elements in $A_\ell\wr S_2$. 
\end{proof}

    \appendix
    \section{Non-occurring ramification types} \label{sec:no-ram}
    We give an alternative explicit proof for types I1A.N1-N2, F4.N1-N2 in Lemma \ref{cor:inf2-3} using: %the following lemma:
    %do not correspond to an indecomposable covering. The proof uses the following lemmas:
    %
    \begin{lem}\label{lem:cyc-non-exist} %Lemma 9:
    Let $a,b\in S_\ell$ be $\ell$-cycles and $c,d\in S_\ell$ be $2$-cycles. 
    \begin{enumerate}
    \item If $adbc=1,$
    then there exists $z \in S_\ell$ such that $z^2=1$,  $zaz=b$ and $zcz=d.$
    \item If $c,d$ are disjoint $2$-cycles, and $abcd=1$,
    then there exists $v \in S_\ell$ such that $v^2=cd$ and $vbv^{-1}=a$ and $vav^{-1}=cdbcd$.
    \end{enumerate}
    \end{lem}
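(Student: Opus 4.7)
\textbf{Plan for Lemma \ref{lem:cyc-non-exist}.} For part (1), I would rewrite the relation $adbc=1$ by setting $\pi := ad$, which gives $\pi = (bc)^{-1} = cb^{-1}$ (using $c^2=d^2=1$), so that $a = \pi d$ and $b = \pi^{-1}c$. Under this reformulation, the required conditions $zaz=b$, $zcz=d$, and $z^2=1$ become equivalent to: $z$ is an involution with $z\pi z = \pi^{-1}$ and $zcz=d$. Indeed, given these, one has $zdz=c$ automatically (apply $z\cdot z=1$ to $zcz=d$), and then $zaz = (z\pi z)(zdz) = \pi^{-1}c = b$. Thus the problem reduces to finding an involution $z$ that simultaneously inverts $\pi$ and interchanges $c$ with $d$.

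Next, I would analyze the cycle structure of $\pi=ad$ in coordinates with $a=(1,\ldots,\ell)$ and $d=(i,j)$, $i<j$: a direct computation shows that $\pi$ is a product of two disjoint cycles of lengths $r = i+\ell-j$ and $s = j-i$, with $r+s=\ell$. Since $b=\pi^{-1}c$ must be an $\ell$-cycle, the transposition $c=(p,q)$ is forced to straddle the two cycles of $\pi$, i.e., $p$ and $q$ lie in distinct cycles (otherwise $b$ would be reducible). Any involution inverting a $k$-cycle of $\pi$ is determined by its effect on a single point of that cycle, so there are exactly $r$ (resp.\ $s$) such involutions on each cycle. The conditions $z(p)=i$ and $z(q)=j$ each pin down $z$ uniquely on one cycle, producing a unique candidate $z \in S_\ell$. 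The role of the hypothesis $adbc=1$ is precisely to ensure that the cycle-by-cycle choice is consistent, i.e., that the resulting $z$ really does invert $\pi$ and swap $c$ with $d$; this is verified by a direct cycle-combinatorial argument.

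For part (2), since $c$ and $d$ are disjoint transpositions, $cd$ has cycle type $[2^2, 1^{\ell-4}]$, and any $v$ with $v^2=cd$ must act as a $4$-cycle on the $4$-point support of $cd$ and fix every other point. An easy calculation shows that given $v^2=cd$, the conditions $vbv^{-1}=a$ and $vav^{-1}=cdbcd$ are equivalent, so the problem reduces to finding $v$ with $v^2=cd$ that conjugates $b$ to $a$. One parametrizes such $v$ (on each orbit of $\langle v\rangle$) and uses the relation $abcd=1$ to pin down the correct $4$-cycle on $\{p_1,p_2,p_3,p_4\}$, analogously to part (1). The main obstacle in both parts is the consistency check: without the hypothesis, the conditions on $z$ (respectively $v$) are over-determined, but $adbc=1$ (resp.\ $abcd=1$) is precisely what forces the cycle-by-cycle data to match up and yield a valid solution.
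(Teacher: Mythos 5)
Your reformulation of part (1) --- reduce to finding an involution $z$ that simultaneously inverts $\pi := ad$ and interchanges $c$ with $d$ --- is correct and is a genuinely different route from the paper's. The paper normalizes $b=(1,\dots,\ell)$, $c=(1,i)$, $a=(u_1,\dots,u_\ell)$, $d=(u_1,u_j)$, defines $z$ \emph{globally} by $z(k)=u_k$, reads off $b^z=a$ and $c^z=d$ from the equality $ad=cb^{-1}$, and only afterwards argues $z^2=1$; your construction builds $z$ cycle-by-cycle as a reflection of each cycle of $\pi$, so $z^2=1$ and $z\pi z=\pi^{-1}$ hold automatically, which is somewhat cleaner. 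One small correction: since $z$ carries each cycle of $\pi$ to a cycle of the same length, you must choose the pairing $z(p)=i,z(q)=j$ or $z(p)=j,z(q)=i$ according to which of $i,j$ lies in the cycle of $\pi$ that contains $p$. With that fix there is no further ``consistency check'' to perform --- the straddle conditions, which you correctly derived from $adbc=1$ together with $a,b$ being $\ell$-cycles, already guarantee the construction goes through.

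Part (2), however, has a genuine gap. The claim that any $v$ with $v^2=cd$ ``must act as a $4$-cycle on the $4$-point support of $cd$ and fix every other point'' is false: such a $v$ may also carry arbitrarily many $2$-cycles on the complement, and the $v$ one actually needs typically does. For example, with $\ell=7$, take $a=(1,\dots,7)$, $c=(1,4)$, $d=(2,5)$, $b=(1,7,6,2,4,3,5)$; then $abcd=1$, and a valid choice is $v=(1,2,4,5)(6,7)$, which moves $6$ and $7$. Restricting the search to $v$ supported on the four points of $cd$ therefore makes the problem unsolvable in general, and the ``parametrize and pin down the $4$-cycle'' plan does not go through. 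The paper proves (2) by a short reduction to (1): rewrite $abcd=1$ as $a_0\,d\,b\,c=1$ with $a_0:=a^d$, apply part (1) to get an involution $z$ with $a_0^z=b$ and $c^z=d$, and set $v:=dz$; then $v^2=d\cdot d^z=dc=cd$ and $a^v=(a_0^d)^{dz}=a_0^z=b$, the third relation being equivalent as you already observed. You should look for a reduction of this kind rather than attack (2) head-on.
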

    \begin{proof}
    First suppose that $adbc=1$ or $ad = cb^{-1}$. Without loss of generality, assume $b= (1,\ldots \ell)$ and $c = (1,i)$. 
    Then $cb^{-1}$ is a product of the two disjoint cycles $(i-1,\ldots, 1)$ and $(\ell,\ldots,i)$. Similarly, writing $a = (u_1,\ldots, u_\ell)$ and $d= (u_1,u_j)$ we get $ad = (u_1,\ldots u_{j-1})(u_j,\ldots, u_\ell)$. 
    $$
    \begin{array}{ccc}
    \xymatrix{
    & 2 \ar@/_0.5pc/[dl] & & i-2 \ar@{..>}@/_1pc/[ll]  & \\
    1 \ar[rrrr] & & & & i-1 \ar@/_0.5pc/[ul] \\
    \ell  \ar@/_0.5pc/[dr] & & & & i \ar[llll]  \\
    & \ell -1  \ar@{..>}@/_1pc/[rr] & & i+1 \ar@/_0.5pc/[ur] &  \\
    %& &  cb^{-1} & & 
    } &
    \,\,\,\,\,
    & 
    \xymatrix{
    & u_2  \ar@{..>}@/^1pc/[rr] & & u_{j-2}  \ar@/^0.5pc/[dr] & \\
    u_1 \ar@/^0.5pc/[ur] & & & & u_{j-1} \ar[llll] \\
    u_\ell \ar[rrrr]  & & & & u_j  \ar@/^0.5pc/[dl] \\
    & u_{\ell -1} \ar@/^0.5pc/[ul]  & & \ar@{..>}@/^1pc/[ll] u_{j+1}  &  \\
    %& &  cb^{-1} & & 
    }
    \\
    \text{The element $cb^{-1}$} & & \text{The element $ad$}
    \end{array} 
    $$
    Since $ad = cb^{-1}$ we get either $i-1=j-1$ or $i-1=\ell-j+1$. Without loss of generality assume $i=j$. 
    
    Defining $z\in S_\ell$ by $z(i) = u_i$, we get $(cb^{-1})^z = (ad)^{-1}$ and $c^z =d$. Hence $(bc)^z = ad$ and $b^z = a$.   Since $(bc)^z = ad = (bc)^{-1}$, we deduce that $z^2$ acts trivially by conjugation on $bc$ and on $c$. Since $b$ and $c$ generate $S_\ell$, $z^2$ is in the center of $S_\ell$, so $z^2=1$, completing part~(1). 
    
    Assume $abcd=1$ and rewrite this relation as $a_0dbc=1$ where $a_0:=a^{d^{-1}}=a^d$. Applying part (1) to the latter relation, we obtain an involution $z\in S_\ell$ such that $(a_0)^z = b$ and $c^z=d$. Setting $v=dz$, we have $a^v = (a_0^d)^v = a_0^z = b$, and $v^2 = dz^2d^z = dc = cd$, proving (2). 
    \end{proof}
    For types F4.N1-N2 
    %Parts of Lemma \ref{lem:commutator} and its proof 
    the proof is based on the following theorem from \cite{JP}. Let $[a,b]:=a^{-1}b^{-1}ab$ denote the commutator. 
    \begin{thm}\label{lem:commutator}
    Let $a,b$ be elements of $S_\ell$ such that $[a,b]$ is a product of two transpositions in $A_\ell$. %and $\langle a,b\rangle$ contains $A_\ell$. %\supseteq A_\ell$. 
    Then there exists  $z\in S_\ell$ such that $a^z=a^{-1}$ and $b^z=b^{-1}$.
    \end{thm}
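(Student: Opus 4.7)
The plan is to construct $z$ by a local-to-global argument on the orbit decomposition of $G:=\langle a,b\rangle$. First, decompose $\{1,\ldots,\ell\}$ into $G$-orbits. Since $[a,b]$ has support of exactly four points (assuming the two transpositions are disjoint; the identity case gives $ab=ba$, which is a special case of what follows), this support meets at most two $G$-orbits. On any $G$-orbit $O$ disjoint from $\text{supp}([a,b])$, the restrictions $a|_O$ and $b|_O$ commute, so $\langle a|_O,b|_O\rangle$ is a transitive abelian subgroup of $S_O$, hence acts regularly on $O$. Fixing a basepoint $x_0\in O$, the formula $z_O(g\cdot x_0):=g^{-1}\cdot x_0$ defines an involution in $S_O$ that simultaneously inverts $a|_O$ and $b|_O$. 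Taking the product of such $z_O$ across all commuting orbits reduces the problem to the (at most two) orbits meeting $\text{supp}([a,b])$.

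If the four support points split $2+2$ between two distinct $G$-orbits, then on each such orbit the commutator restricts to a single transposition, and constructing the local involution is essentially the content of \Lref{lem:cyc-non-exist}(1) applied to the relevant restrictions of $a$ and $b$. The substantial work is therefore the transitive case, in which one $G$-orbit $O$ contains all four support points, so after restriction one may assume $\langle a,b\rangle$ acts transitively on $\{1,\ldots,\ell\}$.

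In the transitive case, write $[a,b]=(p\,q)(r\,s)$, so that $b^{-1}ab=a\cdot(p\,q)(r\,s)$, and symmetrically $a^{-1}ba = b\cdot(p\,q)(r\,s)$ (both commutators having order $2$). Cycle-structure preservation under conjugation forces $a$ and $a\cdot(p\,q)(r\,s)$ to have the same cycle type, and likewise for $b$. Multiplying by $(p\,q)(r\,s)$ either splits or merges the cycles of $a$ containing $\{p,q\}$ and $\{r,s\}$, so this cycle-type identity sharply restricts the combinatorial placement of $\{p,q,r,s\}$ within the cycles of $a$ (and symmetrically within those of $b$). I would enumerate the finite list of admissible configurations (both pairs split cycles of $a$ and two cycles of equal length are merged to compensate; both pairs merge cycles; the two pairs interact within a single cycle; etc.), and for each construct $z$ explicitly as an involution adapted to the arrangement: combine the canonical ``reversal'' involution of each cycle of $a$ with a local correction supported in a neighborhood of $\{p,q,r,s\}$, and verify by direct computation that $a^z=a^{-1}$ and $b^z=b^{-1}$ in that configuration.

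The main obstacle is the transitive case analysis, specifically the ``interlocked'' subcases in which $\{p,q\}$ and $\{r,s\}$ lie in different cycles of $a$ but are tied together by how $b$ permutes these cycles (and vice versa). In these subcases the local correction near $\{p,q,r,s\}$ must be designed to simultaneously invert $a$ and $b$, which is where the argument becomes delicate; one typically chooses $z$ so that it preserves a bipartition of each relevant cycle and reverses the induced cyclic order, then reads off the required action on $\{p,q,r,s\}$ from the relation $b^{-1}ab=a\cdot(p\,q)(r\,s)$. I expect a finite and explicit case check — nontrivial but bounded in complexity — to complete the proof.
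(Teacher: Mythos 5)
Note first that the paper does not prove this theorem --- it is cited from \cite{JP} --- so there is no in-paper argument against which to compare your attempt. Your orbit-by-orbit reduction is sound as far as it goes: on any $G$-orbit $O$ disjoint from $\operatorname{supp}([a,b])$ the restrictions $a|_O$ and $b|_O$ commute, the resulting regular abelian action gives a canonical involution inverting both, and the restriction of $[a,b]$ to the remaining orbit is still the same commutator, so it suffices to treat the case where $\langle a,b\rangle$ is transitive. However, the $2{+}2$ split you allow for cannot actually occur: $[a|_O,b|_O]$ is itself a commutator, hence an even permutation of $O$, so it is never a single transposition; in particular Lemma~\ref{lem:cyc-non-exist}(1) --- which in any event conjugates $a$ to $b$ rather than to $a^{-1}$, and requires full $\ell$-cycles --- is not the relevant tool there.

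The genuine gaps are two. First, ``a product of two transpositions'' includes a $3$-cycle (two transpositions sharing a point), and the paper invokes this theorem precisely in that case (type F4.N2, where $v$ is a $3$-cycle), so this subcase cannot be set aside. Second, and more fundamentally, your treatment of the transitive case with $[a,b]=(p\,q)(r\,s)$ is a plan rather than a proof. The observation that $a$ and $a\cdot(p\,q)(r\,s)$ have the same cycle type is correct and does constrain how $\{p,q,r,s\}$ sit in the cycles of $a$, but that constraint does not by itself produce a $z$ with both $a^z=a^{-1}$ and $b^z=b^{-1}$: the inverters of $a$ form a coset of $C_{S_\ell}(a)$, and showing this coset contains an inverter of $b$ is exactly where the hypothesis on $[a,b]$ must do real work. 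You explicitly defer both the enumeration of configurations and the construction and verification of $z$ (``I would enumerate\ldots'', ``I expect a finite and explicit case check\ldots''), so the heart of the argument remains open. Given that the result merited a dedicated publication \cite{JP}, the deferred case check is likely more delicate than the outline suggests.
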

    \begin{proof}[Proof of Lemma \ref{cor:inf2-3}, types I1A.N1-N2, F4.N1-N2]
    We show there is no product-$1$ tuple $x_1,\ldots,x_r$ corresponding to the ramification types in Table~\ref{table:I1AN} and generating a primitive group. 
    In case I1A.N1 by conjugating the tuple by an element in $S_\ell^2$, we may assume that the product-$1$ tuple is of the form:
    $$(a,b),  (c,d),  (e^{-1},e) s,  s,$$
    where $a$ and $b$ are $\ell$-cycles, and $c$ and $d$ are $2$-cycles. The product-$1$ relation gives $e=ac$ and $bde=bdac=1$.  
    By Lemma \ref{lem:cyc-non-exist}, there exists $z \in S_\ell$ such that
    $z^2=1$, $b=zaz$, $d=zcz$, so that $$ze^{-1}z=zbdz=(zbz)(zdz)=ac=e,$$ $zbz=a$, $zdz=c$, and $zez=e^{-1}$.
    Since $K:=G\cap S_\ell^2$ is generated by $(a,b),(c,d)$ and $(e,e^{-1})$, it is contained in $\{(u,zuz):u \in S_\ell\}$,  hence doesn't
    contain $A_\ell^2$ and does not generate a primitive subgroup $G\leq S_\ell \wr S_2$ by Lemma \ref{lem:cyc-exist}. 
    
    In case I1A.N2, conjugating the tuple with an element in $S_\ell^2$, we may assume that the product-$1$ tuple is of the form:  $$(b,a),  (cd,1) s,  (e,e^{-1})s$$
    where $c,d$ are disjoint $2$-cycles and $a,b$ are $\ell$-cycles. The product-$1$ relation gives $e=bcd$, and $ae=abcd=1$.
    By the previous lemma there exists $v \in S_\ell$ such that $v^2=cd$ and $vbv^{-1}=a$ and $vav^{-1}=cdbcd$;
    since the intersection of the group with $S_\ell^2$ is generated by $(b,a), (cd,cd)$ and $(a,cdbcd)$ (which is conjugate to the product of $(cd,1) s$ and $(e,e^{-1})s$),
    it is contained in $\{(u,vuv^{-1})\,|\,u \in S_\ell\}$, 
     hence doesn't
    contain $A_\ell^2$ and does not generate a primitive subgroup $G\leq S_\ell \wr S_2$ by Lemma \ref{lem:cyc-exist}. 
    
In cases F4.N1, F4.N2,  conjugating by an element in $S_\ell^2$, we may assume that the product-$1$ tuple is of the form:
    $$ s, (a^{-1},a)s, (b,b^{-1})s, (c^{-1}v,c)s, $$
    where $v$ is a $3$-cycle or the product of two disjoint $2$-cycles. 
    The product-$1$ relation amounts to the equalities $abc=1$ and $a^{-1}b^{-1}c^{-1}v=1$. Replacing $c=b^{-1}a^{-1}$, we have $[a,b]=a^{-1}b^{-1}ab=v^{-1}$. Since $(a^{-1},a)=(a^{-1},a)s\cdot s\in K$ and similarly $(b^{-1},b)\in K$, the projections of $K$ to the $i$-th coordinate is $K_i = \langle a, b\rangle$, for $i=1,2$. 
    Since $G$ is primitive, $K_i$ is primitive by Lemma \ref{rem:product-type}.(3). 
    Since $K_i$ is primitive and contains $v$, which is either a $3$-cycle or a product of two disjoint transpositions, 
    Remark \ref{rem:jordan} implies that $K_i\supseteq A_\ell$, for $i=1,2$. 
    Since $K_i\supseteq A_\ell$, $i=1,2$, and $G$ is primitive, 
    we have $G\supseteq A_\ell^2$ by Lemma \ref{lem:primitive}. 
    On the other hand, 
    %assuming Conjecture 
    by Theorem \ref{lem:commutator}, there exists $z\in S_\ell$ such that $a^z=a^{-1}$ and $b^z=b^{-1}$. Since $K$ is generated by $(a^{-1},a)$ and $(b^{-1},b)$, we get that $K\subseteq \{(u,u^z)\suchthat u\in S_\ell\}$, contradicting $K\supseteq A_\ell^2$. 
    %we can assume $d_i=e_i=u=1$, $i=1,2$. Then $(a,a^{-1}), (a^{-1},a), (b,b^{-1})$ and $(b^{-1},b)$ are in $K$ and hence  . Lemma \ref{lem:cyc-exist} completes the proof. 
    % completing the proof of Proposition \ref{prop:wreath-types}. 
        \end{proof}

    % REF
    
    \bibliographystyle{plain}

    \end{document}